\begin{document}

\pagestyle{headings}  

\newtheorem{theorem}{Theorem}[section]
\newtheorem{lemma}[theorem]{Lemma}
\newtheorem{proposition}[theorem]{Proposition}
\newtheorem{prop}[theorem]{Proposition}
\newtheorem{corollary}[theorem]{Corollary}
\newtheorem{cor}[theorem]{Corollary}

\newtheorem*{theorem*}{Theorem}

\theoremstyle{remark}

\theoremstyle{definition}
\newtheorem{problem}[theorem]{Problem}
\newtheorem{definition}[theorem]{Definition}
\newtheorem{remark}[theorem]{Remark}
\newtheorem{q}[theorem]{Question}
\newtheorem{question}[theorem]{Question}
\newtheorem*{acknowledgment}{Acknowledgment}

\newcommand\hide[1]{\commented{gray}{Hidden:}{#1}}
\renewcommand\hide[1]\empty


\title{On ultrafilter extensions of first-order models 
       \\and ultrafilter interpretations}
\author{Nikolai L.~Poliakov
         \footnote{National Research University 
          Higher School of Economics, Moscow.},
        Denis~I.~Saveliev
\footnote{Russian Academy of Sciences, 
          Institute for Information Transmission Problems. 
          }
          }

\date{}

\maketitle

\newcommand{\dom}{ {\mathop{\mathrm {dom\,}}\nolimits} }
\newcommand{\ran}{ {\mathop{\mathrm{ran\,}}\nolimits} }

\newcommand{\app}{ {\mathop{\mathrm {app}}\nolimits} }
\newcommand{\ext}{ {\mathop{\mathrm {ext}}\nolimits} }
\newcommand{\cur}{ {\mathop{\mathrm {cur}}\nolimits} }

\newcommand{\rel}{ {\mathop{\mathrm {rel}}\nolimits} }
\newcommand{\fnc}{ {\mathop{\mathrm {fnc}}\nolimits} }

\newcommand{\inter}{ {\mathop{\mathrm {int}}\nolimits} }
\newcommand{\cl}{ {\mathop{\mathrm {cl}}\nolimits} }
\newcommand{\lcl}{ {\mathop{\mathrm {lcl\,}}\nolimits} }
\newcommand{\rcl}{ {\mathop{\mathrm {rcl\,}}\nolimits} }
\newcommand{\cof}{ {\mathop{\mathrm {cof\,}}\nolimits} }
\newcommand{\add}{ {\mathop{\mathrm {add\,}}\nolimits} }
\newcommand{\sat}{ {\mathop{\mathrm {sat\,}}\nolimits} }
\newcommand{\tc}{ {\mathop{\mathrm {tc\,}}\nolimits} }
\newcommand{\unif}{ {\mathop{\mathrm {unif\,}}\nolimits} }
\newcommand{\uhr}{\!\upharpoonright\!}
\newcommand{\lra}{ {\leftrightarrow} }
\newcommand{\ot}{ {\mathop{\mathrm {ot\,}}\nolimits} }
\newcommand{\ol}{\overline}
\newcommand{\cnc}{ {^\frown} }
\newcommand{\image}{\/``\,}
\newcommand{\scc}{\beta\!\!\!\!\beta}
\newcommand{\wh}{\widehat}
\newcommand{\wt}{\widetilde}
\newcommand{\inn}{\mathrm{in\,}}
\newcommand{\id}{\mathrm{id}}

\newcommand{\RC}{\mathrm{RC}}
\newcommand{\RCO}{\mathrm{RCO}}
\newcommand{\RCl}{\mathrm{RCl}}
\newcommand{\RO}{\mathrm{RO}}
\newcommand{\RClop}{\mathrm{RClop}}
\newcommand{\RegCl}{\mathrm{RegCl}}
\newcommand{\RegO}{\mathrm{RegO}}
\newcommand{\Cl}{\mathrm{Cl}}
\newcommand{\Clop}{\mathrm{Clop}}

\begin{abstract}
There exist two known types of ultrafilter 
extensions of first-order models, both in 
a~certain sense canonical. One of them~\cite{Goranko} 
comes from modal logic and universal algebra, 
and in fact goes back to~\cite{Jonsson Tarski}. 
Another one~\cite{Saveliev,Saveliev(inftyproc)} 
comes from model theory and algebra of ultrafilters, 
with ultrafilter extensions of 
semigroups~\cite{Hindman Strauss} 
as its main precursor. By a~classical fact of 
general topology, the space of ultrafilters over 
a~discrete space is its largest compactification. 
The main result 
of~\cite{Saveliev,Saveliev(inftyproc)}, which 
confirms a~canonicity of this extension, generalizes 
this fact to discrete spaces endowed with an 
arbitrary first-order structure. An analogous result 
for the former type of ultrafilter extensions was 
obtained in~\cite{Saveliev(2 concepts)}. Results 
of such kind are referred to as extension theorems.

After a~brief introduction, 
we offer a~uniform approach to both types of 
extensions based on the idea to extend the extension 
procedure itself. We propose a~generalization of 
the standard concept of first-order interpretations 
in which functional and relational symbols are 
interpreted rather by ultrafilters over sets of 
functions and relations than by functions and 
relations themselves, and define ultrafilter models 
with an appropriate semantics for them. 
We provide two specific operations which 
turn ultrafilter models into ordinary models, 
establish necessary and sufficient conditions under 
which the latter are the two canonical ultrafilter 
extensions of some ordinary models, and obtain  
a~topological characterization of ultrafilter models. 
We generalize a~restricted version 
of the extension theorem to ultrafilter models. 
To formulate the full version, we propose a~wider 
concept of ultrafilter models with their semantics 
based on limits of ultrafilters, and show that 
the former concept can be identified, in a~certain 
way, with a~particular case of the latter; moreover, 
the new concept absorbs the ordinary concept 
of models. We provide two more specific operations 
which turn ultrafilter models in the narrow sense 
into ones in the wide sense, 
and establish necessary and sufficient conditions 
under which ultrafilter models in the wide sense are 
the images of ones in the narrow sense under these 
operations, and also are two canonical ultrafilter
extensions of some ordinary models. Finally, 
we establish three full versions of the extension 
theorem for ultrafilter models in the wide sense.

The results of the first three sections of this paper 
were partially announced in~\cite{Poliakov Saveliev}.
\end{abstract}

{\let\thefootnote\relax\footnotetext{
{\it Keywords}: 
ultrafilter, 
ultrafilter quantifier, 
ultrafilter extension, 
ultrafilter interpretation, 
first-order model,
ultrafilter model, 
topological model, 
largest compactification, 
right continuous map,
right open relation,
right closed relation, 
regular closed set, 
limit of ultrafilter,
restricted pointwise convergence topology, 
homomorphism, 
extension theorem.
}}

{\let\thefootnote\relax\footnotetext{
{\it Mathematical Subject Classification 2010\/}:
Primary 
03C55, 
54C08, 
54C20, 
54D35, 
54D80; 
Secondary 
03C30, 
03C80, 
54A20, 
54B05, 
54B10, 
54B20, 
54C10, 
54C15, 
54C20, 
54C50, 
54F65, 
54E05, 
54H10. 
}}


\section{Introduction}

In this section, we recall main definitions and facts  
concerning ultrafilter extensions of arbitrary maps, 
relations, and first-order models. All results mentioned 
here are established in various previous papers, so 
we omit their proofs. The section provides also some 
(of necessity incomplete) historical information.

Given a~set~$X$, let $\scc X$ be the set of 
ultrafilters over~$X$. As usual, we let 
$X\subseteq\scc X$ by identifying each $x\in X$
with the principal ultrafilter given by~$x$.
Fix a~first-order language and consider 
an arbitrary model~$\mathfrak A$ of the language:
$$
\mathfrak A=(X,F,\ldots,R,\ldots)
$$ 
with the universe~$X$, operations $F,\ldots\,$, 
and relations $R,\ldots$~.

\begin{definition}\label{def: abstract u e}
An ({\it abstract}) {\it ultrafilter extension} 
of~$\mathfrak A$ is any model~$\mathfrak A'$ 
in the same language of form 
$$
\mathfrak A'=
(\scc X,F\,',\ldots,R\,',\ldots)
$$ 
with the universe $\scc X$ and operations 
$F\,',\ldots$ and relations $R\,',\ldots$ 
on~$\scc X$ that extend $F,\ldots$ and $R,\ldots$\,, 
respectively. 
\end{definition}

There are essentially {\it two\/} known ways to 
extend relations by ultrafilters, and {\it one\/} 
to extend maps. Particular instances of these 
extensions were discovered by various authors in 
different time and different areas, often without 
a~knowledge of parallel studies in adjacent areas. 
It is convenient to describe these extensions in 
topological terms.

Recall that $\scc X$~carries a~natural topology 
generated by basic open sets 
$$
\wt A=\{\mathfrak u\in\scc X:A\in\mathfrak u\}
$$ 
for all $A\subseteq X$. Easily, the sets are also closed, 
so the space~$\scc X$ is zero-dimensional. Moreover, 
$\scc X$~is compact, Hausdorff, extremally disconnected 
(the closure of any open set is open), and the largest 
compactification of the discrete space~$X$. This means 
that $X$~is dense in~$\scc X$ and every (trivially 
continuous) map~$h$ of~$X$ into any compact Hausdorff 
space~$Y$ uniquely extends to a~continuous map~$\wt h$ 
of~$\scc X$ into~$Y$:
$$
\xymatrix{
&\scc X\,
\ar@{-->}^{\wt{h}\quad}[drr]&&
\\
&X\,
\ar[u]
\ar[rr]^{h}
&&\,Y&
}
$$
by letting for all $\mathfrak u\in\scc X$,
$$
\wt{h}(\mathfrak u)=y
\;\;\text{where}\;\;
\{y\}=
\bigcap_{A\in\mathfrak u}\cl_Y\,h\image A.
$$
(As usual, $\cl_S\,B$~is the closure of~$B$ in~$S$, 
and $f\image B$ is the image of~$B$ under~$f$.) 
The largest compactification of Tychonoff spaces, 
usually referred to as the {\it Stone--\v{C}ech 
compactification}, was discovered independently 
by \v{C}ech~\cite{Cech} and M.~Stone~\cite{Stone}; 
then Wallman~\cite{Wallman} did the same 
for $T_1$~spaces (by using ultrafilters on 
lattices of closed sets); see 
\cite{Hindman Strauss,Comfort Negrepontis,Engelking} 
for more information. 


The ultrafilter extension of a~unary relation~$R$ on
a~set~$X$ is exactly the basic (cl)open set~$\wt R$, 
and the ultrafilter extension of a~unary map 
$F:X\to Y$, where $Y$~is a~compact Hausdorff space 
(for operations~$F$ on~$X$ we let $Y=\scc X$ 
as $X\subseteq\scc X$), is exactly its continuous 
extension~$\wt F$.
Thus in the unary case, the procedure gives classical 
objects known since 1930s. As for maps and relations 
of greater arities, several instances of their 
ultrafilter extensions were discovered only in 1960s.

\newpage

\vskip+1em
\noindent
\textbf{\textit{Ultrafilter extensions of maps.}}
Studying ultraproducts, Kochen~\cite{Kochen} and 
Frayne, Morel, and Scott~\cite{Frayne Morel Scott} 
considered a~``multiplication" of ultrafilters, 
which actually is the ultrafilter extension of 
the $n$-ary operation of taking $n$-tuples. They 
shown that the successive iteration of ultrapowers by 
ultrafilters $\mathfrak u_1,\ldots,\mathfrak u_n$ is 
isomorphic to a~single ultrapower by their ``product". 
This has leaded to the general construction of iterated 
ultrapowers, invented by Gaifman and elaborated by 
Kunen, which has become common in model theory and 
set theory (see~\cite{Chang Keisler,Kanamori}).

Ultrafilter extensions of semigroups appeared in~1960s 
as subspaces of function spaces. To the best of our 
knowledge, the first explicit construction of the 
semigroup that is the ultrafilter extension of 
a~group is due to Ellis~\cite{Ellis}; he also proved 
the existence of idempotents in compact Hausdorff 
semigroups with one-sided continuity~\cite{Ellis 58}. 
In~1970s Galvin and Glazer applied these facts to give 
an easy proof of what now known as Hindman's Finite 
Sums Theorem; the key idea was to use ultrafilters 
that are idempotent w.r.t.~the extended operation. 
Then the method was developed by Bergelson, Blass, 
van~Douwen, Hindman, Protasov, Strauss, and many others, 
and provided numerous Ramsey-theoretic applications 
in number theory, algebra, topological dynamics, 
and ergodic theory.  
The book~\cite{Hindman Strauss} is a~comprehensive 
treatise of this area, with an historical information. 
This technique was recently applied for obtaining 
analogous results for certain non-associative algebras 
(see~\cite{Saveliev(Hindman),Saveliev(idempotents)}).

Ultrafilter extensions of arbitrary $n$-ary maps 
have been introduced independently in recent works 
by Goranko~\cite{Goranko} and
Saveliev~\cite{Saveliev,Saveliev(inftyproc)}. 

\begin{definition}\label{def: u e map}
For a~map $F:X_1\times\ldots\times X_n\to Y$, 
the extended map 
$\wt F:\scc X_1\times\ldots\times\scc X_n\to\scc Y$ 
is defined by letting
\begin{gather*}
\wt F(\mathfrak u_1,\ldots,\mathfrak u_n)=
\\
\bigl\{A\subseteq Y:
\{x_1\in X_1:
\ldots
\{x_n\in X_n:
F(x_1,\ldots,x_n)\in A
\}\in\mathfrak u_n
\ldots
\}\in\mathfrak u_1
\bigr\}.
\end{gather*}
\end{definition}

One can simplify this cumbersome notation by 
introducing {\it ultrafilter quantifiers\/}.  
For every ultrafilter~$\mathfrak u$ over a~set~$X$ 
and formula $\varphi(x,\ldots)$ with parameters 
$x,\ldots$ valuated over~$X$, let 
$$
(\forall^{\,\mathfrak u}x)\,\varphi(x,\ldots)
\;\;\text{mean}\;\;
\{x:\varphi(x,\ldots)\}\in\mathfrak u.
$$
In fact, such quantifiers are a~special kind 
of second-order quantifiers:
$(\forall^{\,\mathfrak u}x)$ is equivalent to
$(\forall A\in\mathfrak u)(\exists x\in A)$, 
and also (since $\mathfrak u$~is ultra) to 
$(\exists A\in\mathfrak u)(\forall x\in A)$. 
Note also that ultrafilter quantifiers are self-dual, 
i.e.~$\forall^{\,\mathfrak u}$ and 
$\exists^{\,\mathfrak u}$ coincide; 
they generally do not commute with each other, i.e. 
$(\forall^{\,\mathfrak u}x)(\forall^{\,\mathfrak v}y)$ 
and 
$(\forall^{\,\mathfrak v}y)(\forall^{\,\mathfrak u}x)$ 
are generally not equivalent; and if $\mathfrak u$~is 
the principal ultrafilter given by $a\in X$ then 
$(\forall^{\,\mathfrak u}x)\varphi(x,\ldots)$
is reduced to $\varphi(a,\ldots)$.

Now the definition above can be rewritten as follows:
\begin{align*}
\wt F(\mathfrak u_1,\ldots,\mathfrak u_n)=
\bigl\{A\subseteq Y:
(\forall^{\,\mathfrak u_1}x_1)
\ldots
(\forall^{\,\mathfrak u_n}x_n)\;
F(x_1,\ldots,x_n)\in A
\bigr\}.
\end{align*}

The map~$\wt F$ can be also described as 
the composition of the ultrafilter extension 
of taking $n$-tuples, which maps 
$\scc X_1\times\ldots\times\scc X_n$ into $\scc(X_1\times\ldots\times X_n)$, and 
the continuous extension of $F$ considered 
as a~unary map, which maps 
$\scc(X_1\times\ldots\times X_n)$ into~$\scc Y$.

Not many properties of original maps are preserved 
under their ultrafilter extensions. Specific 
identities preserved under ultrafilter extensions 
(e.g.~associativity is so while commutativity 
and idempotency are not) are described 
in \cite{Saveliev(inftyproc)}, Theorem~5.3.


\vskip+1em
\noindent
\textbf{\textit{Ultrafilter extensions of relations.}}
One type of ultrafilter extensions of relations 
goes back to a~seminal paper by J{\'o}nsson and
Tarski~\cite{Jonsson Tarski} where they have been 
appeared implicitly, in terms of representations of 
Boolean algebras with operators. For binary relations, 
their representation theory was rediscovered in 
modal logic by Lemmon~\cite{Lemmon} who credited 
much of this work to Scott (see footnote~6 on p.~204); 
see also~\cite{Lemmon Scott}. Goldblatt and
Thomason~\cite{Goldblatt Thomason} (where Section~2 was 
entirely due to Goldblatt) used this to characterize 
modal definability; the term ``ultrafilter extension'' 
has been coined probably in the subsequent work by 
van~Benthem~\cite{van Benthem} (for modal definability 
see also~\cite{van Benthem 88,Venema,Blackburn et al}). 
Later Goldblatt~\cite{Goldblatt} considered the 
extension of $n$-ary relations in the context of 
universal algebra and model theory.

The following definition is equivalent to one appeared 
in \cite{Goranko} (or \cite{Jonsson Tarski,Goldblatt}):

\begin{definition}\label{def: model u e relation}
For a~relation $R\subseteq X_1\times\ldots\times X_n$, 
the extended relation 
$R^*\subseteq\scc X_1\times\ldots\times\scc X_n$ 
is defined by letting
\begin{gather*}
R^*(\mathfrak u_1,\ldots,\mathfrak u_n)
\;\;\text{iff}\;\;
\\
(\forall A_1\in\mathfrak u_1)
\ldots
(\forall A_n\in\mathfrak u_n)
(\exists x_1\in A_1)
\ldots
(\exists x_n\in A_n)\;
R(x_1,\ldots,x_n).
\end{gather*}
\end{definition}

The first-order formulas corresponding to 
so-called canonical modal formulas (e.g.~to 
all Sahlqvist formulas) are preserved under 
passing from $\mathfrak A$ to~$\mathfrak A^*$, 
provided $\mathfrak A$ is a~model of a~relational 
language (see~\cite{van Benthem,Blackburn et al}).

Another type of ultrafilter extensions 
of $n$-ary relations has been recently discovered 
in~\cite{Saveliev,Saveliev(inftyproc)}:

\begin{definition}\label{def: modal u e relation}
For a~relation $R\subseteq X_1\times\ldots\times X_n$, 
the extended relation 
$\wt R\subseteq\scc X_1\times\ldots\times\scc X_n$ 
is defined by letting
\begin{gather*}
\wt R(\mathfrak u_1,\ldots,\mathfrak u_n)
\;\;\text{iff}\;\;
\\
\bigl\{x_1\in X_1:
\ldots
\{x_n\in X_n:
R(x_1,\ldots,x_n)
\}\in\mathfrak u_n
\ldots
\bigr\}\in\mathfrak u_1.
\end{gather*}
\end{definition}

Rewriting this via ultrafilter quantifiers, 
we get an easier formulation:
\begin{align*}
\wt R(\mathfrak u_1,\ldots,\mathfrak u_n)
\;\;\text{iff}\;\;
(\forall^{\,\mathfrak u_1}x_1)
\ldots
(\forall^{\,\mathfrak u_n}x_n)\;
R(x_1,\ldots,x_n).
\end{align*}
By decoding ultrafilter quantifiers, this also 
can be rewritten by
\begin{gather*}
\wt R(\mathfrak u_1,\ldots,\mathfrak u_n)
\;\;\text{iff}\;\;
\\
(\forall A_1\in\mathfrak u_1)
(\exists x_1\in A_1)
\ldots
(\forall A_n\in\mathfrak u_n)
(\exists x_n\in A_n)\;
R(x_1,\ldots,x_n),
\end{gather*}
whence it clearly follows that one of 
the two relations is included into another: 
$$\wt R\subseteq R^*.$$ 
If $R$~is a~unary relation, both extensions, 
$\wt R$ and~$R^*$, coincide with the basic open 
set given by~$R$ (and with $\cl_{\scc X}R$, 
the closure of $R$ in the space~$\scc X$). 
If a~binary relation~$R$ is functional, then $R^*$ 
(but not~$\wt R$) coincides with the above-defined  
extension of~$R$ considered as a~unary map; this 
does not work for relations of bigger arities. 
An easy instance of the $\,\wt{\;}\;$-extensions, 
where $R$~are linear orders, was studied 
in~\cite{Saveliev(orders)}.

A~systematic comparative study of both extensions 
(for binary~$R$) is undertaken 
in~\cite{Saveliev(2 concepts)}. 
In particular, it is shown there that the ${}^*\,$- 
and the $\,\wt{\;}\;$-extensions have a~dual 
character w.r.t.~relation-algebraic operations: 
the ${}^*\,$-extension commutes with composition 
and inversion but not Boolean operations except 
for union, while the
$\,\wt{\;}\;$-extension commutes with all Boolean 
operations but neither composition nor inversion. 
Also \cite{Saveliev(2 concepts)}~provides topological 
characterizations of $\wt R$ and~$R^*$ in terms of
appropriate closure operations and in terms of 
Vietoris-type topologies (regarding $R$ as 
multi-valued maps).


\vskip+1em
\noindent
\textbf{\textit{Ultrafilter extensions of models.}}
Ultrafilter extensions of arbitrary first-order 
models were defined and studied for the first time
independently in~\cite{Goranko} and in~\cite{Saveliev} 
with two distinct versions of extended relations: 
Goranko considered models with the ${}^*\,$-extensions of
relations and Saveliev with their $\,\wt{\;}\;$-extensions. 
Here we shall consider both types of extensions; for 
a~given model~$\mathfrak A$ denote them by $\mathfrak A^*$ 
and $\wt{\,\mathfrak A\,}$, respectively:%
\footnote{
Another notation was used in~\cite{Goranko},  
where $\mathfrak A^*$ was denoted 
by~${\mathbf U}(\mathfrak A)$, and 
in~\cite{Saveliev,Saveliev(inftyproc),Saveliev Shelah},  
where $\wt{\,\mathfrak A\,}$ was denoted 
by~$\scc\,\mathfrak A$.
}

\begin{definition}\label{def: u e model}
For an arbitrary model 
$\mathfrak A=(X,F,\ldots,R,\ldots)$ we let
\begin{align*}
\mathfrak A^*=
\bigl(\scc X,\wt F,\ldots,R^*,\ldots\bigr)
\;\;\text{ and }\;\;
\wt{\,\mathfrak A\,}=
\bigl(\scc X,\wt F,\ldots,\wt R,\ldots\bigr).
\end{align*}
\end{definition}

Since for any relation~$R$ we have $\wt R\subseteq R^*$, 
the following observation is obvious:

\begin{theorem}\label{homo-of-model-to-modal}
For any model~$\mathfrak A$ with the universe~$X$ 
the identity map on~$\scc X$ is a~homomorphism 
of $\wt{\,\mathfrak A\,}$ onto~$\mathfrak A^*$:
$$
\xymatrix{
\wt{\,\mathfrak A\,}\ar^{\id}[rr]&&\mathfrak A^*
\\
&\mathfrak A\ar[ul]\ar[ur]&
}
$$
\end{theorem}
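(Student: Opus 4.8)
The plan is to unwind the definition of a homomorphism between two first-order models in the same language and check each clause against the identity map. Recall that a map $h$ from the universe of one model to that of another is a homomorphism when it commutes with every operation, $h(F(\mathfrak u_1,\ldots,\mathfrak u_n))=F'(h(\mathfrak u_1),\ldots,h(\mathfrak u_n))$, and preserves every relation in the forward direction, $R(\mathfrak u_1,\ldots,\mathfrak u_n)\Rightarrow R'(h(\mathfrak u_1),\ldots,h(\mathfrak u_n))$. Here both $\wt{\,\mathfrak A\,}$ and $\mathfrak A^*$ carry the same universe $\scc X$ and interpret every operation symbol by the very same extended operation $\wt F$, so with $h=\id$ the operation clause reads $\wt F(\mathfrak u_1,\ldots,\mathfrak u_n)=\wt F(\mathfrak u_1,\ldots,\mathfrak u_n)$ and holds trivially.

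The only clause carrying content is the one for relations. For a relation symbol interpreted by $\wt R$ in the source $\wt{\,\mathfrak A\,}$ and by $R^*$ in the target $\mathfrak A^*$, the requirement is that $\wt R(\mathfrak u_1,\ldots,\mathfrak u_n)$ implies $R^*(\id(\mathfrak u_1),\ldots,\id(\mathfrak u_n))$, that is, $R^*(\mathfrak u_1,\ldots,\mathfrak u_n)$. This is precisely the inclusion $\wt R\subseteq R^*$ recorded above, so the implication is immediate. Surjectivity of $\id$ on $\scc X$ is obvious, which yields the word \emph{onto} in the statement.

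Finally, for the commuting triangle it suffices to observe that the canonical embedding $X\hookrightarrow\scc X$, sending each point to its principal ultrafilter, serves as the arrow $\mathfrak A\to\wt{\,\mathfrak A\,}$ and equally as the arrow $\mathfrak A\to\mathfrak A^*$; since $\id$ fixes every ultrafilter, composing this embedding with $\id$ returns the same embedding, so the lower triangle commutes. I expect no genuine obstacle here: the single substantive ingredient, the inclusion $\wt R\subseteq R^*$, is furnished by the decoding of the ultrafilter quantifiers given earlier, and every other part of the verification is a direct reading of the definitions of homomorphism and of the two extensions.
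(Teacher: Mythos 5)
Your proposal is correct and matches the paper's intent exactly: the paper states this theorem as an immediate observation from the inclusion $\wt R\subseteq R^*$ (noted just before the statement), together with the fact that both extensions interpret operations identically by $\wt F$. Your unwinding of the homomorphism clauses is precisely the verification the paper leaves implicit.
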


Therefore, all positive formulas satisfied 
in $\wt{\,\mathfrak A\,}$ are also satisfied 
in~$\mathfrak A^*$.


It follows from above mentioned facts that 
ultrafilter extensions are not elementary, 
except for certain degenerate cases. Even 
universal formulas are not preserved under 
these extensions, as seen from the example 
of a~semigroup $\mathfrak A$ without idempotents: 
the semigroup $\wt{\,\mathfrak A\,}$ does have 
an idempotent by Ellis' theorem. On the other hand, idempotents in $\wt{\,\mathfrak A\,}$ is a~key 
tool in obtaining various deep combinatorial 
results about the extended~$\mathfrak A$, 
most of which have no known alternative 
(i.e.~not using ultrafilter extensions) proofs 
(see~\cite{Hindman Strauss}). 
More generally,
some complex (typically, not first-order) 
assertions about the original model have 
counterparts about its ultrafilter extension 
which are easier to formulate and to prove; 
so, in a~sense, the non-elementarity 
of ultrafilter extensions can be their advantage 
in studying the extended models.%
\footnote{
Compare this with non-standard extensions, 
also used to prove assertions about the 
extended model, which {\em are} elementary; 
it is unclear, however, whether this technique 
produces as many results with no known 
alternative proofs as the technique based on 
ultrafilter extensions does. Interestingly, 
a~recent paper \cite{DiNasso Baglini} combines 
both techniques to obtain results in number theory. 
}


The following theorem has been appeared
in~\cite{Saveliev} and called the First Extension 
Theorem in~\cite{Saveliev(inftyproc)}:

\begin{theorem}\label{modelFET}
Let $\mathfrak A$ and~$\mathfrak B$ be two models 
of the same signature. If $h$~is a~homomorphism 
between $\mathfrak A$ and~$\mathfrak B$, then 
the continuous extension~$\wt h$ is a~homomorphism 
between $\wt{\,\mathfrak A\,}$ 
and~$\wt{\,\mathfrak B\,}$:
$$
\xymatrix{
&\wt{\,\mathfrak A\,}
\ar@{-->}^{\wt h}[rr]
&&\wt{\,\mathfrak B\,}&
\\
&\,\mathfrak A\,
\ar[rr]^{h}
\ar[u]\,
&&\mathfrak B\,
\ar[u]&
}
$$
\end{theorem}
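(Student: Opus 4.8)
The plan is to reduce the entire statement to a~single change-of-variables identity for ultrafilter quantifiers under the map~$\wt h$, after which operations and relations can be handled separately and almost mechanically.

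First I would note that the continuous extension $\wt h\colon\scc X\to\scc Y$ of $h\colon X\to Y$ acts, as is standard, as the pushforward of ultrafilters,
$$
\wt h(\mathfrak u)=\{B\subseteq Y:\{x\in X:h(x)\in B\}\in\mathfrak u\},
$$
and derive from this the key identity: for any formula $\varphi(y,\ldots)$ valuated over~$Y$,
$$
(\forall^{\,\wt h(\mathfrak u)}y)\,\varphi(y,\ldots)
\;\;\text{iff}\;\;
(\forall^{\,\mathfrak u}x)\,\varphi(h(x),\ldots).
$$
This is immediate from the definitions: the left side says $\{y:\varphi(y,\ldots)\}\in\wt h(\mathfrak u)$, that is, $\{x:h(x)\in\{y:\varphi(y,\ldots)\}\}\in\mathfrak u$, and the latter set is precisely $\{x:\varphi(h(x),\ldots)\}$. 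Since this holds for every value of the parameters, I would apply it one quantifier at a~time to a~nested block, successively substituting $y_i=h(x_i)$, to conclude that the prefix $(\forall^{\,\wt h(\mathfrak u_1)}y_1)\ldots(\forall^{\,\wt h(\mathfrak u_n)}y_n)$ in front of a~formula may be replaced by $(\forall^{\,\mathfrak u_1}x_1)\ldots(\forall^{\,\mathfrak u_n}x_n)$, with each $y_i$ in the formula replaced by~$h(x_i)$.

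For an~operation symbol, interpreted by~$F$ in~$\mathfrak A$ and by~$G$ in~$\mathfrak B$, I would show that the ultrafilters $\wt h(\wt F(\mathfrak u_1,\ldots,\mathfrak u_n))$ and $\wt G(\wt h(\mathfrak u_1),\ldots,\wt h(\mathfrak u_n))$ coincide by comparing their members. Fixing $B\subseteq Y$ and unwinding the definition of~$\wt F$ together with the pushforward, $B$ lies in the first iff $(\forall^{\,\mathfrak u_1}x_1)\ldots(\forall^{\,\mathfrak u_n}x_n)\,h(F(x_1,\ldots,x_n))\in B$, whereas by the identity above $B$ lies in the second iff $(\forall^{\,\mathfrak u_1}x_1)\ldots(\forall^{\,\mathfrak u_n}x_n)\,G(h(x_1),\ldots,h(x_n))\in B$. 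As $h$~is a~homomorphism, $h(F(x_1,\ldots,x_n))=G(h(x_1),\ldots,h(x_n))$ pointwise, so the two conditions agree. For a~relation symbol, interpreted by~$R$ and~$S$ respectively, I would start from $\wt R(\mathfrak u_1,\ldots,\mathfrak u_n)$, that is, $(\forall^{\,\mathfrak u_1}x_1)\ldots(\forall^{\,\mathfrak u_n}x_n)\,R(x_1,\ldots,x_n)$, use the homomorphism property $R(x_1,\ldots,x_n)\to S(h(x_1),\ldots,h(x_n))$ together with the monotonicity of ultrafilter quantifiers (an~ultrafilter being upward closed, applied through the block) to pass to $(\forall^{\,\mathfrak u_1}x_1)\ldots(\forall^{\,\mathfrak u_n}x_n)\,S(h(x_1),\ldots,h(x_n))$, and then apply the identity to recognise this as $\wt S(\wt h(\mathfrak u_1),\ldots,\wt h(\mathfrak u_n))$.

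The hard part will be the iteration of the change-of-variables identity through the nested quantifier block: one must peel off the outermost quantifier with the remaining inner quantifiers treated as the formula~$\varphi$, substitute $y_1=h(x_1)$, and repeat, verifying that the substitutions propagate correctly under successive ultrafilter quantifiers with the outer variables carried along as parameters. Everything else is a~routine unwinding of the definitions of $\wt F$, $\wt R$, and the pushforward~$\wt h$, so once the identity is established the two cases follow directly.
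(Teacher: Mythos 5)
Your proposal is correct. The change-of-variables identity $(\forall^{\,\wt h(\mathfrak u)}y)\,\varphi(y,\ldots)\leftrightarrow(\forall^{\,\mathfrak u}x)\,\varphi(h(x),\ldots)$ is exactly the pushforward description of~$\wt h$ on ultrafilters, and the iteration through a nested block is unproblematic: peeling off the outermost quantifier and using the fact that pointwise equivalent conditions define the same subset of~$X$ (so that ultrafilter quantifiers respect pointwise equivalence) carries the induction; upward closure of ultrafilters gives the monotonicity you need for the relational case. However, this is a genuinely different route from the one the paper takes. The paper omits a direct proof (the result is quoted from earlier work) and instead indicates that Theorem~\ref{modelFET} is obtained as a corollary of the Second Extension Theorem (Theorem~\ref{modelSET}) by taking $\mathfrak C=\wt{\,\mathfrak B\,}$: one first establishes the topological description of $\wt{\,\mathfrak B\,}$ (Theorem~\ref{modeltopology}: operations right continuous, relations right clopen w.r.t.\ the copy of the original universe) and then invokes an abstract lifting result (Theorem~\ref{abstract-ET}) which propagates the homomorphism property from the dense submodel $\mathfrak A\subseteq\wt{\,\mathfrak A\,}$ to all of $\scc X$ by right continuity and right closedness. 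Your argument buys a short, self-contained, purely combinatorial verification that never mentions topology; the paper's route buys generality, since the same machinery yields homomorphisms into arbitrary compact Hausdorff right topological targets $\mathfrak C$, of which $\wt{\,\mathfrak B\,}$ is only one instance, and it is that stronger statement which the paper actually needs later for generalized models.
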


Theorem~\ref{modelFET} on the $\,\wt{\;}\;$-extensions 
is a~precise counterpart of Theorem~\ref{modalFET} 
on the ${}^*\,$-extensions, a~principal result
of~\cite{Goranko}:

\begin{theorem}\label{modalFET}
Let $\mathfrak A$ and~$\mathfrak B$ be two models 
of the same signature. If $h$~is a~homomorphism 
between $\mathfrak A$ and~$\mathfrak B$, then 
the continuous extension~$\wt h$ is a~homomorphism 
between $\mathfrak A^*$ and~$\mathfrak B^*$:
$$
\xymatrix{
&\;\mathfrak A^*
\ar@{-->}^{\wt h}[rr]
&&\;\mathfrak B^*&
\\
&\mathfrak A
\ar[rr]^{h}
\ar[u]
&&\mathfrak B
\ar[u]&
}
$$
\end{theorem}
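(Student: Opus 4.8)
The plan is to verify that $\wt h$ respects both the operations and the relations of the models. Recall that a homomorphism between models in the same signature must commute with every operation symbol and must preserve every relation symbol, in the sense that a tuple standing in the relation in the source is sent to a tuple standing in it in the target. Now the operations of $\mathfrak A^*$ and $\wt{\,\mathfrak A\,}$ are literally the same maps $\wt F$, and likewise for $\mathfrak B$; hence the clause about operations is already furnished by Theorem~\ref{modelFET}, which asserts that $\wt h$ is a homomorphism from $\wt{\,\mathfrak A\,}$ to $\wt{\,\mathfrak B\,}$ and in particular gives $\wt h\bigl(\wt F(\mathfrak u_1,\ldots,\mathfrak u_n)\bigr)=\wt F\bigl(\wt h(\mathfrak u_1),\ldots,\wt h(\mathfrak u_n)\bigr)$ for each operation symbol. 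Thus the only genuinely new content is preservation of the ${}^*\,$-extended relations.

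For this I would first record the explicit form of the continuous extension on discrete spaces: for $h:X\to Y$ and $\mathfrak u\in\scc X$ one has $\wt h(\mathfrak u)=\{B\subseteq Y:\{x:h(x)\in B\}\in\mathfrak u\}$, the pushforward of $\mathfrak u$ along $h$ (this follows from the general formula $\{\wt h(\mathfrak u)\}=\bigcap_{A\in\mathfrak u}\cl_{\scc Y}h\image A$, since $A\subseteq\{x:h(x)\in h\image A\}$ forces $h\image A$ into the pushforward). The two facts I will use are that $B\in\wt h(\mathfrak u)$ iff its preimage $\{x:h(x)\in B\}$ belongs to $\mathfrak u$, and that $x\in\{x:h(x)\in B\}$ entails $h(x)\in B$.

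Now fix $\mathfrak u_1,\ldots,\mathfrak u_n$ with $R^*(\mathfrak u_1,\ldots,\mathfrak u_n)$ and write $\mathfrak v_i=\wt h(\mathfrak u_i)$; the goal is $R^*(\mathfrak v_1,\ldots,\mathfrak v_n)$, which by the universal–existential unfolding of the ${}^*\,$-extension means that for all $B_1\in\mathfrak v_1,\ldots,B_n\in\mathfrak v_n$ there exist $y_1\in B_1,\ldots,y_n\in B_n$ with $R(y_1,\ldots,y_n)$ in $\mathfrak B$ (using the same symbol $R$ for the relation of $\mathfrak B$). Given such $B_i$, set $A_i=\{x:h(x)\in B_i\}$; then $A_i\in\mathfrak u_i$ by the description of $\wt h$. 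Applying the hypothesis $R^*(\mathfrak u_1,\ldots,\mathfrak u_n)$ to these $A_i\in\mathfrak u_i$ produces witnesses $x_i\in A_i$ with $R(x_1,\ldots,x_n)$ in $\mathfrak A$. Put $y_i=h(x_i)$; then $y_i\in B_i$ because $x_i\in A_i$, and the relational clause of the homomorphism $h$ gives $R(h(x_1),\ldots,h(x_n))$, that is $R(y_1,\ldots,y_n)$, in $\mathfrak B$. As $B_1,\ldots,B_n$ were arbitrary, this is exactly $R^*(\mathfrak v_1,\ldots,\mathfrak v_n)$.

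I expect no essential obstacle: the whole argument is driven by the ``for all sets, there exist witnesses'' shape of the ${}^*\,$-extension, which is matched perfectly by pulling each $B_i$ back along $h$ and pushing the resulting witnesses forward. The one point needing care is the quantifier alternation in the unfolding of $R^*$, but since each index $i$ is handled by an independent pull-back $A_i$ and push-forward $y_i=h(x_i)$, the order of the universal and existential quantifiers is harmless. (If one prefers to avoid citing Theorem~\ref{modelFET}, the operations can be treated directly by iterating the change-of-variables identity $(\forall^{\,\mathfrak v}y)\,\psi(y)\lra(\forall^{\,\mathfrak u}x)\,\psi(h(x))$, valid for $\mathfrak v=\wt h(\mathfrak u)$, across the $n$ argument places.)
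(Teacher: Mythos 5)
Your argument is correct. The relational clause is exactly the direct verification one wants: since the unfolding of $R^*$ has all its universal quantifiers over sets in front of all its existential quantifiers over witnesses, pulling each $B_i$ back to $A_i=h^{-1}(B_i)\in\mathfrak u_i$ (using $\wt h(\mathfrak u)=\{B:h^{-1}(B)\in\mathfrak u\}$) and pushing the witnesses forward along $h$ settles it, and the operation clause is indeed shared verbatim with Theorem~\ref{modelFET}. Note, though, that the paper does not prove Theorem~\ref{modalFET} this way (in fact it quotes it from~\cite{Goranko} and omits the proof): the route indicated in the text is to derive it from the Second Extension Theorem~\ref{modalSET} by taking $\mathfrak C=\mathfrak B^*$, which is legitimate because, by Theorem~\ref{modaltopology}, $R^*=\cl_{\scc X\times\ldots\times\scc X}R$ is closed and the operations of $\mathfrak B^*$ are right continuous w.r.t.\ the image of the universe. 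That topological route buys more — Theorem~\ref{modalSET} allows an arbitrary compact Hausdorff target with closed relations, not just $\mathfrak B^*$, and it is the form reused later for generalized models — whereas your argument is more elementary and self-contained: it needs only the ultrafilter description of $\wt h$ and the quantifier shape of the ${}^*\,$-extension, with no appeal to closures, closed maps, or the abstract extension theorem. One tiny quibble: there is no quantifier alternation to worry about in $R^*$ (that issue arises for $\wt R$, whose unfolding interleaves $\forall A_i$ with $\exists x_i$); for $R^*$ the block form makes the independence of the indices automatic, as you in effect observe.
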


Both theorems remain true for isomorphic embeddings 
and some other model-theoretic interrelations 
(see \cite{Goranko,Saveliev,Saveliev(inftyproc)}). 
On the other hand, it was shown in~\cite{Saveliev Shelah} 
that Theorem~\ref{modelFET} does not hold for elementary 
embeddings, moreover, the ultrafilter extensions of 
a~model and its elementary submodel do not need to be 
elementarily equivalent.

Theorem~\ref{modelFET} is actually a~particular case of 
a~much stronger result of~\cite{Saveliev}, called the 
Second Extension Theorem in~\cite{Saveliev(inftyproc)}. 
To formulate this, we need the following concepts 
introduced in~\cite{Saveliev}.

\begin{definition}\label{def: right cont etc}
Let $X_1,\ldots,X_n,Y$ be topological spaces, and let 
$A_1\subseteq X_1,\ldots,A_{n-1}\subseteq X_{n-1}$.
An $n$-ary function $F:X_1\times\ldots\times X_n\to Y$ 
is {\it right continuous w.r.t.\/}~$A_1,\ldots,A_{n-1}$
iff for each~$i$, $1\le i\le n$, and every
$a_1\in A_1,\ldots,a_{i-1}\in A_{i-1}$ and
$x_{i+1}\in X_{i+1},\ldots,x_n\in X_n$, 
the unary map
$$
x\mapsto
F(a_1,\ldots,a_{i-1},x,x_{i+1},\ldots,x_n)
$$
of~$X_i$ into~$Y$ is continuous. 
An $n$-ary relation $R\subseteq X_1\times\ldots\times X_n$ 
is {\it right open\/} ({\it right closed\/}, 
{\it right clopen\/}, etc.)~{\it w.r.t.\/}
$A_1,\ldots,A_{n-1}$ iff for each~$i$, $1\le i\le n$, 
and every $a_1\in A_1,\ldots,a_{i-1}\in A_{i-1}$ 
and $x_{i+1}\in X_{i+1},\ldots,x_n\in X_n$, 
the set 
$$
\bigl\{x\in X_i:
R(a_1,\ldots,a_{i-1},x,x_{i+1},\ldots,x_n)
\bigr\}
$$
is open (closed, clopen, etc.)~in~$X_i$. 
\end{definition}


Theorem~\ref{modeltopology} 
(\cite{Saveliev,Saveliev(inftyproc)}) describes
topological properties of the $\,\wt{\;}\;$-extensions 
and serves as a~base of Theorem~\ref{modelSET}, 
the Second Extension Theorem 
of~\cite{Saveliev(inftyproc)}. 
(A~very particular instance of the latter theorem, in 
which the models under consideration are semigroups, has 
been appeared in~\cite{Berglund et al}, Theorem~4.5.3.)

\begin{theorem}\label{modeltopology}
Let $\mathfrak A$~be a~model. In the 
extension~$\wt{\,\mathfrak A\,}$, 
all operations are right continuous 
and all relations right clopen 
w.r.t.~the universe of~$\mathfrak A$.
\end{theorem}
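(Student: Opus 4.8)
The plan is to unwind the definitions of right continuity and right clopenness relative to the universe, to collapse the leftmost ultrafilter quantifiers using the principality of the fixed arguments, and then to recognize the resulting truth sets as basic clopen sets of $\scc X$. First I would fix the data witnessing the property at the $i$-th coordinate: principal ultrafilters $a_1,\ldots,a_{i-1}\in X$ for the left arguments, arbitrary ultrafilters $\mathfrak u_{i+1},\ldots,\mathfrak u_n\in\scc X$ for the right arguments, and a variable ultrafilter $\mathfrak u\in\scc X$ in the $i$-th slot. The crucial point, already noted in the Introduction, is that a principal ultrafilter collapses its quantifier to a substitution: $(\forall^{\,a_j}x_j)\,\chi(x_j)$ is equivalent to $\chi(a_j)$. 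Hence the leftmost $i-1$ quantifiers disappear and $(\forall^{\,\mathfrak u}x_i)$ becomes the outermost of the surviving quantifiers.

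For the relational case I would set
$$
S=\bigl\{x\in X:
(\forall^{\,\mathfrak u_{i+1}}x_{i+1})\ldots(\forall^{\,\mathfrak u_n}x_n)\,
R(a_1,\ldots,a_{i-1},x,x_{i+1},\ldots,x_n)\bigr\}.
$$
Unwinding the definition of $\wt R$ and applying the reduction above gives, for every $\mathfrak u\in\scc X$,
$$
\wt R(a_1,\ldots,a_{i-1},\mathfrak u,\mathfrak u_{i+1},\ldots,\mathfrak u_n)
\iff
(\forall^{\,\mathfrak u}x_i)\,(x_i\in S)
\iff
S\in\mathfrak u
\iff
\mathfrak u\in\wt S,
$$
so the set of witnessing $\mathfrak u$ is exactly the basic clopen set $\wt S$; this is precisely right clopenness at coordinate $i$.

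For the operational case I would show that $g\colon\mathfrak u\mapsto\wt F(a_1,\ldots,a_{i-1},\mathfrak u,\mathfrak u_{i+1},\ldots,\mathfrak u_n)$, which is a well-defined map $\scc X\to\scc Y$ since $\wt F$ is known to take values in $\scc Y$, is continuous. Since the basic clopen sets $\wt B$, $B\subseteq Y$, form a base of $\scc Y$, it is enough to verify that $g^{-1}(\wt B)$ is open for every $B\subseteq Y$. Putting
$$
T_B=\bigl\{x\in X:
(\forall^{\,\mathfrak u_{i+1}}x_{i+1})\ldots(\forall^{\,\mathfrak u_n}x_n)\,
F(a_1,\ldots,a_{i-1},x,x_{i+1},\ldots,x_n)\in B\bigr\},
$$
the same reduction yields $B\in g(\mathfrak u)\iff T_B\in\mathfrak u$, i.e.\ $g^{-1}(\wt B)=\wt{T_B}$, which is clopen. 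Thus $g$ is continuous, as required.

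I expect no serious obstacle in the computation; the one conceptual point to get right is \emph{why} the fixed left arguments must come from the universe, i.e.\ be principal. It is exactly this that places $(\forall^{\,\mathfrak u}x_i)$ at the head of the remaining quantifier block, so that — despite ultrafilter quantifiers not commuting in general — the dependence on $\mathfrak u$ is captured solely through membership of a single fixed subset of $X$. Were a left argument a nonprincipal ultrafilter, its quantifier could not be absorbed into a substitution, the outer dependence on $\mathfrak u$ would no longer be described by a basic clopen set, and the argument would break down; this is precisely the asymmetry encoded in the words ``right continuous'' and ``right clopen.''
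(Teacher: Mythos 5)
Your proof is correct. Note that the paper itself does not prove Theorem~\ref{modeltopology} — it is quoted from \cite{Saveliev,Saveliev(inftyproc)} with the proof omitted — but your argument is exactly the standard one: the fixed left arguments, being principal, collapse their ultrafilter quantifiers to substitutions, so the dependence on the $i$-th argument $\mathfrak u$ reduces to membership of a single subset $S$ (resp.\ $T_B$) of $X$, and the truth set (resp.\ the preimage of a basic set $\wt B$) is the basic clopen set $\wt S$ (resp.\ $\wt{T_B}$). Your closing remark correctly identifies why the left arguments must come from $X$ and hence why the conclusion is only \emph{right} continuity/clopenness.
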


\begin{theorem}\label{modelSET}
Let $\mathfrak A$ and $\mathfrak C$ be two models
of the same signature, $h$~a~homomorphism of 
$\mathfrak A$ into~$\mathfrak C$, and let 
$\mathfrak C$~be endowed with a~compact Hausdorff 
topology in which all operations are right continuous, 
and all relations are right closed, w.r.t.~the image 
of the universe of~$\mathfrak A$ under~$h$. Then 
$\wt{\,h\,}$~is a~homomorphism of 
$\wt{\,\mathfrak A\,}$ into~$\mathfrak C$:
$$
\xymatrix{
&\wt{\,\mathfrak A\,}
\ar@{-->}^{\wt{h}\;\;}[drr]&&
\\
&\mathfrak A
\ar[u]
\ar[rr]^{\;h}
&&{\;\mathfrak C}&
}
$$
\end{theorem}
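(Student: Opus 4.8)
The plan is to identify $\wt h$ with the map sending each ultrafilter to its limit, and then to propagate the homomorphism property through the nested ultrafilter quantifiers defining $\wt F$ and $\wt R$, one coordinate at a time, converting right continuity into the clause for operations and right closedness into the clause for relations. Write $\mathfrak C=(Y,\ldots)$ and $y_i=\wt h(\mathfrak u_i)$. I would first record that, since $Y$ is compact Hausdorff, the defining formula for $\wt h$ says exactly that $y=\wt h(\mathfrak u)$ is the unique limit of $\mathfrak u$ along $h$; concretely, $y=\wt h(\mathfrak u)$ iff $(\forall^{\,\mathfrak u}x)\,h(x)\in U$ for every open $U\ni y$. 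I would also isolate two elementary facts about such limits in $Y$: (i) if $\Phi\colon Y\to Y$ is continuous then $\lim_{\mathfrak u}\Phi(h(x))=\Phi(\lim_{\mathfrak u}h(x))$; and (ii) if $C\subseteq Y$ is closed and $(\forall^{\,\mathfrak u}x)\,h(x)\in C$, then $\lim_{\mathfrak u}h(x)\in C$.

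For an $n$-ary operation $F$, the homomorphism identity $h(F(x_1,\ldots,x_n))=F(h(x_1),\ldots,h(x_n))$ and the definition of $\wt F$ give, for every open $U\subseteq Y$,
$$\{x\in X:h(x)\in U\}\in\wt F(\mathfrak u_1,\ldots,\mathfrak u_n)\iff(\forall^{\,\mathfrak u_1}x_1)\ldots(\forall^{\,\mathfrak u_n}x_n)\,F(h(x_1),\ldots,h(x_n))\in U.$$
Setting $g_i(x_1,\ldots,x_i)=F(h(x_1),\ldots,h(x_i),y_{i+1},\ldots,y_n)$, so that $g_n=F(h(x_1),\ldots,h(x_n))$ and $g_0=F(y_1,\ldots,y_n)$, right continuity of $F$ w.r.t.~$h\image X$ makes $z\mapsto F(h(x_1),\ldots,h(x_{i-1}),z,y_{i+1},\ldots,y_n)$ continuous, whence by (i) and $\lim_{\mathfrak u_i}h(x_i)=y_i$ one gets $\lim_{\mathfrak u_i}g_i=g_{i-1}$ for each fixed $x_1,\ldots,x_{i-1}$. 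Starting from $g_0\in U$ for a given open $U\ni F(y_1,\ldots,y_n)$ and applying the forward implication ``limit in an open set forces eventual membership'' coordinatewise from $i=1$ upward, I accumulate $(\forall^{\,\mathfrak u_1}x_1)\ldots(\forall^{\,\mathfrak u_n}x_n)\,g_n\in U$. By the displayed equivalence and the limit characterization of $\wt h$, this yields $\wt h(\wt F(\mathfrak u_1,\ldots,\mathfrak u_n))=F(y_1,\ldots,y_n)$.

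For an $n$-ary relation $R$, the homomorphism gives $R(x_1,\ldots,x_n)\Rightarrow R(h(x_1),\ldots,h(x_n))$, so from $\wt R(\mathfrak u_1,\ldots,\mathfrak u_n)$, i.e.\ $(\forall^{\,\mathfrak u_1}x_1)\ldots(\forall^{\,\mathfrak u_n}x_n)\,R(x_1,\ldots,x_n)$, I obtain $(\forall^{\,\mathfrak u_1}x_1)\ldots(\forall^{\,\mathfrak u_n}x_n)\,R(h(x_1),\ldots,h(x_n))$. I then peel downward from coordinate $n$: maintaining the invariant $(\forall^{\,\mathfrak u_1}x_1)\ldots(\forall^{\,\mathfrak u_i}x_i)\,R(h(x_1),\ldots,h(x_i),y_{i+1},\ldots,y_n)$, I observe that the slice $\{z\in Y:R(h(x_1),\ldots,h(x_{i-1}),z,y_{i+1},\ldots,y_n)\}$ is closed by right closedness w.r.t.~$h\image X$, and that $h(x_i)$ lies in it for $\mathfrak u_i$-most $x_i$ while $\lim_{\mathfrak u_i}h(x_i)=y_i$; hence (ii) puts $y_i$ in the slice, advancing the invariant to $i-1$. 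At $i=0$ this reads $R(y_1,\ldots,y_n)=R(\wt h(\mathfrak u_1),\ldots,\wt h(\mathfrak u_n))$, as required.

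The point needing the most care is the match between the direction of peeling and the asymmetric (``staircase'') domain condition in the definitions of right continuity and right closedness: one must process the last coordinate first, so that when coordinate $i$ is treated the already-replaced coordinates $y_{i+1},\ldots,y_n$ are allowed to be \emph{arbitrary} points of $Y$ (they need not lie in $h\image X$), whereas the untouched coordinates $h(x_1),\ldots,h(x_{i-1})$ still lie in $h\image X$, which is exactly where continuity, resp.\ closedness, is hypothesized. A secondary subtlety is that in the operation case only the forward implication (limit in open $U$ $\Rightarrow$ eventual membership in $U$) is available, not its converse, which is why the accumulation is organized around open neighborhoods via (i), and dually around closed slices via (ii) for relations. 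One could alternatively derive the operation clause from the description of $\wt F$ as the continuous extension of $F$ composed with the extension of tupling, but the general right-continuity hypothesis on $\mathfrak C$ is genuinely used through the coordinatewise argument above.
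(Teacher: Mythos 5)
Your argument is correct. Both halves check out: the characterization of $\wt h(\mathfrak u)$ as the $\mathfrak u$-limit of $h$, facts (i) and (ii), the nested accumulation of ultrafilter quantifiers for operations, and the downward peeling for relations are all sound, and you have correctly located the one genuinely delicate point, namely that the peeling must proceed so that the already-collapsed coordinates (which may be arbitrary points of $\mathfrak C$, not in $h\image X$) sit in the positions where the staircase definitions of right continuity and right closedness permit arbitrary arguments. The paper itself does not prove this theorem directly: it quotes it from earlier work and explains that it follows from Theorem~\ref{modeltopology} (operations of $\wt{\,\mathfrak A\,}$ are right continuous and relations right clopen w.r.t.~$X$) combined with the ``abstract extension theorem'' (Theorem~\ref{abstract-ET}, case~($\alpha$), applied with $\mathfrak D=\mathfrak A$, $D=X$ dense in $\scc X$, and $\wt h$ continuous), the proof of that abstract theorem being in turn deferred to \cite{Saveliev,Saveliev(inftyproc)}. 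So the decomposition differs: the paper's route isolates the topological properties of the extension and a reusable lifting lemma for dense submodels, which pays off later when the same abstract theorem is invoked for Theorem~\ref{modalSET} and for the generalized-model versions in Section~4; your route inlines the lifting argument and is self-contained, at the cost of not exposing the reusable intermediate statement. The coordinatewise limit manipulations you carry out are exactly what the proof of the abstract extension theorem amounts to in this instance, so mathematically the two proofs coincide once the citations are unwound.
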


Theorem~\ref{modelFET} (for homomorphisms) 
easily follows: take $\wt{\,\mathfrak B}\,$ as 
such a~$\mathfrak C$. The main meaning of 
Theorem~\ref{modelSET} is that it generalizes 
the mentioned classical \v{C}ech--Stone result to 
the case when the underlying discrete space~$X$
carries an arbitrary first-order structure.


A~natural question is whether the ${}^*\,$-extensions 
are also canonical in a~similar sense. The answer is 
positive; two following theorems are counterparts of 
Theorems \ref{modeltopology} and~\ref{modelSET}, 
respectively (essentially both have been proved 
in~\cite{Saveliev(2 concepts)}). Recall that a~set 
is {\it regular closed\/} iff it is the closure of 
an open set.

\begin{theorem}\label{modaltopology}
Let $\mathfrak A$~be a~model. In the 
extension~$\mathfrak A^*$, all relations are regular 
closed, namely, the closures of the relations 
in~$\mathfrak A$ (while all operations are right 
continuous w.r.t.~the universe of~$\mathfrak A$ 
as before).
\end{theorem}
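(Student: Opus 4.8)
The plan is to reduce everything to one topological identity: regarding the $n$-ary relation $R \subseteq X^n$ of $\mathfrak A$ as a subset of $(\scc X)^n$ via the principal-ultrafilter embedding, I claim that the extended relation $R^*$ is precisely the closure of $R$ in the product topology on $(\scc X)^n$. Since $R$ is itself an \emph{open} subset of $(\scc X)^n$, its closure is automatically regular closed, so this single identity yields both that $R^*$ is regular closed and that it is the closure of $R$, exactly as stated.

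First I would fix a convenient basis for the product topology. As the basic clopen sets $\wt A$ with $A \in \mathfrak u$ form a neighbourhood basis at $\mathfrak u$ in $\scc X$, the boxes $\wt{A_1} \times \ldots \times \wt{A_n}$ with $A_i \in \mathfrak u_i$ form a neighbourhood basis at $(\mathfrak u_1, \ldots, \mathfrak u_n)$ in $(\scc X)^n$. I would then note that every principal ultrafilter is isolated in $\scc X$, since $\wt{\{x\}} = \{x\}$ is open; hence the copy of $X$ sits in $\scc X$ as an open discrete subspace, and $R$, being a union of open boxes $\{x_1\} \times \ldots \times \{x_n\}$, is open in $(\scc X)^n$.

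The heart of the argument is the computation of the closure. A point $(\mathfrak u_1, \ldots, \mathfrak u_n)$ belongs to $\cl(R)$ iff every basic neighbourhood $\wt{A_1} \times \ldots \times \wt{A_n}$ with $A_i \in \mathfrak u_i$ meets $R$; and such a box meets $R$ exactly when there are $x_1 \in A_1, \ldots, x_n \in A_n$ with $R(x_1, \ldots, x_n)$, because $(x_1, \ldots, x_n) \in \wt{A_1} \times \ldots \times \wt{A_n}$ iff $x_i \in A_i$ for each $i$. Thus membership in $\cl(R)$ unfolds to $(\forall A_1 \in \mathfrak u_1) \ldots (\forall A_n \in \mathfrak u_n)(\exists x_1 \in A_1) \ldots (\exists x_n \in A_n)\, R(x_1, \ldots, x_n)$, which is verbatim the defining condition for $R^*(\mathfrak u_1, \ldots, \mathfrak u_n)$. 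Hence $R^* = \cl(R)$, and since $R$ is open this exhibits $R^*$ as regular closed.

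Finally, the clause on operations requires no new work: the operations of $\mathfrak A^*$ and of $\wt{\,\mathfrak A\,}$ are literally the same maps $\wt F$, so their right continuity w.r.t.\ the universe is exactly the assertion of Theorem~\ref{modeltopology}. I anticipate no genuine obstacle; the only points demanding care are the neighbourhood-basis reduction and, above all, keeping straight that the ``closure of $R$'' is taken inside the product of ultrafilter spaces $(\scc X)^n$ rather than inside $X^n$, where $R$ would be its own closure.
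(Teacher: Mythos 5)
Your proof is correct, and it is exactly the argument the paper intends: the paper states Theorem~\ref{modaltopology} in the introductory section with proof omitted (deferring to \cite{Saveliev(2 concepts)}), and later, in the proof of Theorem~\ref{modal-via-ext}, it invokes precisely your key identity $R^*=\cl_{(\scc X)^n}R$. Your unfolding of the definition of $R^*$ against the basic box neighbourhoods $\wt{A_1}\times\ldots\times\wt{A_n}$, the observation that $R$ is open in $(\scc X)^n$ because principal ultrafilters are isolated, and the reduction of the clause on operations to Theorem~\ref{modeltopology} are all sound and complete.
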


\begin{theorem}\label{modalSET}
Let $\mathfrak A$ and $\mathfrak C$ be two models of 
the same signature, $h$~a~homomorphism of $\mathfrak A$
into~$\mathfrak C$, and let $\mathfrak C$~be 
endowed with a~compact Hausdorff topology in which 
all operations are right continuous w.r.t.~the image 
of the universe of~$\mathfrak A$ under~$h$, and 
all relations are closed. Then $\wt{\,h\,}$~is
a~homomorphism of $\mathfrak A^*$ into~$\mathfrak C$. 
$$
\xymatrix{
&\;\mathfrak A^*\,
\ar@{-->}^{\wt{h}\;\;}[drr]&
\\
&\mathfrak A\,
\ar[u]
\ar[rr]^{\;h}
&&\,\mathfrak C&
}
$$
\end{theorem}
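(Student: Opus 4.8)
The plan is to split the verification that $\wt h$ is a homomorphism into two independent parts — preservation of operations and preservation of relations — and to observe that these two halves are handled by completely different mechanisms. For the operations, the interpretations in $\mathfrak A^*$ and in $\wt{\,\mathfrak A\,}$ literally coincide, since both send an operation symbol to $\wt F$, and the hypothesis imposed on $\mathfrak C$ (right continuity of its operations w.r.t.\ $h\image X$, where $X$ is the universe of $\mathfrak A$) is exactly the one in Theorem~\ref{modelSET}. Hence the equations $\wt h(\wt F(\mathfrak u_1,\ldots,\mathfrak u_n))=G(\wt h(\mathfrak u_1),\ldots,\wt h(\mathfrak u_n))$, where $G$ is the operation of $\mathfrak C$ named by the same symbol as $F$, are proved verbatim as there: one writes $\wt F(\mathfrak u_1,\ldots,\mathfrak u_n)$ via the ultrafilter limits defining it and pushes the continuous map $\wt h$ through them, evaluating each limit inside $\mathfrak C$ by right continuity of $G$. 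No relation enters this part, so the weaker hypothesis on the relations of $\mathfrak C$ causes no trouble here.

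For the relations, the key is Theorem~\ref{modaltopology}, which identifies $R^*\subseteq(\scc X)^n$ with the closure $\cl_{(\scc X)^n}R$ taken in the product space (this is immediate from the definition of $R^*$: a basic box $\wt{A_1}\times\cdots\times\wt{A_n}$ meets $R\subseteq X^n$ exactly when $(\exists x_1\in A_1)\cdots(\exists x_n\in A_n)\,R(x_1,\ldots,x_n)$, i.e.\ exactly under the clause defining $R^*$). Write $\wt h^{\,n}=\wt h\times\cdots\times\wt h$ for the induced map $(\scc X)^n\to Z^n$, where $Z$ is the universe of $\mathfrak C$; it is continuous because $\wt h$ is. Then
\[
\wt h^{\,n}\image R^*
=\wt h^{\,n}\image\cl_{(\scc X)^n}R
\subseteq\cl_{Z^n}\bigl(\wt h^{\,n}\image R\bigr),
\]
the inclusion being the standard fact that a continuous map carries the closure of a set into the closure of its image. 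Since $\wt h$ extends $h$ and $h$ is a homomorphism, $\wt h^{\,n}\image R=\{(h(x_1),\ldots,h(x_n)):R(x_1,\ldots,x_n)\}$ lies in the interpretation $S$ of the relation symbol in $\mathfrak C$; and as $S$ is assumed closed, $\cl_{Z^n}(\wt h^{\,n}\image R)\subseteq S$. Chaining the inclusions yields $\wt h^{\,n}\image R^*\subseteq S$, which is precisely the required implication $R^*(\mathfrak u_1,\ldots,\mathfrak u_n)\Rightarrow S(\wt h(\mathfrak u_1),\ldots,\wt h(\mathfrak u_n))$.

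I expect no serious obstacle once Theorem~\ref{modaltopology} is in hand: the relational half is a one-line closure chase and the functional half is inherited unchanged from Theorem~\ref{modelSET}. The one point deserving care is to see \emph{why} full closedness of $S$ (rather than the mere right closedness used in Theorem~\ref{modelSET}) is the correct hypothesis here. The reason is structural: because $\wt R\subseteq R^*$, the relation $R^*$ is the \emph{larger} of the two extensions, so more tuples must be forced into $S$, and correspondingly $S$ must be closed in the full product topology — matching the fact that $R^*$ is the full product closure of $R$ whereas $\wt R$ is only its iterated (``right'') closure. The remaining routine checks are that the product-closure computation agrees with $R^*$ coordinate by coordinate in the multi-ary case, and that the continuity of $\wt h^{\,n}$ is invoked with the product topology on both $(\scc X)^n$ and $Z^n$.
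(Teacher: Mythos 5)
Your proof is correct. Note first that the paper does not actually reprove Theorem~\ref{modalSET} in the text: it is credited to \cite{Saveliev(2 concepts)} and is set up to follow from the ``abstract extension theorem'' (Theorem~\ref{abstract-ET}, case~($\beta$)) combined with Theorem~\ref{modaltopology}. That route reduces the regular-closed/closed case to the right-open/right-closed case~($\alpha$) by first passing to the interior of the source relation and then taking closures, and it invokes the closedness of $\wt h$ as a map (automatic here, $\scc X$ being compact and the target Hausdorff). Your argument short-circuits that reduction: because the homomorphism property is already known on the set $R$ of principal tuples, which is dense in $R^*=\cl_{(\scc X)^n}R$ by Theorem~\ref{modaltopology}, the single chain $\wt h^{\,n}\image R^*=\wt h^{\,n}\image\cl_{(\scc X)^n}R\subseteq\cl_{Z^n}\bigl(\wt h^{\,n}\image R\bigr)\subseteq\cl_{Z^n}S=S$ settles the relational half using only continuity of $\wt h$ and closedness of $S$ --- no interior argument and no closedness of the map are needed. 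The functional half is, as you say, inherited verbatim from Theorem~\ref{modelSET}, whose hypotheses are implied by the present ones (a relation closed in the product topology is in particular right closed, its sections being preimages under the continuous slice maps). What the paper's detour through Theorem~\ref{abstract-ET} buys is generality: there the homomorphism is known only on a dense submodel $D$, and $R\cap D^n$ need not be dense in $R$, so one must first inflate $D^n\cap\inter_{X^n}R$ to $\inter_{X^n}R$ before closing up. For the concrete statement at hand, your direct closure chase is the shorter and cleaner proof.
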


Similarly, Theorem~\ref{modalFET} (for homomorphisms) 
follows from Theorem~\ref{modalSET}. The latter also
generalizes the Stone--\v{C}ech result for discrete 
spaces to discrete models but with a~narrow class of 
target models~$\mathfrak C$: having relations rather
closed than right closed in Theorem~\ref{modelSET}.
In the sequel, we shall refer to 
Theorems \ref{modelFET} and~\ref{modalFET} 
as the {\it First Extension Theorems}, and to 
stronger Theorems \ref{modelSET} and~\ref{modalSET} 
as the {\it Second Extension Theorems}, for the 
${}^*\,$- and $\,\wt{\;}\;$-types of ultrafilter
extensions, respectively. Let us point out that in 
all these extension theorems the converse implication 
``if $\wt h$~is a~homomorphism of an ultrafilter
extension~$\mathfrak A'$ then $h$~is a~homomorphism 
of~$\mathfrak A$\,'' is also true but trivial since 
$\mathfrak A$ is a~submodel of~$\mathfrak A'$. 
We note also that the Second Extension Theorems 
are based on an ``abstract extension theorem'' 
describing certain conditions on models, their 
submodels, homomorphisms, and topological properties, 
under which such a~homomorphism lifts from such 
a~submodel to the whole model. The theorem will be 
used in our paper, too; we shall formulate it later on 
(Theorem~\ref{abstract-ET}).


We end this introductory section with topological 
characterizations of both types of ultrafilter 
extensions of relations and ultrafilter extensions 
of maps into discrete spaces and into compact 
Hausdorff spaces.

\begin{theorem}\label{topological-char}
Let $X_1,\ldots,X_n,Y$ be discrete spaces, 
$Z$~a~compact Hausdorff space, and let the sets 
$\scc X_1,\ldots,\scc X_n,Y$ be endowed with 
the standard topology on ultrafilters, and 
$\scc X_1\times\ldots\times\scc X_n$ with 
the usual product topology. Then
\begin{enumerate}
\item[(i)]
$Q\subseteq\scc X_1\times\ldots\times\scc X_n$ 
is $\wt{R}$ for some 
$R\subseteq X_1\times\ldots\times X_n$ iff 
$Q$~is right clopen w.r.t.~$X_1,\ldots,X_{n-1}$;
\item[(ii)]
$Q\subseteq\scc X_1\times\ldots\times\scc X_n$ 
is $R^*$ for some 
$R\subseteq X_1\times\ldots\times X_n$ iff 
$Q$~is regular closed;
\item[(iii)]
$G:\scc X_1\times\ldots\times\scc X_n\to\scc Y$
is $\wt{F}$ for some 
$F:X_1\times\ldots\times X_n\to Y$ iff 
$G$~is right continuous and right open 
w.r.t.~$X_1,\ldots,X_{n-1}$;
\item[(iv)]
$G:\scc X_1\times\ldots\times\scc X_n\to Z$ 
is $\wt{H}$ for some 
$H:X_1\times\ldots\times X_n\to Z$ iff 
$G$~is right continuous w.r.t.~$X_1,\ldots,X_{n-1}$.
\end{enumerate} 
Moreover, all the four extension operations: 
$R\mapsto\wt{R}$, $R\mapsto R^*$, 
$F\mapsto\wt{F}$, $H\mapsto\wt{H}$, 
are bijections. 
\end{theorem}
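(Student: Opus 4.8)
The plan is to treat all four items by one common scheme and to reduce the two map statements to the compact-Hausdorff case (iv). The unifying observation is a \emph{recovery identity}: each of the four extensions is determined by, and restricts to, the original object on the principal ultrafilters. Since the principal quantifier $(\forall^{\,\mathfrak u}x)$ reduces to evaluation at the point when $\mathfrak u$ is principal, one checks $\wt R(x_1,\ldots,x_n)\lra R(x_1,\ldots,x_n)\lra R^*(x_1,\ldots,x_n)$ and $\wt F(x_1,\ldots,x_n)=F(x_1,\ldots,x_n)$, $\wt H(x_1,\ldots,x_n)=H(x_1,\ldots,x_n)$ for all $x_i\in X_i$ (identifying $x_i$ with its principal ultrafilter). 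Hence $R=\wt R\cap(X_1\times\ldots\times X_n)=R^*\cap(X_1\times\ldots\times X_n)$ and $F=\wt F\uhr(X_1\times\ldots\times X_n)$, $H=\wt H\uhr(X_1\times\ldots\times X_n)$, which already gives injectivity of all four operations. Thus bijectivity will follow once each forward direction (the extension has the stated topological property) and each backward direction (every object with that property is an extension, necessarily of its own restriction to the principal ultrafilters) is proved; the backward directions are exactly the surjectivity claims.

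For the relational items the forward directions are immediate from the quoted topology theorems: $\wt R$ is right clopen w.r.t.~$X_1,\ldots,X_{n-1}$ by Theorem~\ref{modeltopology} (in its evident many-sorted form), and $R^*$ is regular closed, indeed $R^*=\cl(\{(x_1,\ldots,x_n):R(x_1,\ldots,x_n)\})$, by Theorem~\ref{modaltopology}. For the backward direction of (i) I would put $R=Q\cap(X_1\times\ldots\times X_n)$ and prove $Q=\wt R$ by downward induction on the position of the first non-principal argument. At the inductive step one fixes a principal prefix $a_1,\ldots,a_{i-1}$ and an arbitrary suffix $\mathfrak u_{i+1},\ldots,\mathfrak u_n$; right-clopenness makes the $i$-th section $\{\mathfrak x:Q(a_1,\ldots,a_{i-1},\mathfrak x,\mathfrak u_{i+1},\ldots,\mathfrak u_n)\}$ a clopen subset of $\scc X_i$, hence of the form $\wt B$ with $B\subseteq X_i$; the induction hypothesis identifies $B$ with $\{x_i:(\forall^{\,\mathfrak u_{i+1}}x_{i+1})\ldots(\forall^{\,\mathfrak u_n}x_n)\,R(a_1,\ldots,a_{i-1},x_i,x_{i+1},\ldots,x_n)\}$, and $\mathfrak u_i\in\wt B$, i.e.~$B\in\mathfrak u_i$, contributes exactly the outermost quantifier $(\forall^{\,\mathfrak u_i}x_i)$. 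For (ii) I would use that the all-principal tuples form a \emph{dense set of isolated points} of $\scc X_1\times\ldots\times\scc X_n$; for open $U$ this gives $\cl U=\cl(U\cap(X_1\times\ldots\times X_n))$, whence any regular closed $Q=\cl U$ satisfies $Q=\cl(Q\cap(X_1\times\ldots\times X_n))$. Setting $R=Q\cap(X_1\times\ldots\times X_n)$ and invoking the forward identity $R^*=\cl R$ then yields $Q=R^*$.

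For the map items I would prove (iv) first and derive (iii) from it. In (iv) the forward direction is the operation part of right continuity: fixing a principal prefix and an arbitrary suffix, the section $\mathfrak x\mapsto\wt H(a_1,\ldots,a_{i-1},\mathfrak x,\mathfrak u_{i+1},\ldots,\mathfrak u_n)$ is the \v{C}ech--Stone extension to $\scc X_i$ of a unary map of $X_i$ into the compact Hausdorff space $Z$, hence continuous. The backward direction runs the same downward induction as in (i): with $H=G\uhr(X_1\times\ldots\times X_n)$, right continuity says that each section of $G$ is continuous on $\scc X_i$ and therefore coincides with the unique continuous extension of its restriction to the dense set $X_i$, which by the induction hypothesis is the corresponding section of $\wt H$; peeling coordinates one at a time gives $G=\wt H$. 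Now (iii) is the instance $Z=\scc Y$: by (iv) the right-continuous maps $G$ are precisely the $\wt H$ with $H\colon X_1\times\ldots\times X_n\to\scc Y$, and $G=\wt F$ for a genuine $F\colon X_1\times\ldots\times X_n\to Y$ holds iff the recovered $H=G\uhr(X_1\times\ldots\times X_n)$ takes values among the isolated points $Y\subseteq\scc Y$. The right-open hypothesis is exactly what secures this: via the unary fact that the extension of a map into a discrete space is an open map, while an extension assuming a non-principal value at an isolated point is not, right-openness forces $G$ to carry the (isolated) principal tuples to isolated points of $\scc Y$, i.e.~into $Y$; then $\wt H=\wt F$ with $F=H$.

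I expect the main obstacle to be organizational rather than computational: in each induction one must keep the pattern ``principal prefix, one live coordinate, arbitrary suffix'' aligned with the precise quantifier-order in the hypotheses, so that the right-clopen (resp.~right-continuous) property applies verbatim at every peeling step. A genuinely delicate point is the correct reading of ``right open'' in (iii): it cannot mean that every section is an open map, since e.g.~the extension of a projection has constant sections valued at a non-principal ultrafilter, which are not open; its actual role is only to pin down the values of $G$ on the isolated base tuples. Once the backward directions are in hand, bijectivity is automatic from the recovery identities of the first paragraph.
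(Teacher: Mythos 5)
The paper itself gives no proof of Theorem~\ref{topological-char}: it sits in the introductory section, whose results are explicitly recalled from earlier work with proofs omitted, so there is nothing to match your argument against line by line. On its own terms your reconstruction is sound. The recovery identities on principal tuples give injectivity of all four operations; the forward directions of (i), (ii), (iv) are indeed the many-sorted forms of Theorems \ref{modeltopology} and~\ref{modaltopology} (plus the fact that $R$, being a set of isolated points, is open, so $\cl R$ is regular closed); the backward direction of~(i) works because every clopen subset of $\scc X_i$ is a basic set $\wt B$, and your peeling induction reproduces the quantifier string $(\forall^{\,\mathfrak u_i}x_i)\ldots(\forall^{\,\mathfrak u_n}x_n)$ one coordinate at a time; the backward direction of~(iv) is exactly Lemma~\ref{uniqueness-of-right-cont}(i) applied to $G$ and $\wt{H}$ with $H=G\uhr(X_1\times\ldots\times X_n)$, and~(ii) follows from density of the isolated principal tuples as you say. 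The one place where you are genuinely filling a gap in the paper rather than proving a stated claim is~(iii): the paper defines ``right open'' only for relations, never for maps, and your observation that the reading ``every principal-prefix section is an open map'' would falsify the forward direction (the extension of the projection $(x_1,x_2)\mapsto x_2$ has constant sections at non-principal values) is correct and worth keeping. The reading you substitute --- right openness of the graph in its last coordinate, i.e.\ $\{G(a_1,\ldots,a_n)\}$ open in $\scc Y$ for principal $a_1,\ldots,a_n$, equivalently $G\image(X_1\times\ldots\times X_n)\subseteq Y$ --- is the only one under which both directions of~(iii) hold, and with it your reduction of~(iii) to~(iv) with $Z=\scc Y$ is complete. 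It would strengthen the write-up to state that reading explicitly as a definition rather than leaving it as a remark about what the hypothesis ``really does''.
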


This theorem shows that Theorems \ref{modeltopology}
and~\ref{modaltopology} in fact {\it characterize\/} 
the ${}^*\,$- and $\,\wt{\;}\;$-extensions via their
topological properties (and the same will follow 
from Theorems \ref{e-and-E-as-ultraextensions} 
and~\ref{e-and-E-topology} later). 


The subsequent text is organized as follows. 

In Section~2, we develop a~topological technique 
that allows us to define an ultrafilter extension 
of the procedure of ultrafilter extension itself. 
This is a~key tool for our article. Based on it, we
provide a~uniform approach to both types of ultrafilter 
extensions of relations (Theorem~\ref{modal-via-ext}), 
and furthermore, in Section~3, we define an ultrafilter
interpretation of first-order syntax, under which
functional and relational symbols are interpreted rather
by ultrafilters over sets of functions and relations 
than by their elements. We define ultrafilter models 
using ultrafilter evaluations of variables and 
ultrafilter interpretations and an appropriate 
semantics for them. We provide two specific operations, 
$e$ and~$E$, which turn ultrafilter models into ordinary 
ones, establish necessary and sufficient conditions 
under which the latter are two canonical ultrafilter 
extensions of some ordinary models 
(Theorem~\ref{e-and-E-as-ultraextensions}), and give 
a~topological characterization of ultrafilter models
(Theorem~\ref{e-and-E-topology}). 
Defining a~natural concept of homomorphisms between 
ultrafilter models, we establish 
the First Extension Theorem for ultrafilter models 
(Theorem~\ref{FET-generalized}) 
and a~stronger variant of it 
(Theorem~\ref{intermediate-ET-generalized}).

In Section~4, we define an even wider concept of
ultrafilter models together with their semantics  
based on limits of ultrafilters, and show that 
this new concept absorbs the ordinary concept of 
models with the usual semantics  
(Theorem~\ref{ordinary-models-as-new-generalized})
as well as our previous concept of ultrafilter 
models with their semantics 
(Theorem~\ref{generalized-models:-old-vs-new}). 
We provide two more specific operations, $i$ and~$I$, 
which turn ultrafilter models in the narrow sense 
into ones in the wide sense, show how they relate 
to the operations $e$ and~$E$ via their limits in 
appropriate topologies (Theorems \ref{e-as-lim-of-i}
and~\ref{E-as-lim-of-I}), and establish necessary and 
sufficient conditions under which ultrafilter models 
in the wide sense are the images of ones in the narrow 
sense under $i$ and~$I$, and also are two canonical 
ultrafilter extensions of some ordinary models 
(Theorems \ref{top-char-of-i-of-old} 
and~\ref{top-char-of-I-of-old}). 
Finally, we define homomorphisms between ultrafilter  
models in the wide sense, and establish for them
an ``abstract extension theorem''  
(Theorem~\ref{abstract-ET-generalized}) and two Second
Extension Theorems (Theorems \ref{SET-generalized} 
and~\ref{SET-generalized-I-version}). In Section~5, 
we conclude the article by posing some problems 
and tasks.

A~part of the results mentioned in Sections~1--3 
was announced in~\cite{Poliakov Saveliev}; here 
we provide complete proofs of all our results.%
\footnote{In~\cite{Poliakov Saveliev}, it was 
erroneously stated that the set of right continuous 
maps forms a~compact Hausdorff space w.r.t.~the 
pointwise convergence topology; actually, the 
intended topology was a~{\it restricted\/} pointwise 
convergence topology, as explained in details below.}


\section[Extending the ultrafilter extension]
{Extending the ultrafilter extension
procedure}

A~purpose of this section is to provide a~uniform
approach to both types of ultrafilter extensions: 
the smaller $\wt{\;\;}\,$-extensions and the larger 
${}^*\,$-extensions. For this, we shall develop 
some ideas and machinery which will lead us in the 
next section to certain structures, called there 
ultrafilter models, generalizing ultrafilter 
extensions of each of the two types.

We shall give an alternative description of the 
${}^*\,$-extension  of relations in terms of the 
basic (cl)open sets and the continuous extension of 
maps. The crucial idea is to consider continuous 
extension of the procedure of ultrafilter extension 
itself, i.e.~a~{\it self-application\/} of the 
procedure. Let us clarify what is the idea precisely. 
For simplicity, consider firstly unary maps, for which 
the ultrafilter extensions are just the continuous 
extensions. To make the notation easier, let us denote 
the operation of continuous extension of maps by~$\ext$; 
i.e. $\ext(f)$~is another notation for~$\wt f$: 
$$
\ext(f)=\wt f.
$$ 
So if we consider (unary) maps of $X$ into~$Y$, then
$\ext$~is a~map of $Y^X$ into $C(\scc X,\scc Y)$, the 
set of all continuous functions of $\scc X$ into~$\scc Y$. 
If $C(\scc X,\scc Y)$ would be endowed with some compact 
Hausdorff topology, then we could extend the map~$\ext$ 
to a~(unique) continuous map~$\wt\ext$ of $\scc(Y^X)$ 
into $C(\scc X,\scc Y)$:
$$
\xymatrix{
&\scc\bigl(Y^X\bigr)\,
\ar@{-->}^{\wt{\ext}\;\;}[drr]&&
\\
&\;Y^X\,
\ar[u]
\ar[rr]^{\,\ext}&&
\;\;C(\scc X,\scc Y)&
}
$$
We are going to show that such a~topology on 
$C(\scc X,\scc Y)$ exists, and in fact, is 
a~weaker version of the pointwise convergence 
topology (while the standard full version of 
the topology is not compact, as explained 
in Remark~\ref{rmk: full pointwise conv}). 
Furthermore, as we shall see, the same 
approach will work in the case of $n$-ary maps 
(and relations, which can be reduced to maps).


\vskip+1em
\noindent 
\textbf{\textit{Restricted pointwise 
convergence topology.}}
Let $X$ and $Y$ be topological spaces and 
$A\subseteq X$. Define a~topology on the set
$Y^X$ of all maps of $X$ into~$Y$ by letting 
the family of sets $O_{a,B}=\{f\in Y^X:f(a)\in B\}$
for all $a\in A$ and all $B\subseteq Y$ which are 
open in~$Y$, as an open subbase. We shall call it 
the $A$-{\it pointwise convergence\/} topology. 
Clearly, if $A=X$ then it is the usual pointwise 
convergence topology, which, as well-known (see 
e.g.~\cite{Engelking}), coincides with the standard 
(Tychonoff) product topology. 

\begin{definition}\label{def: restr pointwise topol}
Consider $Y^{X_1\times\ldots\times X_n}$ 
as the set of $n$-ary maps, and choose subsets
$A_1\subseteq X_1,\ldots,A_n\subseteq X_n$. The 
topology with an open subbase consisting of sets
$$ 
O_{a_1,\ldots,a_n,B}=
\bigl\{
f\in Y^{X_1\times\ldots\times X_n}:
f(a_1,\ldots,a_n)\in B\bigr\}
$$
for all $a_1\in A_1,\ldots,a_n\in A_n$ and all 
$B\subseteq Y$ which are open in~$Y$, will be called 
the $(A_1,\ldots,A_n)$-{\it pointwise convergence\/} 
topology. 
\end{definition}

Although it is the same that the set of unary maps 
of $X_1\times\ldots\times X_n$ into~$Y$ endowed 
with the $A_1\times\ldots\times A_n$-pointwise 
convergence topology, we shall use this terminology 
to emphasize when we shall say about $n$-ary maps.

Let $1\leq i\leq j\leq n$. For any 
$f:X_1\times\ldots\times X_n\to Y$, 
$\boldsymbol a\in X_1\times\ldots\times X_{i-1}$, 
and $\boldsymbol u\in X_{j+1}\times\ldots\times X_{n}$, 
the map
$$
f_{\boldsymbol a}^{\boldsymbol u}:
X_{i}\times\ldots\times X_{j}\to Y
$$ 
is defined by letting 
$$
f_{\boldsymbol a}^{\boldsymbol u}
(x_i,\ldots,x_j)=
f(\boldsymbol a,x_i,\ldots,x_j,\boldsymbol u)
$$
for all $x_i\in X_i,\ldots,x_j\in X_j$. 
We omit the sub- and superscripts whenever the sequences 
$\boldsymbol a$ and $\boldsymbol u$ respectively are empty.

Let $\cur$ be the {\it currying\/} (or {\it evaluation\/}) 
map taking any $f:X_1\times\ldots\times X_n\to Y$ 
with $n\geq 2$ to the map 
$
\cur(f):X_n\to 
Y^{X_1\times\ldots\times X_{n-1}}
$ 
such that 
$$
\cur(f)(x)=f^x.
$$ 
(A~more precise term would be the 
{\it right currying\/} but we prefer the shorter one.) 
Clearly, the map~$\cur$ is bijective.

Let for any positive $n<\omega$,
topological spaces $X_1,\ldots,X_{n},Y$, and sets 
$A_1\subseteq X_1,\ldots,A_{n-1}\subseteq X_{n-1}$, 
$$
RC_{A_1,\ldots,A_{n-1}}(X_1,\ldots,X_n,Y)
$$ 
denote the set of $n$-ary maps 
$f:X_1\times\ldots\times X_n\to Y$ that are 
right continuous w.r.t.~$A_1,\ldots,A_{n-1}$, 
which we consider with the  
$(A_1,\ldots,A_n)$-pointwise convergence 
topology.

\begin{lemma}\label{currying-of-righ-cont}
If $f\in RC_{A_1,\ldots,A_{n-1}}(X_1,\ldots,X_n,Y)$ 
then 
$$
\cur(f)\in 
C\bigl(X_{n},
RC_{A_1,\ldots,A_{n-2}}(X_1,\ldots,X_{n-1},Y)
\bigr).
$$
\end{lemma}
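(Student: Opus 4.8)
The plan is to unpack the single assertion into two claims and verify each by peeling off the definition of right continuity: first that $\cur(f)$ actually takes values in the target set $RC_{A_1,\ldots,A_{n-2}}(X_1,\ldots,X_{n-1},Y)$, and second that the resulting map is continuous from $X_n$ into that set equipped with its $(A_1,\ldots,A_{n-1})$-pointwise convergence topology. Both reduce to the hypothesis that $f$ is right continuous w.r.t.\ $A_1,\ldots,A_{n-1}$, split according to whether the distinguished coordinate lies among the first $n-1$ slots or is the last one; this is essentially a restricted ``currying'' (exponential-law) computation.

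For the values claim I would fix $x\in X_n$ and check that $\cur(f)(x)=f^x$ is right continuous w.r.t.\ $A_1,\ldots,A_{n-2}$. By definition this requires, for each $i$ with $1\le i\le n-1$ and all $a_1\in A_1,\ldots,a_{i-1}\in A_{i-1}$ and $x_{i+1}\in X_{i+1},\ldots,x_{n-1}\in X_{n-1}$, continuity of the unary map $t\mapsto f^x(a_1,\ldots,a_{i-1},t,x_{i+1},\ldots,x_{n-1})$. Since this value equals $f(a_1,\ldots,a_{i-1},t,x_{i+1},\ldots,x_{n-1},x)$, it is exactly the $i$-th right-continuity clause for $f$ with the last argument frozen at $x$; only the clauses $i\le n-1$ are invoked, and these freeze coordinates only in $A_1,\ldots,A_{n-2}$, so the hypothesis on $f$ supplies them directly. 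Hence $\cur(f)$ maps $X_n$ into the target set.

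For the continuity claim, which is the crux, I would invoke the standard criterion that a map into a space with a pointwise convergence topology is continuous iff the preimage of every subbasic set is open. Here the subbasic opens are the $O_{a_1,\ldots,a_{n-1},B}$ with $a_1\in A_1,\ldots,a_{n-1}\in A_{n-1}$ and $B$ open in $Y$, and one computes $\cur(f)^{-1}(O_{a_1,\ldots,a_{n-1},B})=\{x\in X_n: f(a_1,\ldots,a_{n-1},x)\in B\}$. This set is open precisely when the unary map $x\mapsto f(a_1,\ldots,a_{n-1},x)$ is continuous, which is exactly the $i=n$ clause of right continuity of $f$ (continuity in the final variable for all choices of $a_1\in A_1,\ldots,a_{n-1}\in A_{n-1}$). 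Thus every subbasic preimage is open and $\cur(f)$ is continuous, completing the argument.

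I expect no deep obstacle: the work is bookkeeping about two disjoint index ranges. The one point requiring care is the match between the restricted topology and right continuity, namely that the subbasic neighbourhoods of the target are indexed by test tuples whose last coordinate ranges over $A_{n-1}$ rather than all of $X_{n-1}$; this is exactly the range over which the $i=n$ clause guarantees continuity in the final variable, so with the full pointwise convergence topology the continuity step would fail. Keeping the target topology aligned with the subscript convention is therefore the only genuinely delicate part.
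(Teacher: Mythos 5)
Your proof is correct and follows essentially the same route as the paper's: membership of $f^x$ in the target set is read off from the right-continuity clauses with $i\le n-1$, and continuity of $\cur(f)$ is checked on subbasic preimages, which reduce to the $i=n$ clause. You are in fact more careful than the printed proof, which lets the test tuple $\boldsymbol{a}$ range over all of $X_1\times\ldots\times X_{n-1}$ where the subbase of the $(A_1,\ldots,A_{n-1})$-pointwise convergence topology only requires (and right continuity only supplies) $\boldsymbol{a}\in A_1\times\ldots\times A_{n-1}$ --- exactly the delicate point you flag at the end.
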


\begin{proof}
By definition of currying, $\cur(f)$ maps $X_{n}$ 
into $RC_{A_1,\ldots,A_{n-2}}(X_1,\ldots,X_{n-1},Y)$.
Let us verify that it is continuous. Pick any
$
\boldsymbol{a}\in 
X_1\times\ldots\times X_{n-1},
$
open set~$B$ in~$Y$, and consider the subbasic open set
$
O_{\boldsymbol{a},B}=
\{h\in 
RC_{A_1,\ldots,A_{n-2}}(X_1,\ldots,X_{n-1},Y): 
h(\boldsymbol{a})\in B
\}
$ 
in the space 
$RC_{A_1,\ldots,A_{n-1}}(X_1,\ldots,X_n,Y)$.
We have:
$$
\cur(f)(u)\in O_{\boldsymbol{a},B}
\;\;\text{iff}\;\;
\cur(f)(u)(\boldsymbol{a})\in B
\;\;\text{iff}\;\;
f_{\boldsymbol{a}}(u)\in B.
$$
The set
$
\cur(f)^{-1}(O_{\boldsymbol{a},B})=
(f_{\boldsymbol{a}})^{-1}(B)
$ 
is open since the map~$f_{\boldsymbol{a}}$ is continuous. 
\end{proof}

\begin{lemma}\label{uniqueness-of-right-cont} 
For any positive $n<\omega$, topological spaces
$X_1,\ldots,X_{n}$, their dense subsets
$D_1\subseteq X_1,\ldots,D_n\subseteq X_n$, 
and Hausdorff space~$Y$,
\begin{enumerate}
\item[(i)] 
if maps 
$
f,g\in 
RC_{D_1,\ldots,D_{n-1}}(X_1,\ldots,X_n,Y)
$
coincide on $D_1\times\ldots\times D_n$, 
then they coincide everywhere,
\item[(ii)] 
the space $RC_{D_1,\ldots,D_{n-1}}(X_1,\ldots,X_n,Y)$ 
endowed with the $(D_1,\ldots,D_{n-1})$-pointwise 
convergence topology, is Hausdorff. 
\end{enumerate}
\end{lemma}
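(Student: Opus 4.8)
The plan is to prove (i) directly by a coordinatewise density argument and then to obtain (ii) almost immediately from it, so the real content lies in (i).

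For (i), the idea is to propagate the agreement of $f$ and $g$ from the ``good'' box $D_1\times\dots\times D_n$ out to the full product $X_1\times\dots\times X_n$, freeing one coordinate at a time and moving from the last coordinate backwards to the first. Concretely, I would prove by downward induction on $k$ (from $k=n$ to $k=0$) the statement
\[
f=g \text{ on } D_1\times\dots\times D_k\times X_{k+1}\times\dots\times X_n .
\]
The base case $k=n$ is exactly the hypothesis. For the step from $k$ to $k-1$, fix any $d_1\in D_1,\dots,d_{k-1}\in D_{k-1}$ and any $x_{k+1}\in X_{k+1},\dots,x_n\in X_n$, and compare the two unary slice maps $x\mapsto f(d_1,\dots,d_{k-1},x,x_{k+1},\dots,x_n)$ and $x\mapsto g(d_1,\dots,d_{k-1},x,x_{k+1},\dots,x_n)$ of $X_k$ into $Y$. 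By the $i=k$ clause of right continuity w.r.t.~$D_1,\dots,D_{n-1}$ --- whose shape is precisely ``earlier coordinates in $D_1,\dots,D_{k-1}$, later coordinates arbitrary in $X_{k+1},\dots,X_n$'' --- both slice maps are continuous. By the inductive hypothesis they agree on $D_k$. Hence, invoking the standard fact that two continuous maps into a Hausdorff space which coincide on a dense set coincide everywhere (here $D_k$ is dense in $X_k$ and $Y$ is Hausdorff), they agree on all of $X_k$. Letting $d_1,\dots,d_{k-1}$ and $x_{k+1},\dots,x_n$ range over all admissible values yields the statement for $k-1$. At $k=0$ this is $f=g$ on $X_1\times\dots\times X_n$, as desired.

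For (ii), I would argue by contraposition using (i). Let $f\neq g$ be two elements of the space. By (i) they cannot coincide on $D_1\times\dots\times D_n$, so there is a point $(d_1,\dots,d_n)$ in this box with $f(d_1,\dots,d_n)\neq g(d_1,\dots,d_n)$. Since $Y$ is Hausdorff, choose disjoint open $B_f,B_g\subseteq Y$ separating these two values; then the subbasic evaluation sets $O_{d_1,\dots,d_n,B_f}$ and $O_{d_1,\dots,d_n,B_g}$ are open, contain $f$ and $g$ respectively, and are disjoint, since a single map cannot send $(d_1,\dots,d_n)$ into both $B_f$ and $B_g$. Thus the topology separates $f$ and $g$.

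I do not expect a serious obstacle; the one point to get right is the alignment between the induction and the definition of right continuity. What makes the \emph{backward} induction work --- rather than a forward one --- is that the $i=k$ continuity clause is available exactly when the coordinates before $k$ lie in $D_1,\dots,D_{k-1}$ while those after $k$ are unconstrained, which is precisely the configuration maintained by the box $D_1\times\dots\times D_{k-1}\times X_k\times\dots\times X_n$ at each stage. I would also note the mild indexing subtlety in (ii): the separating subbasic sets require evaluation at a point of $D_1\times\dots\times D_n$, which is legitimate because the last coordinate ranges over a set containing $D_n$ (indeed $D_n\subseteq X_n$), so the exact convention for the $n$-th factor of the pointwise convergence topology is immaterial here.
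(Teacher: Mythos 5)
Your proof is correct, and part (i) takes a genuinely different route from the paper's. The paper proves (i) and (ii) simultaneously by induction on the arity $n$: given agreement on $D_1\times\ldots\times D_{k+1}$, it curries in the last variable, uses Lemma~\ref{currying-of-righ-cont} to see that $\cur(f),\cur(g):X_{k+1}\to RC_k$ are continuous, and then applies the dense-agreement fact to maps into the function space $RC_k$ --- which is why the induction must carry along the Hausdorffness of $RC_k$ as part of the inductive hypothesis. Your downward induction over coordinates avoids currying and the auxiliary function spaces entirely: every application of the dense-agreement fact is to a one-variable slice map into $Y$ itself, so you never need to know that any $RC_k$ is Hausdorff, and (ii) then falls out of (i) exactly as in the paper (your separation argument for (ii) is the same as theirs). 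Your observation about why the induction must run backwards --- the $i=k$ clause of right continuity demands earlier coordinates in the $D_j$ and later ones unconstrained, which is precisely the shape of the box $D_1\times\ldots\times D_{k-1}\times X_k\times\ldots\times X_n$ --- is the one place a forward induction would break, and you address it explicitly. What your approach buys is a more elementary, self-contained proof of (i) that decouples it from (ii) and from Lemma~\ref{currying-of-righ-cont}; what the paper's approach buys is that the currying machinery it sets up is reused elsewhere (e.g.\ in the remark on defining $\wt{\ext}$ inductively). Your handling of the indexing of the last coordinate in (ii) is also correct: evaluation at a point of $D_n\subseteq X_n$ is a legitimate subbasic set under any of the conventions the paper uses.
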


\begin{proof}
For brevity, let us denote the space 
$RC_{D_1,\ldots,D_{k-1}}(X_1\ldots,X_{k},Y)$ 
by~$\RC_k$. We argue by induction on~$n$. 
For induction basis, see \cite{Engelking}, 
Theorem~2.1.9. Assume we have already proven 
the claim for $n=k$. Let us prove this for $n=k+1$. 

Let maps $f,g\in\RC_{k+1}$ coincide on 
$D_1\times\ldots\times D_{k+1}$. 
Then for each $a\in D_{k+1}$ the maps $f^a$ and $g^a$ 
coincide on $D_1\times\ldots\times D_{k}$ 
and are right continuous w.r.t.~the~$D_i$.
By induction hypothesis, $f^a=g^a$. Hence, the maps 
$\cur(f),\cur(g):X_{k+1}\to\RC_{k}$ coincide on~$D_{k+1}$. 
By Lemma~\ref{currying-of-righ-cont}, the maps 
are continuous, while by induction hypothesis the
space~$\RC_{k}$ is Hausdorff. Hence, $\cur(f)=\cur(g)$ 
again by \cite{Engelking}, Theorem~2.1.9. 
Therefore, $f=g$ since $\cur$~is bijective.

Furthermore, this shows that the space~$\RC_{k+1}$ is 
Hausdorff. Indeed, let $f,g\in\RC_{k+1}$ and $f\neq g$. 
Then, by the just proven fact,
$f(\boldsymbol{a})\neq g(\boldsymbol{a})$ for some
$\boldsymbol{a}\in D_1\times\ldots\times D_{k+1}$. 
Since $Y$~is Hausdorff, pick any disjoint open 
neighborhoods $A,B\subseteq Y$ of $f(\boldsymbol{a})$ 
and $g(\boldsymbol{a})$. Then the sets 
$F=\{h\in\RC_{k+1}:h(\boldsymbol{a})\in A\}$ and 
$G=\{h\in\RC_{k+1}:h(\boldsymbol{a})\in B\}$ are 
disjoint open neighborhoods of $f\in F$ and $g\in G$. 
\end{proof}


\begin{lemma}\label{partial-pointwise-convergence}
Let $X_1,\ldots,X_n$ be discrete spaces and 
$Y$~a~compact Hausdorff space. The set 
$$
RC_{X_1,\ldots,X_{n-1}}(\scc X_1,\ldots,\scc X_n,Y)
$$ 
of $n$-ary maps of $\scc X_1\times\ldots\times\scc X_n$ 
into~$Y$ which are right continuous 
w.r.t.~$X_1,\ldots,X_{n-1}$, endowed with the 
$(X_1,\ldots,X_n)$-pointwise convergence topology,
is homeomorphic to the space 
$Y^{X_1\times\ldots\times X_n}$ endowed with 
the usual pointwise convergence topology. 
Therefore, the space is compact Hausdorff; 
moreover, it is zero-dimensional iff so is~$Y$. 
\end{lemma}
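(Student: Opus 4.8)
The plan is to exhibit the homeomorphism explicitly through the restriction map
$$
\rho\colon RC_{X_1,\ldots,X_{n-1}}(\scc X_1,\ldots,\scc X_n,Y)\to Y^{X_1\times\ldots\times X_n},
\qquad
\rho(f)=f\uhr(X_1\times\ldots\times X_n),
$$
which records the values of a right continuous map on the dense product $X_1\times\ldots\times X_n$. The key structural observation, which I would exploit to avoid any hard topological work on the domain, is that \emph{both} topologies are generated by the same family of evaluations $f\mapsto f(\boldsymbol a)$ at points $\boldsymbol a\in X_1\times\ldots\times X_n$. Since $\rho$ preserves exactly these values, the subbases correspond: for $\boldsymbol a\in X_1\times\ldots\times X_n$ and $B$ open in $Y$ one has $\rho^{-1}(\{g:g(\boldsymbol a)\in B\})=O_{\boldsymbol a,B}$ and, using bijectivity, $\rho(O_{\boldsymbol a,B})=\{g:g(\boldsymbol a)\in B\}$. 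Hence once $\rho$ is shown to be a bijection it is automatically a homeomorphism, with no appeal to compactness of the domain. So the whole problem reduces to proving $\rho$ bijective.

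Injectivity is immediate from Lemma~\ref{uniqueness-of-right-cont}(i), since each $X_i$ is dense in $\scc X_i$, so two right continuous maps agreeing on $X_1\times\ldots\times X_n$ agree everywhere. The substantive point, and the expected main obstacle, is surjectivity: every $g\colon X_1\times\ldots\times X_n\to Y$ must admit a right continuous extension. I would prove this by induction on $n$. The base case $n=1$ is precisely the universal property of the largest compactification: $g\colon X_1\to Y$ extends to the continuous $\wt g\colon\scc X_1\to Y$, and here $RC(\scc X_1,Y)=C(\scc X_1,Y)$. For the step, assume the lemma for $n-1$, so that $Z:=RC_{X_1,\ldots,X_{n-2}}(\scc X_1,\ldots,\scc X_{n-1},Y)$ is compact Hausdorff. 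Curry $g$ in the last variable: for $x\in X_n$ let $g^x(\boldsymbol y)=g(\boldsymbol y,x)$ and let $\wh{g^x}\in Z$ be its right continuous extension, which exists by the induction hypothesis. The map $X_n\to Z$, $x\mapsto\wh{g^x}$, goes from a discrete space into the compact Hausdorff space $Z$, so it extends to a continuous $\Phi\colon\scc X_n\to Z$; put $\wh g:=\cur^{-1}(\Phi)$, that is $\wh g(\boldsymbol y,u)=\Phi(u)(\boldsymbol y)$.

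It then remains to check that $\wh g$ extends $g$ and lies in $RC_{X_1,\ldots,X_{n-1}}(\scc X_1,\ldots,\scc X_n,Y)$. The former holds because $\Phi(x)=\wh{g^x}$ for $x\in X_n$, whence $\wh g(\boldsymbol a,x)=g(\boldsymbol a,x)$ on $X_1\times\ldots\times X_n$. For right continuity in the coordinate $i=n$, fix $a_1\in X_1,\ldots,a_{n-1}\in X_{n-1}$; then $u\mapsto\wh g(a_1,\ldots,a_{n-1},u)=\Phi(u)(a_1,\ldots,a_{n-1})$ is the composite of the continuous $\Phi$ with the evaluation $h\mapsto h(a_1,\ldots,a_{n-1})$, which is continuous on $Z$ exactly because $(a_1,\ldots,a_{n-1})\in X_1\times\ldots\times X_{n-1}$. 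For a coordinate $i<n$, fix $a_1\in X_1,\ldots,a_{i-1}\in X_{i-1}$ and $x_{i+1}\in\scc X_{i+1},\ldots,x_n\in\scc X_n$; then $x\mapsto\Phi(x_n)(a_1,\ldots,a_{i-1},x,x_{i+1},\ldots,x_{n-1})$ is continuous since $\Phi(x_n)\in Z$ is right continuous w.r.t.\ $X_1,\ldots,X_{n-2}$, which is precisely its continuity in the $i$-th slot for every $i\le n-1$. Lemma~\ref{currying-of-righ-cont} is the natural device for packaging this verification. (Alternatively, bijectivity of $\rho$ is exactly the content of Theorem~\ref{topological-char}(iv), which asserts that $H\mapsto\wt H$ is a bijection onto the right continuous maps; $\rho$ is then its inverse.)

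Finally, being a homeomorphism, $\rho$ transports all topological properties from $Y^{X_1\times\ldots\times X_n}$. By Tychonoff's theorem this power of the compact Hausdorff space $Y$ is compact Hausdorff, so $RC_{X_1,\ldots,X_{n-1}}(\scc X_1,\ldots,\scc X_n,Y)$ is compact Hausdorff (its Hausdorffness re-proving Lemma~\ref{uniqueness-of-right-cont}(ii) in this setting). For the last clause, a nonempty power $Y^{X_1\times\ldots\times X_n}$ is zero-dimensional iff $Y$ is: a clopen base on $Y$ yields the clopen box base on the product, and conversely $Y$ sits as a one-coordinate slice, a subspace to which zero-dimensionality descends. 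Transporting along $\rho$ gives the stated equivalence. Throughout, the genuine difficulty is the existence half of bijectivity, namely the inductive construction of the right continuous extension; the topological conclusions are then bookkeeping with the matching subbases.
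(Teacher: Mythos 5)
Your proof is correct and follows essentially the same route as the paper's: the paper works with the map $\ext\colon f\mapsto\wt{f}$ (the inverse of your restriction map $\rho$), notes that its injectivity is trivial and its surjectivity follows from Lemma~\ref{uniqueness-of-right-cont}, and observes that the two subbases correspond, exactly as you do. The only difference is that you re-derive the existence of the right continuous extension of an arbitrary $g\in Y^{X_1\times\ldots\times X_n}$ by an inductive currying argument, whereas the paper simply invokes the already-defined ultrafilter extension $\wt{f}$ from Section~1 (right continuous by Theorem~\ref{modeltopology}), a shortcut you yourself note via Theorem~\ref{topological-char}(iv).
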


\begin{proof}
Let us verify that the map~$\ext$, which takes 
each $n$-ary~$f$ in $Y^{X_1\times\ldots\times X_n}$ 
to its extension $\ext(f)=\wt{f}$ in 
$RC_{X_1,\ldots,X_{n-1}}(\scc X_1,\ldots,\scc X_n,Y)$, 
is a~homeomorphism. The fact that $\ext$~is injective 
is trivial, and that $\ext$~is surjective follows 
from Lemma~\ref{uniqueness-of-right-cont} (since 
each $X_i$ is dense in~$\scc X_i$ and $Y$~is Hausdorff): 
whenever 
$g\in RC_{X_1,\ldots,X_{n-1}}(\scc X_1,\ldots,\scc X_n,Y)$ 
and $f=g\uhr(X_1\times\ldots\times X_n)$, then $\wt{f}=g$. 
Finally, the fact that it preserves in both directions 
open sets belonging to the subbases of the spaces, is 
immediate by the definition of the 
$(X_1,\ldots,X_n)$-pointwise convergence topology. 
Therefore, the space is homeomorphic to the usual product
space of~$Y$, hence, by the Tychonoff theorem, is compact 
Hausdorff, and moreover, the zero-dimensionality 
iff so is~$Y$ (see e.g.~\cite{Engelking}).
\end{proof}

\begin{lemma}\label{extensions-are-dense} 
Let $X_1,\ldots,X_n$ be discrete spaces,
$Y$~a~compact Hausdorff space, and $S\subseteq Y$ 
dense in~$Y$. Then the set 
$
\bigl\{
\wt{f}:f\in S^{X_1\times\ldots\times X_n}
\bigr\}
$ 
is dense in the space 
$RC_{X_1,\ldots,X_{n-1}}(\scc X_1,\ldots,\scc X_n,Y)$
endowed with the $(X_1,\ldots,X_n)$-pointwise 
convergence topology. 
\end{lemma}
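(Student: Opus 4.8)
The plan is to transport the statement through the homeomorphism of Lemma~\ref{partial-pointwise-convergence} and thereby reduce it to the standard fact that a product of dense sets is dense in the product topology. By that lemma the operation $\ext\colon f\mapsto\wt f$ is a homeomorphism of $Y^{X_1\times\ldots\times X_n}$, carrying the usual pointwise convergence topology, onto $RC_{X_1,\ldots,X_{n-1}}(\scc X_1,\ldots,\scc X_n,Y)$ with the $(X_1,\ldots,X_n)$-pointwise convergence topology. By the very definition of $\ext$, the set whose density we must prove is precisely the image $\ext\image S^{X_1\times\ldots\times X_n}$. Since a homeomorphism carries dense subsets to dense subsets, it suffices to show that $S^{X_1\times\ldots\times X_n}$ is dense in $Y^{X_1\times\ldots\times X_n}$ under the pointwise convergence (equivalently, Tychonoff product) topology.

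First I would write $I=X_1\times\ldots\times X_n$ for the index set and recall that a nonempty basic open set of the product $Y^I$ constrains only finitely many coordinates, i.e.\ has the form $\{g\in Y^I:g(i_1)\in B_1,\ldots,g(i_k)\in B_k\}$ for finitely many distinct $i_1,\ldots,i_k\in I$ and nonempty open $B_1,\ldots,B_k\subseteq Y$. Density of~$S$ in~$Y$ yields points $s_j\in B_j\cap S$; then any $g\in S^I$ with $g(i_j)=s_j$ for $j\le k$ (and arbitrary $S$-values at the remaining coordinates) lies inside the given basic open set. Hence $S^I$ meets every nonempty basic open set and is therefore dense in $Y^I$; pulling this back through the homeomorphism~$\ext$ gives the lemma.

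I expect no serious obstacle here: the entire topological content is already packaged in Lemma~\ref{partial-pointwise-convergence}, and the residual claim uses only that basic open sets in a product depend on no more than finitely many coordinates. The two points deserving a little care are the bookkeeping identification of $\{\wt f:f\in S^{X_1\times\ldots\times X_n}\}$ with $\ext\image S^{X_1\times\ldots\times X_n}$, and the trivial remark that a dense subset~$S$ is nonempty whenever $Y$ is, so that the witnessing $g$ genuinely exists (the degenerate case $Y=\emptyset$ rendering the statement vacuous).
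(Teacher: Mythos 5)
Your proof is correct. It differs from the paper's only in packaging: the paper argues directly in the space $RC_{X_1,\ldots,X_{n-1}}(\scc X_1,\ldots,\scc X_n,Y)$, taking a nonempty basic open set $\bigcap_{i<k}O_{\boldsymbol{a}_i,B_i}$, merging subbasic constraints at coinciding tuples $\boldsymbol{a}_i$ so that the remaining tuples are distinct, and then choosing $f\in S^{X_1\times\ldots\times X_n}$ with $f(\boldsymbol{a}_i)\in B_i\cap S$, whereas you transport the whole statement through the homeomorphism $\ext$ of Lemma~\ref{partial-pointwise-convergence} and reduce it to the standard fact that $S^{X_1\times\ldots\times X_n}$ is dense in the product $Y^{X_1\times\ldots\times X_n}$. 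The combinatorial core is identical in both cases (a basic open set constrains only finitely many points of $X_1\times\ldots\times X_n$, and density of $S$ in $Y$ supplies values there), but your route is more modular, reusing the already-established homeomorphism instead of redoing the computation; the paper's version is self-contained and does not depend on the full strength of Lemma~\ref{partial-pointwise-convergence}. Your two points of care --- the identification of $\{\wt f:f\in S^{X_1\times\ldots\times X_n}\}$ with $\ext\image S^{X_1\times\ldots\times X_n}$ and the nonemptiness of the witnessing sets --- are exactly the right ones, and both check out.
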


\begin{proof}
Let $k<\omega$, pick for all $i<k$ arbitrary 
$\boldsymbol{a}_i\in X_1\times\ldots\times X_n$ 
and $B_i\subseteq Y$ open in~$Y$, and 
show that, whenever the basic open set 
$\bigcap_{i<k}O_{\boldsymbol{a}_i,B_i}$
is nonempty, then it contains a~point~$\wt{f}$ 
for some $f\in S^{X_1\times\ldots\times X_n}$.
Note that if some of the~$\boldsymbol{a}_i$ 
coincide, say, $\boldsymbol{a}_i=\boldsymbol{a}_j$ 
for all $i,j\in A$ and some $A\subseteq k$, then  
$\bigcap_{i\in A}B_i$ is nonempty whenever so is 
$\bigcap_{i<k}O_{\boldsymbol{a}_i,B_i}$. So we can 
assume w.l.g.~that all the~$\boldsymbol{a}_i$ are 
distinct. Then any $f\in S^{X_1\times\ldots\times X_n}$
satisfying $f(\boldsymbol{a}_i)\in B_i\cap S$ 
for all $i<k$, is as required. 
\end{proof}

\begin{question}\label{q: extensions-are-dense}
Does this remain true, moreover, for 
the full pointwise convergence topology? 
The answer is affirmative for unary maps, i.e.~the 
set $\{\wt{f}:f\in S^X\}$ is dense in $C(\scc X,Y)$. 
What happens for binary maps? 
(Problem~\ref{prb: 1}.)
\end{question}


\begin{remark}\label{rmk: full pointwise conv} 
One may ask whether the usual (unrestricted) 
pointwise convergence topology on the set 
$RC_{X_1,\ldots,X_{n-1}}(\scc X_1,\ldots,\scc X_n,Y)$ 
is compact, or equivalently, whether the set forms 
a~closed subspace of the compact Hausdorff space 
$Y^{\scc X_1\times\ldots\times\scc X_n}$ with the 
Tychonoff product topology. If~this would be the 
case, we could use this more common topology for 
our purpose. Let us show that the answer is in the
negative, even for unary maps.

1. 
The set $C(\scc X,\scc Y)$ endowed with the pointwise 
convergence topology is not compact.

It suffices to verify that for an arbitrary 
map $h:\scc X\to\scc Y$ there exists 
an ultrafilter~$\mathfrak f$ over $C(\scc X,\scc Y)$ 
converging to~$h$ (to recall related facts the reader 
can look at the beginning of Section~4). Since $h$~can 
be discontinuous, this will show that $C(\scc X,\scc Y)$ 
is not closed in~$\scc Y^{\scc X}$. Consider the family 
\begin{align*}
\mathcal{F}
&=
\bigl\{
O_{\mathfrak{u},\,\wt{S}}:
\mathfrak{u}\in\scc X 
\;\text{and}\;
h(\mathfrak{u})\in\wt S\,
\bigr\}
\\
&=
\bigl\{
\{f\in C(\scc X,\scc Y):S\in f(\mathfrak u)\}:
\mathfrak{u}\in\scc X 
\;\text{and}\;
S\in h(\mathfrak{u})
\bigr\}.
\end{align*}
Let us check that $\mathcal{F}$~is centered. 
It suffices to show that for any positive $n\in\omega$, 
ultrafilters $\mathfrak{u}_0,\ldots,\mathfrak{u}_{n-1}$ 
over~$X$, and non-empty subsets $S_0,\ldots,S_{n-1}$ 
of~$X$, there exists a~map $f\in C(\scc X,\scc Y)$ 
satisfying
$$
S_0\in f(\mathfrak{u}_0),  
\ldots, 
S_{n-1}\in f(\mathfrak{u}_{n-1}).
$$
To see, pick arbitrary pairwise disjoint sets 
$A_0,\ldots,A_{n-1}$ such that
$
A_0\in\mathfrak{u}_0,\ldots,
A_{n-1}\in\mathfrak{u}_{n-1},
$ 
elements $s_0\in S_0,\ldots,s_{n-1}\in S_{n-1}$, 
and consider a~map $g: X\to Y$ such that 
$g(x)=s_i$ whenever $x\in A_i$, $i<n$, and 
$g(x)=y$, where $y$~is a~fixed element of~$Y$,
otherwise. (Actually, $g$~on the set
$X\setminus(A_0\cup\ldots\cup A_{n-1})$ 
could be defined arbitrarily.) Let $f=\wt{g}$, 
so $f\in C(\scc X,\scc Y)$. For each $i<n$ we have
$
A_i\subseteq g^{-1}(S_i)
\;\;\text{and}\;\;
A_i\in\mathfrak{u}_i,
$
therefore, $g^{-1}(S_i)\in\mathfrak{u}_i$, and so,
$
S_i\in\wt{g}(\mathfrak{u}_i)=f(\mathfrak{u}_i).
$
Thus the map~$f$ witnesses that
the family~$\mathcal{F}$ is centered.

Now extend $\mathcal{F}$ to an ultrafilter
$\mathfrak{f}\in\scc\,C(\scc X,\scc Y)$. It is 
clear that $\mathfrak{f}$~converges to the map~$h$, 
as required.

2. 
Since we know that $C(\scc X,\scc Y)$ with the 
$X$-pointwise convergence topology is compact while 
with the (full) pointwise convergence topology is not, 
we may ask what is the map $f\in C(\scc X,\scc Y)$ 
such that the ultrafilter~$\mathfrak f$ defined above 
converges to~$f$ in the weaker (restricted) topology. 
It is not difficult to show that $f=\wt{h\uhr X}$. 
Note that $(\scc Y)^{\scc X}$ with the $X$-pointwise 
convergence topology is compact (since it is compact 
even with the stronger pointwise convergence topology). 
The map~$r$ of this compact space onto its compact 
subspace $C(\scc X,\scc Y)$, defined by letting 
for all $h\in(\scc Y)^{\scc X}$
$$
r(h)=\wt{h\uhr X},
$$
is a~natural retraction. 
However, $(\scc Y)^{\scc X}$ with the $X$-pointwise 
convergence topology is not Hausdorff nor even 
a~$T_0$-space since, whenever $h\in(\scc Y)^{\scc X}$ 
is discontinuous, then the points $h$ and $r(h)$ 
are distinct but have the same neighborhoods
(it suffices to consider subbasic neighborhoods, and 
for any $a\in X$ and open $B\subseteq\scc Y$ we have 
$h\in O_{a,B}$ iff $r(h)\in O_{a,B}$).

3. 
These observations hold in a~general setting, for 
$n$-ary maps into any compact Hausdorff space~$Y$: 
the full pointwise convergence topology on 
$RC_{X_1,\ldots,X_{n-1}}(\scc X_1,\ldots,\scc X_n,Y)$
is not compact, while 
the $(X_1,\ldots,X_n)$-pointwise convergence
topology on $Y^{\scc X_1\times\ldots\times\scc X_n}$ is 
compact but not~$T_0$, and the map~$r$ defined by letting 
for all $h\in Y^{\scc X_1\times\ldots\times\scc X_n}$ 
$$
r(h)=
\ext(h\uhr(X_1\times\ldots\times X_{n}))
$$ 
is a~natural retraction of 
$Y^{\scc X_1\times\ldots\times\scc X_n}$ onto 
$RC_{X_1,\ldots,X_{n-1}}(\scc X_1,\ldots,\scc X_n,Y)$. 
\end{remark}


\vskip+1em
\noindent 
\textbf{\textit{Self-application of 
the extension operation.}}
Now we are ready to define the continuous 
extension $\wt{\ext}$ of the map~$\ext$ 
in a~general form. 
Let $X_1,\ldots,X_n$ be discrete spaces and 
$Y$~a~compact Hausdorff space. Recall that 
for any $n$-ary map~$f$ of 
$X_1\times\ldots\times X_n$ into~$Y$, 
$\ext(f)$~is~$\wt f$, the extension of~$f$
to ultrafilters which is right continuous 
w.r.t.~principal ultrafilters:
\begin{align*} 
\ext:
Y^{X_1\times\ldots\times X_n}\to
RC_{X_1,\ldots,X_{n-1}}
(\scc X_1,\ldots,\scc X_n,Y). 
\end{align*}
By Lemma~\ref{partial-pointwise-convergence}, 
the set 
$
RC_{X_1,\ldots,X_{n-1}}
(\scc X_1,\ldots,\scc X_n,Y)
$
endowed with the $(X_1,\ldots,X_n)$-pointwise 
convergence topology is a~compact Hausdorff space. 
Therefore, $\ext$~extends to a~unique continuous 
map~$\wt{\ext}$ on ultrafilters over the set
$Y^{X_1\times\ldots\times X_n}$: 
$$
\xymatrix{
\;\quad\scc\bigl(Y^{X_1\times\ldots\times 
X_n}\bigr)\qquad\ar@{-->}^{\wt{\ext}}[dr]&&
\\
Y^{X_1\times\ldots\times X_n}\,\ar[u]\ar[r]^{\;\;\;\ext}&&%
\!\!\!\!\!\!\!\!\!\!\!\!\!\!\!\!%
RC_{X_1,\ldots,X_{n-1}}
(\scc X_1,\ldots,\scc X_n,Y)
}
$$

\begin{remark}\label{rmk: def of ext ext} 
Alternatively, we can first define $\wt{\ext}$ 
on ultrafilters over the set of unary maps and then 
extend it to $\wt{\ext}$ on ultrafilters over the set 
of $n$-ary maps by induction on~$n$ by using currying.

For this, we first note that 
the one-to-one correspondence between the sets 
$Y^{X_1\times\ldots\times X_n\times X_{n+1}}$ 
and $(Y^{X_1\times\ldots\times X_n})^{X_{n+1}}$ given
by~$\cur$ induces the one-to-one correspondence between 
the sets of ultrafilters over them, which takes 
each ultrafilter
$
\mathfrak f\in
\scc(Y^{X_1\times\ldots\times X_n\times X_{n+1}})
$ 
to the ultrafilter 
$
\mathfrak f\,'=
\{\cur\image A:A\in\mathfrak f\}\in
\scc((Y^{X_1\times\ldots\times X_{n}})^{X_{n+1}}).
$
Or else, $\mathfrak f\,'$~can be defined 
via the continuous extension of currying:
$$
\xymatrix{
\scc\bigl(
Y^{X_1\times\ldots\times X_{n+1}}
\bigr)
\;\;
\ar@{-->}^{\wt\cur\;\;\;\;\;\;\;\;}[r]
&\;\;
\scc\bigl(
\bigl(Y^{X_1\times\ldots\times X_n}\bigr)^{X_{n+1}}
\bigr)
\\
\ar[u] 
Y^{X_1\times\ldots\times X_{n+1}}
\;\;
\ar[r]^{\cur\;\;\;\;\;\;\;\;} 
&\;\;
\bigl(Y^{X_1\times\ldots\times X_n}\bigr)^{X_{n+1}}
\ar[u]. 
}
$$ 
Since $\cur$~is a~bijection, it is easily follows 
that so is~$\wt{\cur}$, and for all 
$
\mathfrak f\in
\scc(Y^{X_1\times\ldots\times X_n\times X_{n+1}})
$ 
we have
$
\wt{\cur}(\mathfrak f)=\mathfrak f\,'.
$

Now, for $n=1$, we extend $\ext:Y^X\to C(\scc X,Y)$ 
to $\wt{\ext}:\scc(Y^X)\to C(\scc X,Y)$. And assuming 
that $\wt{\ext}$~has been already defined for~$n$, 
we can define $\wt{\ext}(\mathfrak f)$ by letting
$$
\wt{\ext}(\mathfrak f)
(\mathfrak u_1,\ldots,\mathfrak u_n,\mathfrak u_{n+1})
=
\wt{\ext}\bigl(\wt{\ext}(\mathfrak f\,')
(\mathfrak u_{n+1})\bigr)
(\mathfrak u_1,\ldots,\mathfrak u_{n})
$$ 
since $\wt{\ext}$~has been already defined 
on $\mathfrak f\,'$ and 
$\wt{\ext}(\mathfrak f\,')(\mathfrak u_{n+1})$ 
by induction hypothesis.
\end{remark}


\begin{question}\label{q: wider ext ext}
One can offer another, alternative way to extend 
the ultrafilter extension procedure by considering 
it as the map not into the space of right continuous 
maps but into set of all maps with the usual product 
topology. Thus for any discrete $X_1,\ldots,X_n$ and 
compact Hausdorff~$Y$, let $\ext$~be a~map of the 
discrete space $Y^{X_1\times\ldots\times X_n}$ into 
$Y^{\scc X_1\times\ldots\times\scc X_n}$ endowed with 
the usual product topology (or equivalently, the usual,
unrestricted pointwise convergence topology). As the 
range is a~compact Hausdorff space, the map~$\ext$ 
continuously extends to~$\wt\ext$ (in the new sense):
$$
\xymatrix{
\;\quad\scc\bigl(Y^{X_1\times\ldots\times 
X_n}\bigr)\qquad\ar@{-->}^{\wt{\ext}}[dr]&&
\\
Y^{X_1\times\ldots\times X_n}\,\ar[u]\ar[r]^{\;\;\;\ext}&&%
\!\!\!\!\!\!\!\!\!\!\!\!\!\!\!\!%
Y^{\,\scc X_1\times\ldots\times\scc X_n}
}
$$
Unlike the previous construction, now some ultrafilters 
$\mathfrak f\in\scc(Y^{X_1\times\ldots\times X_n})$ 
are mapped into maps 
$
\wt\ext(\mathfrak f)\in 
Y^{\,\scc X_1\times\ldots\times\scc X_n}
$
that no longer are right continuous w.r.t.~principal 
ultrafilters (as explained in the remarks above). 
However, these maps $\wt\ext(\mathfrak f)$ are still 
close to those: any neighborhood of 
$\wt\ext(\mathfrak f)$ contains some right continuous 
map; this is because 
$$
\wt{\ext}\image\,
\scc\bigl(Y^{X_1\times\ldots\times X_n}\bigr)
=
\cl_{Y^{\,\scc X_1\times\ldots\times\scc X_n}}
\bigl(
\ext\image\,Y^{X_1\times\ldots\times X_n}
\bigr).
$$
Is this version of~$\wt\ext$ surjective?
This is the case iff the image of~$\ext$ 
is dense in the space; see 
Question~\ref{q: extensions-are-dense}.

Can this version of $\wt\ext$ lead to some 
interesting possibilities?  
(Problem~\ref{prb: 2}.)
\end{question}


\begin{lemma}\label{ext-ext-surj-non-inj}
For any positive $n<\omega$, discrete spaces 
$X_1,\ldots,X_n$, and compact Hausdorff space~$Y$, 
the continuous map 
$$
\wt{\ext}:\scc(Y^{X_1\times\ldots\times X_n})
\to 
RC_{X_1,\ldots,X_{n-1}}(\scc X_1,\ldots,\scc X_n,Y)
$$ 
is surjective and, whenever at least one of the~$X_i$
is infinite, non-injective.
\end{lemma}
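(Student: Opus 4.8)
The proof naturally splits into surjectivity and non-injectivity, and for each I would exploit the structural facts already established in Lemmas~\ref{partial-pointwise-convergence} and~\ref{extensions-are-dense}.

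\textbf{Surjectivity.} The plan is to combine compactness with density. Since $\wt{\ext}$ is continuous, its domain $\scc(Y^{X_1\times\ldots\times X_n})$ is compact, and its codomain $RC_{X_1,\ldots,X_{n-1}}(\scc X_1,\ldots,\scc X_n,Y)$ is Hausdorff by Lemma~\ref{partial-pointwise-convergence}, the image $\wt{\ext}\image\scc(Y^{X_1\times\ldots\times X_n})$ is compact and hence closed in the codomain. By construction $\wt{\ext}$ agrees with $\ext$ on the principal ultrafilters, i.e.\ on the dense copy of $Y^{X_1\times\ldots\times X_n}$ sitting inside $\scc(Y^{X_1\times\ldots\times X_n})$, so its image contains $\ext\image Y^{X_1\times\ldots\times X_n}=\{\wt f:f\in Y^{X_1\times\ldots\times X_n}\}$. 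Applying Lemma~\ref{extensions-are-dense} with $S=Y$ (trivially dense in itself) shows this set is dense in the codomain. A closed set containing a dense set is the whole space, so $\wt{\ext}$ is onto.

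\textbf{Non-injectivity.} Here I would argue by a cardinality count, writing $X=X_1\times\ldots\times X_n$ (which is infinite exactly when some $X_i$ is, provided all $X_i$ are nonempty, and assuming the nondegenerate case $|Y|\geq 2$). By Lemma~\ref{partial-pointwise-convergence} the codomain is homeomorphic to $Y^{X}$, hence has cardinality $|Y|^{|X|}$. Since $|Y|\geq 2$ and $X$ is infinite, the set $Y^{X}$ is infinite, so by the classical count of ultrafilters over an infinite set (see~\cite{Hindman Strauss}) the domain $\scc(Y^{X})$ has cardinality $2^{2^{|Y^{X}|}}$. As $2^{2^{\mu}}>\mu$ for every infinite cardinal $\mu$, the domain is strictly larger than the codomain, so by the pigeonhole principle $\wt{\ext}$ cannot be injective. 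Conceptually, the collapse happens because $\wt{\ext}(\mathfrak f)(\boldsymbol a)$ is the limit along $\mathfrak f$ of the evaluation $f\mapsto f(\boldsymbol a)$, and taking these coordinatewise limits in $Y$ discards almost all of the information carried by $\mathfrak f$.

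The main obstacle I anticipate is not the surjectivity half, which is a routine compactness-plus-density argument, but rather pinning down the cardinality of the codomain: the count $|Y|^{|X|}$ is only accessible through the homeomorphism of Lemma~\ref{partial-pointwise-convergence}, and one must also dispose of the degenerate configurations ($|Y|=1$, or some $X_i$ empty) in which the map is in fact a bijection between singletons. Once those cases are excluded, the strict inequality $2^{2^{\mu}}>\mu$ closes the argument immediately.
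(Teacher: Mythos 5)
Your proof is correct, but it takes a genuinely different route from the paper's on both halves. For surjectivity the paper does not use the closed-image argument at all: it fixes $f\in RC_{X_1,\ldots,X_{n-1}}(\scc X_1,\ldots,\scc X_n,Y)$, builds the explicit centered family $\mathcal F=\bigl\{\{g:g(\boldsymbol a)\in B\}:f(\boldsymbol a)\in B,\ B\text{ open}\bigr\}$, and proves that \emph{every} ultrafilter $\mathfrak f\supseteq\mathcal F$ satisfies $\wt{\ext}(\mathfrak f)=f$ (via the identity $\bigcap_{A\in\mathcal F}\cl\{\wt g:g\in A\}=\{f\}$, which itself uses Lemmas~\ref{uniqueness-of-right-cont} and~\ref{extensions-are-dense}). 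Non-injectivity then comes from exhibiting two distinct ultrafilters $\mathfrak f\,',\mathfrak f\,''\supseteq\mathcal F$; the infinitude of some $X_i$ is used to keep the second family centered. Your compactness-plus-density argument for surjectivity is shorter and entirely adequate, and your cardinality count ($2^{2^{\mu}}>\mu$ with $\mu=|Y^{X_1\times\ldots\times X_n}|$, via Pospíšil's count of ultrafilters) does establish non-injectivity. What the paper's constructive route buys in exchange for its length is the strictly stronger conclusion, remarked at the end of its proof, that the preimage of \emph{each} point of the codomain contains more than one ultrafilter --- a counting argument only shows that some fiber collapses. Your caveat about the degenerate configurations ($|Y|=1$ or some $X_i$ empty) is well taken: the paper's construction of the sets $B_{\boldsymbol a}\neq Y$ with $f(\boldsymbol a)\in B_{\boldsymbol a}$, and the centredness of $\mathcal F\,''$, silently require $|Y|\ge 2$ and all $X_i$ nonempty just as your argument does, so you are not assuming anything the paper does not.
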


\begin{proof}
To simplify the notation, let $\RC$~denote the space
$RC_{X_1,\ldots,X_{n-1}}(\scc X_1,\ldots,\scc X_n,Y)$
endowed with the $(X_1,\ldots,X_n)$-pointwise 
convergence topology. Pick any $f\in\RC$, let 
$$
Z=
\bigl\{
(\boldsymbol{a},B):
\boldsymbol{a}\in X_1\times\ldots\times X_n\,,\; 
B\text{ is open in~}Y,
\text{ and }
f(\boldsymbol{a})\in B 
\bigr\},
$$
and consider the following family~$\mathcal F$ 
of subsets of $Y^{X_1\times\ldots\times X_n}$:
$$
\mathcal F=
\bigl\{
\bigl\{
g\in Y^{X_1\times\ldots\times X_n}: 
g(\boldsymbol{a})\in B
\bigr\}:
(\boldsymbol{a},B)\in Z
\bigr\}.
$$
The family~$\mathcal F$ is centered; this can be 
stated by arguments similar to those in the first 
remark after Lemma~\ref{extensions-are-dense}. 
We are going to prove the following key property 
of the family~$\mathcal F$: 
$$
\wt\ext(\mathfrak f)=f 
\;\;\text{for all }
\mathfrak f\in
\scc\bigl(Y^{X_1\times\ldots\times X_n}\bigr)
\text{ such that }\mathcal F\subseteq\mathfrak f.
$$
The lemma will be deduced from this property: 
since the argument works for all $f\in\RC$, 
the property shows that $\wt\ext$~is surjective; 
and that $\wt\ext$~is non-injective will be shown 
once two distinct such ultrafilters 
$\mathfrak f\,',\mathfrak f\,''$ will be constructed.

Let us verify the following equality:
$$
\bigcap_{A\in\mathcal F}
\cl_{\RC}\bigl\{\wt g:g\in A\bigr\}=
\{f\}.
$$
First note that by Lemma~\ref{extensions-are-dense}, 
for every 
$
A=
\{g\in Y^{X_1\times\ldots\times X_n}:
g(\boldsymbol{a})\in B\}
$ 
in~$\mathcal F$ we have
$$
\cl_{\RC}\{\wt g:g\in A\}=
\cl_{\RC}\{h\in\RC:h(\boldsymbol{a})\in B\}.
$$
Therefore,
$$
f\in\bigcap_{A\in\mathcal F}
\cl_{\RC}\{\wt g:g\in A\}.
$$
Next, toward a~contradiction, assume that there 
exists $f\,'\in\RC$ such that $f\,'\neq f$ and 
$
f\,'\in\bigcap_{A\in\mathcal F}
\cl_{\RC}\{\wt g:g\in A\}.
$
By Lemma~\ref{uniqueness-of-right-cont}, 
there exists 
$\boldsymbol{b}\in X_1\times\ldots\times X_n$ 
such that 
$f(\boldsymbol{b})\neq f\,'(\boldsymbol{b})$. 
As $Y$~is Hausdorff, pick disjoint open 
neighborhoods $U$ and~$U\,'$ of the points
$f(\boldsymbol{b})$ and $f\,'(\boldsymbol{b})$, 
respectively. We have:
\begin{align*}
\bigcap_{A\in\mathcal F}\cl_{\RC}\{\wt g:g\in A\}
&=
\bigcap_{(\boldsymbol{a},B)\in Z}
\cl_{\RC}
\{h\in\RC:h(\boldsymbol{a})\in B\}
\\
&\subseteq
\cl_{\RC}
\bigl\{
h\in\RC:h(\boldsymbol{b})\in U
\bigr\}
\\
&\subseteq 
\cl_{\RC}
\bigl\{
h\in\RC:h(\boldsymbol{b})\in Y\setminus U\,'
\bigr\}
=
\bigl\{
h\in\RC:h(\boldsymbol{b})\in Y\setminus U\,'
\bigr\},
\end{align*} 
(where the last equality holds since the set 
$\{h\in\RC:h(\boldsymbol{b})\in Y\setminus U\,'\}$
is the complement in~$\RC$ of the subbasic open set 
$
\{h\in\RC:h(\boldsymbol{b})\in U\,'\}=
O_{\boldsymbol{b},U\,'}
$). 
Therefore,
$$
f\,'\notin
\bigcap_{A\in\mathcal F}
\cl_{\RC}\{\wt g:g\in A\},
$$ 
a~contradiction. Thus we have verified that 
the equality is true.

\par 
Now the required key property of the 
family~$\mathcal F$, i.e.~that we have 
$\wt{\ext}(\mathfrak f)=f$ whenever 
$\mathfrak f\supseteq\mathcal F,$
is clearly follows from this equality. 
As observed above, this property immediately implies 
that $\wt{\ext}$~is surjective; and to show that 
$\wt{\ext}$~is also non-injective, it remains to 
construct two distinct ultrafilters 
$\mathfrak f\,',\mathfrak f\,''\supseteq\mathcal F$.

Pick a~family 
$
\{
B_{\boldsymbol{a}}:
\boldsymbol{a}
\in X_1\times\ldots\times X_n
\}
$ 
of subsets of~$Y$ such that 
$B_{\boldsymbol{a}}\neq Y$ and 
$f(\boldsymbol{a})\in B_{\boldsymbol{a}}$ for all 
$\boldsymbol{a}\in X_1\times\ldots\times X_n$. 
The families 
\begin{align*}
\mathcal F\,'
&=
\mathcal F\cup 
\bigl\{\bigl\{
g\in Y^{X_1\times\ldots\times X_n}:
(\forall\boldsymbol{a}
\in X_1\times\ldots\times X_n)\;
g(\boldsymbol{a})\in B_{\boldsymbol{a}}
\bigr\}\bigr\},
\\
\mathcal F\,''
&=
\mathcal F\cup
\bigl\{\bigl\{
g\in Y^{X_1\times\ldots\times X_n}:
(\exists\boldsymbol{a}
\in X_1\times\ldots\times X_n)\;
g(\boldsymbol{a})\notin B_{\boldsymbol{a}}
\bigr\}\bigr\}
\end{align*}
are both centered (the fact that $\mathcal F\,''$~is 
centered uses that one of the~$X_i$ is infinite). 
We extend them to two (automatically distinct) 
ultrafilters $\mathfrak f\,'$ and~$\mathfrak f\,''$, 
respectively. By the key property of~$\mathcal F$, 
we obtain
$$
\wt{\ext}(\mathfrak f\,')=
\wt{\ext}(\mathfrak f\,'')=
f,
$$
thus showing that $\wt\ext$~is not injective. 
Note also that, since $f\in\RC$ was choosen 
arbitrary, we have established a~bit more: 
the preimage of {\it each\/} point in~$\RC$ under 
the map~$\wt\ext$ consists of more than one point.

The lemma is proved. 
\end{proof}


\begin{lemma}\label{ext-ext}
Let $X_1,\ldots,X_n$ be discrete spaces, 
$Y$~a~compact Hausdorff space, $S\subseteq Y$, 
and\, $R\subseteq S^{X_1\times\ldots\times X_n}$. 
Then\, $\wt\ext$ maps the closure of~$R$ 
in the space $\scc(S^{X_1\times\ldots\times X_n})$ 
onto the closure of~$R$ in the space 
$
RC_{X_1,\ldots,X_{n-1}}
(\scc X_1,\ldots,\scc X_n,Y)
$ 
endowed with the $(X_1,\ldots,X_n)$-pointwise 
convergence topology:
$$
\wt{\ext}\image
\cl_{\scc(S^{X_1\times\ldots\times X_n})}R
=
\cl_{RC_{X_1,\ldots,X_{n-1}}
(\scc X_1,\ldots,\scc X_n,Y)}\,
\ext\image R.
$$
\end{lemma}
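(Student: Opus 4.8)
The plan is to deduce the equality from the explicit description of the continuous extension $\wt\ext$ together with the compactness of its target. Throughout write $\RC$ for the space $RC_{X_1,\ldots,X_{n-1}}(\scc X_1,\ldots,\scc X_n,Y)$ with the $(X_1,\ldots,X_n)$-pointwise convergence topology, which is compact Hausdorff by Lemma~\ref{partial-pointwise-convergence}, and abbreviate $\mathcal D=S^{X_1\times\ldots\times X_n}\subseteq Y^{X_1\times\ldots\times X_n}$. Since $R\subseteq\mathcal D$ is a subset of a discrete space, its closure in $\scc\mathcal D$ is the basic clopen set $\wt R=\{\mathfrak f:R\in\mathfrak f\}$; under the canonical identification of $\scc\mathcal D$ with the clopen subset $\wt{\mathcal D}$ of $\scc(Y^{X_1\times\ldots\times X_n})$, this is the set of ultrafilters over $Y^{X_1\times\ldots\times X_n}$ containing $R$, on which $\wt\ext$ is already defined.

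For the inclusion $\cl_{\RC}\ext\image R\subseteq\wt\ext\image\wt R$, I would observe that $\wt R$ is a closed, hence compact, subset of the compact space $\scc(Y^{X_1\times\ldots\times X_n})$, so its image $\wt\ext\image\wt R$ under the continuous map $\wt\ext$ is compact and therefore closed in the Hausdorff space $\RC$. Every $g\in R$ is a principal ultrafilter with $\wt\ext(g)=\ext(g)$, so this closed set contains $\ext\image R$ and hence its closure $\cl_{\RC}\ext\image R$.

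For the reverse inclusion I would invoke the formula for the continuous extension into a compact Hausdorff space: for each $\mathfrak f\in\scc(Y^{X_1\times\ldots\times X_n})$,
$$
\{\wt\ext(\mathfrak f)\}=\bigcap_{A\in\mathfrak f}\cl_{\RC}\ext\image A.
$$
If $\mathfrak f\in\wt R$, then $R\in\mathfrak f$ appears among the sets $A$ being intersected, so $\wt\ext(\mathfrak f)\in\cl_{\RC}\ext\image A$ already for $A=R$; thus $\wt\ext\image\wt R\subseteq\cl_{\RC}\ext\image R$. Combining the two inclusions gives the asserted equality.

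The argument is essentially a packaging of the general fact that a continuous extension $\wt g\colon\scc\mathcal D\to K$ into a compact Hausdorff $K$ satisfies $\wt g\image\cl_{\scc\mathcal D}R=\cl_K g\image R$; the only point requiring care is the bookkeeping that lets one apply this with $g=\ext\uhr\mathcal D$ and $K=\RC$, i.e.\ checking that the relevant closures are taken in the $(X_1,\ldots,X_n)$-pointwise convergence topology and that $\wt\ext$ restricted to $\wt{\mathcal D}$ is indeed the continuous extension of $\ext\uhr\mathcal D$ (two continuous maps into the Hausdorff space $\RC$ agreeing on the dense set $\mathcal D$ must coincide). I expect no serious obstacle beyond this identification, since both inclusions reduce to standard properties of continuous extensions and of compact Hausdorff spaces.
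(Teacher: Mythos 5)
Your proposal is correct and follows essentially the same route as the paper's proof: one inclusion via the intersection formula $\{\wt\ext(\mathfrak f)\}=\bigcap_{A\in\mathfrak f}\cl_{\RC}\ext\image A$ applied to $A=R$, and the other via the fact that $\wt\ext$, being a continuous map from a compact space into the Hausdorff space $\RC$, sends the closed set $\cl_{\scc(S^{X_1\times\ldots\times X_n})}R=\wt R$ to a closed set containing $\ext\image R$. The bookkeeping you flag at the end (identifying $\scc(S^{X_1\times\ldots\times X_n})$ with a clopen subset of $\scc(Y^{X_1\times\ldots\times X_n})$ and checking the two extensions agree) is handled in the paper by the remark that the topology on $Z$ does not depend on $S$ as a subspace of $Y$, so there is no substantive difference.
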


\begin{proof} 
Again, to simplify notation, we temporarily let: 
\begin{align*}
Z&=
\scc\bigl(S^{X_1\times\ldots\times X_n}\bigr),
\\
\RC&=
RC_{X_1,\ldots,X_{n-1}}
(\scc X_1,\ldots,\scc X_n,Y).
\end{align*} 
We consider $Z$ with the standard topology on the space 
of ultrafilters, so it actually does not depend on the 
topology on $S$ as a~subspace of~$Y$. The fact that 
$Y$~is compact Hausdorff is used only to get the same 
properties of the topology on~$\RC$, which are 
essential to extend $\ext$ to~$\wt{\ext}$.

To prove the inclusion
$$
\wt{\ext}\image\cl_{Z}\,R
\subseteq
\cl_{\RC}\,\ext\image R,
$$
recall that by the general definition of continuous 
extensions of unary maps, for any $\mathfrak f\in Z$ 
we have 
$
\{\wt{\ext}(\mathfrak f)\}=
\bigcap_{A\in\mathfrak f}
\cl_{\RC}\,\ext\image A.
$
Therefore, 
\begin{align*}
\wt{\ext}\image
\cl_{Z}R
=
\bigl\{
\wt{\ext}(\mathfrak f):
\mathfrak f\in
\cl_{Z}R
\bigr\}
=
\bigcup_{\mathfrak f\in
\cl_{Z}R}\;
\bigcap_{A\in\mathfrak f}\;
\cl_{\RC}\,\ext\image A.
\end{align*}
But for any $\mathfrak f\in\cl_{Z}R$ we have 
$R\in\mathfrak f$ and hence 
$
\bigcap_{A\in\mathfrak f}
\cl_{\RC}\,\ext\image A
\subseteq
\cl_{\RC}\,\ext\image R,
$
whence it follows 
$$
\bigcup_{\mathfrak f\in
\cl_{Z}R}\;
\bigcap_{A\in\mathfrak f}\;
\cl_{\RC}\,\ext\image A
\subseteq 
\cl_{\RC}\,\ext\image R,
$$ 
which gives the required inclusion.

To prove the converse inclusion
$$
\cl_{\RC}\,\ext\image R
\subseteq
\wt{\ext}\image\cl_{Z}R, 
$$
note that 
$
\cl_{\RC}\,\wt{\ext}\image R
\subseteq
\wt{\ext}\image\cl_{Z}R 
$
since the map $\wt{\ext}:Z\to RC$ is closed as
a~continuous map of a~compact space into a~Hausdorff 
space (see e.g.~\cite{Engelking}, Corollary~3.1.11),
and that $\wt{\ext}\image R=\ext\image R$ since
$R$~consists of principal ultrafilters over the set 
$S^{X_1\times\ldots\times X_n}$ (under our customary 
identification of elements with principal ultrafilters 
given by them).

The lemma is proved. 
\end{proof}


Now we are ready to give the promised alternative 
description of the ${}^*\,$-extension of relations. 
For simplicity, we formulate it only for the case
when $X_1=\ldots=X_n=X$; nevertheless, this formulation 
does not lose generality since for given~$X_i$ we can 
take their union as such an~$X$.

\begin{theorem}\label{modal-via-ext}
Let\, $R\subseteq X\times\ldots\times X$ be any 
$n$-ary relation on a~set~$X$. Then its extension 
$R^*\subseteq\scc X\times\ldots\times\scc X$ is 
(identified with) the image under~$\wt\ext$ of 
the basic set~$\wt R$ in the space~$\scc(X^n)$ 
where $R$~is considered as a~unary relation on~$X^n$:  
\begin{align*}
R^*=    
\bigl\{\wt{\ext}(\mathfrak r):R\in\mathfrak r\bigr\}=
\wt{\ext}\image\wt{R}.
\end{align*}
\end{theorem}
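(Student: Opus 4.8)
The plan is to exhibit the map $\wt\ext$ appearing here as a concrete instance of the self-application construction, compute its image on the clopen set $\wt R$ using Lemma~\ref{ext-ext}, and recognize the resulting closure as $R^*$ via Theorem~\ref{modaltopology}. First I would set up the bookkeeping that makes Lemma~\ref{ext-ext} applicable. Writing $[n]=\{1,\ldots,n\}$, I view each tuple as a function $[n]\to X$, so that $X^n=X^{[n]}=S^{[n]}$ with $S:=X$ regarded as a dense subset of the compact Hausdorff space $Y:=\scc X$. Under this reading the relation $R\subseteq X^n$ is a subset $R\subseteq S^{[n]}$, and the basic set $\wt R$ is the closure $\cl_{\scc(X^n)}R$ of $R$ in the space of ultrafilters over $X^n$. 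The associated space of right continuous maps is $C(\scc[n],\scc X)$; since $\scc[n]=[n]$ is finite and discrete this is simply $(\scc X)^{[n]}=\scc X\times\ldots\times\scc X$ with the product topology, and here $\ext$ is the inclusion $X^n\hookrightarrow(\scc X)^n$ sending a tuple to the tuple of the corresponding principal ultrafilters, so $\wt\ext$ is its continuous extension to $\scc(X^n)$ (equivalently, $\mathfrak r\mapsto(\wt{\pi_1}(\mathfrak r),\ldots,\wt{\pi_n}(\mathfrak r))$, the ultrafilter limit in $(\scc X)^n$).

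Next I would feed these data into Lemma~\ref{ext-ext}. On one side $\cl_{\scc(X^n)}R=\wt R$; on the other, each $\boldsymbol a\in R$ is a function $[n]\to X\subseteq\scc X$ whose continuous extension over $\scc[n]=[n]$ is $\boldsymbol a$ itself, so $\ext\image R=R$ as a subset of $(\scc X)^n$. The lemma therefore yields
\[
\wt\ext\image\wt R=\wt\ext\image\cl_{\scc(X^n)}R=\cl_{(\scc X)^n}\,\ext\image R=\cl_{(\scc X)^n}R.
\]
Finally, Theorem~\ref{modaltopology} identifies $R^*$ with exactly the closure $\cl_{(\scc X)^n}R$ of $R$ in the product space, whence $\wt\ext\image\wt R=R^*$; and since $\wt R=\{\mathfrak r\in\scc(X^n):R\in\mathfrak r\}$, this image is literally $\{\wt\ext(\mathfrak r):R\in\mathfrak r\}$, which is the displayed formula.

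As a self-contained substitute for invoking Theorem~\ref{modaltopology}, I would verify $\cl_{(\scc X)^n}R=R^*$ pointwise: given $(\mathfrak u_1,\ldots,\mathfrak u_n)$ with $R^*(\mathfrak u_1,\ldots,\mathfrak u_n)$, the family $\{R\}\cup\{\pi_i^{-1}(A_i):A_i\in\mathfrak u_i,\ 1\le i\le n\}$ has the finite intersection property, since a finite intersection equals $R\cap(A_1\times\ldots\times A_n)$, nonempty precisely by the defining condition of $R^*$; it thus extends to an ultrafilter $\mathfrak r\ni R$ with $\wt\ext(\mathfrak r)=(\mathfrak u_1,\ldots,\mathfrak u_n)$. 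Conversely, any $\mathfrak r\ni R$ forces $R\cap(A_1\times\ldots\times A_n)\in\mathfrak r$ to be nonempty for every $A_i\in\mathfrak u_i:=\wt{\pi_i}(\mathfrak r)$, which is exactly $R^*(\mathfrak u_1,\ldots,\mathfrak u_n)$. The main obstacle is not any single computation but the initial identification: one must present the theorem's $\wt\ext$ as the specific instance of self-application above, check that the restricted pointwise topology degenerates to the product topology on $(\scc X)^n$, and confirm $\ext\image R=R$; once this is in place, Lemma~\ref{ext-ext} and Theorem~\ref{modaltopology} finish the argument.
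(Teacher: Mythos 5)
Your proposal is correct and follows essentially the same route as the paper's proof: the same instantiation of Lemma~\ref{ext-ext} with $X_1=n$ discrete, $Y=\scc X$, $S=X$, the same observations that $\cl_{\scc(X^n)}R=\wt R$ and $\ext\image R=R$, and the same appeal to Theorem~\ref{modaltopology} to identify $R^*$ with $\cl_{(\scc X)^n}R$. Your optional pointwise verification of $\cl_{(\scc X)^n}R=R^*$ is a sound self-contained substitute for that last citation, but it does not change the structure of the argument.
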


\begin{proof}
By Theorem~\ref{modaltopology}, 
$R^*=\cl_{\scc X\times\ldots\times\scc X}R$. As usual, 
the product space $\scc X\times\ldots\times\scc X$
($n$~times) is identified with $(\scc X)^n$, 
so up to this identification we can let 
$$
R^*=\cl_{(\scc X)^n}R.
$$
We are going to use Lemma~\ref{ext-ext} by choosing 
appropriate discrete $X_1,\ldots,X_m$, a~compact 
Hausdorff~$Y$, and $S\subseteq Y$. Let $m=1$, 
let the space~$X_1$ be~$n$ with the discrete topology, 
so $\scc X_1=X_1=n$, let 
the space~$Y$ be~$\scc X$ with the standard topology 
on the space of ultrafilters, and let $S$ be~$X$, so 
we have: 
$$
\scc\bigl(S^{X_1}\bigr)=\scc(X^n) 
\;\;\text{and}\;\; 
C(X_1,Y)=(\scc X)^n
$$ 
(clearly, the $X_1$-pointwise 
convergence topology on the latter set is the same 
that the full pointwise convergence topology). 
Now Lemma~\ref{ext-ext} gives us 
$$
\wt{\ext}\image\cl_{\scc(X^n)}R
=
\cl_{(\scc X)^n}\,\ext\image R.
$$ 
But $\cl_{\scc(X^n)}R=\wt{R}$ where $R$~is considered 
as a~unary relation on~$X^n$ (recall that if $Z$~is 
discrete and $A\subseteq Z$, then the basic open 
set~$\wt{A}$ equals the closure $\cl_{Z}A$), 
and furthermore, $\ext\image R=R$ (since $\wt f=f$ 
for all $f\in Y^n$ as $\scc\,n=n$). Putting all this 
together, we obtain:
$$
R^*=
\cl_{(\scc X)^n}R=
\cl_{(\scc X)^n}\,\ext\image R=
\wt{\ext}\image\cl_{\scc(X^n)}R=
\wt{\ext}\image\wt{R}, 
$$ 
as required. 
\end{proof}

Although this description of the $^{*}$-extension 
of relations is not simpler than one given by 
Theorem~\ref{modaltopology}, it provides some 
connection of this larger extension with 
the smaller $\wt{\;\;}$-extension of relations 
(by using also continuous extensions of maps). 
Other interrelations between the $\wt{\;\;}$- and  
$^{*}$-extensions of relations are established via 
Vietoris-type topologies in~\cite{Saveliev(2 concepts)}.


\section{Ultrafilter interpretations}

In this section, we define our main concepts: 
ultrafilter interpretations (of functional and 
relational symbols) and ultrafilter models 
(involving ultrafilter evaluations and ultrafilter
interpretations) together with their semantics. 
Then we provide two specific operations turning 
ultrafilter models into ordinary ones, establish 
necessary and sufficient conditions under which 
the latter are two canonical ultrafilter extensions 
of some ordinary models, and give a~topological
characterization of ultrafilter models. Finally, 
we define homomorphisms of ultrafilter models and 
prove for them a~version of the First Extension 
Theorem and an its refinement.

\vskip+1em
\noindent
\textbf{\textit{Ultrafilter models.}}
Using ultrafilters over maps in our previous 
considerations leads us to the following concept.

\begin{definition}\label{def: ultra model (narrow)}
Given a~signature~$\tau$, we define 
an {\it ultrafilter interpretation\/} as 
a~map~$\imath$ that takes each $n$-ary functional 
symbol~$F$ in~$\tau$ to an ultrafilter over the set 
of $n$-ary operations on~$X$, and each $n$-ary 
predicate symbol~$R$ in~$\tau$ to an ultrafilter 
over the set of $n$-ary relations on~$X$; 
let also $v$~be an {\it ultrafilter valuation\/} 
of variables, i.e.~a~valuation which takes each 
variable~$x$ to an ultrafilter over a~given set~$X$:
\begin{align*}
v(x)\in\scc X,
\quad
\imath(F)\in\scc\bigl(X^{X\times\ldots\times X}\bigr),
\quad
\imath(R)\in\scc\,\mathcal{P}(X\times\ldots\times X).
\end{align*}
We refer to the structure  
$$(\scc X,\imath(F),\ldots,\imath(R),\ldots)$$ 
as an {\it ultrafilter model\/} of~$\tau$.%
\footnote{
Ultrafilter models were introduced 
in~\cite{Poliakov Saveliev} under the name 
of generalized models. 
In Section~4, we shall introduce 
a~wider concept of ultrafilter models 
(Definition~\ref{def: ultra model (wide)}); 
the ultrafilter models defined here will be 
referred to as those ``in the narrow sense''.
}
\end{definition}

Now we are going to define an appropriate 
satisfiability relation between ultrafilter models 
and first-order formulas, which we shall denote 
by the symbol~$\VDash$\,.

First, given an interpretation~$\imath$ of non-logical 
symbols, we expand any valuation~$v$ of variables to 
the map~$v_\imath$ defined on all terms as follows. 
Let 
$
\app:
X_1\times\ldots\times X_n\times 
Y^{X_1\times\ldots\times X_n}
\to Y
$ 
be the {\it application\/} operation:
$$
\app(a_1,\ldots,a_n,f)=f(a_1,\ldots,a_n).
$$ 
Extend it to the map 
$
\wt\app:
\scc X_1\times\ldots\times\scc X_n\times
\scc(Y^{X_1\times\ldots\times X_n})\to\scc Y
$ 
right continuous w.r.t.~the principal ultrafilters, 
in the usual way:
$$
\xymatrix{\scc X_1\times\ldots\times\scc X_n\times
\scc(Y^{X_1\times\ldots\times X_n})
\!\!\!\!\!\!\!\!\!\!\!\!\!\!\!\!\! 
&\ar@{-->}^{\wt\app\;\;}[r]
&\;\scc Y
\\
\ar[u] X_1\times\ldots\times X_n\times 
Y^{X_1\times\ldots\times X_n}
\!\!\!\!\!\!\!\!\!\!\!\!\!\!\!\!\! 
&\ar[r]^{\app\;\;} &\;Y\ar[u]. }
$$ 
Let $v_\imath$~coincide with~$v$ on variables, 
and if $v_\imath$~has been already defined on 
terms $t_1,\ldots,t_n$, we let
$$
v_\imath(F(t_1,\ldots,t_n))=
\wt\app(v_\imath(t_1),\ldots,v_\imath(t_n),\imath(F)).
$$

\begin{remark}\label{rmk: ext-app}
We can consider, more generally, for any compact 
Hausdorff space~$Y$ the extension
$
\wt\app:
\scc X_1\times\ldots\times\scc X_n\times
\scc(Y^{X_1\times\ldots\times X_n})\to Y
$ 
right continuous w.r.t.~the principal ultrafilters:
$$
\xymatrix{
\scc X_1\times\ldots\times\scc X_n\times
\scc(Y^{X_1\times\ldots\times X_n})
\qquad\qquad\ar@{-->}^{\qquad\;\wt\app}[dr]&&
\!\!\!\!\!\!\!\!\!\!\!\!\!\!\! 
\\
\;\;\;{X_1\times\ldots\times X_n\times 
Y^{X_1\times\ldots\times X_n}}\,
\ar[u]\ar[r]^{\qquad\qquad\app}& 
Y
}
$$
though this is redundant for our immediate purposes.
\end{remark}

Further, given an ultrafilter model 
$
\mathfrak A=
(\scc X,\imath(F),\ldots,\imath(R),\ldots),
$ 
define the satisfiability in~$\mathfrak A$ as follows. 
Let 
$
\inn\subseteq 
X_1\times\ldots\times X_n\times
\mathcal{P}(X_1\times\ldots\times X_n)
$ 
be the {\it membership\/} predicate:
$$
\inn(a_1,\ldots,a_n,R)
\;\;\text{iff}\;\;
(a_1,\ldots,a_n)\in R.
$$
Extend it to the relation 
$
\wt{\inn}\subseteq 
\scc X_1\times\ldots\times\scc X_n\times
\scc\,\mathcal{P}(X_1\times\ldots\times X_n)
$ 
right clopen w.r.t.~principal ultrafilters.

\begin{definition}\label{def: sat narrow}
The {\it satisfiability} of a~formula~$\varphi$ 
in~$\mathfrak A$ is defined by induction on 
the construction of~$\varphi$:
If $t_1=t_2$ is an identity, we let
$$ 
\mathfrak A\VDash
t_1=t_2\;[v]
\;\;\text{iff}\;\;
v_\imath(t_1)=v_\imath(t_2).
$$
If $R(t_1,\ldots,t_n)$ is an atomic formula 
in which $R$~is not the equality predicate, we let
$$
\mathfrak A\VDash
R(t_1,\ldots,t_n)\;[v]
\;\;\text{iff}\;\;
\wt\inn(v_\imath(t_1),\ldots,v_\imath(t_n),\imath(P)). 
$$
(Equivalently, we could define the satisfiability 
of atomic formulas by identifying predicates with 
their characteristic functions and using the 
satisfiability of equalities of the resulting terms.)
Finally, if $\varphi(t_1,\ldots,t_n)$ is obtained 
by negation, conjunction, or quantification from 
formulas for which $\VDash$~has been already defined,
we define $\mathfrak A\VDash\varphi\,[v]$ 
in the standard way. 
\end{definition}

When needed, we shall use variants of notation 
commonly used for ordinary models and satisfiability, 
for our generalized variants. E.g.~for an ultrafilter  
model~$\mathfrak{A}$ with the universe~$\scc X$, 
a~formula $\varphi(x_1,\ldots,x_n)$, and elements
$\mathfrak u_1,\ldots,\mathfrak u_n$ of~$\scc X$, 
the notation 
$
\mathfrak{A}\VDash\varphi\,
[\mathfrak u_1,\ldots,\mathfrak u_n]
$ means that 
$\varphi$~is satisfied in~$\mathfrak A$ under 
a~valuation taking the variables $x_1,\ldots,x_n$ to 
the ultrafilters $\mathfrak u_1,\ldots,\mathfrak u_n$, 
respectively.


Ultrafilter models actually generalize not all 
ordinary models but those that are ultrafilter 
extensions of some models. It is worth also pointing 
out that whenever an ultrafilter interpretation is 
{\it principal}, i.e.~all non-logical symbols are
interpreted by principal ultrafilters, we naturally 
identify it with the obvious ordinary interpretation 
with the same universe~$\scc X$; however, not every
ordinary interpretation with the universe~$\scc X$ 
is of this form. Precise relationships between 
ultrafilter models, ordinary models, and ultrafilter
extensions will be described in Theorems 
\ref{e-and-E-as-ultraextensions} 
and~\ref{e-and-E-topology}. 
Let us also note in advance that ultrafilter models 
in the wide sense, which we shall define in Section~4,  
will cover (up to some natural identification) not 
ultrafilter extensions only but all ordinary models.


An ultrafilter valuation~$v$ is {\it principal\/} 
iff it takes any variable to a~principal ultrafilter.

\begin{lemma}\label{equiv}
Let\, 
$
\mathfrak{A}=
(\scc X,\imath(F),\ldots,\imath(R),\ldots)
$ 
and 
$
\mathfrak{B}=
(\scc X,\jmath(F),\ldots,\jmath(R),\ldots)
$ 
be two ultrafilter models of the same signature 
and having the same universe~$\scc X$.
If for all functional symbols~$F$, predicate symbols~$R$, 
variables $x_1,\ldots,x_n$, and principal valuations~$v$, 
\begin{align*}
\wt{\app}(v(x_1),\ldots,v(x_n),\imath(F))
&=
\wt{\app}(v(x_1),\ldots,v(x_n),\jmath(F)),
\\
\wt{\inn}(v(x_1),\ldots,v(x_n),\imath(R))
\;\,&\text{iff}\;\;\,
\wt{\inn}(v(x_1),\ldots,v(x_n),\jmath(R)),
\end{align*}
then for all formulas~$\varphi$, terms $t_1,\ldots,t_n$, 
and valuations~$v$,
$$
\mathfrak{A}\VDash\varphi(t_1,\ldots,t_n)\,[v]
\;\;\text{iff}\;\;
\mathfrak{B}\VDash\varphi(t_1,\ldots,t_n)\,[v].
$$
\end{lemma}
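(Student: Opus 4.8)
The plan is to reduce the claim to a single agreement statement about the extended operations $\wt{\app}$ and $\wt{\inn}$ on \emph{arbitrary} ultrafilters, and then to run the obvious inductions on terms and on formulas. First I would isolate the following key fact: for every functional symbol $F$ and all $\mathfrak u_1,\ldots,\mathfrak u_n\in\scc X$,
\[
\wt{\app}(\mathfrak u_1,\ldots,\mathfrak u_n,\imath(F))
=
\wt{\app}(\mathfrak u_1,\ldots,\mathfrak u_n,\jmath(F)),
\]
and for every predicate symbol $R$, $\wt{\inn}(\mathfrak u_1,\ldots,\mathfrak u_n,\imath(R))$ iff $\wt{\inn}(\mathfrak u_1,\ldots,\mathfrak u_n,\jmath(R))$. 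The hypothesis supplies exactly this agreement when all the $\mathfrak u_i$ are principal, so the real work is to propagate it to arbitrary ultrafilters.

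To do so I freeze the last argument and consider the two $n$-ary maps $g_\imath,g_\jmath\colon(\scc X)^n\to\scc X$ given by $g_\imath(\mathfrak u_1,\ldots,\mathfrak u_n)=\wt{\app}(\mathfrak u_1,\ldots,\mathfrak u_n,\imath(F))$ and analogously for $\jmath$. Since $\wt{\app}$ is right continuous with respect to the principal ultrafilters in its first $n$ coordinates, and the $(n+1)$-st coordinate is a \emph{later} argument for each of those positions (hence permitted to be arbitrary in the definition of right continuity), freezing it leaves $g_\imath$ and $g_\jmath$ right continuous with respect to $X,\ldots,X$ (the first $n-1$ coordinates). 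By hypothesis they coincide on the product of the dense sets of principal ultrafilters, and $\scc X$ is Hausdorff, so Lemma~\ref{uniqueness-of-right-cont}(i) forces $g_\imath=g_\jmath$ everywhere. For predicate symbols I would argue identically with the characteristic function of the slice $\{(\mathfrak u_1,\ldots,\mathfrak u_n):\wt{\inn}(\mathfrak u_1,\ldots,\mathfrak u_n,\imath(R))\}$ into the discrete two-point space: right clopenness of $\wt{\inn}$ turns this characteristic function into a right continuous $\{0,1\}$-valued map, to which Lemma~\ref{uniqueness-of-right-cont}(i) again applies.

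With the key fact in hand I would show $v_\imath(t)=v_\jmath(t)$ for every term $t$ and every valuation $v$ by induction on $t$. Variables are immediate, since $v_\imath$ and $v_\jmath$ both restrict to $v$; for $t=F(t_1,\ldots,t_n)$ the induction hypothesis gives $v_\imath(t_k)=v_\jmath(t_k)$, whence
\begin{align*}
v_\imath(F(t_1,\ldots,t_n))
&=\wt{\app}(v_\imath(t_1),\ldots,v_\imath(t_n),\imath(F))\\
&=\wt{\app}(v_\jmath(t_1),\ldots,v_\jmath(t_n),\jmath(F))
=v_\jmath(F(t_1,\ldots,t_n)),
\end{align*}
the middle equality combining the induction hypothesis with the key fact. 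Agreement on atomic formulas is then immediate: equalities reduce to equalities of term-values, and $R(t_1,\ldots,t_n)$ reduces through $\wt{\inn}$ to the predicate half of the key fact applied to the ultrafilters $v_\imath(t_k)=v_\jmath(t_k)$. Finally a routine induction on formula complexity extends the equivalence to all formulas: the negation and conjunction steps are trivial, and the quantifier step goes through because $\mathfrak A$ and $\mathfrak B$ share the universe $\scc X$, so the quantifiers range over the same set.

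The main obstacle is the key fact, and within it the verification that freezing the function (resp.\ relation) argument of $\wt{\app}$ (resp.\ $\wt{\inn}$) genuinely preserves right continuity (resp.\ right clopenness) in the remaining coordinates, so that Lemma~\ref{uniqueness-of-right-cont} becomes applicable; everything after that is bookkeeping. Care is required because right continuity is an asymmetric notion tied to coordinate order, so one must confirm that the frozen coordinate occupies a position the definition allows to be arbitrary — which it does, being the final argument of the $(n+1)$-ary maps $\wt{\app}$ and $\wt{\inn}$.
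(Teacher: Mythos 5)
Your proposal is correct and follows essentially the same route as the paper, whose proof is just the one-line remark ``by induction on construction of formulas using the right continuity of~$\wt{\app}$ and the right clopenness of~$\wt{\inn}$ w.r.t.~$X$''; your appeal to Lemma~\ref{uniqueness-of-right-cont} to propagate agreement from principal to arbitrary ultrafilters is exactly the mechanism that remark is gesturing at, and your check that freezing the final (function or relation) argument preserves right continuity in the earlier coordinates is the right point to be careful about.
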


\begin{proof} 
By induction on construction of formulas using 
the right continuity of~$\wt{\app}$ and the right 
clopenness of~$\wt{\inn}$ w.r.t.~$X$. 
\end{proof}

\begin{cor}\label{principalpred}
Given an ultrafilter model\, 
$
\mathfrak{A}=
(\scc X,\imath(F),\ldots,\imath(R),\ldots),
$ 
define an ultrafilter model 
$
\mathfrak{B}=
(\scc X,\jmath(F),\ldots,\jmath(R),\ldots)
$ 
of the same signature as follows: 
let $\mathfrak B$ have the same universe~$\scc X$, 
let $\jmath$~coincide with~$\imath$ on functional 
symbols, and for each predicate symbol~$R$ let 
$\jmath(R)$~be the principal ultrafilter 
given by the relation 
$$
\bigl\{
(a_1,\ldots,a_n)\in X^n:
\wt{\inn}(a_1,\ldots, a_n,\imath(R))
\bigr\}.
$$
Then for all valuations~$v$, formulas~$\varphi$, 
and terms $t_1,\ldots,t_n$, 
$$
\mathfrak{A}\VDash\varphi(t_1,\ldots,t_n)\,[v]
\;\;\text{iff}\;\;
\mathfrak{B}\VDash\varphi(t_1,\ldots,t_n)\,[v]. 
$$
\end{cor}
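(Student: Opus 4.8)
The plan is to apply Lemma~\ref{equiv}. Since $\mathfrak{A}$ and $\mathfrak{B}$ share the same signature and the same universe~$\scc X$, it suffices to check the two hypotheses of that lemma, i.e.~that $\imath$ and~$\jmath$ agree (via $\wt{\app}$ and $\wt{\inn}$) on all principal valuations. Once these are verified, the conclusion of Lemma~\ref{equiv} is exactly the asserted equivalence of satisfaction in $\mathfrak{A}$ and~$\mathfrak{B}$ for all formulas, terms, and valuations.

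The condition on functional symbols is immediate: by construction $\jmath(F)=\imath(F)$ for every functional symbol~$F$, so $\wt{\app}(v(x_1),\ldots,v(x_n),\imath(F))=\wt{\app}(v(x_1),\ldots,v(x_n),\jmath(F))$ for every valuation, a~fortiori for principal ones. For the condition on predicate symbols, fix a~predicate symbol~$R$ and a~principal valuation~$v$, so that each $v(x_i)$ is the principal ultrafilter given by some $a_i\in X$; under the identification $X\subseteq\scc X$ I write $v(x_i)=a_i$. Write $S$ for the relation $\bigl\{(a_1,\ldots,a_n)\in X^n:\wt{\inn}(a_1,\ldots,a_n,\imath(R))\bigr\}$, so that $\jmath(R)$ is the principal ultrafilter given by~$S$. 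On the $\jmath$-side, every argument of $\wt{\inn}$ is now principal---the elements $a_1,\ldots,a_n$ and the relation $S\in\mathcal{P}(X^n)$---and since $\wt{\inn}$ extends the membership predicate $\inn$, it agrees with $\inn$ on such tuples:
$$
\wt{\inn}(a_1,\ldots,a_n,\jmath(R))
\;\;\text{iff}\;\;
\inn(a_1,\ldots,a_n,S)
\;\;\text{iff}\;\;
(a_1,\ldots,a_n)\in S.
$$
By the very definition of~$S$, the last condition holds iff $\wt{\inn}(a_1,\ldots,a_n,\imath(R))$, and so $\wt{\inn}(v(x_1),\ldots,v(x_n),\jmath(R))$ iff $\wt{\inn}(v(x_1),\ldots,v(x_n),\imath(R))$, which is the second hypothesis of Lemma~\ref{equiv}.

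The only step requiring care is the reduction of $\wt{\inn}$ to $\inn$ when \emph{all} of its arguments, including the relational one, are principal; but this is precisely the defining property of $\wt{\inn}$ as an extension of $\inn$ (its restriction to principal ultrafilters coincides with $\inn$), so no genuine obstacle arises. With both hypotheses of Lemma~\ref{equiv} established, the corollary follows.
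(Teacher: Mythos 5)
Your proof is correct and follows exactly the paper's route: the paper's entire proof is the single citation ``Lemma~\ref{equiv}'', and you have simply written out the routine verification of that lemma's two hypotheses (trivial agreement on functional symbols, and the reduction of $\wt{\inn}$ to $\inn$ on all-principal arguments for predicate symbols). Nothing to correct.
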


\begin{proof}
Lemma~\ref{equiv}. 
\end{proof}


\begin{definition}\label{def: pseudo-principal uf}
If $X_1,\ldots,X_n,Y$ are discrete spaces, 
let us say that an ultrafilter~$\mathfrak f$ 
over the set $Y^{X_1\times\ldots\times X_n}$ of 
$n$-ary maps is {\it pseudo-principal\/} iff 
$\wt\app$~takes any $n$-tuple consisting of 
principal ultrafilters together with~$\mathfrak f$ 
to a~principal ultrafilter: 
\begin{align*}
a_1\in X_1,\ldots,a_n\in X_n
\;\;\text{implies}\;\; 
\wt\app(a_1,\ldots,a_n,\mathfrak f)\in Y.
\end{align*}
\end{definition}

Clearly, if the space~$Y$ is finite, then all  
ultrafilters in $\scc(Y^{X_1\times\ldots\times X_n})$
are pseudo-principal. 
(More generally, if we would defined $\wt{\app}$ 
with the range in any compact Hausdorff~$Y$, as 
proposed in the remark above, then all ultrafilters 
in $\scc(Y^{X_1\times\ldots\times X_n})$ were 
pseudo-principal.)

\begin{lemma}\label{pseudoprinc} 
Let $X_1,\ldots,X_n,Y$ be discrete spaces.
In $\scc(Y^{X_1\times\ldots\times X_n})$, every 
principal ultrafilter is pseudo-principal, and 
if $Y$ and at least one of the~$X_i$ are infinite, 
then there exist pseudo-principal ultrafilters that 
are not principal as well as ultrafilters that are 
not pseudo-principal. 
\end{lemma}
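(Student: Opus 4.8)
The plan is to prove three separate assertions about the set $\scc(Y^{X_1\times\ldots\times X_n})$: first, that principal ultrafilters are always pseudo-principal; second, that under the infinitude hypotheses there is a non-principal pseudo-principal ultrafilter; and third, under the same hypotheses there is an ultrafilter that is not pseudo-principal at all. The first claim is immediate: if $\mathfrak f$ is the principal ultrafilter given by a single map $g\in Y^{X_1\times\ldots\times X_n}$, then for principal $a_1,\ldots,a_n$ we have $\wt\app(a_1,\ldots,a_n,\mathfrak f)=\app(a_1,\ldots,a_n,g)=g(a_1,\ldots,a_n)\in Y$ by right continuity of $\wt\app$ w.r.t.\ principal ultrafilters, which is principal. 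So the work lies in the two existence statements.

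**Constructing a non-principal pseudo-principal ultrafilter.**
For the second claim, the key observation is that being pseudo-principal is a condition on the family of values $\langle g(a_1,\ldots,a_n):a_i\in X_i\rangle$ as $g$ ranges over a set in $\mathfrak f$, rather than on $g$ itself. Concretely, $\wt\app(a_1,\ldots,a_n,\mathfrak f)$ is the ultrafilter $\{S\subseteq Y:\{g:g(\boldsymbol a)\in S\}\in\mathfrak f\}$, and this is principal precisely when $\{g:g(\boldsymbol a)=y\}\in\mathfrak f$ for some $y\in Y$. The plan is to fix a single value $y_0\in Y$ and let $\mathfrak f$ contain, for every tuple $\boldsymbol a\in X_1\times\ldots\times X_n$, the set $\{g:g(\boldsymbol a)=y_0\}$; that is, take any ultrafilter extending the family $\{\{g:g(\boldsymbol a)=y_0\}:\boldsymbol a\}$ together with a set forcing non-principality (for instance the complement of any single map). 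This family is centered because a finite intersection $\bigcap_{i<k}\{g:g(\boldsymbol a_i)=y_0\}$ is satisfied by the constant map $g\equiv y_0$ (indeed by any $g$ agreeing with $y_0$ on the finitely many $\boldsymbol a_i$), and there are infinitely many such $g$ since some $X_i$ is infinite, so one can simultaneously avoid any prescribed single map. The resulting $\mathfrak f$ is non-principal yet pseudo-principal, since $\wt\app(\boldsymbol a,\mathfrak f)$ is the principal ultrafilter given by $y_0$ for every $\boldsymbol a$.

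**Constructing an ultrafilter that is not pseudo-principal.**
For the third claim I would exploit the infinitude of $Y$ to arrange that, at a single tuple, the value ultrafilter is non-principal. Fix one tuple $\boldsymbol a_0$ and an injection $\iota:X_{i_0}\to Y$ where $X_{i_0}$ is an infinite coordinate space; using the bijection between $Y^{X_1\times\ldots\times X_n}$ and functions of the remaining structure, I build maps whose values at $\boldsymbol a_0$ range over an infinite subset $T\subseteq Y$, and choose $\mathfrak f$ so that the induced ultrafilter $\wt\app(\boldsymbol a_0,\mathfrak f)=\{S:\{g:g(\boldsymbol a_0)\in S\}\in\mathfrak f\}$ is a non-principal ultrafilter on $Y$ concentrated on $T$. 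The cleanest route is to push forward a non-principal ultrafilter: let $\pi:Y^{X_1\times\ldots\times X_n}\to Y$ be evaluation at $\boldsymbol a_0$, fix any non-principal ultrafilter $\mathfrak w$ on the infinite set $Y$, and take $\mathfrak f=\{A:\pi^{-1}(S)\subseteq A\text{ for some }S\in\mathfrak w\}$, the filter generated by preimages under $\pi$; extend to an ultrafilter. Then $\wt\app(\boldsymbol a_0,\mathfrak f)\supseteq\mathfrak w$ is non-principal, so $\mathfrak f$ is not pseudo-principal.

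**The main obstacle.**
The routine parts are the centeredness checks and the first claim. The delicate point is making sure the value-ultrafilter at a given tuple really equals the push-forward I intend, i.e.\ verifying that $\wt\app(\boldsymbol a_0,\mathfrak f)=\pi_*\mathfrak f$ (the image ultrafilter under evaluation at $\boldsymbol a_0$); this amounts to unwinding the definition of $\wt\app$ via its right-continuous extension and checking $S\in\wt\app(\boldsymbol a_0,\mathfrak f)$ iff $\pi^{-1}(S)\in\mathfrak f$, which is exactly the description of the continuous extension of the unary map $g\mapsto g(\boldsymbol a_0)$ evaluated at $\mathfrak f$. Once this identification is in hand, both existence claims reduce to the standard facts that $Y$ carries a non-principal ultrafilter when infinite and that one can freely prescribe finitely many coordinate values of a map, so the argument closes.
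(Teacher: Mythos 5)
Your argument is correct and essentially coincides with the paper's own proof: the paper obtains the non-principal pseudo-principal ultrafilter by extending the centered family $\{\{g:g(\boldsymbol a)=f(\boldsymbol a)\}:\boldsymbol a\}\cup\{\{g:g\neq f\}\}$ for an arbitrary fixed $f$ (your version is the special case $f\equiv y_0$), and the non-pseudo-principal one by extending $\{\{g:g(\boldsymbol a_0)\in S\}:S\in\mathfrak u\}$ for a non-principal $\mathfrak u\in\scc Y\setminus Y$, which is exactly your push-forward construction under evaluation at $\boldsymbol a_0$, the identification $\wt\app(\boldsymbol a_0,\mathfrak f)=\pi_*\mathfrak f$ being checked by the same ultrafilter-quantifier computation you indicate. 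The one point to tighten is in your pseudo-principal construction: to force non-principality the excluded map must be the constant map $g\equiv y_0$ itself (the unique member of $\bigcap_{\boldsymbol a}\{g:g(\boldsymbol a)=y_0\}$), not the complement of an arbitrary single map.
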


\begin{proof}
Pick any $f\in Y^{X_1\times\ldots\times X_n}$. 
Let $\mathcal F$~be the following family of subsets 
of the space $Y^{X_1\times\ldots\times X_n}$:
$$
\mathcal F=
\bigl\{
\bigl\{g\in Y^{X_1\times\ldots\times X_n}:
g(\boldsymbol{a})=f(\boldsymbol{a})\bigr\}:
\boldsymbol{a}\in X_1\times\ldots\times X_n
\bigr\}
\cup
\bigl\{
\bigl\{g\in Y^{X_1\times\ldots\times X_n}: 
g\neq f\bigr\}
\bigr\}.
$$
The family~$\mathcal F$ is centered (as at 
least one of the~$X_i$ is infinite), so pick 
any ultrafilter~$\mathfrak f$ over the set
$Y^{X_1\times\ldots\times X_n}$ such that 
$\mathcal F\subseteq\mathfrak f$.  
Since $\bigcap\mathcal F$~is empty, 
the ultrafilter~$\mathfrak f$ is non-principal.  
On the other hand, for every 
$
\boldsymbol{a}=(a_1,\ldots, a_n)\in 
X_1\times\ldots\times X_n
$ 
we have:
\begin{align*}
S\in\wt{\app}(a_1,\ldots,a_n,\mathfrak f)
&\;\;\text{iff}\;\;
(\forall^{\,a_1}x_1)
\ldots
(\forall^{\,a_n}x_n)
(\forall^{\,\mathfrak f}g)\;
\app(x_1,\ldots,x_n,g)\in S
\\
&\;\;\text{iff}\;\;
(\forall^{\,a_1}x_1)
\ldots
(\forall^{\,a_n}x_n)
(\forall^{\,\mathfrak f}g)\;
g(x_1,\ldots,x_n)\in S
\\
&\;\;\text{iff}\;\;
(\forall^{\,\mathfrak f}g)\;
g(a_1,\ldots,a_n)\in S
\\
&\;\;\text{iff}\;\;
(\exists F\in\mathfrak f)
(\forall g\in F)\;
g(a_1,\ldots,a_n)\in S.
\end{align*}
(The first equivalence follows from the definition 
of extensions of maps via ultrafilter quantifiers,
the second holds by the definition of~$\app$, 
the third since $a_1,\ldots,a_n$ are principal, 
and the fourth decodes the definition of 
the $\forall^{\,\mathfrak f}$~quantifier.)
Letting $S=\{f(a_1,\ldots,a_n)\}$, we have 
$
\wt{\app}(a_1,\ldots,a_n,\mathfrak f)=
f(a_1,\ldots,a_n)\in Y, 
$ 
thus witnessing that $\mathfrak f$~is 
pseudo-principal.

\par 
To construct a~non-pseudo-principal ultrafilter, 
pick any 
$
\boldsymbol{a}=(a_1,\ldots,a_n)
\in X_1\times\ldots\times X_n
$ 
and $\mathfrak u\in\scc Y\setminus Y$ (as 
$Y$~is infinite), and expand the centered family
\begin{align*}
\mathcal G
&=
\bigl\{
\bigl\{f\in Y^{X_1\times\ldots\times X_n}: 
f(\boldsymbol{a})\in S\bigr\}:S\in\mathfrak u
\bigr\}
\\
&=
\bigl\{
O_{\boldsymbol{a},\,\wt{S}}:S\in\mathfrak u
\bigr\}
\end{align*}
to an ultrafilter $\mathfrak g\supseteq\mathcal G$ 
over $Y^{X_1\times\ldots\times X_n}$. Calculations 
similar to those in the above give us 
$$
\wt{\app}(a_1,\ldots,a_n,\mathfrak g)=\mathfrak u,
$$
thus witnessing that $\mathfrak g$~is not 
pseudo-principal.
\end{proof}


\begin{definition}\label{def: pseudo-principal int}
An ultrafilter interpretation~$\imath$ is 
{\it pseudo-principal on functional symbols\/} 
iff $\imath(F)$~is a~pseudo-principal ultrafilter 
for each functional symbol~$F$ 
(and then, for each term~$t$).
\end{definition}

\begin{cor}\label{principalfunc}
Given an ultrafilter model
$
\mathfrak{A}=
(\scc X,\imath(F),\ldots,\imath(R),\ldots)
$ 
with $\imath$~pseudo-principal on functional symbols, 
define an ultrafilter model 
$
\mathfrak{B}=
(\scc X,\jmath(F),\ldots,\jmath(R),\ldots)
$ 
of the same signature as follows: 
let $\mathfrak B$ have the same universe~$\scc X$, 
let $\jmath$~coincide with~$\imath$ on predicate symbols, 
and for each functional symbol~$F$ let $\jmath(F)$~be 
the principal ultrafilter given by the operation 
$f:X^n\to X$ defined by letting 
$$
f(a_1,\ldots,a_n)=\wt{\app}(a_1,\ldots,a_n,\imath(F)).
$$
Then for all valuations~$v$, formulas~$\varphi$, 
and terms $t_1,\ldots,t_n$, 
$$
\mathfrak{A}\VDash\varphi(t_1,\ldots,t_n)\,[v]
\;\;\text{iff}\;\;
\mathfrak{B}\VDash\varphi(t_1,\ldots,t_n)\,[v]. 
$$
\end{cor}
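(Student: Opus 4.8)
The plan is to reduce everything to Lemma~\ref{equiv}, exactly as in the proof of Corollary~\ref{principalpred}. To invoke that lemma I must verify its two hypotheses for the interpretations $\imath$ and $\jmath$ on principal valuations; once these hold, the equivalence of satisfiability in $\mathfrak A$ and $\mathfrak B$ follows for all formulas, terms, and valuations at once.

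First I would dispose of the relational hypothesis, which is immediate. Since $\jmath$ coincides with $\imath$ on all predicate symbols by construction, we have $\imath(R)=\jmath(R)$ for every predicate symbol~$R$, so the equivalence
$$
\wt{\inn}(v(x_1),\ldots,v(x_n),\imath(R))
\;\text{iff}\;
\wt{\inn}(v(x_1),\ldots,v(x_n),\jmath(R))
$$
holds trivially for every principal valuation~$v$.

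The substantive point is the functional hypothesis. Here I would first observe that the pseudo-principality of~$\imath$ on functional symbols is precisely what guarantees that the prescription $f(a_1,\ldots,a_n)=\wt{\app}(a_1,\ldots,a_n,\imath(F))$ defines a genuine operation $f\colon X^n\to X$ (rather than merely a map into~$\scc X$), so that $\jmath(F)$, the principal ultrafilter given by~$f$, is a legitimate element of $\scc(X^{X\times\ldots\times X})$. Then, for a principal valuation~$v$ with $v(x_i)=a_i\in X$, I would compute $\wt{\app}(a_1,\ldots,a_n,\jmath(F))$: since $a_1,\ldots,a_n$ and $\jmath(F)$ are all principal ultrafilters and $\wt{\app}$ extends~$\app$, this value equals $\app(a_1,\ldots,a_n,f)=f(a_1,\ldots,a_n)$, which by the very definition of~$f$ equals $\wt{\app}(a_1,\ldots,a_n,\imath(F))$. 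This is exactly the required condition
$$
\wt{\app}(v(x_1),\ldots,v(x_n),\imath(F))
=
\wt{\app}(v(x_1),\ldots,v(x_n),\jmath(F)).
$$

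With both hypotheses of Lemma~\ref{equiv} verified, the corollary follows. I do not expect a genuine obstacle: the only place where care is needed is in checking that~$f$ is well-defined as an $X$-valued operation, which is exactly where the pseudo-principality assumption enters, and in noting that $\wt{\app}$ restricted to principal arguments recovers~$\app$ — a property built into the definition of~$\wt{\app}$ as the extension of~$\app$ that is right continuous w.r.t.\ the principal ultrafilters.
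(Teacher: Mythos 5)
Your proposal is correct and follows exactly the paper's route: the paper's own proof is simply the citation of Lemma~\ref{equiv}, and the verification you supply (trivial on predicate symbols, and the chain $\wt{\app}(\boldsymbol a,\jmath(F))=\app(\boldsymbol a,f)=f(\boldsymbol a)=\wt{\app}(\boldsymbol a,\imath(F))$ on functional symbols, with pseudo-principality ensuring $f$ is $X$-valued) is precisely the computation the paper later makes explicit in Lemma~\ref{pseudoprinc-to-princ}.
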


\begin{proof}
Lemma~\ref{equiv}.
\end{proof}

It follows that for every ultrafilter 
model~$\mathfrak{A}$ whose interpretation 
is pseudo-principal on functional symbols, 
by replacing its relations 
as in Corollary~\ref{principalpred} and 
its operations as in Corollary~\ref{principalfunc}, 
one obtains an ordinary model~$\mathfrak{B}$ 
with the same universe such that 
for all formulas~$\varphi$ and elements 
$\mathfrak u_1,\ldots,\mathfrak u_n$ of the universe, 
$
\mathfrak{A}\VDash\varphi\,
[\mathfrak u_1,\ldots,\mathfrak u_n]
$
iff 
$
\mathfrak{B}\vDash\varphi\,
[\mathfrak u_1,\ldots,\mathfrak u_n]. 
$

We do not formulate this fact as a~separate theorem 
since we shall be able to establish stronger facts 
soon. In Theorem~\ref{e-preserves-formulas}, we shall 
establish that for any ultrafilter model~$\mathfrak A$, 
not necessarily with a~pseudo-principal interpretation, 
one can construct a~certain ordinary model 
$e(\mathfrak A)$ satisfying the same formulas; 
and then, in Theorem~\ref{e-and-E-as-ultraextensions}, 
that whenever $\mathfrak A$~has a~pseudo-principal 
interpretation, $e(\mathfrak A)$~is nothing but 
the $\wt{\;\;}$-extension of some model. In fact, 
in the latter case, $e(\mathfrak A)$~coincides 
with $\mathfrak B$ from the previous paragraph.

%
%



\vskip+1em
\noindent
\textbf{\textit{Operations $e$ and~$E$.}}
Let us now define two operations, $e$ and~$E$, 
which turn ultrafilter models into certain ordinary 
models that (as we shall see soon) generalize 
the ${}^*\,$- and $\,\wt{\;}\;$-extensions. 
Both operations take ultrafilters over $n$-ary maps 
to $n$-ary maps over ultrafilters, and ultrafilters 
over $n$-ary relations to $n$-ary relations over 
ultrafilters. Both operations are surjective and 
non-injective (Lemma~\ref{e,E-surj-non-inj}).

The map~$e$ on ultrafilters over maps will be the
map~$\wt{\ext}$ defined and discussed in Section~2. 
Now we extend $\wt{\ext}$ to ultrafilters over
relations by identifying $n$-ary relations 
with their $n$-ary characteristic functions 
into the discrete space~$2=\{0,1\}$: 
$$
\xymatrix{
\;\quad\scc
\mathcal P(X_1\times\ldots\times X_n)
\qquad\ar@{-->}^{\;\;\;\;\wt{\ext}}[dr]&&
\\
\mathcal P(X_1\times\ldots\times X_n)
\,\ar[u]\ar[r]^{\;\;\;\;\;\;\;\;\ext}&&%
\!\!\!\!\!\!\!\!\!\!\!\!\!\!\!\!%
\{
Q\subseteq\scc X_1\times\ldots\times\scc X_n:
Q\text{ is right clopen w.r.t.~}
X_1,\ldots,X_{n-1}
\}
}
$$
(Recall that by Theorem~\ref{topological-char}(i),
a~subset of $\scc X_1\times\ldots\times\scc X_n$ 
is right clopen w.r.t.~$X_1,\ldots,X_{n-1}$ iff it 
is of form~$\wt{R}$ for some $n$-ary subset~$R$ of 
$X_1\times\ldots\times X_n$.) Let the map~$e$ on
ultrafilters over relations also coincide with 
the map~$\wt{\ext}$ on them. So in result we have:
\begin{align*}
e\image
\scc\bigl(Y^{X_1\times\ldots\times X_n}\bigr)
&\subseteq
\scc Y^{\,\scc X_1\times\ldots\times\scc X_n},
\\
e\image
\scc\,\mathcal{P}(X_1\times\ldots\times X_n)
&\subseteq
\mathcal{P}(\scc X_1\times\ldots\times\scc X_n).
\end{align*}

We observe that $e$ and $\wt\app$ (or $\wt\inn$) 
are expressed via each other:

\begin{lemma}\label{appviaext} 
Let $X_1,\ldots,X_n,Y$ be discrete spaces. 
For all\, 
$
\mathfrak f\in
\scc(Y^{X_1\times\ldots\times X_n}), 
$
$
\mathfrak r\in
\scc\,\mathcal{P}(X_1\times\ldots\times X_n),
$ 
and\, 
$
\mathfrak u_1\in
\scc X_1,\ldots,\mathfrak u_n\in\scc X_n,
$
\begin{align*}
e(\mathfrak f)(\mathfrak u_1,\ldots,\mathfrak u_n)
&=
\wt\app(\mathfrak u_1,\ldots,\mathfrak u_n,\mathfrak f),
\\
e(\mathfrak r)(\mathfrak u_1,\ldots,\mathfrak u_n)
&\;\;\text{iff}\;\;\,
\wt\inn
(\mathfrak u_1,\ldots,\mathfrak u_n,\mathfrak r).
\end{align*}
\end{lemma}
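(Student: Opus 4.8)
The plan is to recognize that both equalities assert that two naturally defined extensions of the same base map coincide, and then to invoke the uniqueness of right continuous extensions (Lemma~\ref{uniqueness-of-right-cont}(i)). I would treat the functional equality first and deduce the relational one from it by the characteristic-function device already built into the definition of~$e$.

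First I would introduce two $(n+1)$-ary maps of $\scc X_1\times\ldots\times\scc X_n\times\scc(Y^{X_1\times\ldots\times X_n})$ into~$\scc Y$, namely
$$
\Phi(\mathfrak u_1,\ldots,\mathfrak u_n,\mathfrak f)=
e(\mathfrak f)(\mathfrak u_1,\ldots,\mathfrak u_n)
\quad\text{and}\quad
\Psi(\mathfrak u_1,\ldots,\mathfrak u_n,\mathfrak f)=
\wt\app(\mathfrak u_1,\ldots,\mathfrak u_n,\mathfrak f).
$$
By its very definition, $\wt\app$ is the extension of~$\app$ right continuous w.r.t.~the principal ultrafilters, i.e.~w.r.t.~$X_1,\ldots,X_n$; so $\Psi$ is such an extension, and by Lemma~\ref{uniqueness-of-right-cont}(i) it is the only one. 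Hence it would suffice to show that $\Phi$ is \emph{also} an extension of~$\app$ right continuous w.r.t.~$X_1,\ldots,X_n$, whence $\Phi=\Psi$, which is the first equality.

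That $\Phi$ extends~$\app$ is the computation at all-principal arguments: for $a_i\in X_i$ and $f\in Y^{X_1\times\ldots\times X_n}$ we have $e(f)=\wt\ext(f)=\ext(f)=\wt f$, so $\Phi(a_1,\ldots,a_n,f)=\wt f(a_1,\ldots,a_n)=f(a_1,\ldots,a_n)=\app(a_1,\ldots,a_n,f)$. For the right continuity of~$\Phi$ w.r.t.~$X_1,\ldots,X_n$ I would combine two facts. For each fixed~$\mathfrak f$, the map $(\mathfrak u_1,\ldots,\mathfrak u_n)\mapsto e(\mathfrak f)(\mathfrak u_1,\ldots,\mathfrak u_n)$ is the element $e(\mathfrak f)$ of $RC_{X_1,\ldots,X_{n-1}}(\scc X_1,\ldots,\scc X_n,\scc Y)$, already right continuous w.r.t.~$X_1,\ldots,X_{n-1}$; this delivers continuity in each of the point-coordinates $1,\ldots,n$ in the required pattern (earlier point-coordinates principal, the $\mathfrak f$-coordinate held fixed). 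For continuity in the remaining $\mathfrak f$-coordinate with $a_1,\ldots,a_n$ principal, I would use that $e=\wt\ext$ is continuous into the $(X_1,\ldots,X_n)$-pointwise convergence topology on $\RC$ (Lemma~\ref{partial-pointwise-convergence}): as the sets $O_{a_1,\ldots,a_n,B}$ with $a_i\in X_i$ and $B\subseteq\scc Y$ open form a subbase of that topology, the evaluation $\mathfrak f\mapsto e(\mathfrak f)(a_1,\ldots,a_n)$ is continuous. These two observations together are precisely right continuity of~$\Phi$ w.r.t.~$X_1,\ldots,X_n$.

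With both ingredients in hand, Lemma~\ref{uniqueness-of-right-cont}(i), applied to the $(n+1)$ spaces $\scc X_1,\ldots,\scc X_n,\scc(Y^{X_1\times\ldots\times X_n})$ with dense subsets $X_1,\ldots,X_n,Y^{X_1\times\ldots\times X_n}$ and Hausdorff target~$\scc Y$, yields $\Phi=\Psi$, i.e.~the first equality. For the relational equality I would specialize to $Y=2=\{0,1\}$ (so $\scc Y=Y$) and use that $e$ on relations is defined through the identification of an $n$-ary relation with its characteristic function, under which $\inn$ is the preimage of~$\{1\}$ under~$\app$; thus $\wt\inn(\mathfrak u_1,\ldots,\mathfrak u_n,\mathfrak r)$ holds iff $\wt\app(\mathfrak u_1,\ldots,\mathfrak u_n,\mathfrak r)=1$ iff $e(\mathfrak r)(\mathfrak u_1,\ldots,\mathfrak u_n)=1$, which is the second equality. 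The main obstacle, and the only step needing real care, is the mixed right continuity of~$\Phi$: one must check that the right continuity of each slice $e(\mathfrak f)$ in the point-coordinates and the continuity of~$\wt\ext$ in the function-coordinate—the latter being exactly continuity into the \emph{restricted} pointwise convergence topology, and hence available only at principal point-arguments—fit together into the single right-continuity hypothesis demanded by Lemma~\ref{uniqueness-of-right-cont}(i).
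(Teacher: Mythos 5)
Your proof is correct, but it takes a genuinely different route from the paper's. The paper fixes $\mathfrak f$ and compares the two $n$-ary maps $e(\mathfrak f)$ and $\wt\app^{\,\mathfrak f}$ in $RC_{X_1,\ldots,X_{n-1}}(\scc X_1,\ldots,\scc X_n,\scc Y)$; Lemma~\ref{uniqueness-of-right-cont} then reduces the problem to principal $n$-tuples $(a_1,\ldots,a_n)$ with $\mathfrak f$ still arbitrary, and the remaining work is an explicit ultrafilter-quantifier computation of $\wt\app(a_1,\ldots,a_n,\mathfrak f)$ combined with the closure description $\{e(\mathfrak f)\}=\bigcap_{A\in\mathfrak f}\cl\{\wt f:f\in A\}$ and the clopenness of the subbasic sets $O_{a_1,\ldots,a_n,\wt S}$ (zero-dimensionality of the space from Lemma~\ref{partial-pointwise-convergence}). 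You instead promote $\mathfrak f$ to a variable and compare two $(n{+}1)$-ary maps $\Phi,\Psi$ right continuous w.r.t.\ $X_1,\ldots,X_n$; the uniqueness lemma then reduces everything to the fully principal case, where both sides are trivially $f(a_1,\ldots,a_n)$, and the substantive step migrates into the right continuity of $\Phi$ in the last coordinate, i.e.\ the continuity of $\mathfrak f\mapsto e(\mathfrak f)(a_1,\ldots,a_n)$, which you correctly obtain as the composition of $\wt\ext$ (continuous by construction) with evaluation at a principal tuple (continuous precisely because $O_{a_1,\ldots,a_n,B}$ is subbasic in the restricted topology). Your verification of the mixed right-continuity pattern is sound: coordinates $i\le n$ are covered by $e(\mathfrak f)\in\RC$ for each fixed $\mathfrak f$, and coordinate $n{+}1$ only needs to be checked at all-principal point-arguments, which is exactly where the restricted topology gives continuity. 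What your approach buys is the elimination of the explicit ultrafilter computation and of the appeal to zero-dimensionality; what the paper's buys is a self-contained identification of $e(\mathfrak f)$ at principal tuples that is reused conceptually elsewhere (e.g.\ in Lemma~\ref{pseudoprinc}). Both rest on the same two pillars, Lemmas \ref{uniqueness-of-right-cont} and \ref{partial-pointwise-convergence}, and your reduction of the relational case via characteristic functions with $Y=2$ matches the paper's.
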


In other words,
\begin{align*}
e(\mathfrak f)
&=
\bigl\{
(\mathfrak u_1,\ldots,\mathfrak u_n,\mathfrak v)\in
\scc X_1\times\ldots\times\scc X_n\times\scc Y:
\wt{\app}(\mathfrak u_1,\ldots,\mathfrak u_n,\mathfrak f)
=\mathfrak v
\bigr\},
\\
e(\mathfrak r)
&=
\bigl\{
(\mathfrak u_1,\ldots,\mathfrak u_n)\in
\scc X_1\times\ldots\times\scc X_n:
\wt{\inn}(\mathfrak u_1,\ldots,\mathfrak u_n,\mathfrak r)
\bigr\}.
\end{align*}


\begin{proof} 
To simplify the notation, let $\RC$ be the space 
$
RC_{X_1,\ldots,X_{n-1}}
(\scc X_1,\ldots,\scc X_n,\scc Y)
$ 
of $n$-ary maps on $\scc X_1\times\ldots\times\scc X_n$ 
into~$\scc Y$ that are right continuous 
w.r.t.~$X_1,\ldots,X_{n-1}$, endowed with 
the $(X_1,\ldots,X_n)$-pointwise convergence topology. 
By Lemma~\ref{partial-pointwise-convergence}, 
$\RC$~is compact Hausdorff. Recall that for any 
$\mathfrak f\in\scc(Y^{X_1\times\ldots\times X_n})$ 
we have 
$$
e(\mathfrak f)=
\wt{\ext}(\mathfrak f)=
g\in\RC
\;\;\text{such that}\;\;
\{g\}=
\bigcap_{A\in\mathfrak f}\cl_{\RC}
\bigl\{\wt{f}:f\in A\bigr\}, 
$$
and that $\wt{\app}^{\,\mathfrak f}$ is the $n$-ary map 
on $\scc X_1\times\ldots\times\scc X_n$ into~$\scc Y$ 
defining by letting 
$
\wt{\app}^{\,\mathfrak f}
(\mathfrak u_1,\ldots,\mathfrak u_n)=
\wt{\app}
(\mathfrak u_1,\ldots,\mathfrak u_n,\mathfrak f)
$
for all 
$
\mathfrak u_1\in\scc X_1,
\ldots,
\mathfrak u_n\in\scc X_n.
$

Note that both maps $\wt{\app}^{\,\mathfrak f}$ 
and $e(\mathfrak f)$ are in~$\RC$ (the first follows 
from the fact that $\wt{\app}$ is right continuous 
w.r.t.~$X_1,\ldots,X_{n-1}$, the second holds 
since $\wt{\ext}$ is a~map into~$\RC$). 
Therefore, by Lemma~\ref{uniqueness-of-right-cont}, 
in order to show that they coincide, it suffices to 
verify that they coincide on principal ultrafilters.

For this, pick any $a_1\in X_1,\ldots,a_n\in X_n$ 
and $S\subseteq Y$. We have: 
\begin{align*} 
S\in
\wt{\app}^{\,\mathfrak f}(a_1,\ldots,a_n)
&\;\;\text{iff}\;\;
(\forall^{\,a_1}x_1)
\ldots
(\forall^{\,a_n}x_n)
(\forall^{\,\mathfrak f}f)\;
\app(x_1,\ldots,x_n,f)\in S
\\
&\;\;\text{iff}\;\;
(\forall^{\,\mathfrak f}f)\;
\app(a_1,\ldots,a_n,f)\in S
\\
&\;\;\text{iff}\;\;
(\forall^{\,\mathfrak f}f)\;
f(a_1,\ldots,a_n)\in S 
\\
&\;\;\text{iff}\;\;
\bigl\{
f\in Y^{X_1\times\ldots\times X_n}:
f(a_1,\ldots,a_n)\in S 
\bigr\}
\in\mathfrak f.
\end{align*}
(The first equivalence follows from the definition 
of extensions of maps via ultrafilter quantifiers,
the second holds since $a_1,\ldots,a_n$ are principal, 
the third by the definition of~$\app$, 
and the fourth by the definition of 
the $\forall^{\,\mathfrak f}$~quantifier.)
Therefore, 
$$
e(\mathfrak f)\in
\cl_{\RC}\,
\bigl\{
\wt{f}\in\RC:
f\in Y^{X_1\times\ldots\times X_n}
\text{ and }
f(a_1,\ldots,a_n)\in S
\bigr\}.
$$ 
As stated in Lemma~\ref{partial-pointwise-convergence}, 
the space~$\RC$ is zero-dimensional; in particular,
the open set 
$
O_{a_1,\ldots,a_n,\wt{S}}=
\bigl\{
g\in\RC:g(a_1,\ldots,a_n)\in\wt{S}\,
\bigr\}
$ 
is closed (since its complement
$
\RC\,\setminus O_{a_1,\ldots,a_n,\,\wt{S}}=
O_{a_1,\ldots,a_n,\,\scc Y\setminus\wt{S}}
$ 
is open too). It follows that 
$$
\cl_{\RC}\,
\bigl\{
\wt{f}\in\RC:
f\in Y^{X_1\times\ldots\times X_n}
\text{ and }
f(a_1,\ldots,a_n)\in S
\bigr\}
\subseteq 
O_{a_1,\ldots,a_n,\,\wt{S}}\,.
$$
Therefore, we obtain
$$
e(\mathfrak f)\in
O_{a_1,\ldots,a_n,\,\wt{S}}\,,
$$
or, in other words,
$
e(\mathfrak f)(a_1,\ldots,a_n)\in\wt{S}.
$
The latter is clearly equivalent to 
$S\in e(\mathfrak f)(a_1,\ldots,a_n).$ 
Thus we get the inclusion
$
\wt{\app}(a_1,\ldots,a_n,\mathfrak f)
\subseteq 
e(\mathfrak f)(a_1,\ldots,a_n).
$
But since both 
$\wt{\app}(a_1,\ldots,a_n,\mathfrak f)$ 
and $e(\mathfrak f)(a_1,\ldots,a_n)$ are 
{\it ultra\/}filters, 
the inclusion actually gives the equality 
$
\wt{\app}(a_1,\ldots,a_n,\mathfrak f)
=e(\mathfrak f)(a_1,\ldots,a_n).
$

This proves the lemma for ultrafilters over sets 
of maps. The remaining claim about ultrafilters 
over sets of relations follows by replacing the 
relations with their characteristic functions. 
\end{proof}

\begin{question}\label{q: appviaext for compact}
For which compact Hausdorff space~$Y$, instead of 
$\scc Y$ with a~discrete~$Y$, does Lemma~\ref{appviaext} 
remain true (providing that $\wt\app$ is defined 
as in Remark~\ref{rmk: ext-app})? Does this hold 
at least for all zero-dimensional, or all 
extremally disconnected compact Hausdorff~$Y$? 
(Problem~\ref{prb: 3}.)
\end{question}


\begin{cor}\label{pseudo-principal-as-preimage} 
Let $X_1,\ldots,X_n,Y$ be discrete spaces. 
The set of pseudo-principal ultrafilters 
is the preimage of the set\, 
$\{\wt{f}:f\in Y^{X_1\times\ldots\times X_n}\}$ 
under the map~$e$:
$$
\bigl\{
\,\mathfrak{f}\in
\scc\bigl(Y^{X_1\times\ldots\times X_n}\bigr):
\mathfrak{f}\text{ is pseudo-principal}\,
\bigr\}
=
e^{-1}\,
\bigl\{
\wt{f}:f\in Y^{X_1\times\ldots\times X_n}
\bigr\}. 
$$ 
\end{cor}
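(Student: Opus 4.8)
The plan is to read off the corollary directly from Lemma~\ref{appviaext} together with the uniqueness of right continuous extensions (Lemma~\ref{uniqueness-of-right-cont}). Recall that $e$~is by definition the map~$\wt\ext$, so that $e(\mathfrak f)$ always lies in $RC_{X_1,\ldots,X_{n-1}}(\scc X_1,\ldots,\scc X_n,\scc Y)$, i.e.~is right continuous w.r.t.~$X_1,\ldots,X_{n-1}$; and recall that by Lemma~\ref{appviaext} we have $e(\mathfrak f)(\mathfrak u_1,\ldots,\mathfrak u_n)=\wt\app(\mathfrak u_1,\ldots,\mathfrak u_n,\mathfrak f)$ for all $\mathfrak u_1\in\scc X_1,\ldots,\mathfrak u_n\in\scc X_n$. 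The crux is then the simple observation that pseudo-principality of~$\mathfrak f$ is precisely the statement that the restriction of~$e(\mathfrak f)$ to tuples of principal ultrafilters takes its values in~$Y\subseteq\scc Y$.

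For the inclusion~$\supseteq$, suppose $e(\mathfrak f)=\wt f$ for some $f\in Y^{X_1\times\ldots\times X_n}$. Then for any $a_1\in X_1,\ldots,a_n\in X_n$, Lemma~\ref{appviaext} gives $\wt\app(a_1,\ldots,a_n,\mathfrak f)=e(\mathfrak f)(a_1,\ldots,a_n)=\wt f(a_1,\ldots,a_n)=f(a_1,\ldots,a_n)\in Y$, the penultimate equality holding because $\wt f$~agrees with~$f$ on principal ultrafilters. This is exactly the defining condition for~$\mathfrak f$ to be pseudo-principal.

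For the converse inclusion~$\subseteq$, suppose $\mathfrak f$~is pseudo-principal. Then for all $a_1\in X_1,\ldots,a_n\in X_n$ the ultrafilter $\wt\app(a_1,\ldots,a_n,\mathfrak f)=e(\mathfrak f)(a_1,\ldots,a_n)$ is principal, so we may define an honest map $f:X_1\times\ldots\times X_n\to Y$ by taking $f(a_1,\ldots,a_n)$ to be the element of~$Y$ generating it. Both $e(\mathfrak f)$ and $\wt f$ belong to $RC_{X_1,\ldots,X_{n-1}}(\scc X_1,\ldots,\scc X_n,\scc Y)$ --- the former since $e=\wt\ext$ maps into this space, the latter as a continuous extension --- and by construction they coincide on $X_1\times\ldots\times X_n$. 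Since each~$X_i$ is dense in~$\scc X_i$ and $\scc Y$~is Hausdorff, Lemma~\ref{uniqueness-of-right-cont}(i) yields $e(\mathfrak f)=\wt f$, whence $\mathfrak f\in e^{-1}\{\wt f:f\in Y^{X_1\times\ldots\times X_n}\}$.

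I do not anticipate a genuine obstacle: once Lemma~\ref{appviaext} is in hand the statement is essentially bookkeeping. The only points demanding care are the verification that $e(\mathfrak f)$~is right continuous, so that the uniqueness lemma applies --- which is immediate from $e=\wt\ext$ --- and the recognition that pseudo-principality is exactly the hypothesis ensuring that the restriction of~$e(\mathfrak f)$ to principal tuples is the germ of a genuine $Y$-valued map rather than merely a $\scc Y$-valued one.
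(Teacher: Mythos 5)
Your proof is correct, and the inclusion $\{\text{pseudo-principal}\}\subseteq e^{-1}\{\wt f:f\in Y^{X_1\times\ldots\times X_n}\}$ is argued exactly as in the paper: use Lemma~\ref{appviaext} to see that $e(\mathfrak f)$ sends principal tuples to principal ultrafilters, restrict to get $f=e(\mathfrak f)\uhr(X_1\times\ldots\times X_n)$, and invoke Lemma~\ref{uniqueness-of-right-cont} to conclude $e(\mathfrak f)=\wt f$. Where you genuinely diverge is in the reverse inclusion. The paper's printed proof states the remaining task as ``for every $\wt f$ there exists a pseudo-principal $\mathfrak f$ with $e(\mathfrak f)=\wt f$'' and settles it by taking $\mathfrak f$ principal; that establishes only that $e$ maps the pseudo-principal ultrafilters \emph{onto} $\{\wt f:f\in Y^{X_1\times\ldots\times X_n}\}$, not that \emph{every} member of the preimage is pseudo-principal, which is what the asserted set equality requires (and is a nontrivial demand, since by Lemma~\ref{ext-ext-surj-non-inj} each $\wt f$ has many $e$-preimages). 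Your one-line computation --- if $e(\mathfrak f)=\wt f$ then $\wt\app(a_1,\ldots,a_n,\mathfrak f)=e(\mathfrak f)(a_1,\ldots,a_n)=f(a_1,\ldots,a_n)\in Y$ for all principal tuples, which is verbatim the definition of pseudo-principality --- proves the implication actually needed, so your version is the tighter and more complete one.
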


Recalling that $e=\wt\ext$, that 
on the set $Y^{X_1\times\ldots\times X_n}$
(identified with principal ultrafilters) 
$\wt\ext$~is just~$\ext$, and that 
$
e\image Y^{X_1\times\ldots\times X_n}=
\{\wt{f}:f\in Y^{X_1\times\ldots\times X_n}\},
$ 
we can rewrite the set of pseudo-principal
ultrafilters also by
$$
e^{-1}\,e\image\, 
Y^{X_1\times\ldots\times X_n}
=
\wt{\ext}{\,}^{-1}\,\ext\image\, 
Y^{X_1\times \ldots\times X_n}.
$$

\begin{proof}
Show first that if $\mathfrak f$~is pseudo-principal,
then $e(\mathfrak f)=\wt{f}$ for some 
$f\in Y^{X_1\times\ldots\times X_n}$. 
By the definition of~$e$ ($=\wt\ext$), always 
$e(\mathfrak f)$~is a~map belonging to the set 
$
RC_{X_1,\ldots,X_{n-1}}
(\scc X_1,\ldots,\scc X_n,Y).
$ 
Since by Lemma~\ref{appviaext} we have 
$
e(\mathfrak f)
(\mathfrak u_1,\ldots,\mathfrak u_n)=
\wt\app
(\mathfrak u_1,\ldots,\mathfrak u_n,\mathfrak f),
$
we see that the map~$e$ takes principal ultrafilters 
to principal ultrafilters whenever $\mathfrak f$~is 
pseudo-principal. But then it follows from 
Lemma~\ref{uniqueness-of-right-cont} that 
$e(\mathfrak f)$ coincides with $\wt{f}$ 
if the map~$f$ is the restriction of 
$e(\mathfrak f)$ to principal ultrafilters:
$$
e(\mathfrak f)=\wt{f}
\;\;\text{for}\;\;
f=e(\mathfrak f)\uhr(X_1\times\ldots\times X_n).
$$ 
It remains to show the converse implication, 
i.e.~that for every~$\wt{f}$ there exists 
a~pseudo-principal ultrafilter~$\mathfrak f$ 
with $e(\mathfrak f)=\wt{f}$. For this, 
it clearly suffices to let $\mathfrak f$ equal 
to the principal ultrafilter given by~$f$. 
\end{proof}

\begin{question}\label{q: topol of pseudo-princ}
What are topological properties of the set 
of pseudo-principal ultrafilters in the space
$\scc(Y^{X_1\times\ldots\times X_n})$? topological 
properties of its preimage under~$e$, the set 
$\{\wt{f}:f\in Y^{X_1\times\ldots\times X_n}\}$,
in the space
$
RC_{X_1,\ldots,X_{n-1}}
(\scc X_1,\ldots,\scc X_n,\scc Y)
$ 
with the $(X_1,\ldots,X_n)$-pointwise convergence 
topology (besides the fact that it is dense there, 
as stated in Lemma~\ref{extensions-are-dense}), 
or with the (usual) pointwise convergence topology? 
in the space
$(\scc Y)^{\,\scc X_1\times\ldots\times\scc X_n}$ 
with the pointwise convergence topology?

Let us point out that objects naturally defined 
in terms of ultrafilter extensions often have 
rather hardly definable topological properties, 
cf.~\cite{Hindman Strauss topol-properties,
Hindman Strauss non-Borel}.
(Problem~\ref{prb: 4}.)
\end{question}


\begin{cor}\label{corappviaext}
For all ultrafilter models 
$
\mathfrak A=
(\scc X,\imath(F),\ldots,\imath(R),\ldots)
$
and valuations~$v$, 
\begin{align*}
v_\imath(F(t_1,\ldots,t_n))
&=
e(\imath(F))(v_\imath(t_1),\ldots,v_\imath(t_n)),
\\
\mathfrak A\VDash
R(t_1,\ldots,t_n)\;[v]
\;\;&\text{iff}\;\;\,
e(\imath(R))(v_\imath(t_1),\ldots,v_\imath(t_n)).
\end{align*}
\end{cor}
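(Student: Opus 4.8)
The plan is to read off both assertions directly from Lemma~\ref{appviaext}, substituting the appropriate ultrafilters. There is essentially no obstacle here: the corollary is nothing but the combination of that lemma with the defining equations for $v_\imath$ on functional terms and for $\VDash$ on atomic formulas. The only thing worth keeping an eye on is that Lemma~\ref{appviaext} is stated for \emph{arbitrary} ultrafilters $\mathfrak u_1,\ldots,\mathfrak u_n$ in the $\scc X_i$, so the tuples $(v_\imath(t_1),\ldots,v_\imath(t_n))$ will be admissible inputs with no side condition to verify.

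For the first equality, I would begin from the clause defining $v_\imath$ on a functional term, namely
$$
v_\imath(F(t_1,\ldots,t_n))=
\wt\app(v_\imath(t_1),\ldots,v_\imath(t_n),\imath(F)).
$$
Each $v_\imath(t_i)$ lies in $\scc X$ and $\imath(F)$ is an ultrafilter over $X^{X\times\ldots\times X}$, so the first clause of Lemma~\ref{appviaext} applies with $\mathfrak u_i=v_\imath(t_i)$ and $\mathfrak f=\imath(F)$, giving
$$
\wt\app(v_\imath(t_1),\ldots,v_\imath(t_n),\imath(F))=
e(\imath(F))(v_\imath(t_1),\ldots,v_\imath(t_n)).
$$
Chaining the two displayed equalities yields the first assertion.

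For the equivalence I would start from the clause defining $\VDash$ for an atomic formula whose predicate symbol is not the equality predicate, namely
$$
\mathfrak A\VDash R(t_1,\ldots,t_n)\;[v]
\;\;\text{iff}\;\;
\wt\inn(v_\imath(t_1),\ldots,v_\imath(t_n),\imath(R)),
$$
and then apply the second clause of Lemma~\ref{appviaext} with $\mathfrak u_i=v_\imath(t_i)$ and $\mathfrak r=\imath(R)$, which gives
$$
\wt\inn(v_\imath(t_1),\ldots,v_\imath(t_n),\imath(R))
\;\;\text{iff}\;\;
e(\imath(R))(v_\imath(t_1),\ldots,v_\imath(t_n)).
$$
Combining the two equivalences yields the second assertion. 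Thus the proof amounts to unwinding the two definitions and invoking Lemma~\ref{appviaext} once in each of its two forms, and it is complete.
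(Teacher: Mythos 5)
Your proof is correct and is exactly the paper's argument: the paper's own proof is just the one-line citation ``Lemma~\ref{appviaext} with $X_1=\ldots=X_n=Y=X$,'' and your unwinding of the definitions of $v_\imath$ on functional terms and of $\VDash$ on atomic formulas makes that instantiation explicit.
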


\begin{proof} 
Lemma~\ref{appviaext} with 
$X_1=\ldots=X_n=Y=X$.
\end{proof}



\begin{definition}\label{def: e-of-model}
For an ultrafilter model 
$
\mathfrak B=
(\scc X,\mathfrak f,\ldots,\mathfrak r,\ldots),
$
let 
$$
e(\mathfrak B)=
(\scc X,e(\mathfrak f),\ldots,e(\mathfrak r),\ldots).
$$
\end{definition}

Note that $e(\mathfrak B)$~is an ordinary model.

The following theorem is the first of the three main 
results of this section, it states that in point of 
view of the satisfaction of formulas, any ultrafilter
model~$\mathfrak A$ is not distinguished 
from the ordinary model $e(\mathfrak A)$.

\begin{theorem}\label{e-preserves-formulas}
If $\mathfrak A$ is an ultrafilter model, then 
for all formulas~$\varphi$ and 
elements $\mathfrak u_1,\ldots,\mathfrak u_n$ 
of the universe of~$\mathfrak A$, 
$$
\mathfrak{A}\VDash\varphi\,
[\mathfrak u_1,\ldots,\mathfrak u_n]
\;\;\text{iff}\;\;
e(\mathfrak{A})\vDash\varphi\,
[\mathfrak u_1,\ldots,\mathfrak u_n].
$$
\end{theorem}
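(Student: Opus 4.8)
The plan is to prove the equivalence by induction on the construction of the formula $\varphi$, reducing each clause to the machinery already set up in the section. The base cases are the atomic formulas, and these are exactly where the operation $e$ was designed to do its work; the inductive cases for $\neg$, $\wedge$, and the quantifiers will be purely formal since both $\VDash$ and $\vDash$ are defined by the same Tarski clauses once the atomic case is settled.

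First I would handle the atomic formulas. The key observation is that $e(\mathfrak{A})$ is an ordinary model, so $e(\mathfrak A)\vDash R(t_1,\ldots,t_n)\,[v]$ unfolds, by the standard semantics, to $e(\imath(R))\bigl(v_{e(\mathfrak A)}(t_1),\ldots,v_{e(\mathfrak A)}(t_n)\bigr)$, where $v_{e(\mathfrak A)}$ is the ordinary term evaluation in $e(\mathfrak A)$. On the generalized side, Corollary~\ref{corappviaext} tells us precisely that $\mathfrak A\VDash R(t_1,\ldots,t_n)\,[v]$ iff $e(\imath(R))(v_\imath(t_1),\ldots,v_\imath(t_n))$, and likewise $v_\imath(F(t_1,\ldots,t_n))=e(\imath(F))(v_\imath(t_1),\ldots,v_\imath(t_n))$. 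So the whole atomic case comes down to a preliminary sublemma: that the generalized term evaluation $v_\imath$ agrees with the ordinary term evaluation $v_{e(\mathfrak A)}$, i.e.~$v_\imath(t)=v_{e(\mathfrak A)}(t)$ for every term~$t$. This I would prove by a subordinate induction on the structure of~$t$: for variables both sides equal $v$, and for $F(t_1,\ldots,t_n)$ the ordinary evaluation applies the operation $e(\imath(F))$ to the already-matching arguments, while the generalized evaluation gives $e(\imath(F))$ applied to the same arguments by the first line of Corollary~\ref{corappviaext}. With the term evaluations identified, the equality atom $t_1=t_2$ matches trivially and the relational atom matches by the second line of Corollary~\ref{corappviaext}.

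The inductive step for the connectives and quantifiers is then immediate. Both satisfaction relations were defined ``in the standard way'' on negation, conjunction, and quantification; since the universe of $\mathfrak A$ and of $e(\mathfrak A)$ is the same set $\scc X$, and the values assigned to the free variables are the same ultrafilters $\mathfrak u_1,\ldots,\mathfrak u_n\in\scc X$, the quantifiers in both models range over exactly the same domain. Hence each Tarski clause transports the equivalence from the immediate subformulas to the compound formula without any extra hypothesis.

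The only real content, and hence the step I would watch most carefully, is the atomic/term reduction via Corollary~\ref{corappviaext}; everything else is bookkeeping. In particular I expect no obstacle from the inductive clauses, and the potentially delicate point---that an arbitrary, possibly non-pseudo-principal, interpretation $\imath$ still yields a well-defined ordinary model $e(\mathfrak A)$ whose operations are genuine functions---is already guaranteed because $e=\wt{\ext}$ always lands in a space of right-continuous maps (Lemma~\ref{partial-pointwise-convergence} and the definition of~$e$), so $e(\imath(F))$ is a bona fide $n$-ary operation on $\scc X$ and $e(\imath(R))$ a bona fide $n$-ary relation on $\scc X$. Thus the theorem follows.
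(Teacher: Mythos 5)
Your proposal is correct and follows exactly the route the paper takes: its entire proof is ``Induction on~$\varphi$ starting from Corollary~\ref{corappviaext}.'' You have merely spelled out the details the authors leave implicit (the sub-induction identifying $v_\imath$ with the ordinary term evaluation in $e(\mathfrak A)$, and the observation that the non-atomic clauses transfer trivially because both models share the universe~$\scc X$).
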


\begin{proof}
Induction on~$\varphi$ starting 
from Corollary~\ref{corappviaext}. 
\end{proof}


Now we define the map~$E$, which has the same 
domain that the map~$e$ does and also satisfying 
\begin{align*}
E\image
\scc\bigl(Y^{X_1\times\ldots\times X_n}\bigr)
&\subseteq
\scc Y^{\,\scc X_1\times\ldots\times\scc X_n},
\\
E\image
\scc\,\mathcal{P}(X_1\times\ldots\times X_n)
&\subseteq
\mathcal{P}(\scc X_1\times\ldots\times\scc X_n),
\end{align*}
as follows: $E$ and~$e$ coincide on 
$\scc(Y^{X_1\times\ldots\times X_n})$, and whenever
$
\mathfrak r\in
\scc\,\mathcal{P}(X_1\times\ldots\times X_n)
$
then we define 
\begin{align*}
E(\mathfrak r)
=
\wt\ext\image
\wt\ext(\mathfrak r)
=
\bigl\{
\wt\ext(\mathfrak q):
\mathfrak q\in\wt\ext(\mathfrak r)
\bigr\}
\end{align*} 
where $\mathfrak r$~is considered as an ultrafilter 
over unary relations on $X_1\times\ldots\times X_n$
while $\mathfrak q$~is considered as an ultrafilter 
over unary maps on~$n$ (and $\wt\ext$~has 
the corresponding meaning). Let us now explain 
the construction in details.

First, we consider
$\mathcal P(X_1\times\ldots\times X_n)$ as the set of 
{\it unary\/} relations on $X_1\times\ldots\times X_n$.
Then the map~$\ext$ takes any subset~$R$ of 
$X_1\times\ldots\times X_n$ to the clopen 
subset~$\wt{R}$ of $\scc(X_1\times\ldots\times X_n)$. 
Therefore, the extended map~$\wt\ext$ 
takes any ultrafilter~$\mathfrak r$ over 
$\mathcal{P}(X_1\times\ldots\times X_n)$ to 
some clopen subset $Q=\wt\ext(\mathfrak r)$ 
of $\scc(X_1\times\ldots\times X_n)$:
$$
\xymatrix{
\;\quad\scc\mathcal P(X_1\times\ldots\times X_n)
\qquad\ar@{-->}^{\;\;\;\;\wt{\ext}}[dr]&&
\\
\mathcal P(X_1\times\ldots\times X_n)
\,\ar[u]\ar[r]^{\;\;\;\;\;\;\;\;\ext}&&%
\!\!\!\!\!\!\!\!\!\!\!\!\!\!\!\!%
\{
Q\subseteq\scc(X_1\times\ldots\times X_n):
Q\text{ is clopen}\,
\}
}
$$
Next, we identify the product 
$X_1\times\ldots\times X_n$ 
with the set of unary maps~$f$ from the set~$n$ 
into $\bigcup_iX_i$ satisfying $f(i)\in X_{i+1}$ 
(for all $i<n$). Then the map~$\ext$ takes any 
such~$f$ to the unary continuous map~$\wt{f}$ 
from~$n$ into $\bigcup_i\scc X_i$ satisfying 
$f(i)\in\scc X_{i+1}$, and we identify the set 
of such maps~$\wt{f}$ backwards with the product 
$\scc X_1\times\ldots\times\scc X_n$. 
Therefore, the extended map~$\wt\ext$ 
takes any ultrafilter~$\mathfrak q$ over
$X_1\times\ldots\times X_n$ to some $n$-tuple 
$
(\mathfrak u_1,\ldots,\mathfrak u_n)=
\wt\ext(\mathfrak q)
$
in $\scc X_1\times\ldots\times\scc X_n$:
$$
\xymatrix{
\;\quad\scc(X_1\times\ldots\times X_n)
\qquad\ar@{-->}^{\;\;\;\;\wt{\ext}}[dr]&&
\\
X_1\times\ldots\times X_n
\,\ar[u]\ar[r]^{\;\;\;\;\;\;\;\;\ext}&&%
\!\!\!\!\!\!\!\!\!\!\!\!\!\!\!\!%
\scc X_1\times\ldots\times\scc X_n 
}
$$
(An analogous construction was previously used in 
Theorem~\ref{modal-via-ext}.) In result, the set 
$
Q=\wt\ext(\mathfrak r)\subseteq
\scc(X_1\times\ldots\times X_n)
$ 
is mapped onto the set
$
\wt\ext\image Q=E(\mathfrak r)\subseteq
\scc X_1\times\ldots\times\scc X_n. 
$ 
Since $Q$~is clopen and the map~$\wt\ext$ is closed 
(as a~continuous map between compact Hausdorff spaces), 
the resulting $E(\mathfrak r)$ is a~closed subset 
of the space $\scc X_1\times\ldots\times\scc X_n.$


\begin{lemma}\label{e-E-relations}
Let\, 
$
\mathfrak r\in
\scc\,\mathcal{P}(X_1\times\ldots\times X_n).
$ 
Then 
\begin{align*}
e(\mathfrak r)=\wt R
\;\;\text{and}\;\;
E(\mathfrak r)=R^*
\end{align*}
for 
$
R=
e(\mathfrak r)\cap(X_1\times\ldots\times X_n)=
E(\mathfrak r)\cap(X_1\times\ldots\times X_n)=
\bigcap_{S\in\mathfrak{r}}\bigcup S.
$
Consequently, 
$$
e(\mathfrak r)\subseteq E(\mathfrak r).
$$
\end{lemma}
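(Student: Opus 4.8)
The plan is to pin down the relation $R$ first, then prove the two equalities $e(\mathfrak r)=\wt R$ and $E(\mathfrak r)=R^*$ separately, and finally read off the inclusion from $\wt R\subseteq R^*$. Writing $W=X_1\times\ldots\times X_n$, I would first check that $R=\bigcap_{S\in\mathfrak r}\bigcup S$ is the set of principal tuples on which $e(\mathfrak r)$ holds. For $\boldsymbol a\in W$ one has $\boldsymbol a\in\bigcup S$ iff $(\exists Q\in S)\,\boldsymbol a\in Q$, so $\boldsymbol a\in\bigcap_{S\in\mathfrak r}\bigcup S$ iff $(\forall S\in\mathfrak r)(\exists Q\in S)\,\boldsymbol a\in Q$, which by the stated equivalence for ultrafilter quantifiers is exactly $(\forall^{\,\mathfrak r}Q)\,\boldsymbol a\in Q$, i.e.\ $\{Q:\boldsymbol a\in Q\}\in\mathfrak r$. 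On the other hand, by Lemma~\ref{appviaext}, $e(\mathfrak r)(\boldsymbol a)$ iff $\wt\inn(\boldsymbol a,\mathfrak r)$, and decoding $\wt\inn$ through ultrafilter quantifiers at a principal tuple collapses the $\forall^{\,a_i}$ blocks and leaves precisely $(\forall^{\,\mathfrak r}Q)\,\boldsymbol a\in Q$. This yields $e(\mathfrak r)\cap W=R$; the corresponding equality for $E(\mathfrak r)$ will follow once $E(\mathfrak r)=R^*$ is proved, since $R^*$ restricts to $R$ on principal tuples.

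For $e(\mathfrak r)=\wt R$ I would simply continue the decoding: by Lemma~\ref{appviaext}, $e(\mathfrak r)(\mathfrak u_1,\ldots,\mathfrak u_n)$ iff $\wt\inn(\mathfrak u_1,\ldots,\mathfrak u_n,\mathfrak r)$, which unwinds to $(\forall^{\,\mathfrak u_1}x_1)\ldots(\forall^{\,\mathfrak u_n}x_n)(\forall^{\,\mathfrak r}Q)\,(x_1,\ldots,x_n)\in Q$. Since the innermost block $(\forall^{\,\mathfrak r}Q)\,(x_1,\ldots,x_n)\in Q$ is, for each fixed tuple, just the condition defining $R(x_1,\ldots,x_n)$, this is literally $(\forall^{\,\mathfrak u_1}x_1)\ldots(\forall^{\,\mathfrak u_n}x_n)\,R(x_1,\ldots,x_n)$, i.e.\ $\wt R(\mathfrak u_1,\ldots,\mathfrak u_n)$. (Alternatively, $e(\mathfrak r)$ is right clopen by construction, hence of the form $\wt{R'}$ by Theorem~\ref{topological-char}(i), and $R'=e(\mathfrak r)\cap W=R$.)

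The substantive part is $E(\mathfrak r)=R^*$, where I must keep track of the two distinct uses of $\wt\ext$ in its definition. The inner $\wt\ext(\mathfrak r)$ is the continuous extension, along $\mathfrak r$, of $R'\mapsto\wt{R'}$, landing in the clopen subsets of $\scc W$; by the unary instance of the computation above its trace on $W$ is $R$, and since a clopen subset of $\scc W$ equals the basic set determined by its trace on the dense discrete set $W$, this inner value is the clopen set $\wt R=\cl_{\scc W}R$. The outer $\wt\ext$ is the continuous extension of the inclusion $W\hookrightarrow\scc X_1\times\ldots\times\scc X_n$, i.e.\ the projection $\Pi(\mathfrak q)=(\pi_1(\mathfrak q),\ldots,\pi_n(\mathfrak q))$ sending an ultrafilter over $W$ to the tuple of its coordinate pushforwards, exactly as in Theorem~\ref{modal-via-ext}. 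Thus $E(\mathfrak r)=\Pi\image\cl_{\scc W}R$. Because $\Pi$ is a continuous map of the compact space $\scc W$ into the Hausdorff product, it is closed, so taking images commutes with closures: $\Pi\image\cl_{\scc W}R=\cl_{\scc X_1\times\ldots\times\scc X_n}\,\Pi\image R=\cl_{\scc X_1\times\ldots\times\scc X_n}R$, the last step because $\Pi$ restricts to the canonical embedding on $W$, so $\Pi\image R=R$. By Theorem~\ref{modaltopology} this closure is exactly $R^*$, giving $E(\mathfrak r)=R^*$ and, with it, $E(\mathfrak r)\cap W=R$.

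Finally, the inclusion $e(\mathfrak r)\subseteq E(\mathfrak r)$ is immediate from $e(\mathfrak r)=\wt R\subseteq R^*=E(\mathfrak r)$, the inclusion $\wt R\subseteq R^*$ having already been noted in the introduction. The main obstacle I anticipate is the bookkeeping in the $E$-computation: correctly reading off that the inner $\wt\ext(\mathfrak r)$ is the clopen set $\cl_{\scc W}R$ rather than some larger set, and that the outer $\wt\ext$ is precisely the closed projection $\Pi$. Once these two identifications are secured, the compact-to-Hausdorff closedness of $\Pi$ does the rest.
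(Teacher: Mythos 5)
Your proposal is correct and follows essentially the same route as the paper: the paper's (very terse) proof likewise obtains $e(\mathfrak r)=\wt R$ from Lemma~\ref{appviaext} and obtains $E(\mathfrak r)=R^*$ by observing that $\mathfrak q\in\wt\ext(\mathfrak r)$ iff $\bigcap_{S\in\mathfrak r}\bigcup S\in\mathfrak q$ (i.e.\ the inner $\wt\ext(\mathfrak r)$ is the clopen set $\wt R$), after which the outer $\wt\ext$ sends $\wt R$ to $R^*$ exactly as in Theorem~\ref{modal-via-ext}. You have merely written out in full the quantifier decoding and the closed-projection argument that the paper leaves implicit.
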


We can write up this~$R$ more explicitly:
\begin{align*}
R
&= 
\bigl\{
(a_1,\ldots,a_n)\in X_1\times\ldots\times X_n: 
(\forall S\in\mathfrak{r})\,
(\exists Q\in S)\;
Q(a_1,\ldots,a_n)
\bigr\}.
\end{align*}

\begin{proof} 
For $e(\mathfrak r)=\wt R$, apply Lemma~\ref{appviaext}.
For $E(\mathfrak r)=R^*$, note that 
$\mathfrak q\in\wt\ext(\mathfrak r)$ iff 
$\bigcap_{S\in\mathfrak{r}}\bigcup S\in\mathfrak q$. 
\end{proof}



\begin{definition}\label{def: E-of-model}
For an ultrafilter model 
$
\mathfrak B=
(\scc X,\mathfrak f,\ldots,\mathfrak r,\ldots),
$ 
let
\begin{align*}
E(\mathfrak B)=
(\scc X,E(\mathfrak f),\ldots,E(\mathfrak r),\ldots).
\end{align*}
Then $E(\mathfrak B)$, like $e(\mathfrak B)$, 
is an ordinary model. 
\end{definition}


The following easy observation is similar to 
Theorem~\ref{homo-of-model-to-modal}, and moreover, 
it turns out to be that theorem whenever the 
interpretation of~$\mathfrak B$ is pseudo-principal 
on functional symbols, as we shall see after
Theorem~\ref{e-and-E-as-ultraextensions}.

\begin{theorem}\label{homo-of-e-to-E}
For any ultrafilter model~$\mathfrak B$ 
the identity map on its universe is a~homomorphism 
of $e(\mathfrak B)$ onto $E(\mathfrak B)$:
$$
\xymatrix{
&\mathfrak B\ar_{e}[dl]\ar^{E}[dr]&
\\
e(\mathfrak B)\ar^{\id}[rr]&&E(\mathfrak B)
}
$$
\end{theorem}

Therefore, all positive formulas satisfied 
in $e(\mathfrak B)$ are also satisfied 
in~$E(\mathfrak B)$.

\begin{proof}
Immediate from Lemma~\ref{e-E-relations} since 
$\wt{R}\subseteq R^*$ for all relations~$R$. 
\end{proof}

Now we are going to establish two 
remaining main results of this section, 
Theorems \ref{e-and-E-as-ultraextensions} 
and~\ref{e-and-E-topology}. The first of them 
characterizes ultrafilter models such that 
their $e$- and $E$-images are ultraextensions of 
ordinary models, while the second one characterizes 
ordinary models that are the $e$- and $E$-images of
ultrafilter models. Before this we prove two more 
auxiliary lemmas, which actually follow from 
the previously stated facts.

\begin{lemma}\label{pseudoprinc-to-princ}
Let $\mathfrak A$ be an ultrafilter model with 
a~pseudo-principal interpretation of functional 
symbols, and $\mathfrak B$ the ultrafilter model 
with a~principal interpretation of functional 
symbols constructed from $\mathfrak A$ as in
Corollary~\ref{principalfunc}. Then 
$e(\mathfrak A)=e(\mathfrak B)$.
\end{lemma}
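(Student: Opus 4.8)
The plan is to reduce the claimed equality of the two ordinary models to an equality of their functional interpretations, and then to identify both of these with the continuous extension of a single operation. First I would observe that $e(\mathfrak A)$ and $e(\mathfrak B)$ share the universe~$\scc X$, and that since the construction in Corollary~\ref{principalfunc} leaves $\imath$ unchanged on predicate symbols ($\jmath(R)=\imath(R)$), the relational parts of the two models coincide outright: $e(\imath(R))=e(\jmath(R))$ for every predicate symbol~$R$. Hence it suffices to prove, for each functional symbol~$F$, that $e(\imath(F))=e(\jmath(F))$ as $n$-ary maps on~$\scc X$.

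Next I would compute $e(\jmath(F))$. By the construction of~$\mathfrak B$, the ultrafilter $\jmath(F)$ is the principal ultrafilter given by the operation $f\colon X^n\to X$ with $f(a_1,\ldots,a_n)=\wt\app(a_1,\ldots,a_n,\imath(F))$. Since $e=\wt\ext$ and $\wt\ext$ restricted to principal ultrafilters (i.e.~to $X^{X^n}$ under the customary identification) is just~$\ext$, we obtain $e(\jmath(F))=\ext(f)=\wt f$.

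It then remains to show $e(\imath(F))=\wt f$, and here I would use that $\imath(F)$ is pseudo-principal. By Corollary~\ref{pseudo-principal-as-preimage}, $e(\imath(F))=\wt g$ where $g=e(\imath(F))\uhr(X^n)$ is the restriction of $e(\imath(F))$ to principal ultrafilters. By Lemma~\ref{appviaext}, for all $a_1,\ldots,a_n\in X$ we have $g(a_1,\ldots,a_n)=e(\imath(F))(a_1,\ldots,a_n)=\wt\app(a_1,\ldots,a_n,\imath(F))=f(a_1,\ldots,a_n)$, so $g=f$ and therefore $e(\imath(F))=\wt f=e(\jmath(F))$. Applying this to every functional symbol and combining with the equality of the relational parts yields $e(\mathfrak A)=e(\mathfrak B)$.

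I expect no serious obstacle: the argument is essentially a bookkeeping assembly of Lemma~\ref{appviaext} and Corollary~\ref{pseudo-principal-as-preimage}. The only point needing mild care is the identification $e(\jmath(F))=\wt f$, i.e.~that $\wt\ext$ agrees with $\ext$ on principal ultrafilters. Alternatively, one can bypass Corollary~\ref{pseudo-principal-as-preimage} entirely and instead note that both $e(\imath(F))$ and $\wt f$ lie in $RC_{X,\ldots,X}(\scc X,\ldots,\scc X,\scc X)$ (are right continuous w.r.t.~$X$) and agree on the dense set $X^n$ by Lemma~\ref{appviaext}, so they coincide everywhere by Lemma~\ref{uniqueness-of-right-cont}.
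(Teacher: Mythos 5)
Your proposal is correct and follows essentially the same route as the paper: the paper likewise reduces everything to the functional symbols, shows via Lemma~\ref{appviaext} that $e(\imath(F))$ and $e(\jmath(F))$ agree on principal ultrafilters, and concludes by Lemma~\ref{uniqueness-of-right-cont}. Your main variant merely packages the uniqueness step inside Corollary~\ref{pseudo-principal-as-preimage}, and the ``alternative'' you sketch at the end is word-for-word the paper's argument.
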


\begin{proof}
Let $\imath$ and $\jmath$ be the interpretations 
in $\mathfrak A$ and~$\mathfrak B$, respectively. 
If $F$~is a~functional symbol, then 
the operations $e(\imath(F))$ and $e(\jmath(F))$ 
are right continuous w.r.t.~principal ultrafilters. 
Therefore, by Lemma~\ref{uniqueness-of-right-cont}, 
in order to show that they coincide, it suffices to 
verify that they coincide on principal ultrafilters.

If the symbol~$F$ is $n$-ary, let $\boldsymbol{a}$ 
be any $n$-tuple of principal ultrafilters. We have: 
\begin{align*}
\wt{\app}(\boldsymbol{a},\imath(F))=
\jmath(F)(\boldsymbol{a})=
\app(\boldsymbol{a},\jmath(F))=
\wt{\app}(\boldsymbol{a},\jmath(F))
\end{align*}
(the first equality holds by the definition 
of~$\jmath$ from Corollary~\ref{principalfunc}, 
the second as $\jmath(F)$~is principal, and 
the third as $\wt{\app}$ extends~$\app$). 
By Lemma~\ref{appviaext}, 
$$
e(\imath(F))(\boldsymbol{a})=
\wt{\app}(\boldsymbol{a},\imath(F))
\;\;\text{and}\;\;
e(\jmath(F))(\boldsymbol{a})=
\wt{\app}(\boldsymbol{a},\jmath(F))
$$
(that holds for $n$-tuples of non-principal 
ultrafilters as well). This completes the proof.
\end{proof}

\begin{lemma}\label{e,E-surj-non-inj}
Both operations $e$ and $E$ are surjective and 
non-injective. More precisely, 
\begin{enumerate}
\item[(i)]
$e$ (and $E$) on 
$\scc\bigl(Y^{X_1\times\ldots\times X_n}\bigr)$ 
is a~surjection onto 
$RC_{X_1,\ldots,X_{n-1}}(\scc X_1,\ldots,\scc X_n,Y)$, 
\item[(ii)]
$e$ on $\scc\,\mathcal P(X_1\times\ldots\times X_n)$ 
is a~surjection onto 
$
\bigl\{
\wt{R}\in
\mathcal P(\scc X_1\times\ldots\times\scc X_n):
R\subseteq X_1\times\ldots\times X_n
\bigr\} 
$
$
=
\bigl\{
Q\in\mathcal P(\scc X_1\times\ldots\times\scc X_n):
Q$~is right clopen w.r.t.~$X_1,\ldots,X_n
\bigr\},  
$ 
\item[(iii)]
$E$ on $\scc\,\mathcal P(X_1\times\ldots\times X_n)$ 
is a~surjection onto 
$
\bigl\{
R^*\in
\mathcal P(\scc X_1\times\ldots\times\scc X_n):
R\subseteq X_1\times\ldots\times X_n
\bigr\} 
$
$
=
\bigl\{
Q\in\mathcal P(\scc X_1\times\ldots\times\scc X_n):
Q$~is regular closed\,$
\bigr\},
$
\end{enumerate}
and each of the three maps is not an injection 
whenever at least one of the~$X_i$ is infinite. 
\end{lemma}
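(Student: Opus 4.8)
The plan is to establish surjectivity and non-injectivity for each of the three maps separately, leaning heavily on the already-proven structural results. For the surjectivity onto the right continuous maps in part (i), I would simply invoke Lemma~\ref{ext-ext-surj-non-inj}, which states precisely that $\wt{\ext}$ (and hence $e=\wt{\ext}$ on ultrafilters over maps) is surjective onto $RC_{X_1,\ldots,X_{n-1}}(\scc X_1,\ldots,\scc X_n,Y)$; since $E$ agrees with $e$ on ultrafilters over maps by definition, part (i) is immediate for both operations. The equality of the two descriptions of the target set in parts (ii) and (iii) is handled by Theorem~\ref{topological-char}(i) and~(ii), which characterize the right clopen sets as exactly the $\wt{R}$ and the regular closed sets as exactly the $R^*$.

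\textbf{Surjectivity for relations (parts (ii) and (iii)).} For part~(ii), by Lemma~\ref{e-E-relations} we know $e(\mathfrak r)=\wt{R}$ where $R=\bigcap_{S\in\mathfrak r}\bigcup S$, so the image of $e$ is contained in $\{\wt R:R\subseteq X_1\times\ldots\times X_n\}$; for the reverse inclusion, given any $R$, I would take $\mathfrak r$ to be the principal ultrafilter given by $R$ (as a single element of $\mathcal P(X_1\times\ldots\times X_n)$), for which $\bigcap_{S\in\mathfrak r}\bigcup S = R$, yielding $e(\mathfrak r)=\wt R$. The same principal ultrafilter handles part~(iii): Lemma~\ref{e-E-relations} gives $E(\mathfrak r)=R^*$ for the same $R$, so choosing the principal $\mathfrak r$ given by $R$ produces $E(\mathfrak r)=R^*$, establishing surjectivity onto the regular closed sets.

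\textbf{Non-injectivity.} This is the step requiring the most care, and it is where I expect the main work to lie. For the maps-component, non-injectivity (when some $X_i$ is infinite) is again already delivered by Lemma~\ref{ext-ext-surj-non-inj}, whose proof explicitly constructs two distinct ultrafilters $\mathfrak f\,',\mathfrak f\,''$ with the same image; so (i) is done. For part~(ii), I would reuse that same construction transported through the characteristic-function identification: since $e$ on relations is $\wt{\ext}$ applied to ultrafilters over $\mathcal P(X_1\times\ldots\times X_n)\cong 2^{X_1\times\ldots\times X_n}$, and the target space $2$ is finite, I must check the non-injectivity argument still produces two distinct preimages of a single right clopen set. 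The cleanest route is to observe that $R=\bigcap_{S\in\mathfrak r}\bigcup S$ depends on $\mathfrak r$ only through a coarse feature, so that distinct ultrafilters $\mathfrak r\,',\mathfrak r\,''$ yielding the same $R$ are easy to build directly whenever the underlying set $\mathcal P(X_1\times\ldots\times X_n)$ is infinite (which holds once some $X_i$ is infinite), for instance by fixing $R$ and extending the family of all $S$ with $\bigcup S \supseteq R$ and $\bigcap S \subseteq R$ in two incompatible ways.

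\textbf{Non-injectivity for $E$.} The genuinely new case is part~(iii), since $E$ on relations is \emph{not} $\wt{\ext}$ in the single sense but the doubly-applied $E(\mathfrak r)=\wt\ext\image\wt\ext(\mathfrak r)$. Here I would argue that $E(\mathfrak r)=R^*$ depends only on $R=\bigcap_{S\in\mathfrak r}\bigcup S$ (Lemma~\ref{e-E-relations}), exactly as for $e$; therefore the very same two distinct ultrafilters $\mathfrak r\,',\mathfrak r\,''$ constructed for part~(ii), which share the same value of $R$, automatically satisfy $E(\mathfrak r\,')=R^*=E(\mathfrak r\,'')$. This reduces the $E$-case to the $e$-case at no extra cost, which is the key economy: the hard combinatorial construction of colliding ultrafilters need only be carried out once. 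The main obstacle, then, is purely the bookkeeping of the characteristic-function and currying identifications so that the collision established for maps or for the invariant $R$ really transfers to the relational settings; the existence of the collisions themselves is inherited from the earlier lemma.
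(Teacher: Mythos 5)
Your proof follows the same skeleton as the paper's: item (i) is quoted from Lemma~\ref{ext-ext-surj-non-inj}, items (ii) and (iii) come from Lemma~\ref{e-E-relations}, and the set equalities from Theorem~\ref{topological-char}; your verification of surjectivity via principal ultrafilters is a correct unpacking of what the paper leaves as ``immediate''. The one genuine divergence is non-injectivity in (ii) and (iii). The paper disposes of it in one line by cardinality: since $R\mapsto\wt R$ and $R\mapsto R^*$ are bijections, the two target sets have the cardinality of $\mathcal P(X_1\times\ldots\times X_n)$, which is strictly smaller than that of $\scc\,\mathcal P(X_1\times\ldots\times X_n)$ once some $X_i$ is infinite, so no surjection between them can be injective. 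You instead build two colliding ultrafilters explicitly; the strategy is sound and even stronger (it shows every fiber is non-trivial), and your reduction of the $E$-case to the $e$-case via the common invariant $R=\bigcap_{S\in\mathfrak r}\bigcup S$ is exactly right.

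However, the concrete recipe you offer needs repair. The family of all $S$ with $\bigcup S\supseteq R$ and $\bigcap S\subseteq R$ is not centered: for $R=\emptyset$ it contains the disjoint members $\{\{a\},\{b\}\}$ and $\{\{c\},\{d\}\}$ for distinct points $a,b,c,d$, so it extends to no ultrafilter at all; nor does containing that family characterize the fiber over $\wt R$. The correct invariant is that $e(\mathfrak r)=\wt R$ iff $\mathfrak r$ contains $\{Q:a\in Q\}$ for each $a\in R$ and $\{Q:a\notin Q\}$ for each $a\notin R$. This family is centered (the relation $R$ itself lies in every finite intersection), the principal ultrafilter at $R$ is one extension, and when $X_1\times\ldots\times X_n$ is infinite the family is also compatible with $\{Q:Q\neq R\}$, yielding a second, non-principal extension---the same pattern as the construction of $\mathfrak f\,'$ and $\mathfrak f\,''$ inside Lemma~\ref{ext-ext-surj-non-inj}. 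With that substitution your argument goes through; alternatively, the paper's cardinality remark makes the whole construction unnecessary.
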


\begin{proof} 
Item~(i) is Lemma~\ref{ext-ext-surj-non-inj};  
items (ii) and~(iii) are immediate from 
Lemma~\ref{e-E-relations}; the non-injectivity 
is easy from the cardinality argument since 
both maps $R\mapsto\wt{R}$ and $R\mapsto R^*$ 
are bijections. (Alternatively, (ii)~can be 
obtained from~(i) by replacing relations with 
their characteristic functions.) 
The equalities in (ii) and (iii) were stated 
in Theorem~\ref{topological-char}(i),(ii).
\end{proof}


By Lemma~\ref{e-E-relations}, relations of the model 
$e(\mathfrak B)$ are the $\wt{\;\;\,}$-extensions 
of some relations on~$X$, while relations of the 
model $E(\mathfrak B)$ are the ${}^*\,$-extensions 
of the same relations. Whether the whole models 
$e(\mathfrak B)$ and $E(\mathfrak B)$ are the 
ultrafilter extensions of some models depends only 
on the ultrafilter interpretation of functional 
symbols in~$\mathfrak B$:

\begin{theorem}\label{e-and-E-as-ultraextensions}
Let\, $\mathfrak B$ be an ultrafilter model with 
the universe~$\scc X$. The following are equivalent:
\begin{enumerate}
\item[(i)]
$e(\mathfrak B)=\wt{\,\mathfrak A\,}$ for 
an ordinary model~$\mathfrak A$ with the universe~$X$,
\item[(ii)]
$E(\mathfrak B)=\mathfrak A^*$ for 
an ordinary model~$\mathfrak A$ with the universe~$X$,
\item[(iii)]
the interpretation in~$\mathfrak B$ is 
pseudo-principal on functional symbols.
\end{enumerate} 
Moreover, the model~$\mathfrak A$ in (i) and~(ii) 
is the same.  
$$
\xymatrix{
&\mathfrak B\ar_{e}[dl]\ar^{E}[dr]&
\\
\wt{\,\mathfrak A\,}
&&\mathfrak A^*
\\
&\mathfrak A\ar[ul]\ar[ur]&
}
$$
\end{theorem}

\begin{proof} 
The implications from each of (i) and (ii) to~(iii) are 
obvious: if the interpretation~$\jmath$ in $\mathfrak B$ 
is not pseudo-principal, then there are a~functional 
symbol~$F$ and a~sequence~$\boldsymbol{a}$ of principal
ultrafilters over~$\scc X$ such that the operation
$G=e(\jmath(F))$ on~$\scc X$ takes $\boldsymbol{a}$ 
to a~non-principal ultrafilter $G(\boldsymbol{a})$ 
over~$X$. Therefore, $G$~is not of form~$\wt{f}$ 
for any operation~$f$ on~$X$. Since $G$~is the 
interpretation of~$F$ in both models $e(\mathfrak B)$ 
and~$E(\mathfrak B)$, it follows that these models are 
not of form $\wt{\,\mathfrak A\,}$ and $\mathfrak A^*$ 
for any ordinary model~$\mathfrak A$.

Let us show now that, conversely, (iii)~implies each of 
(i) and~(ii). By Lemma~\ref{pseudoprinc-to-princ}, it 
suffices to handle the case when the pseudo-principal 
interpretation~$\jmath$ in $\mathfrak B$ is principal. 
So suppose this is the case and define an ordinary 
interpretation~$\imath$ of the same language by letting,
for all functional symbols~$F$ and predicate symbols~$R$, 
\begin{align*}
\imath(F)=G
&\;\text{ if the principal ultrafilter~$\jmath(F)$ 
over $X^{X\times\ldots\times X}$ is given by~$G$},
\\
\imath(R)=Q
&\;\text{ if }Q=e(\jmath(R))\cap(X\times\ldots\times X).
\end{align*}
We have: 
$$
e(\jmath(F))=E(\jmath(F))=\wt{\imath(F)}
$$ 
since $\jmath(F)$~is principal and $e$ (and~$E$) 
on principal ultrafilters is~$\ext$, and 
$$
e(\jmath(R))=\wt{\imath(R)}
\;\;\text{and}\;\;
E(\jmath(R))=(\imath(R))^*
$$ 
by Lemma~\ref{e-E-relations}. 
Thus if $\mathfrak A$~is the ordinary model 
given by~$\imath$, we obtain
$e(\mathfrak B)=\wt{\,\mathfrak A\,}$ and 
$E(\mathfrak B)=\mathfrak A^*$, as required. 
\end{proof}


Finally, we point out that the fact whether 
an ordinary model with the universe~$\scc X$ 
is of form $e(\mathfrak B)$, and whether it is 
of form $E(\mathfrak B)$, for some ultrafilter 
model~$\mathfrak B$ (clearly, with the same 
universe~$\scc X$) depends only on its 
topological properties:

\begin{theorem}\label{e-and-E-topology}
Let\, $\mathfrak A$ be an ordinary model 
with the universe~$\scc X$. Then:
\begin{enumerate}
\item[(i)]
$\mathfrak A=e(\mathfrak B)$ for 
an ultrafilter model~$\mathfrak B$ 
iff in~$\mathfrak A$ all operations are 
right continuous w.r.t.~$X$ and 
all relations are right clopen w.r.t.~$X$,
\item[(ii)]
$\mathfrak A=E(\mathfrak B)$ for 
an ultrafilter model~$\mathfrak B$ 
iff in~$\mathfrak A$ all operations are 
right continuous w.r.t.~$X$ and 
all relations are regular closed. 
\end{enumerate}
\end{theorem}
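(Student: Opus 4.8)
The plan is to prove each direction of both (i) and (ii) by combining the previously established characterizations. The forward direction (left-to-right) of both items is essentially immediate from what we already know: if $\mathfrak A = e(\mathfrak B)$ or $\mathfrak A = E(\mathfrak B)$ for some generalized model $\mathfrak B$, then by Lemma~\ref{e,E-surj-non-inj} every operation of $\mathfrak A$ lies in $RC_{X,\ldots,X}(\scc X,\ldots,\scc X,\scc X)$ and hence is right continuous w.r.t.~$X$, while every relation of $\mathfrak A$ is of the form $\wt R$ (in case~(i)) or $R^*$ (in case~(ii)) for some $R\subseteq X^n$. By Theorem~\ref{topological-char}(i),(ii), being of the form $\wt R$ is equivalent to being right clopen w.r.t.~$X_1,\ldots,X_{n-1}$, and being of the form $R^*$ is equivalent to being regular closed. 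So the forward implications reduce to reading off the surjectivity statements of Lemma~\ref{e,E-surj-non-inj} together with Theorem~\ref{topological-char}.

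\textbf{The converse directions.} For the right-to-left direction, suppose $\mathfrak A=(\scc X, G,\ldots,Q,\ldots)$ is an ordinary model in which all operations $G$ are right continuous w.r.t.~$X$ and all relations $Q$ are right clopen w.r.t.~$X$ (case~(i)) or regular closed (case~(ii)). I would construct the required generalized model $\mathfrak B$ symbol by symbol, invoking the surjectivity of $e$ and $E$ from Lemma~\ref{e,E-surj-non-inj}. For each operation symbol $F$ interpreted by $G$ in $\mathfrak A$: since $G\in RC_{X,\ldots,X}(\scc X,\ldots,\scc X,\scc X)$ by hypothesis, item~(i) of Lemma~\ref{e,E-surj-non-inj} furnishes an ultrafilter $\imath(F)\in\scc(X^{X\times\cdots\times X})$ with $e(\imath(F))=E(\imath(F))=G$. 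For each predicate symbol $R$ interpreted by $Q$: in case~(i), right clopenness of $Q$ gives (Theorem~\ref{topological-char}(i)) that $Q=\wt R$ for some $R\subseteq X^n$, and then item~(ii) of Lemma~\ref{e,E-surj-non-inj} yields $\imath(R)\in\scc\,\mathcal P(X^n)$ with $e(\imath(R))=\wt R=Q$; in case~(ii), regular closedness of $Q$ gives (Theorem~\ref{topological-char}(ii)) that $Q=R^*$ for some $R$, and item~(iii) of Lemma~\ref{e,E-surj-non-inj} yields $\imath(R)$ with $E(\imath(R))=R^*=Q$. Collecting these choices defines a generalized model $\mathfrak B=(\scc X,\imath(F),\ldots,\imath(R),\ldots)$ with the same universe $\scc X$, and by construction $e(\mathfrak B)=\mathfrak A$ in case~(i) and $E(\mathfrak B)=\mathfrak A$ in case~(ii).

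\textbf{The one subtle point.} The main thing to check carefully is that the same $\mathfrak B$ simultaneously handles operations and relations in the chosen case, and that the choice of $\imath(R)$ is compatible with whichever operation $e$ or $E$ we are using. This is where one must be a little careful: in case~(i) we need $e(\imath(R))=Q$, and since $e$ on relation-ultrafilters computes the $\wt{\ }$-extension (Lemma~\ref{e-E-relations}), the $\imath(R)$ supplied by Lemma~\ref{e,E-surj-non-inj}(ii) is exactly what we want; in case~(ii) we need $E(\imath(R))=Q$, and since $E$ on relation-ultrafilters computes the $*$-extension (again Lemma~\ref{e-E-relations}), the $\imath(R)$ from Lemma~\ref{e,E-surj-non-inj}(iii) does the job. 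Because the constructions of operations and relations are independent (they live over disjoint pieces of the signature and are extended by $e$, resp.~$E$, componentwise), there is no interference, and the single generalized model $\mathfrak B$ realizes $\mathfrak A$ as claimed. Thus I expect no genuine obstacle here: the theorem is a bookkeeping consequence of the surjectivity in Lemma~\ref{e,E-surj-non-inj} and the topological characterizations in Theorem~\ref{topological-char}, and the only care needed is matching each symbol to the correct clause of those lemmas.
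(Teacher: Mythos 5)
Your proof is correct and follows essentially the same route as the paper, whose entire proof of this theorem is a citation of Lemma~\ref{e,E-surj-non-inj}; you have simply spelled out the symbol-by-symbol bookkeeping (surjectivity of $e$ and $E$ onto the right continuous operations and onto the right clopen, respectively regular closed, relations, via Theorem~\ref{topological-char}) that the paper leaves implicit.
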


\begin{proof}
Lemma~\ref{e,E-surj-non-inj}. 
\end{proof}

Since by Theorem~\ref{e-and-E-as-ultraextensions}, 
$e$ and $E$ applied to ultrafilter models with 
pseudo-principal interpretations give the 
$\wt{\;\;}$- and ${}^*$-extensions of ordinary models, 
Theorem~\ref{e-and-E-topology} can be considered 
as a~generalization of Theorems \ref{modeltopology} 
and~\ref{modaltopology}.


\vskip+1em
\noindent
\textbf{\textit{First Extension Theorems.}}
Here we discuss a~possible generalization of the 
First Extension Theorems (Theorems \ref{modalFET} 
and~\ref{modelFET}) to ultrafilter models. To start, 
let us restate both them in a~single way as follows.

\begin{theorem}\label{FET-restated}
Let $\mathfrak A$ and $\mathfrak B$ be two 
(ordinary) models of the same signature, and 
let $h:X\to Y$ be a~map between their universes. 
The following are equivalent: 
\begin{enumerate}
\item[(i)]
$h$~is a~homomorphism of $\mathfrak A$ 
into $\mathfrak B$,
\item[(ii)]
$\wt{h}$~is a~homomorphism of $\wt{\,\mathfrak A\,}$ 
into $\wt{\,\mathfrak B\,}$,
\item[(iii)]
$\wt{h}$~is a~homomorphism of $\mathfrak A^*$ 
into $\mathfrak B^*$:
\end{enumerate} 
$$
\xymatrix{ 
&&\mathfrak A^\ast
\ar@{-->}^{\wt h}[rr] 
&&\mathfrak B^\ast
\\
\wt{\,\mathfrak A\,}
\ar@{-->}^{\wt h}[rr]
&&\wt{\,\mathfrak B\,}&&
\\
&\mathfrak A
\ar[rr]^h\ar[lu]\ar[ruu]
&&\mathfrak B\ar[lu]\ar[ruu]& 
}
$$
\end{theorem}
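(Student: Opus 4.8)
The plan is to prove the three-way equivalence by closing a short cycle of implications, two of which are already available above and two of which are the ``trivial converses'' mentioned in the remark following Theorem~\ref{modalSET}. Indeed, the implication (i)$\Rightarrow$(ii) is precisely Theorem~\ref{modelFET}, and (i)$\Rightarrow$(iii) is precisely Theorem~\ref{modalFET}, so no new work is required in those directions. It then remains only to establish (ii)$\Rightarrow$(i) and (iii)$\Rightarrow$(i); once these are in hand, every implication among (i)--(iii) follows by composition (in particular (ii)$\Leftrightarrow$(iii) factors through~(i)).

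For the two converse implications, the key observation is that $\mathfrak A$ embeds as a submodel into each of $\wt{\,\mathfrak A\,}$ and $\mathfrak A^*$ under the identification of $x\in X$ with its principal ultrafilter, and that the continuous extension satisfies $\wt h\uhr X=h$. Concretely, I would use that on tuples of principal ultrafilters all the extension operations collapse to the originals: for any operation~$F$ one has $\wt F(a_1,\ldots,a_n)=F(a_1,\ldots,a_n)$, and for any relation~$R$ both $\wt R(a_1,\ldots,a_n)$ and $R^*(a_1,\ldots,a_n)$ hold iff $R(a_1,\ldots,a_n)$ holds, the latter being immediate by instantiating the defining quantifier string with the singletons $\{a_i\}\in a_i$. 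Granting this, assume~(ii), i.e.\ that $\wt h$ is a homomorphism of $\wt{\,\mathfrak A\,}$ into $\wt{\,\mathfrak B\,}$. Restricting the homomorphism equations to principal ultrafilters $a_1,\ldots,a_n\in X$ and applying the above reductions, the operation clause $\wt h(\wt F(a_1,\ldots,a_n))=\wt F(\wt h(a_1),\ldots,\wt h(a_n))$ collapses to $h(F(a_1,\ldots,a_n))=F(h(a_1),\ldots,h(a_n))$, and the relation clause collapses to $R(a_1,\ldots,a_n)\Rightarrow R(h(a_1),\ldots,h(a_n))$; hence $h$ is a homomorphism of $\mathfrak A$ into $\mathfrak B$, which is~(i). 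The derivation of (iii)$\Rightarrow$(i) is word-for-word the same, with $R^*$ in place of $\wt R$, since $R^*$ too restricts to~$R$ on principal ultrafilters.

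I expect no genuine obstacle here: the content of the theorem is essentially its two already-proved forward halves together with a routine restriction-to-submodel argument for the converses. The one point deserving care in the write-up is the bookkeeping that the homomorphism condition for relations is one-directional (preservation rather than reflection), so that its restriction to principal ultrafilters yields exactly the preservation condition required of~$h$; this causes no difficulty once one notes that $\wt R$ and $R^*$ both agree with~$R$ there, so the implication transfers in the correct direction.
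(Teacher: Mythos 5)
Your proposal is correct and follows essentially the same route as the paper: the paper's proof simply cites Theorems~\ref{modelFET} and~\ref{modalFET} for the forward directions and relies on the remark (made after Theorem~\ref{modalSET}) that the converse implications are trivial because $\mathfrak A$ sits inside $\wt{\,\mathfrak A\,}$ and $\mathfrak A^*$ as the submodel of principal ultrafilters and $\wt h\uhr X=h$. Your write-up just makes explicit the restriction-to-principal-ultrafilters argument that the paper leaves implicit.
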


\begin{proof}
Theorems \ref{modelFET} and~\ref{modalFET}.
\end{proof}

This leads to a~conclusion for our ultrafilter models:

\begin{lemma}\label{homomorphisms-vs-e-and-E}
Let $\mathfrak U$ and $\mathfrak V$ be two 
ultrafilter models of the same signature, and let 
$h:\scc X\to\scc Y$ be a~map between their universes. 
The following are equivalent: 
\begin{enumerate}
\item[(i)]
$h$~is a~homomorphism of $e(\mathfrak U)$ 
into $e(\mathfrak V)$,
\item[(ii)]
$h$~is a~homomorphism of $E(\mathfrak U)$ 
into $E(\mathfrak V)$.
\end{enumerate} 
\end{lemma}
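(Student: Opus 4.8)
The plan is to peel the two homomorphism conditions apart symbol-by-symbol, observe that they can only differ on predicate symbols, and then reduce the whole statement to a single topological comparison between the two ways of extending a base relation. First I would record that $e$ and $E$ \emph{agree on functional symbols}: by construction $E$ coincides with $e$ on every $\scc(Y^{X_1\times\ldots\times X_n})$, so for each functional symbol $F$ the operations $e(\imath_{\mathfrak U}(F))$ and $E(\imath_{\mathfrak U}(F))$ on $\scc X$ are literally the same map (and likewise over $\mathfrak V$). Hence the functional clauses in the definition of homomorphism are word-for-word identical for $e(\mathfrak U)\to e(\mathfrak V)$ and for $E(\mathfrak U)\to E(\mathfrak V)$, and a given $h$ satisfies them in one case iff in the other. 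So the equivalence collapses to the relational clauses, and it suffices to treat each predicate symbol $R$ separately.

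For a fixed $R$, Lemma~\ref{e-E-relations} supplies a single base relation $P=e(\imath_{\mathfrak U}(R))\cap(X\times\ldots\times X)\subseteq X^n$ with $e(\imath_{\mathfrak U}(R))=\wt P$ and $E(\imath_{\mathfrak U}(R))=P^*$, and similarly $P'\subseteq Y^n$ with $e(\imath_{\mathfrak V}(R))=\wt{P'}$ and $E(\imath_{\mathfrak V}(R))=(P')^*$. Writing $h^{(n)}$ for the coordinatewise map $(\mathfrak u_1,\ldots,\mathfrak u_n)\mapsto(h\mathfrak u_1,\ldots,h\mathfrak u_n)$, the statement to be proved is exactly
$$
\wt P\subseteq (h^{(n)})^{-1}\wt{P'}
\quad\text{iff}\quad
P^*\subseteq (h^{(n)})^{-1}(P')^*.
$$
The bridge between the two sides is the identity $P^*=\cl_{(\scc X)^n}\wt P$: since $P\subseteq\wt P\subseteq P^*$ and $P^*=\cl_{(\scc X)^n}P$ by Theorem~\ref{modaltopology}, monotonicity of closure together with closedness of $P^*$ gives $\cl_{(\scc X)^n}\wt P=P^*$. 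Thus $\wt P$ is dense in the regular closed set $P^*$, while by Theorems~\ref{modeltopology} and~\ref{topological-char}(i) the set $\wt P$ is the right-clopen ``core'' inside $P^*$ (and analogously $\wt{P'}\subseteq(P')^*$), and both $\wt P$ and $P^*$ have the same trace $P$ on the principal points $X\subseteq\scc X$.

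With this picture, I would attack the two implications via the denseness/closedness structure and the action of $h$ on the principal points. For the direction from the $e$-condition to the $E$-condition, the target $(P')^*$ is closed and $\wt P$ is dense in $P^*$, so the plan is to approximate an arbitrary $\vec{\mathfrak u}\in P^*$ by tuples of $\wt P$ — each of which $h^{(n)}$ already sends into $\wt{P'}\subseteq(P')^*$ — and to transport this along $h^{(n)}$ into the closure $(P')^*$; the delicate point is controlling $h^{(n)}$ on limit points. For the converse I would start from $\vec{\mathfrak u}\in\wt P$, expand $\wt P$ and $\wt{P'}$ in their iterated ultrafilter-quantifier (right-clopen) form from Theorem~\ref{topological-char}(i), and try to upgrade the known membership $h^{(n)}\vec{\mathfrak u}\in(P')^*$ to the finer $h^{(n)}\vec{\mathfrak u}\in\wt{P'}$ by peeling off coordinates and tracking how $h$ treats the principal ultrafilters witnessing $\wt P$, using that $\wt{P'}$ and $(P')^*$ share the trace $P'$ on $Y$.

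I expect this converse — recovering membership in the smaller relation $\wt{P'}$ from membership in the larger $(P')^*$ — to be the main obstacle, precisely because $\wt{P'}\subsetneq(P')^*$ in general, so closure arguments run the wrong way and one cannot simply invoke $\wt{P'}\subseteq(P')^*$. The crux is to show that the right-clopen (rather than merely regular-closed) character of $\wt{P'}$, recorded in Theorems~\ref{modeltopology} and~\ref{topological-char}(i), together with the fact from Theorem~\ref{homo-of-e-to-E} that the $\wt{\;\;}$- and ${}^*$-relations agree on principal points, constrains $h^{(n)}$ enough on the tuples of $\wt P$ to force the finer membership. Everything else in the argument — the functional clauses, the reduction through Lemma~\ref{e-E-relations}, and the forward closure step — I expect to be routine once this finer point about $h$ and the right-clopen core is settled.
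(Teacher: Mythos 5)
Your set-up matches the paper's opening moves exactly: the functional clauses coincide because $e=E$ on ultrafilters over maps, and Lemma~\ref{e-E-relations} reduces the whole question to a single predicate symbol with $e(\mathfrak r)=\wt P$, $E(\mathfrak r)=P^*$ on one side and $\wt{P'}$, $(P')^*$ on the other. But from there your proposal is a plan rather than a proof: both implications are deferred to a ``delicate point'' and a ``crux'' that you never resolve, and neither is routine. Worse, in the generality in which you are working --- an arbitrary map $h:\scc X\to\scc Y$ with no continuity assumed --- both implications are in fact false, so the direct topological attack you outline cannot be completed by any refinement of the right-clopen versus regular-closed structure. For (i)$\Rightarrow$(ii), take $X=Y=\omega$, $n=2$, $P=P'=\{(x,y):y\le x\}$, and let $h$ send every principal ultrafilter and one fixed nonprincipal $\mathfrak u_0$ to $0$, while fixing some nonprincipal $\mathfrak w$. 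Every pair in $\wt P$ has principal second coordinate, and $(\mathfrak a,0)\in\wt P$ for every $\mathfrak a$ (since $\{x:0\le x\}=\omega\in\mathfrak a$), so $h\times h$ maps $\wt P$ into $\wt P$; yet $(\mathfrak u_0,\mathfrak w)\in P^*$ while its image $(0,\mathfrak w)\notin P^*$ (witness $A=\{0\}$, $B=\omega\setminus\{0\}$). The constant map $h\equiv\mathfrak w$ refutes (ii)$\Rightarrow$(i) similarly: it sends everything into $P^*$, but sends $(0,0)\in\wt P$ to $(\mathfrak w,\mathfrak w)\notin\wt P$. So your suspicion that the converse is the hard point was well-founded, but no pointwise constraint on $h^{(n)}$ over $\wt P$ can ``force the finer membership'': the equivalence genuinely requires $h$ to be a continuous extension $\wt{h_0}$ of a map between the base sets, which is the only case in which the lemma is ever applied.

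The missing idea is the paper's two-line route through the principal points: one never converts between $\wt{P'}$ and $(P')^*$ directly at the ultrafilter level at all. For $h=\wt{h_0}$, each of (i) and (ii) is separately equivalent to ``$h_0$ is a homomorphism of the ordinary model $(X,P,\ldots)$ into $(Y,P',\ldots)$''. Downwards this is the trivial restriction to $X^n$, using that $\wt{h_0}$ takes principal ultrafilters to principal ones and that $\wt{P'}\cap Y^n=(P')^*\cap Y^n=P'$; upwards it is precisely the two First Extension Theorems (Theorems \ref{modelFET} and~\ref{modalFET}), packaged as Theorem~\ref{FET-restated}, which the paper's proof invokes verbatim after Lemma~\ref{e-E-relations}. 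Note also that once continuity is available your forward direction does become the easy closure argument you sketched, since $h^{(n)}(P^*)=h^{(n)}(\cl\,\wt P)\subseteq\cl\,h^{(n)}(\wt P)\subseteq\cl\,\wt{P'}=(P')^*$; but the converse cannot be salvaged without descending to the base model, because membership in $\wt{P'}$ at tuples of nonprincipal ultrafilters is exactly what the quantifier-by-quantifier argument behind Theorem~\ref{modelFET} establishes, and is not recoverable from the behavior of $h^{(n)}$ on $\wt P$ alone.
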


\begin{proof}
If $\mathfrak f$~is an ultrafilter over operations, 
we have $e(\mathfrak f)=E(\mathfrak f)$ by definition 
of $e$ and~$E$, hence the claim for homomorphisms 
w.r.t.~operations holds trivially. 
If $\mathfrak r$~is an ultrafilter over relations, 
we have $e(\mathfrak r)=\wt R$ and $E(\mathfrak r)=R^*$ 
by Lemma~\ref{e-E-relations}, hence the claim for 
homomorphisms w.r.t.~relations holds
by Theorem~\ref{FET-restated}.
\end{proof}

This observation leads to the following definition: 

\begin{definition}\label{def: homo (narrow)}
If $\mathfrak U$ and $\mathfrak V$ are two 
ultrafilter models of the same signature, we say that 
a~map $h:\scc X\to\scc Y$ between their universes is 
a~{\it homomorphism\/} ({\it of ultrafilter models\/}) 
iff it is a~homomorphism of $e(\mathfrak U)$ into 
$e(\mathfrak V)$ (or a~homomorphism of $E(\mathfrak U)$ 
into $E(\mathfrak V)$, which is equivalent by 
Lemma~\ref{homomorphisms-vs-e-and-E}). 
\end{definition}

The concepts of {\it epimorphisms\/}, {\it quotients\/}, 
{\it isomorphic embeddings\/}, {\it submodels\/}, 
{\it elementary embeddings\/}, 
{\it elementary submodels\/}, etc., 
for ultrafilter models are defined likewise.


The following can be considered as the First 
Extension Theorem for ultrafilter models:

\begin{theorem}\label{FET-generalized}
Let $\mathfrak U$ and $\mathfrak V$ be two ultrafilter  
models of the same signature with the universes 
$\scc X$ and~$\scc Y$, both having pseudo-principal 
interpretations on functional symbols, let
$\mathfrak A$ and~$\mathfrak B$ denote 
the models such that 
$\wt{\,\mathfrak A\,}=e(\mathfrak U)$ and 
$\wt{\,\mathfrak B\,}=e(\mathfrak V)$,  
and so $\mathfrak A^*=E(\mathfrak U)$ and 
$\mathfrak B^*=E(\mathfrak V)$, and let $h:X\to Y$. 
The following are equivalent: 
\begin{enumerate}
\item[(i)] 
$h$~is a~homomorphism of $\mathfrak A$ 
into~$\mathfrak B$,
\item[(ii)] 
$\wt{h}$~is a~homomorphism of $\mathfrak U$ 
into~$\mathfrak V$, 
\item[(iii)]
$\wt{h}$~is a~homomorphism of $\wt{\,\mathfrak A\,}$ 
into $\wt{\,\mathfrak B\,}$,
\item[(iv)]
$\wt{h}$~is a~homomorphism of $\mathfrak A^*$ 
into $\mathfrak B^*$:
\end{enumerate}
$$
\xymatrix{ 
&\mathfrak U
\ar[ldd]_(.4)e
\ar[rd]^E\ar@{-->}^{\wt h}[rr]
&&\mathfrak V
\ar[ldd]_(.4)e\ar[rd]^E&
\\
&&\mathfrak A^\ast
\ar@{-->}^{\wt h}[rr] 
&&\mathfrak B^\ast
\\
\wt{\,\mathfrak A\,}
\ar@{-->}^{\wt h}[rr]
&&\wt{\,\mathfrak B\,}&& 
\\
&\mathfrak A
\ar[rr]^h\ar[lu]\ar[ruu]
&&\mathfrak B\ar[lu]\ar[ruu]& 
}
$$
\end{theorem}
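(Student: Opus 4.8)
The plan is to reduce everything to the ordinary-model First Extension Theorem (Theorem~\ref{FET-restated}) by way of the identifications supplied by Theorem~\ref{e-and-E�as-ultraextensions} together with the very definition of a homomorphism of generalized models. No genuinely new computation should be needed: the entire content of this theorem is that the four formulations name one and the same condition.

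First I would record what the hypotheses give. Since both $\mathfrak U$ and $\mathfrak V$ carry pseudo-principal interpretations on functional symbols, Theorem~\ref{e-and-E�as-ultraextensions} supplies ordinary models $\mathfrak A$ and $\mathfrak B$, with universes $X$ and~$Y$, such that $e(\mathfrak U)=\wt{\,\mathfrak A\,}$, $E(\mathfrak U)=\mathfrak A^*$, $e(\mathfrak V)=\wt{\,\mathfrak B\,}$, and $E(\mathfrak V)=\mathfrak B^*$ --- precisely the models named in the statement. These are the objects to which the ordinary extension theorem will be applied.

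Next I would obtain (ii)~$\Leftrightarrow$~(iii) straight from the definition of a homomorphism of generalized models: by that definition $\wt h$ is a homomorphism of $\mathfrak U$ into $\mathfrak V$ iff $\wt h$ is a homomorphism of $e(\mathfrak U)$ into $e(\mathfrak V)$, and substituting $e(\mathfrak U)=\wt{\,\mathfrak A\,}$ and $e(\mathfrak V)=\wt{\,\mathfrak B\,}$ turns this verbatim into statement~(iii). One may alternatively route through $E$, getting (ii)~$\Leftrightarrow$~(iv) from $E(\mathfrak U)=\mathfrak A^*$, $E(\mathfrak V)=\mathfrak B^*$ and Lemma~\ref{homomorphisms-vs-e-and-E}; that serves as a consistency check but is not required for the chain.

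Finally I would invoke Theorem~\ref{FET-restated} for the ordinary models $\mathfrak A$, $\mathfrak B$ and the map $h:X\to Y$. It asserts exactly that $h$ is a homomorphism of $\mathfrak A$ into $\mathfrak B$ iff $\wt h$ is a homomorphism of $\wt{\,\mathfrak A\,}$ into $\wt{\,\mathfrak B\,}$ iff $\wt h$ is a homomorphism of $\mathfrak A^*$ into $\mathfrak B^*$; that is, (i)~$\Leftrightarrow$~(iii)~$\Leftrightarrow$~(iv) in the present labelling. Chaining this with the previous paragraph closes all four equivalences. I do not expect a real obstacle here; the only point demanding care is bookkeeping --- matching each clause correctly under the identifications of Theorem~\ref{e-and-E�as-ultraextensions}, so that (iii) here lines up with the $\wt{\;}$-clause and (iv) with the ${}^*$-clause of Theorem~\ref{FET-restated}.
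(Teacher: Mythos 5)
Your proposal is correct and uses exactly the ingredients of the paper's own proof: Theorem~\ref{e-and-E�as-ultraextensions} to identify $e(\mathfrak U),E(\mathfrak U),e(\mathfrak V),E(\mathfrak V)$ with the ultrafilter extensions of $\mathfrak A$ and $\mathfrak B$, the definition of homomorphisms of generalized models (with Lemma~\ref{homomorphisms-vs-e-and-E}) to tie (ii) to the others, and Theorem~\ref{FET-restated} for the equivalence of (i), (iii), and (iv). The only difference is cosmetic --- the paper links (i) with (ii) directly while you link (ii) with (iii) --- so the two arguments are essentially identical.
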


\begin{proof} 
The equivalence of items (i) and~(ii) follows 
from Theorem~\ref{e-and-E-as-ultraextensions}, 
Lemma~\ref{homomorphisms-vs-e-and-E}, and the 
definition of homomorphisms of ultrafilter models. 
The equivalence of items (i), (iii), and~(iv) 
repeats Theorem~\ref{FET-restated}. 
\end{proof}


For an ultrafilter model~$\mathfrak U$ with 
the universe~$\scc X$, the set~$X$ of principal 
ultrafilters forms an ultrafilter submodel (and 
also ordinary submodels of $e(\mathfrak U)$ 
and~$E(\mathfrak U)$) iff the interpretation 
in~$\mathfrak U$ is pseudo-principal on functional 
symbols; this can be added as item~(iv) to
Theorem~\ref{e-and-E-as-ultraextensions}. 
We shall call the submodel consisting of principal
ultrafilters the {\it principal submodel\/}. Thus 
Theorem~\ref{FET-generalized} can be reformulated 
by replacing ``both having pseudo-principal 
interpretations'' with ``both having principal
submodels''.


In fact, we can omit here the assumption about 
the pseudo-principality in the ultrafilter
model~$\mathfrak V$ by applying the Second 
Extension Theorems (Theorems \ref{modelSET} 
and~\ref{modalSET}):

\begin{theorem}\label{intermediate-ET-generalized}
Let $\mathfrak U$ and $\mathfrak V$ be two 
ultrafilter models of the same signature with 
the universes $\scc X$ and~$\scc Y$, let the 
interpretation of $\mathfrak U$ be pseudo-principal 
on functional symbols with $\mathfrak A$~the 
principal submodel (having the universe~$X$), 
so $\wt{\,\mathfrak A\,}=e(\mathfrak U)$ 
and $\mathfrak A^*=E(\mathfrak U)$, 
and let $h:X\to Y$. 
The following are equivalent: 
\begin{enumerate}
\item[(i)] 
$h$~is a~homomorphism of $\mathfrak A$ 
into $e(\mathfrak V)$, 
\item[(ii)] 
$h$~is a~homomorphism of $\mathfrak A$
into $E(\mathfrak V)$,  
\item[(iii)] 
$\wt{h}$~is a~homomorphism of $\mathfrak U$ 
into~$\mathfrak V$,
\item[(iv)] 
$\wt{h}$~is a~homomorphism of $\wt{\;\mathfrak A\;}$ 
into $e(\mathfrak V)$, 
\item[(v)] 
$\wt{h}$~is a~homomorphism of $\mathfrak A^*$
into $E(\mathfrak V)$:  
\end{enumerate}
$$
\xymatrix{ 
&\mathfrak U
\ar[ldd]_(.4)e
\ar[rd]^E
\ar@{-->}^{\wt h}[rr]
&&\mathfrak V
\ar[ldd]_(.4)e
\ar[rd]^E&
\\
&&\mathfrak A^\ast
\ar@{-->}^{\wt h}[rr] 
&&E(\mathfrak V)
\\
\wt{\,\mathfrak A\,}
\ar@{-->}^{\wt h}[rr]
&&e(\mathfrak V)&& 
\\
&\mathfrak A
\ar[ru]^(.6){h}
\ar[lu]
\ar[ruu]
\ar[rrruu]^h&&& 
}
$$
\end{theorem}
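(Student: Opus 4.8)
The plan is to route everything through the two Second Extension Theorems (Theorems~\ref{modelSET} and~\ref{modalSET}), exploiting the fact that these require only topological properties of the target model, not that it literally be an ultrafilter extension---which is precisely what is needed here, since $\mathfrak V$ is not assumed to be pseudo-principal. The five conditions split naturally into a purely definitional block, $\{(\text{iii}),(\text{iv}),(\text{v})\}$, and two ``lifting'' bridges connecting (iv) to~(i) and (v) to~(ii).

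First I would dispose of the equivalences among (iii), (iv), and~(v), which hold essentially by definition. By hypothesis $e(\mathfrak U)=\wt{\,\mathfrak A\,}$ and $E(\mathfrak U)=\mathfrak A^*$. By the definition of a homomorphism of generalized models, statement~(iii) unfolds to ``$\wt h$ is a homomorphism of $e(\mathfrak U)$ into $e(\mathfrak V)$,'' which is exactly~(iv); and by Lemma~\ref{homomorphisms-vs-e-and-E} this is in turn equivalent to ``$\wt h$ is a homomorphism of $E(\mathfrak U)$ into $E(\mathfrak V)$,'' which is exactly~(v). The two downward implications (iv)$\,\Rightarrow\,$(i) and (v)$\,\Rightarrow\,$(ii) are then trivial: the principal submodel $\mathfrak A$ embeds into $\wt{\,\mathfrak A\,}$ (and into $\mathfrak A^*$) via the identification of points of~$X$ with principal ultrafilters, $\wt h$ agrees with~$h$ on these points, and the restriction of a homomorphism to a submodel is again a homomorphism.

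The heart of the argument lies in the two upward implications. For (i)$\,\Rightarrow\,$(iv) I would apply Theorem~\ref{modelSET} with target $\mathfrak C=e(\mathfrak V)$. Its universe $\scc Y$ is compact Hausdorff, and by Theorem~\ref{e-and-E-topology}(i) all operations of $e(\mathfrak V)$ are right continuous w.r.t.~$Y$ while all its relations are right clopen w.r.t.~$Y$. Since right continuity and right closedness w.r.t.~a set are inherited by any subset of that set, and $h\image X\subseteq Y$, these conditions hold w.r.t.~$h\image X$ as well (with right clopen giving right closed in particular); thus the hypotheses of Theorem~\ref{modelSET} are met, yielding that $\wt h$ is a homomorphism of $\wt{\,\mathfrak A\,}=e(\mathfrak U)$ into $e(\mathfrak V)$, which is~(iv). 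The implication (ii)$\,\Rightarrow\,$(v) is entirely parallel, using Theorem~\ref{modalSET} in place of Theorem~\ref{modelSET}: here Theorem~\ref{e-and-E-topology}(ii) furnishes right-continuous operations and \emph{regular closed} relations in $E(\mathfrak V)$, and regular closed relations are closed, matching the milder hypothesis on relations in Theorem~\ref{modalSET}.

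The main obstacle is the bookkeeping in these two hypothesis checks: one must confirm that the topological properties guaranteed w.r.t.~the full set~$Y$ by Theorem~\ref{e-and-E-topology} descend to the image $h\image X$ as demanded by the Second Extension Theorems, and that ``right clopen'' and ``regular closed'' respectively imply the weaker ``right closed'' and ``closed'' conditions actually invoked there. Assembling the two blocks then closes the full cycle (i)$\,\Leftrightarrow\,$(iv)$\,\Leftrightarrow\,$(iii)$\,\Leftrightarrow\,$(v)$\,\Leftrightarrow\,$(ii), and in particular establishes (i)$\,\Leftrightarrow\,$(ii) without a separate argument.
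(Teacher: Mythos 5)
Your proposal is correct and follows essentially the same route as the paper: the equivalences among (iii), (iv), (v) come from the definition of homomorphisms of generalized models together with Lemma~\ref{homomorphisms-vs-e-and-E}, and the bridges to (i) and (ii) are obtained by applying Theorems~\ref{modelSET} and~\ref{modalSET} with targets $e(\mathfrak V)$ and $E(\mathfrak V)$, whose required topological properties are supplied by Theorem~\ref{e-and-E-topology}. Your explicit check that right clopenness/regular closedness w.r.t.~$Y$ descends to the weaker conditions w.r.t.~$h\image X$ demanded by the Second Extension Theorems is a useful piece of bookkeeping that the paper leaves implicit.
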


\begin{proof} 
The equivalence of items (i) and~(iii) follows 
from Theorem~\ref{modelSET} and 
Theorem~\ref{e-and-E-topology}(i), while 
the equivalence of items (ii) and~(iii) follows
from Theorem~\ref{modalSET} and 
Theorem~\ref{e-and-E-topology}(ii). Finally, 
(ii) is equivalent to (iv) by Theorem~\ref{modelSET} 
and to~(v) by Theorem~\ref{modalSET}.  
\end{proof}

Observe that, by Theorem~\ref{e-and-E-topology}, 
whenever the interpretation of $\mathfrak V$ is 
{\it not\/} pseudo-principal on functional symbols, 
then the models $e(\mathfrak V)$ and $E(\mathfrak V)$ 
are {\it not\/} of form $\wt{\,\mathfrak{A}\,}$ and 
$\mathfrak A^*$ for any ordinary model~$\mathfrak A$; 
nevertheless, these models still satisfy the conditions 
of Theorems \ref{modelSET} and~\ref{modalSET} (playing 
the role of the model~$\mathfrak C$ there). Therefore, 
Theorem~\ref{intermediate-ET-generalized} is indeed 
more general than Theorem~\ref{FET-generalized}; 
it has a~character intermediate between the First and 
the Second Extension Theorems. To have a~reasonable
generalization of Second Extension Theorems in the 
full form, we need to have a~more general concept 
of ultrafilter models; this is the subject of the 
next, last section of our article.

\begin{remark}\label{rmk: variants of FET}
Theorems \ref{FET-restated}--%
\ref{FET-generalized} remain true 
for epimorphisms and isomorphic embeddings, 
and Theorem~\ref{intermediate-ET-generalized} 
for epimorphisms. 
Also they can be stated for so-called 
homotopies and isotopies; these concepts 
(generalizing homomorphisms and isomorphisms) 
for ordinary models, together with both extension 
theorems, were introduced in
\cite{Saveliev} (and~\cite{Saveliev(inftyproc)}).
For ultrafilter models they can be defined 
in the same way as this was done for homomorphisms and 
embeddings. Finally, versions for multi-sorted models 
(having rather many universes $X_1,\ldots,X_n$ than 
one universe~$X$) can be also easily stated. 
\end{remark}



\section{Wider ultrafilter interpretations} 

Here we discuss a~possible generalization of the 
Second Extension Theorems (Theorems \ref{modelSET} 
and~\ref{modalSET}) to ultrafilter models. 
For this, we should have a~wider concept 
of ultrafilter models which, on the one hand, 
would replace compact Hausdorff right topological
models in these theorems, and on the other hand, 
would turn into our previous concept of 
ultrafilter models whenever the universe is 
of form~$\scc Y$ to include our versions of 
the First Extension Theorem for ultrafilter 
models (Theorems \ref{FET-generalized} 
and~\ref{intermediate-ET-generalized}).
Also we should have a~concept of satisfiability 
in these models which would turn into 
the satisfiability in our previous ultrafilter 
model; recall that the latter can be redefined 
in terms of the map~$e$ 
(Theorem~\ref{e-preserves-formulas}). 
Actually, our new concepts of ultrafilter models 
and satisfiability will be wide enough to cover 
all ordinary models, not only ultrafilter 
extensions.

\vskip+1em
\noindent
\textbf{\textit{Ultrafilter models in the wide sense.}}
The new definition of ultrafilter models requires 
only a~minor modification of the former one. By 
an {\it ultrafilter\/} 
{\it interpretation\/} we still mean a~map which
takes functional and relational symbols to 
ultrafilters over operations and relations on 
a~set~$X$. But valuations of variables now will 
be in the set~$X$ itself, not in~$\scc X$.

\begin{definition}\label{def: ultra model (wide)}
Given a~signature~$\tau$, an 
{\it ultrafilter model} {\it of~$\tau$}
{\it in the wide sense} 
is a~structure~$\mathfrak U$ of the form
$$
\mathfrak U=
(X,\mathfrak f,\ldots,\mathfrak r,\ldots),
$$ 
where $X$~is the universe of~$\mathfrak U$ 
(so individual variables are valuated by elements 
of~$X$),%
\hide{
if $F$~is an $n$-ary functional symbol in~$\tau$
then it is interpreted by some 
$\mathfrak f\in\scc(X^{X^n})$, and 
if $R$~is an $n$-ary predicate symbol in~$\tau$ 
then it is interpreted by some 
$\mathfrak r\in\scc\,\mathcal P(X^n)$. 
} 
each $n$-ary functional symbol in~$\tau$ is 
interpreted by some $\mathfrak f\in\scc(X^{X^n})$, 
and each $n$-ary predicate symbol in~$\tau$ is 
interpreted by some 
$\mathfrak r\in\scc\,\mathcal P(X^n)$. 
\end{definition}

Ultrafilter models in the sense of  
Definition~\ref{def: ultra model (narrow)} 
will be referred to as {\it ultrafilter models\/} 
{\it in the narrow sense\/}. Henceforth we shall 
never omit the words ``in the narrow sense''.

To revise the concept of satisfiability making 
it adequate for ultrafilter models in the wide 
sense, we use the notion of convergence of 
ultrafilters. Recall that a~filter~$\mathfrak d$ 
over a~topological space~$X$ {\it converges\/} 
to a~point $x\in X$ iff any neighborhood of~$x$ 
belongs to~$\mathfrak d$. If a~filter~$\mathfrak d$ 
converges to a~unique point~$x$ then $x$~is called 
the {\it limit\/} of~$\mathfrak d$, in which case 
we shall write $\lim\mathfrak d=x$. 
As well-known, any filter over~$X$ converges to 
at most one point iff $X$~is Hausdorff, and any 
{\it ultra\/}filter over~$X$ converges to at least 
one point iff $X$~is compact (see e.g.~\cite{Engelking};
for the concept of $\mathfrak u$-limit with 
ultrafilters~$\mathfrak u$ see~\cite{Hindman Strauss}). 
Note also that, even if $X$~is compact Hausdorff, 
some filter over~$X$ that is not an ultrafilter 
may converge to {\it no\/} single point (consider
e.g.~any discrete~$X$ with $1<|X|<\omega$ and 
the trivial filter consisting of~$X$ as its 
single element). 

\hide{
%
%
}


For an ultrafilter model in the wide sense
$
\mathfrak U=
(X,\mathfrak f,\ldots,\mathfrak r,\ldots),
$ 
fix some topologies on the sets $X^{X^n}$ and 
$\mathcal P(X^n)$ for all $n<\omega$ such that 
the signature has $n$-ary operations, 
respectively, relations.

\begin{definition}\label{def: limit ultra model}
The ultrafilter model $\mathfrak U$ 
{\it converges\/} (w.r.t~the specified family 
of topologies) to an ordinary model 
$\mathfrak A=(X,F,\ldots,R,\ldots)$ of 
the same signature iff the interpretation 
of each symbol in~$\mathfrak{U}$ converges to 
one in~$\mathfrak{A}$. Moreover, $\mathfrak A$~is
the {\it limit\/} of~$\mathfrak U$ iff the
interpretation of~$\mathfrak A$ is the pointwise
limit of one of~$\mathfrak U$, in which case 
we write $\mathfrak A=\lim\mathfrak U$. 
\end{definition}

Thus whenever the limits of all the ultrafilters 
$\mathfrak f,\ldots,\mathfrak r,\ldots$ exist then
the ultrafilter model~$\mathfrak U$ converges to 
its limit:
$$
\lim\mathfrak U=
(X,\lim\mathfrak f,\ldots,\lim\mathfrak r,\ldots),
$$ 
which is an ordinary model of the same signature.


\begin{definition}\label{def: limit satisfiability}
The ultrafilter model $\mathfrak U$ 
(endowed with the specified family of topologies)
has a~{\it well-defined satisfiability\/} iff 
there exists the limit of~$\mathfrak U$, in which 
case we define it as the ordinary satisfiability 
in the limit:
$$
\mathfrak U\VDash_{\lim}\varphi\,[v]
\;\;\text{iff}\;\;
\lim\mathfrak U\vDash\varphi\,[v]
$$ 
for all formulas~$\varphi$ and valuations~$v$. 
\end{definition}

We use the symbol~$\VDash_{\lim}$ for the renewed 
concept of satisfiability temporarily; after 
Theorem~\ref{generalized-models:-old-vs-new}, 
which states that on ultrafilter models in 
the narrow sense this concept coincides with 
the former one, we shall continue to use the 
former symbol~$\VDash$\,.


Let us firstly show that all ordinary models 
and the satisfiability in them can be regarded 
as ultrafilter models in the wide sense 
and the satisfiability defined via limits.

\begin{theorem}%
\label{ordinary-models-as-new-generalized}
Any ordinary model~$\mathfrak A$ with the usual 
satisfaction relation~$\vDash$ is (up to a~natural 
identification) an ultrafilter model~$\mathfrak U$ 
with the satisfaction relation~$\VDash_{\lim}$, so 
we have
$$
\mathfrak U\VDash_{\lim}\varphi\,[v]
\;\;\text{iff}\;\;
\mathfrak A\vDash\varphi\,[v]
$$ 
for all formulas~$\varphi$ and valuations~$v$. 
Moreover, the same is true for ordinary models 
endowed with arbitrary topologies. 
\end{theorem}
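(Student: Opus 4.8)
The plan is to realize the ordinary model by the obvious principal interpretation and then exploit the fact that principal ultrafilters have unique limits in any $T_1$ topology. Given an ordinary model $\mathfrak A=(X,F,\ldots,R,\ldots)$, I would define a generalized model in the wide sense
$$
\mathfrak U=(X,\mathfrak f,\ldots,\mathfrak r,\ldots)
$$
with the same universe $X$ (hence the same valuations of variables into $X$) by interpreting each $n$-ary functional symbol by the principal ultrafilter $\mathfrak f\in\scc(X^{X^n})$ generated by the operation $F$, and each $n$-ary predicate symbol by the principal ultrafilter $\mathfrak r\in\scc\,\mathcal P(X^n)$ generated by the relation $R$. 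This is a legitimate wide-sense generalized model, and the assignment $\mathfrak A\mapsto\mathfrak U$ is injective since distinct operations and relations generate distinct principal ultrafilters; this is the natural identification referred to in the statement.

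Next I would fix the topologies demanded by the definition of $\VDash_{\lim}$ by endowing each space $X^{X^n}$ and $\mathcal P(X^n)$ with the discrete topology (any $T_1$ topology would serve equally well). The key elementary fact is that a principal ultrafilter over a topological space converges to its generating point, since every neighbourhood of that point contains the point and hence belongs to the ultrafilter. Moreover, in a $T_1$ space it converges to no other point: for any point distinct from the generator, the complement of the singleton of the generator is an open neighbourhood of that point which omits the generator and therefore does not lie in the ultrafilter. Consequently $\lim\mathfrak f=F$ and $\lim\mathfrak r=R$ for every functional and predicate symbol.

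From this the theorem follows immediately. As the limit is taken symbol by symbol, $\lim\mathfrak U$ exists and equals $(X,\lim\mathfrak f,\ldots,\lim\mathfrak r,\ldots)=(X,F,\ldots,R,\ldots)=\mathfrak A$. Hence $\mathfrak U$ has a well-defined satisfiability, and by the very definition of $\VDash_{\lim}$,
$$
\mathfrak U\VDash_{\lim}\varphi\,[v]
\;\;\text{iff}\;\;
\lim\mathfrak U\vDash\varphi\,[v]
\;\;\text{iff}\;\;
\mathfrak A\vDash\varphi\,[v]
$$
for all formulas $\varphi$ and valuations $v$, as required.

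For the concluding \emph{moreover} clause I would observe that the topology on the universe $X$ never enters the computation above: the existence and value of $\lim\mathfrak U$ depend only on the chosen topologies on the function spaces $X^{X^n}$ and the relation spaces $\mathcal P(X^n)$, not on any topology carried by $X$ itself. Thus the identification and the equivalence persist verbatim when $\mathfrak A$ is additionally endowed with an arbitrary topology on $X$. The only point demanding care is the uniqueness of the limits, i.e.\ the well-definedness of $\VDash_{\lim}$, and this is exactly what the choice of a $T_1$ (for instance, discrete) topology on the spaces of operations and relations secures; it is the sole genuine obstacle in an otherwise routine verification.
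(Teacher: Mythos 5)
Your proposal is correct and follows essentially the same route as the paper: interpret every symbol by the principal ultrafilter it generates, take the topologies on $X^{X^n}$ and $\mathcal P(X^n)$ to be discrete, and observe that each principal ultrafilter then has its generator as limit, so $\lim\mathfrak U=\mathfrak A$ and the equivalence is immediate from the definition of $\VDash_{\lim}$. If anything you are slightly more careful than the paper on the one delicate point, namely that uniqueness of the limit requires a $T_1$ (e.g.\ discrete) topology on the operation and relation spaces, whereas the paper's closing remark asserts the fact for arbitrary topologies on those spaces.
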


\begin{proof}
Define $\mathfrak U$ as follows: let the universe
of~$\mathfrak U$ coincide with one of~$\mathfrak A$,
which we denote by~$X$, and let the interpretation 
in~$\mathfrak U$ be the principal interpretation
giving with one in~$\mathfrak A$, i.e.~if 
an $n$-ary functional symbol is interpreted 
in $\mathfrak A$ by $F\in X^{X^n}$ then it is 
interpreted in $\mathfrak U$ by the principal
ultrafilter over $X^{X^n}$ given by~$F$, and 
likewise for predicate symbols. 
We can suppose that all topologies on $X^{X^n}$ and 
$\mathcal P(X^n)$ are discrete. Since any principal 
ultrafilter given by a~point~$x$ has the limit~$x$, 
we conclude that $\VDash_{\lim}$ in~$\mathfrak U$ 
coincides with~$\vDash$ in~$\mathfrak A$. Moreover, 
the same fact is true for every topologies on 
$X^{X^n}$ and $\mathcal P(X^n)$, which proves 
the last claim of the theorem. 
\end{proof}


Now we are going to show that the wider concepts 
of ultrafilter models and the satisfiability 
in them cover the former, narrow concepts. 
(Let us also note that the new concept is not  
exhausted by the two cases of ordinary models 
and ultrafilter models in the narrow sense.)

\begin{theorem}\label{generalized-models:-old-vs-new}
Any ultrafilter model $\mathfrak A$ in 
the narrow sense with the satisfaction relation 
is, up to a~natural identification, an ultrafilter  
model~$\mathfrak U$ in the wide sense with 
the satisfaction relation defined via limits 
in certain appropriate topologies, so we have 
$$
\mathfrak U\VDash_{\lim}\varphi\,[v]
\;\;\text{iff}\;\;
\mathfrak A\VDash\varphi\,[v]
$$ 
for all formulas~$\varphi$ and valuations~$v$. 
\end{theorem}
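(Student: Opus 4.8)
The plan is to reduce the statement to two facts already established: that the narrow satisfaction in $\mathfrak A$ is completely governed by the ordinary model $e(\mathfrak A)$, and that every ordinary model can be regarded as a wide generalized model via principal ultrafilters. Writing $\mathfrak A=(\scc X,\imath(F),\ldots,\imath(R),\ldots)$, Theorem~\ref{e-preserves-formulas} already gives
$$
\mathfrak A\VDash\varphi\,[v]
\;\;\text{iff}\;\;
e(\mathfrak A)\vDash\varphi\,[v]
$$
for all formulas $\varphi$ and valuations $v$, where $e(\mathfrak A)=(\scc X,e(\imath(F)),\ldots,e(\imath(R)),\ldots)$ is an \emph{ordinary} model whose universe is again $\scc X$. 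Thus $\mathfrak A$ and the ordinary model $e(\mathfrak A)$ are indistinguishable by formulas, and it remains only to realize the latter inside the wide framework.

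For that second step I would apply Theorem~\ref{ordinary-models-as-new-generalized} to the ordinary model $e(\mathfrak A)$ (with universe $\scc X$). This yields a wide generalized model $\mathfrak U$ with the same universe $\scc X$, in which each $n$-ary functional symbol $F$ is interpreted by the \emph{principal} ultrafilter over $(\scc X)^{(\scc X)^n}$ given by the operation $e(\imath(F))$, each predicate symbol $R$ by the principal ultrafilter over $\mathcal P((\scc X)^n)$ given by the relation $e(\imath(R))$, and all operation- and relation-spaces carry the \emph{discrete} topology. Since a principal ultrafilter converges, in the discrete topology, precisely to its generating point, each interpreting ultrafilter of $\mathfrak U$ has a unique limit and $\lim\mathfrak U=e(\mathfrak A)$; hence $\mathfrak U\VDash_{\lim}\varphi\,[v]$ iff $e(\mathfrak A)\vDash\varphi\,[v]$. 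Chaining this with the previous equivalence gives $\mathfrak U\VDash_{\lim}\varphi\,[v]$ iff $\mathfrak A\VDash\varphi\,[v]$, as required; the identification is natural in that $\mathfrak U$ and $\mathfrak A$ share the universe $\scc X$ and the interpretation of $\mathfrak U$ is read off from that of $\mathfrak A$ through the single operation $e$.

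The delicate point I anticipate is the phrase ``appropriate topologies'', and it is instructive to see why the more \emph{faithful} identification fails. One is tempted to retain the original ultrafilter $\imath(F)$ and merely push it forward along $\ext\colon X^{X^n}\hookrightarrow(\scc X)^{(\scc X)^n}$ to an ultrafilter over operations on $\scc X$; since $e=\wt\ext$ is a continuous extension, $e(\imath(F))$ is literally the limit of this pushforward in the restricted (principal-point) pointwise topology. The obstruction is uniqueness: on the \emph{full} operation space that topology is not even $T_0$ (by the Remarks after Lemma~\ref{extensions-are-dense}, $e(\imath(F))$ shares every neighborhood with each of its discontinuous lifts), so the limit is not unique and $\lim\mathfrak U$ is undefined. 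Passing to the full pointwise topology restores the Hausdorff property but alters the value: the full pointwise limit at a non-principal tuple $(\mathfrak u_1,\ldots,\mathfrak u_n)$ carries the quantifier block in the order $(\forall^{\,\imath(F)}f)(\forall^{\,\mathfrak u_1}x_1)\ldots(\forall^{\,\mathfrak u_n}x_n)$, whereas $e(\imath(F))=\wt\app(\cdot,\imath(F))$ carries it as $(\forall^{\,\mathfrak u_1}x_1)\ldots(\forall^{\,\mathfrak u_n}x_n)(\forall^{\,\imath(F)}f)$ by Lemma~\ref{appviaext}; since ultrafilter quantifiers do not commute, the two limits disagree off the principal tuples. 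As no natural pointwise topology simultaneously fixes the correct limit and renders it unique, routing through $e(\mathfrak A)$ with discrete topologies—where uniqueness is automatic—is the clean way to secure a well-defined $\VDash_{\lim}$.
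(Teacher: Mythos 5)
Your argument does establish the displayed biconditional, but it takes a genuinely different route from the paper, and the difference matters for what the theorem is supposed to deliver. You route everything through $e$: you replace $\mathfrak A$ by the ordinary model $e(\mathfrak A)$ (Theorem~\ref{e-preserves-formulas}) and then re-package that ordinary model as a wide generalized model with \emph{principal} interpretations and discrete topologies (Theorem~\ref{ordinary-models-as-new-generalized}). The paper instead constructs the identification map~$i$, which pushes each original ultrafilter $\imath(F)\in\scc(X^{X^n})$ forward along $\ext$ (via the bijection~${}^+$) and then lifts it to an ultrafilter over $(\scc X)^{(\scc X)^n}$ concentrated on $\ext\image X^{X^n}$; the heart of the proof is then Lemma~\ref{e-as-lim}, showing $\lim i(\mathfrak f)=e(\mathfrak f)$ in the restricted pointwise convergence topology. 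What the paper's route buys is that $i$ is a \emph{bijection} onto the ultrafilters concentrated on $\ext\image X^{X^n}$ (Lemma~\ref{i-is-1-1}), so the original, generally non-principal, ultrafilter interpretation itself survives into the wide framework; your identification factors through the non-injective map~$e$ (Lemma~\ref{e,E-surj-non-inj}), collapses distinct narrow generalized models to the same wide one, and lands in the principal-interpretation fragment already covered by Theorem~\ref{ordinary-models-as-new-generalized}. Moreover the map~$i$ and the equality $\lim i(\mathfrak A)=e(\mathfrak A)$ are exactly what Theorems \ref{top-char-of-i-of-old}, \ref{pre-SET-generalized} and \ref{SET-generalized} rely on later, so the paper's proof is also building infrastructure that your shortcut does not provide.

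The concrete error is in your final paragraph, where you argue that the ``faithful'' identification (pushing $\imath(F)$ forward along $\ext$) cannot be made to work because no pointwise-type topology yields a unique, correct limit. That is precisely what the paper does, and the obstruction you describe is circumvented as follows: $i(\mathfrak f)$ is concentrated on the set $\ext\image X^{X^n}\subseteq RC_{X}((\scc X)^n,\scc X)$, so one identifies it with its projection to the subspace $RC_{X}((\scc X)^n,\scc X)$, which by Lemma~\ref{partial-pointwise-convergence} \emph{is} compact Hausdorff in the $(X,\ldots,X)$-pointwise convergence topology. On that subspace the limit exists, is unique, and equals $e(\mathfrak f)$ (Lemma~\ref{e-as-lim}); the failure of the $T_0$ property occurs only on the full function space $(\scc X)^{(\scc X)^n}$, which is never where the limit is taken, and your quantifier-exchange worry about the full pointwise topology is moot since that topology is not used. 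So while your construction is internally sound, you have misdiagnosed the paper's intended identification as impossible when it is in fact the content of the proof.
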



\begin{proof} 
We start by describing how to represent 
an ultrafilter model~$\mathfrak A$ in 
the narrow sense by a~certain ultrafilter 
model~$\mathfrak U$ in the wide sense. Let 
$\scc X$~be the universe of~$\mathfrak A$, and suppose 
that the universe of~$\mathfrak U$ coincides with~it. 
Now we must identify ultrafilters over the sets 
$X^{X^n}$ and $\mathcal P(X^n)$ with certain 
ultrafilters over the sets $(\scc X)^{(\scc X)^n}$ and 
$\mathcal P((\scc X)^n)$, respectively. Let us provide 
a~more general procedure, which will be referred to 
as the {\it identification map\/} and denoted by~$i$.

\vskip+1em
\noindent
\textbf{\textit{Identification map~$i$.}}
For any positive $n<\omega$, discrete spaces 
$X_1,\ldots,X_n$, compact Hausdorff space~$Y$, 
and $S\subseteq Y$,
we construct the map~$i$ taking ultrafilters over 
$S^{X_1\times\ldots\times X_n}$ 
to ultrafilters over 
$Y^{\scc X_1\times\ldots\times\scc X_n}$: 
\begin{align*}
i\image
\scc\bigl(S^{X_1\times\ldots\times X_n}\bigr)
\subseteq
\scc\bigl(Y^{\scc X_1\times\ldots\times\scc X_n}\bigr).
\end{align*}
The construction is going in two steps.

First, recall that the map~$\ext$ provides 
a~canonical one-to-one correspondence between 
the set $S^{X_1\times\ldots\times X_n}$ and its image 
$$
\ext\image S^{X_1\times\ldots\times X_n}=
\bigl\{\wt f:f\in S^{X_1\times\ldots\times X_n}\bigr\}
\subseteq
RC_{X_1,\ldots,X_{n-1}}
(\scc X_1,\ldots,\scc X_n,Y)
\subseteq 
Y^{\scc X_1\times\ldots\times\scc X_n}.
$$ 
This induces the bijection~${}^+$ of
$\scc(S^{X_1\times\ldots\times X_n})$ onto 
$\scc\,(\ext\image S^{X_1\times\ldots\times X_n})$ 
taking each ultrafilter~$\mathfrak f$ over 
$S^{X_1\times\ldots\times X_n}$ 
to an ultrafilter~$\mathfrak f^+$ over
$\ext\image S^{X_1\times\ldots\times X_n}$ 
by letting
$$
\mathfrak f^+=
\{\ext\image A:A\in\mathfrak f\,\}.
$$

Second, for any $S\subseteq T$ we define
the {\it lifting\/} map of $\scc S$ into~$\scc T$,
by letting for all $\mathfrak u\in\scc S$, 
\begin{align*}
\mathfrak u^{T}=
\{
B\subseteq T:
A\in\mathfrak u\text{ and }B\supseteq A
\}.
\end{align*} 
Define also the {\it projection\/} map 
of $\{\mathfrak v\in\scc T:S\in\mathfrak v\}$ 
into~$\scc S$, by letting for all such~$\mathfrak v$,
\begin{align*}
\mathfrak v_{S}=
\{A\cap S:A\in\mathfrak v\}. 
\end{align*} 
Clearly, the domain of the projection is
the range of the lifting, and moreover, 
$(\mathfrak v_S)^T=\mathfrak v$ 
and $(\mathfrak u^T)_S=\mathfrak u$, 
thus the lifting and the projection maps 
are two mutually inverse bijections. 
(Often one identifies these ultrafilters,
thus considering $\scc S$ as the closed subset 
of~$\scc T$ consisting of those ultrafilters 
over~$T$ that are concentrated on~$S$, 
see e.g.~\cite{Hindman Strauss}, Section~3.3).

Now we define $i$ as the composition of 
${}^+$ and lifting, thus for all 
$\mathfrak f\in\scc(S^{X_1\times\ldots\times X_n})$ 
we let
$$
i(\mathfrak f)=
(\mathfrak f^{+})%
^{Y^{\scc X_1\times\ldots\times\scc X_n}}.
$$
In result, for any ultrafilter~$\mathfrak f$ 
over $S^{X_1\times\ldots\times X_n}$, 
its image~$i(\mathfrak f)$ is an ultrafilter 
over $Y^{\scc X_1\times\ldots\times\scc X_n}$ 
which is concentrated on 
$
\ext\image S^{X_1\times\ldots\times X_n}=
\bigl\{\wt f:f\in S^{X_1\times\ldots\times X_n}\bigr\}.
$


Let us now expand the domain of the map~$i$ 
to ultrafilters over relations. We want to get 
$i$~taking ultrafilters over 
$\mathcal P(X_1\times\ldots\times X_n)$ 
to ultrafilters over
$\mathcal P(\scc X_1\times\ldots\times\scc X_n):$
\begin{align*}
i\image
\scc\,\mathcal P(X_1\times\ldots\times X_n)
\subseteq
\scc\,\mathcal P(\scc X_1\times\ldots\times\scc X_n).
\end{align*}
For this, we may identify $n$-ary relations with 
their characteristic functions, i.e.~$n$-ary maps
into $2=\{0,1\}$, where $2$~is endowed with the discrete 
topology, and use the definition of~$i$ for ultrafilters 
over maps (with $Y=S=2$). Equivalently, we might 
imitate the above construction: for each
$
\mathfrak r\in
\scc\,\mathcal P(X_1\times\ldots\times X_n),
$
we might turn it firstly to 
$
\mathfrak r^+\in
\scc(\ext\image\mathcal P(X_1\times\ldots\times X_n))
$
where $\ext(R)=\wt{R}$, so by 
Theorem~\ref{topological-char},
\begin{align*}
\ext\image\mathcal P(X_1\times\ldots\times X_n)
&=
\bigl\{
\wt R:R\subseteq X_1\times\ldots\times X_n
\bigr\}
\\
&=
\bigl\{Q\subseteq\scc X_1\times\ldots\times\scc X_n:
Q\text{ is right clopen w.r.t.~}%
X_1,\ldots,X_n
\bigr\}, 
\end{align*}
by letting 
$$
\mathfrak r^+=
\{\ext\image A:A\in\mathfrak r\,\},  
$$ 
and secondly, by lifting 
the obtaining ultrafilter to an ultrafilter over 
$\mathcal P(\scc X_1\times\ldots\times\scc X_n)$, 
thus letting 
$$
i(\mathfrak r)=(\mathfrak r^+)%
^{\mathcal P(\scc X_1\times\ldots\times\scc X_n)}.
$$
In result, for any ultrafilter~$\mathfrak r$ 
over $\mathcal P(X_1\times\ldots\times X_n)$, 
its image~$i(\mathfrak r)$ is an ultrafilter over
$\mathcal P(\scc X_1\times\ldots\times\scc X_n)$ 
which is concentrated on
$\{Q\subseteq\scc X_1\times\ldots\times\scc X_n:Q$~is 
right clopen w.r.t.~$X_1,\ldots,X_n\}$.


\begin{remark}\label{rmk: on +}
In fact, the map~${}^+$ is $\wt{\ext}$ for $\ext$ 
considered as a~bijection between two discrete spaces, 
and thus a~homeomorphism between the spaces of 
ultrafilters over them.

For maps these discrete spaces are
$S^{X_1\times\ldots\times X_n}$ and 
$\ext\image S^{X_1\times\ldots\times X_n}$, 
so $\wt{\ext}$~is a~homeomorphism between 
$\scc(S^{X_1\times\ldots\times X_n})$ and 
$\scc(\ext\image S^{X_1\times\ldots\times X_n})$, 
and $\mathfrak f^{+}=\wt{\ext}(\mathfrak f)$: 
$$
\xymatrix{
\scc\bigl(S^{X_1\times\ldots\times X_n}\bigr)
\!\!\!\!\!\!\!\!\!\!\!\!\!\!\!\!
&\ar@{-->}^{{}^{+}\qquad\qquad}[r]
&\;
\scc\bigl(\ext\image S^{X_1\times\ldots\times X_n}\bigr)
\\
\ar[u] 
S^{X_1\times\ldots\times X_n}
\!\!\!\!\!\!\!\!\!\!\!\!\!\!\!\!
&\ar[r]^{\:\ext\qquad\qquad} 
&\;
\ext\image S^{X_1\times\ldots\times X_n}
\ar[u]
}
$$ 
and analogously, for relations the discrete spaces 
are $\mathcal P(X_1\times\ldots\times X_n)$ and 
$\ext\image\mathcal P(X_1\times\ldots\times X_n)$, 
so $\wt{\ext}$~is a~homeomorphism between 
$\scc\,\mathcal P(X_1\times\ldots\times X_n)$ and 
$\scc(\ext\image\mathcal P(X_1\times\ldots\times X_n))$, 
and $\mathfrak r^{+}=\wt{\ext}(\mathfrak r)$: 
$$
\xymatrix{
\scc\,\mathcal P(X_1\times\ldots\times X_n)
\!\!\!\!\!\!\!\!\!\!\!\!\!\!\!\!
&\ar@{-->}^{{}^{+}\qquad\qquad\quad}[r]
&\;
\scc(\ext\image\mathcal P(X_1\times\ldots\times X_n))
\\
\ar[u] 
\mathcal P(X_1\times\ldots\times X_n)
\!\!\!\!\!\!\!\!\!\!\!\!\!\!\!\!
&\ar[r]^{\:\ext\qquad\qquad\quad} 
&\;
\ext\image\mathcal P(X_1\times\ldots\times X_n)
\ar[u]
}
$$ 
(cf.~Remark~\ref{rmk: def of ext ext} 
explaining a~similar situation with currying).
Nevertheless, we use the symbol~${}^+$ to avoid 
confusing with $\wt{\ext}$ for the map~$\ext$ into 
a~compact Hausdorff space~$Y$, which will be also 
used in our arguments below. 
\end{remark}


\begin{lemma}\label{i-is-1-1}
The map~$i$ is a~bijection between:
\begin{enumerate}
\item[(i)] 
the set of all ultrafilters over 
$S^{X_1\times\ldots\times X_n}$ 
and the set of the ultrafilters over 
$Y^{\scc X_1\times\ldots\times\scc X_n}$ 
that are concentrated on 
$\ext\image S^{X_1\times\ldots\times X_n}$, 
\item[(ii)] 
the set of all ultrafilters over 
$\mathcal P(X_1\times\ldots\times X_n)$ 
and the set of the ultrafilters over 
$\mathcal P(\scc X_1\times\ldots\times\scc X_n)$ 
that are concentrated on 
$\ext\image\mathcal P(X_1\times\ldots\times X_n)$.
\end{enumerate}
\end{lemma}

\begin{proof} 
For brevity, we let: 
$$
A=
\scc\bigl(S^{X_1\times\ldots\times X_n}\bigr),
\;\;
B=
\scc\bigl(\ext\image S^{X_1\times\ldots\times X_n}\bigr),
\;\;
C=
\bigl\{
\mathfrak g\in
\scc\bigl(Y^{\scc X_1\times\ldots\times\scc X_n}\bigr):
\ext\image S^{X_1\times\ldots\times X_n}\in\mathfrak g
\bigr\}.
$$
As we have already pointed out, the map~${}^+$ is 
a~bijection of $A$ onto $B$, and the lifting map 
is a~bijection of $B$ onto~$C$. Therefore, $i$,~as 
the composition of the two maps, is a~bijection of 
$A$ onto~$C$, which proves item~(i). Item~(ii) is 
either proved similarly or obtained from~(i) by 
replacing relations with their characteristic functions. 
(We may also note that these maps are homeomorphic 
embeddings.)
\end{proof}


Let us now turn back to 
Theorem~\ref{generalized-models:-old-vs-new} 
and the discussed there situation with 
ultrafilter models in the former, narrow sense. 
In this case, all the discrete spaces $X_1,\ldots,X_n$ 
are equal to~$X$, while the compact Hausdorff space~$Y$ 
is~$\scc X$ and its subset~$S$ is $X$ or $2$ in the cases 
of operations and relations of the model, respectively. 
(Recall that we identify elements of~$X$ with the 
principal ultrafilters given by them, so any $n$-ary 
operation on~$X$ is identified with a~map of $X^n$ 
into~$\scc X$.) We expand $i$ to ultrafilter models 
in the narrow sense by defining it pointwise:

\begin{definition}\label{def: i-of-model}
Given an ultrafilter model 
$
\mathfrak A=
(\scc X,\mathfrak f,\ldots,\mathfrak r,\ldots)
$
in the narrow sense, we let 
$$
i(\mathfrak A)=
(\scc X,i(\mathfrak f),\ldots,i(\mathfrak r),\ldots).
$$
\end{definition}

Continuing the proof of 
Theorem~\ref{generalized-models:-old-vs-new}, 
we define $\mathfrak U$, the ultrafilter model 
in the wide sense corresponding to $\mathfrak A$, 
the given ultrafilter model in the narrow sense, 
by letting 
$$\mathfrak U=i(\mathfrak A).$$


It remains to verify that the new satisfiability, 
defined via limits, coincides with the old one. 
By Theorem~\ref{e-preserves-formulas}, which states 
that for all formulas~$\varphi$ and valuations~$v$, 
we have
$\mathfrak A\VDash\varphi\,[v]$ 
iff
$e(\mathfrak A)\vDash\varphi\,[v],$ 
the latter can be redefined via the map~$e$. 
Therefore, it suffices to check that for all 
formulas~$\varphi$ and valuations~$v$, we have
$$
\lim\mathfrak U\vDash\varphi\,[v]
\;\;\text{iff}\;\;
e(\mathfrak A)\vDash\varphi\,[v].
$$ 
But actually, a~stronger fact is true:
$$
\lim\mathfrak U=e(\mathfrak A),
$$ 
thus leading to the following result.

\begin{theorem}\label{e-as-lim-of-i}
If $\mathfrak A$~is an ultrafiler model 
in the narrow sense, then 
$\lim i(\mathfrak A)=e(\mathfrak A)$: 
$$
\xymatrix{
&i(\mathfrak A)\ar^{\lim}[dd]
\\
\mathfrak A\ar[dr]^{e}\ar[ur]^{i}&
\\
&e(\mathfrak A)
}
$$
\end{theorem}

\begin{proof}
We must verify the equalities
$\lim i(\mathfrak f)=e(\mathfrak f)$ and 
$\lim i(\mathfrak r)=e(\mathfrak r)$ for all 
$\mathfrak f\in\scc(X^{X\times\ldots\times X})$ and 
$\mathfrak r\in\scc\,\mathcal P(X\times\ldots\times X)$.
This will be stated in the next, more general lemma.


Recall once more that the map~$e$ on ultrafilters 
over $n$-ary maps is $\wt{\ext}$, the continuous 
extension of the map~$\ext$, where the latter, in turn, 
takes $n$-ary maps $f:X_1\times\ldots\times X_n\to Y$ 
of discrete spaces $X_1,\ldots,X_n$ into a~compact 
Hausdorff space~$Y$, to their extensions $\wt f=\ext(f)$ 
that are right continuous w.r.t.~$X_1,\ldots,X_{n-1}$, 
and that these extensions form a~compact Hausdorff space 
w.r.t.~the $(X_1,\ldots,X_n)$-pointwise convergence 
topology:
$$
\xymatrix{
\;\quad
\scc\bigl(Y^{X_1\times\ldots\times X_n}\bigr)
\qquad\ar@{-->}^{\wt{\ext}}[dr]&&
\\
Y^{X_1\times\ldots\times X_n}\,
\ar[u]\ar[r]^{\;\;\;\ext}&&%
\!\!\!\!\!\!\!\!\!\!\!\!\!\!\!\!%
RC_{X_1,\ldots,X_{n-1}}
(\scc X_1,\ldots,\scc X_n,Y)
}
$$
The next lemma states that the map~$e$ is 
the composition of the identification map~$i$ 
and taking the limit.

\begin{lemma}\label{e-as-lim} 
Let $X_1,\ldots,X_n$ be discrete spaces,
$Y$~a~compact Hausdorff space, and let the spaces 
$RC_{X_1,\ldots,X_{n-1}}(\scc X_1,\ldots,\scc X_n,Y)$ 
and 
$
\{Q\subseteq\scc X_1\times\ldots\times\scc X_n:
Q$~is right clopen w.r.t.~$X_1,\ldots,X_{n-1}\}
$
be endowed with the $(X_1,\ldots,X_n)$-pointwise 
convergence topologies. Then 
we have
$$
\lim i(\mathfrak f)=e(\mathfrak f)
\;\;\text{and}\;\;
\lim i(\mathfrak r)=e(\mathfrak r)
$$ 
for every ultrafilters 
$
\mathfrak f\in
\scc(Y^{X_1\times\ldots\times X_n})
$ 
and 
$
\mathfrak r\in
\scc\,\mathcal P(X_1\times\ldots\times X_n).
$ 
\end{lemma}

\begin{proof}
For brevity, let $\RC$ denote the space
$RC_{X_1,\ldots,X_{n-1}}(\scc X_1,\ldots,\scc X_n,Y)$. 
By Lemma~\ref{partial-pointwise-convergence},
the space~$\RC$ is compact Hausdorff. Hence, 
$i(\mathfrak f)$, which is an ultrafilter over 
the set $Y^{\scc X_1\times\ldots\times\scc X_n}$ 
concentrated on its subset~$\RC$ and thus 
can be identified with its projection to~$\RC$, 
converges to a~unique point of~$\RC$, i.e.~has 
a~limit in~$\RC$. We need to show that the limit 
is exactly the map $e(\mathfrak f)$.

Denote $e(\mathfrak f)$ by~$F$. Then, since 
$e=\wt{\ext}$ and $\ext=\wt{\;\;\;}$, we get: 
\begin{align*}
\{F\}
&=
\bigcap_{A\in\mathfrak f}
\cl_{\RC}\,\ext\image A
=
\bigcap_{A\in\mathfrak f}
\cl_{\RC}\bigl\{\wt{f}:f\in A\bigr\}.
\end{align*}
Therefore, for every $A\in\mathfrak f$ and any 
neighborhood~$O$ of the point~$F$ in the space,  
there exists $f\in A$ such that $\wt{f}\in O$; 
here we use that the set 
$
\ext\image Y^{X_1\times\ldots\times X_n}=
\{\wt f:f\in Y^{X_1\times\ldots\times X_n}\}
$ 
is dense in~$\RC$ by Lemma~\ref{extensions-are-dense}.

Let us verify that, moreover, 
for any neighborhood~$O$ of $F\in\RC$ the set 
$\{f\in Y^{X_1\times\ldots\times X_n}:\wt f\in O\}$ 
is in~$\mathfrak f$. Assume the converse: 
there exists a~neighborhood~$O$ of~$F$ such that 
the set 
$
\{f\in Y^{X_1\times\ldots\times X_n}:
\wt f\in O\}
$ 
is not in~$\mathfrak f$. 
Then, as $\mathfrak f$~is an ultrafilter, 
the complement 
$$
A=
\bigl\{
f\in Y^{X_1\times\ldots\times X_n}:
\wt f\notin O
\bigr\}
$$ 
is in~$\mathfrak f$. 
However, this contradicts to the above stated fact.

The case of ultrafilters over relations reduces 
to the case of ultrafilters over maps with $Y=2$. 
The lemma is proved. 
\end{proof}

This proves Theorem~\ref{e-as-lim-of-i}. 
\end{proof}

Now the proof of 
Theorem~\ref{generalized-models:-old-vs-new} 
is complete.
\end{proof}

Theorem~\ref{generalized-models:-old-vs-new} permits 
us to eliminate our temporary symbol~$\VDash_{\lim}$ 
and use the former symbol~$\VDash$ also to denote the 
satisfaction in ultrafilter models in the wide sense. 
Moreover, 
by Theorem~\ref{ordinary-models-as-new-generalized}
we might use the only ordinary symbol~$\vDash$ to denote 
the satisfaction in both ordinary and ultrafilter models; 
we however prefer to retain the symbol~$\VDash$ 
for a~convenience of reading.


Finally, we refine the first part of 
Theorem~\ref{generalized-models:-old-vs-new} 
(concerning rather models than the satisfaction 
relation) by characterizing the ultrafilter 
models in the wide sense that correspond to 
those in the narrow sense:

\begin{theorem}\label{top-char-of-i-of-old}
Let $\mathfrak U$ be an ultrafilter model 
in the wide sense. Then:
\begin{enumerate}
\item[(i)] 
$\mathfrak U=i(\mathfrak A)$ 
for some ultrafilter model~$\mathfrak A$ in 
the narrow sense iff 
the universe of~$\mathfrak U$ is $\scc X$ for some~$X$ 
and the interpretation takes all functional symbols to 
ultrafilters concentrated on $\ext\image X^{X^n}$, and 
all relational symbols to ultrafilters concentrated on 
$\{Q\subseteq(\scc X)^n:Q$~is right clopen w.r.t.~$X\}$;
\item[(ii)]
$\lim\mathfrak U=\wt{\,\mathfrak A\,}$ 
for some ordinary model~$\mathfrak A$ iff 
the universe of~$\mathfrak U$ is $\scc X$ for some~$X$ 
and the interpretation takes all functional symbols to 
ultrafilters in 
$\{i(\mathfrak f):\mathfrak f\in\scc(X^{X^n})$~is 
pseudo-principal\,$\}$, and 
all relational symbols to ultrafilters concentrated on 
$\{Q\subseteq(\scc X)^n:Q$~is right clopen w.r.t.~$X\}$.
\end{enumerate}
\end{theorem}

\begin{proof} 
Item~(i) is immediate from Lemma~\ref{i-is-1-1}; 
we recall only that the images of ultrafilters over
$\mathcal P(X^n)$ under~$i$ are exactly ultrafilters 
over $\mathcal P((\scc X)^n)$ that are concentrated 
on $\ext\image\mathcal P(X^n)$: 
\begin{align*}
i\image\scc\,\mathcal P(X^n)
&=
\bigl\{
\mathfrak s\in\scc\,\mathcal P((\scc X)^n):
\ext\image\mathcal P(X^n)\in\mathfrak s
\bigr\}
\\
&=
\bigl\{
\mathfrak s\in\scc\,\mathcal P((\scc X)^n):
\{\wt{R}:R\subseteq X^n\}
\in\mathfrak s
\bigr\}
\\
&=
\bigl\{
\mathfrak s\in\scc\,\mathcal P((\scc X)^n):
\{Q\subseteq(\scc X)^n:Q\text{ is 
right clopen w.r.t.~}X\}\in\mathfrak s
\bigr\}.
\end{align*}
Item~(ii) follows from item~(i) and 
Theorem~\ref{e-and-E-as-ultraextensions}. 
\end{proof}


\vskip+1em 
\noindent
\textbf{\textit{Map~$I$.}}
Here we consider a~variant of the map~$i$, which we 
denote by~$I$. This map relates to the operation~$E$ 
in the same way as the map~$i$ to the operation~$e$ 
does (which explains our choosing of the symbol~$I$).

The map~$I$ has the same domain and range that 
the map~$i$ does:
\begin{align*}
I\image
\scc\bigl(S^{X_1\times\ldots\times X_n}\bigr)
&\subseteq
\scc\bigl(Y^{\scc X_1\times\ldots\times\scc X_n}\bigr),
\\
I\image
\scc\,\mathcal P(X_1\times\ldots\times X_n)
&\subseteq
\scc\,\mathcal P(\scc X_1\times\ldots\times\scc X_n).
\end{align*}
and is defined as follows: 
on ultrafilters over $S^{X_1\times\ldots\times X_n}$
it coincides with~$i$, and on ultrafilters 
over $\mathcal P(X_1\times\ldots\times X_n)$
it is defined likewise~$i$ except 
for taking ${}^\times$ instead of~${}^+$, where 
$\mathfrak r^\times$ uses rather $\cl$ than~$\ext$, 
i.e.~turning $R\subseteq X_1\times\ldots\times X_n$ 
not to $\wt R$ but to~$R^*$ (recall that, by 
Theorem~\ref{topological-char}, $\cl$~is a~bijection
between all subsets of $X_1\times\ldots\times X_n$ 
and regular closed subsets of 
$\scc X_1\times\ldots\times\scc X_n$).

Thus for each
$
\mathfrak r\in
\scc\,\mathcal P(X_1\times\ldots\times X_n),
$
we might turn it firstly to 
$
\mathfrak r^{\times}\in
\scc(\cl\image\mathcal P(X_1\times\ldots\times X_n))
$
where $\cl(R)=R^*$, so by 
Theorem~\ref{topological-char},
\begin{align*}
\mathfrak r^\times
&\in
\scc\,
\bigl\{
R^*:R\subseteq X_1\times\ldots\times X_n
\bigr\}
\\
&=
\scc\,
\bigl\{Q\subseteq\scc X_1\times\ldots\times\scc X_n:
Q\text{ is regular closed\,} 
\bigr\}, 
\end{align*}
by letting 
$$
\mathfrak r^{\times}=
\{\cl\image A:A\in\mathfrak r\,\},  
$$ 
and secondly, by lifting 
the obtained ultrafilter to an ultrafilter over 
$\mathcal P(\scc X_1\times\ldots\times\scc X_n)$, 
thus letting 
$$
I(\mathfrak r)=(\mathfrak r^\times)%
^{\mathcal P(\scc X_1\times\ldots\times\scc X_n)}.
$$
In result, for any ultrafilter~$\mathfrak r$ 
over $\mathcal P(X_1\times\ldots\times X_n)$, 
its image~$I(\mathfrak r)$ is an ultrafilter over
$\mathcal P(\scc X_1\times\ldots\times\scc X_n)$ 
which is concentrated on
$\{Q\subseteq\scc X_1\times\ldots\times\scc X_n:Q$~is 
regular closed\,$\}$.  
(To make an analogy between ${}^+$ and ${}^\times$ more
complete, we can also let that ${}^\times$~is defined 
on ultrafilters over $S^{X_1\times\ldots\times X_n}$
and coincides there with~${}^+$\,; but in fact we do 
not need this.)

\begin{remark}\label{rmk: on x}
Again, the map~${}^{\times}$ is $\wt{\,\cl\,}$ 
for $\cl$ considered as a~bijection between 
two discrete spaces
$\mathcal P(X_1\times\ldots\times X_n)$ and 
$\cl\image\mathcal P(X_1\times\ldots\times X_n)$, 
so $\wt{\,\cl\,}$~is a~homeomorphism between 
$\scc\,\mathcal P(X_1\times\ldots\times X_n)$ and 
$\scc(\cl\image\mathcal P(X_1\times\ldots\times X_n))$, 
and $\mathfrak r^{\times}=\wt{\,\cl\,}(\mathfrak r)$: 
$$
\xymatrix{
\scc\,\mathcal P(X_1\times\ldots\times X_n)
\!\!\!\!\!\!\!\!\!\!\!\!\!\!\!\!
&\ar@{-->}^{{}^{\times}\qquad\qquad\quad}[r]
&\;
\scc(\cl\image\mathcal P(X_1\times\ldots\times X_n))
\\
\ar[u] 
\mathcal P(X_1\times\ldots\times X_n)
\!\!\!\!\!\!\!\!\!\!\!\!\!\!\!\!
&\ar[r]^{\:\cl\qquad\qquad\quad} 
&\;
\cl\image\mathcal P(X_1\times\ldots\times X_n)
\ar[u].
}
$$ 
Nevertheless, we use the symbol~${}^\times$ 
to keep the analogy with~${}^+$\,. 
\end{remark}


Two next lemmas and the subsequent theorem are 
counterparts of Lemmas \ref{i-is-1-1} and~\ref{e-as-lim}
and Theorem~\ref{e-as-lim-of-i}, respectively.

\begin{lemma}\label{I-is-1-1}
The map~$I$ is a~bijection between:
\begin{enumerate}
\item[(i)] 
the set of all ultrafilters over 
$S^{X_1\times\ldots\times X_n}$ 
and the set of the ultrafilters over 
$Y^{\scc X_1\times\ldots\times\scc X_n}$ 
that are concentrated on 
$\ext\image S^{X_1\times\ldots\times X_n}$, 
\item[(ii)] 
the set of all ultrafilters over 
$\mathcal P(X_1\times\ldots\times X_n)$ 
and the set of the ultrafilters over 
$\mathcal P(\scc X_1\times\ldots\times\scc X_n)$ 
that are concentrated on 
$\cl_{\,\scc X_1\times\ldots\times\scc X_n}%
\!\!\image\,
\mathcal P(X_1\times\ldots\times X_n)$.
\end{enumerate}
\end{lemma}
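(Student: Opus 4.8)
The plan is to mirror the proof of Lemma~\ref{i-is-1-1} essentially verbatim, since by definition $I$ differs from $i$ only in its action on ultrafilters over relations, where the bijection ${}^+$ (induced by $\ext$, i.e.~by $R\mapsto\wt R$) is replaced by ${}^\times$ (induced by $\cl$, i.e.~by $R\mapsto R^*$). In particular, on ultrafilters over $S^{X_1\times\ldots\times X_n}$ the map $I$ coincides with $i$, so item~(i) is literally item~(i) of Lemma~\ref{i-is-1-1} and requires nothing new.

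For item~(ii) I would reproduce the same three-set decomposition. Let
$$
A=\scc\,\mathcal P(X_1\times\ldots\times X_n),
\qquad
B=\scc\bigl(\cl\image\mathcal P(X_1\times\ldots\times X_n)\bigr),
$$
and
$$
C=\bigl\{\mathfrak s\in\scc\,\mathcal P(\scc X_1\times\ldots\times\scc X_n):
\cl\image\mathcal P(X_1\times\ldots\times X_n)\in\mathfrak s\bigr\}.
$$
The goal is to show that ${}^\times$ is a bijection of $A$ onto~$B$ and that the lifting map is a bijection of $B$ onto~$C$; then $I$, being the composition of ${}^\times$ and lifting, is a bijection of $A$ onto~$C$. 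Since $C$ is exactly the set of ultrafilters over $\mathcal P(\scc X_1\times\ldots\times\scc X_n)$ concentrated on $\cl\image\mathcal P(X_1\times\ldots\times X_n)$, this is the assertion of item~(ii).

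The key step is the bijectivity of~${}^\times$. By Theorem~\ref{topological-char}(ii), the operation $\cl$ (that is, $R\mapsto R^*$) is a bijection of $\mathcal P(X_1\times\ldots\times X_n)$ onto the set of regular closed subsets of $\scc X_1\times\ldots\times\scc X_n$, i.e.~onto $\cl\image\mathcal P(X_1\times\ldots\times X_n)$. As observed in the Remark preceding the lemma, ${}^\times=\wt{\,\cl\,}$ for $\cl$ regarded as a bijection between these two discrete spaces; since any bijection of sets induces a bijection (indeed a homeomorphism) of the corresponding spaces of ultrafilters, the map ${}^\times$, given by $\mathfrak r^\times=\{\cl\image A:A\in\mathfrak r\}$, is a bijection of $A$ onto~$B$. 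The bijectivity of the lifting map of $B$ onto~$C$ is exactly as in Lemma~\ref{i-is-1-1}, since lifting and projection were shown there to be mutually inverse. Composing the two bijections gives item~(ii); alternatively, item~(ii) can be obtained from item~(i) by identifying relations with their characteristic functions, exactly as in Lemma~\ref{i-is-1-1}.

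I do not anticipate a genuine obstacle here. The whole content of the lemma for relations rests on the bijectivity of $R\mapsto R^*$ furnished by Theorem~\ref{topological-char}(ii), which plays precisely the role that $R\mapsto\wt R$ (Theorem~\ref{topological-char}(i)) played for~$i$; everything else—the passage through ${}^\times$ followed by lifting—is formally identical to the earlier proof.
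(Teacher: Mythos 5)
Your main argument is correct and is essentially the paper's own proof: the same decomposition into $A$, $B$, $C$, with ${}^\times$ a bijection of $A$ onto $B$ (justified, as in the remark preceding the lemma, by the bijectivity of $R\mapsto R^*$ from Theorem~\ref{topological-char}(ii)) and the lifting map a bijection of $B$ onto $C$. One small caveat: your closing "alternatively" does not go through for $I$ --- unlike $i$, the map $I$ on ultrafilters over relations is \emph{not} the characteristic-function instance of $I$ on ultrafilters over maps (it uses $\cl$, i.e.~$R\mapsto R^*$, rather than $\ext$, i.e.~$R\mapsto\wt R$), so item~(ii) cannot be obtained from item~(i) by that identification; this is precisely why the paper proves (ii) directly via ${}^\times$ rather than reducing it to~(i).
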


\begin{proof} 
Item~(i) just repeats Lemma~\ref{i-is-1-1}(i) 
since $I$~coincides with~$i$ on ultrafilters over maps. 
For item~(ii), let 
\begin{gather*}
A=
\scc\,\mathcal P(X_1\times\ldots\times X_n),
\;\;
B=
\scc(\cl_{\,\scc X_1\times\ldots\times\scc X_n}%
\!\!\image\,\mathcal P(X_1\times\ldots\times X_n)),
\\
C=
\bigl\{
\mathfrak s\in
\scc\,\mathcal P(\scc X_1\times\ldots\times\scc X_n):
\cl_{\,\scc X_1\times\ldots\times\scc X_n}%
\!\!\image\,\mathcal P(X_1\times\ldots\times X_n)
\in\mathfrak s
\bigr\}.
\end{gather*} 
(Here the closure 
$\cl_{\scc X_1\times\ldots\times\scc X_n}$ refers  
to the product topology where the spaces~$\scc X_i$ 
are endowed with their standard topologies, and the set 
$
\cl_{\,\scc X_1\times\ldots\times\scc X_n}%
\!\!\image\,\mathcal P(X_1\times\ldots\times X_n)
$
in the definition of~$B$ is considered as a~discrete 
space.) As the map~${}^\times$ is a~bijection of $A$ 
onto $B$ and the lifting map is a~bijection of $B$ 
onto~$C$, the map~$I$, which the composition of the 
two maps, is a~bijection of $A$ onto~$C$, 
thus proving~(ii). 
\end{proof}


In what follows we consider 
the space $\scc X_1\times\ldots\times\scc X_n$ endowed 
with the usual product topology of the spaces~$\scc X_i$
and the set of regular closed sets in this space 
endowed with a~compact Hausdorff topology.
This topology is induced from the compact Hausdorff 
space $\mathcal P(X_1\times\ldots\times X_n)$, which 
we identify with the space $2^{X_1\times\ldots\times X_n}$ 
(where $2$~is discrete and the space carries the usual 
product topology) by the natural bijection taking
$R\subseteq X_1\times\ldots\times X_n$ to  
$R^*\subseteq\scc X_1\times\ldots\times\scc X_n$.

Recall also that 
by Lemma~\ref{partial-pointwise-convergence} 
(and its proof),
the $(X_1,\ldots,X_n)$-pointwise convergence topology 
on $RC_{X_1,\ldots,X_{n-1}}(\scc X_1,\ldots,\scc X_n,Y)$ 
can be induced from the product topology on
$Y^{X_1\times\ldots\times X_n}$ by the bijection~$\ext$, 
which takes each $f\in Y^{X_1\times\ldots\times X_n}$ 
to $\wt{f}\in Y^{\scc X_1\times\ldots\times\scc X_n}$. 
In particular, if $Y=2$ then $\ext$~on relations 
(identified with their characteristic functions), 
which takes each $R\subseteq X_1\times\ldots\times X_n$ 
to $\wt{R}\subseteq\scc X_1\times\ldots\times\scc X_n$, 
induces the above considered compact Hausdorff topology 
on $\mathcal P(\scc X_1\times\ldots\times\scc X_n)$. 
Therefore, we have three homeomorphic spaces: the 
space of subsets~$R$ of $X_1\times\ldots\times X_n$ 
and its images under the homeomorphisms $\ext$ 
and~$\cl$ taking $R,\wt{R},R^*$ into each others: 
$$
\begin{array}{c}
\xymatrix{ 
\RClop
\ar@{<->}[rr] 
&&\RegCl
\\
&\ar[ul]^{\ext}
\ar[ur]_{\cl}& 
} 
\\ 
[-2 mm] 
\mathcal P(X_1\times\ldots\times X_n)
\end{array}
$$
where 
\begin{align*}
\RClop
&=
\;\ext\image
\mathcal P(X_1\times\ldots\times X_n)
\\
&=
\bigl\{
\wt{R}:R\in\mathcal P(X_1\times\ldots\times X_n)
\bigr\}
\\
&=
\bigl\{
Q\subseteq\scc X_1\times\ldots\times\scc X_n:
Q~\text{ is right clopen w.r.t.~}X_1,\ldots,X_{n-1}
\bigr\},
\\
\RegCl
&=
\,\cl\image
\mathcal P(X_1\times\ldots\times X_n)
\\
&=
\bigl\{
R^*:R\in\mathcal P(X_1\times\ldots\times X_n)
\bigr\}
\\
&=
\bigl\{
Q\subseteq\scc X_1\times\ldots\times\scc X_n:
Q~\text{ is regular closed\,}
\bigr\}.
\end{align*}

\begin{question}\label{q: RegCl via Vietoris} 
Redefine the topology on the set $\RegCl$ as 
a~restricted version of the Vietoris topology (in 
an analogy with the restricted version of pointwise 
convergence topology turning out $\RC$ into a~compact 
Hausdorff space homeomorphic to the product space
$Y^{X_1\times\ldots\times X_n}$). Note that we cannot 
use the usual (unrestricted) Vietoris topology since 
in it, $\RegCl$ is not a~closed subset of the space
$\{Q\subseteq\scc X_1\times\ldots\times\scc X_n:
Q$~is closed$\}$.
(Problem~\ref{prb: 7}.)
\end{question}


\begin{lemma}\label{E-and-I from e-and-i}
Let $f$~be the homeomorphism between 
$
\RClop=
\{\wt{R}:R\in\mathcal P(X_1\times\ldots\times X_n)\}
$
and 
$
\RegCl=
\{R^*:R\in\mathcal P(X_1\times\ldots\times X_n)\}
$
taking $\wt{R}$ to~$R^*$. Then we have
$$
f\circ e=E
\;\;\text{and}\;\;
\wt{f}\circ i=I:
$$
$$
\begin{array}{c}
\xymatrix{ 
\scc\,\RClop 
\ar@{-->}^{\wt{f}}[rr] 
&&\scc\,\RegCl
\\
\RClop
\ar@{->}^{f}[rr]
\ar[u]  
&& 
\RegCl
\ar[u]
\\
&\ar[ul]^{e} 
\ar[ur]_{E} 
\ar[uul]_(.7){i} 
\ar[uur]^(.7){I}& 
} 
\\
[-2 mm]
\scc\,\mathcal P(X_1\times\ldots\times X_n) 
\\
\end{array}
$$
\end{lemma}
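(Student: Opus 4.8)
The plan is to treat the two asserted equalities separately, reducing each to Lemma~\ref{e-E-relations} together with the defining formulas for $i$, $I$, $f$ and $\wt{f}$; no genuinely new computation is needed, since everything has already been prepared. For the first equality $f\circ e=E$ I would fix $\mathfrak{r}\in\scc\,\mathcal{P}(X_1\times\ldots\times X_n)$ and set $R=\bigcap_{S\in\mathfrak{r}}\bigcup S$. By Lemma~\ref{e-E-relations} one has $e(\mathfrak{r})=\wt{R}$ and $E(\mathfrak{r})=R^*$ with \emph{one and the same} $R$. Since $f$ is by definition the homeomorphism carrying $\wt{R}$ to $R^*$, this gives immediately
$$
f(e(\mathfrak{r}))=f(\wt{R})=R^*=E(\mathfrak{r}),
$$
so the first equality holds pointwise on $\scc\,\mathcal{P}(X_1\times\ldots\times X_n)$.

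For the second equality $\wt{f}\circ i=I$ I would first record the action of the three maps on ultrafilters over relations. Under the lifting/projection identification of ultrafilters concentrated on a subset with ultrafilters over that subset, the maps $i$ and $I$ are $i(\mathfrak{r})=\mathfrak{r}^{+}=\{\ext\image A:A\in\mathfrak{r}\}\in\scc\,\RClop$ and $I(\mathfrak{r})=\mathfrak{r}^{\times}=\{\cl\image A:A\in\mathfrak{r}\}\in\scc\,\RegCl$, where $\ext(R)=\wt{R}$ and $\cl(R)=R^*$. Next I would note the pointwise identity $f\circ\ext=\cl$ on $\mathcal{P}(X_1\times\ldots\times X_n)$, which is just $f(\ext(R))=f(\wt{R})=R^*=\cl(R)$. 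Finally, since $f$ is a bijection between the (discrete) spaces $\RClop$ and $\RegCl$, its continuous extension $\wt{f}$ acts on ultrafilters as the pushforward, $\wt{f}(\mathfrak{p})=\{f\image B:B\in\mathfrak{p}\}$. Combining these, the computation runs
\begin{align*}
\wt{f}(i(\mathfrak{r}))
&=
\{f\image B:B\in\mathfrak{r}^{+}\}
=
\{f\image(\ext\image A):A\in\mathfrak{r}\}
\\
&=
\{\cl\image A:A\in\mathfrak{r}\}
=
\mathfrak{r}^{\times}
=
I(\mathfrak{r}).
\end{align*}
Equivalently, invoking the Remarks that identify ${}^{+}$ with $\wt{\ext}$ and ${}^{\times}$ with $\wt{\cl}$ for $\ext$ and $\cl$ regarded as bijections of discrete spaces, this is nothing but functoriality of the Stone extension, $\wt{f}\circ\wt{\ext}=\wt{f\circ\ext}=\wt{\cl}$.

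The only step that demands care — the mild obstacle here — is the bookkeeping of the lifting and projection identifications: one must verify that $i(\mathfrak{r})$, defined in the text as an ultrafilter over the \emph{full} power set $\mathcal{P}(\scc X_1\times\ldots\times\scc X_n)$ concentrated on $\RClop$, corresponds under projection to exactly $\mathfrak{r}^{+}\in\scc\,\RClop$ (and likewise $I(\mathfrak{r})$ to $\mathfrak{r}^{\times}\in\scc\,\RegCl$), so that $\wt{f}$ may legitimately be applied and the diagram squares commute. Once this identification is pinned down, using that lifting and projection are mutually inverse bijections as established earlier, the remainder is purely formal, and the case of ultrafilters over maps is subsumed by identifying relations with their characteristic functions exactly as in the definitions of $e$, $E$, $i$ and $I$.
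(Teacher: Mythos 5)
Your proposal is correct and follows essentially the same route as the paper, whose entire proof is the one-line remark that $f\circ e=E$ follows from the lemma computing $e(\mathfrak r)=\wt R$ and $E(\mathfrak r)=R^*$ for the same $R$, and that this in turn yields $\wt f\circ i=I$. You merely spell out the details the paper leaves implicit — in particular the pushforward description of $\wt f$ and the lifting/projection bookkeeping — which is exactly the intended argument.
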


\begin{proof}
The equality $f\circ e=E$ follows from 
Theorem~\ref{e-E-relations} and, in turn, 
implies the equality $\wt{f}\circ i=I$.
\end{proof}


\begin{lemma}\label{E-as-lim} 
Let $X_1,\ldots,X_n$ be discrete spaces,
$Y$~a~compact Hausdorff space, and let the spaces 
$
\RC=
RC_{X_1,\ldots,X_{n-1}}(\scc X_1,\ldots,\scc X_n,Y)
$ 
and 
$
\RegCl= 
\{Q\subseteq\scc X_1\times\ldots\times\scc X_n:
Q$~is regular closed\,$\}
$
be endowed with the topology induced from 
$Y^{X_1\times\ldots\times X_n}$ and 
$\mathcal P(X_1\times\ldots\times X_n)$,
respectively. Then we have 
$$
\lim I(\mathfrak f)=E(\mathfrak f)
\;\;\text{and}\;\;
\lim I(\mathfrak r)=E(\mathfrak r)
$$ 
for every ultrafilters 
$
\mathfrak f\in
\scc(Y^{X_1\times\ldots\times X_n})
$ 
and 
$
\mathfrak r\in
\scc\,\mathcal P(X_1\times\ldots\times X_n).
$ 
\end{lemma}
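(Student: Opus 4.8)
The plan is to reduce both equalities to Lemma~\ref{e-as-lim} by combining it with the homeomorphism $f$ supplied by Lemma~\ref{E-and-I from e-and-i} and the standard fact that a continuous map carries an ultrafilter converging to a point to its pushforward ultrafilter converging to the image of that point. First I would dispose of the case of an ultrafilter $\mathfrak f$ over maps, which is immediate: by construction $I$ coincides with $i$ on $\scc(Y^{X_1\times\ldots\times X_n})$, and likewise $E$ coincides with $e$ on that same domain, so the desired equality $\lim I(\mathfrak f)=E(\mathfrak f)$ is nothing but the equality $\lim i(\mathfrak f)=e(\mathfrak f)$ already proved in Lemma~\ref{e-as-lim}.

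The substantive case is that of an ultrafilter $\mathfrak r$ over relations. Recall that $i(\mathfrak r)$ is concentrated on $\RClop$, hence (via its projection) may be regarded as an ultrafilter over the compact Hausdorff space $\RClop$, and by Lemma~\ref{e-as-lim} it converges there to $e(\mathfrak r)$. I would then apply the homeomorphism $f\colon\RClop\to\RegCl$ of Lemma~\ref{E-and-I from e-and-i}, which takes $\wt R$ to $R^*$. Since $f$ is continuous and $i(\mathfrak r)\to e(\mathfrak r)$ in $\RClop$, the pushforward ultrafilter $f_*(i(\mathfrak r))$ converges in $\RegCl$ to $f(e(\mathfrak r))$.

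It then remains to match the two sides with $I(\mathfrak r)$ and $E(\mathfrak r)$. The limit point is identified at once: $f(e(\mathfrak r))=E(\mathfrak r)$ is exactly the relation $f\circ e=E$ of Lemma~\ref{E-and-I from e-and-i}. For the ultrafilter itself, I would use that $f$, viewed as a bijection of the underlying discrete sets $\RClop$ and $\RegCl$, has continuous extension $\wt f$ acting on ultrafilters precisely by the pushforward $\mathfrak d\mapsto\{f\image A:A\in\mathfrak d\}$; hence the identity $\wt f\circ i=I$ of the same lemma yields $f_*(i(\mathfrak r))=\wt f(i(\mathfrak r))=I(\mathfrak r)$ after the routine identification of an ultrafilter concentrated on $\RegCl$ with its projection. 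Putting these together gives $\lim I(\mathfrak r)=\lim f_*(i(\mathfrak r))=f(e(\mathfrak r))=E(\mathfrak r)$, and the case of relations reduces to that of maps with $Y=2$ as in Lemma~\ref{e-as-lim}.

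The main obstacle is purely the bookkeeping around the dual role of $f$: it is at once a bijection of the underlying sets, which is what makes $\wt f$ agree with the set-theoretic pushforward $f_*$ on ultrafilters, and a homeomorphism of the compact Hausdorff restricted topologies on $\RClop$ and $\RegCl$, which is what makes it preserve limits. The delicate point is that $\wt f$ was originally introduced as the induced homeomorphism of the ultrafilter spaces $\scc\RClop\to\scc\RegCl$ (with $\RClop$, $\RegCl$ discrete), whereas the limits $\lim i(\mathfrak r)$ and $\lim I(\mathfrak r)$ are taken in the compact Hausdorff topologies; once one notes that the same underlying bijection $f$ furnishes both, the two descriptions coincide and all remaining steps are formal.
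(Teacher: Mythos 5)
Your proposal is correct and follows essentially the same route as the paper's proof: the case of maps is disposed of by noting $I=i$, $E=e$ there and invoking Lemma~\ref{e-as-lim}, and the case of relations is handled by transporting the convergence $\lim i(\mathfrak r)=e(\mathfrak r)$ through the homeomorphism $f$ using the identities $f\circ e=E$ and $\wt f\circ i=I$ of Lemma~\ref{E-and-I from e-and-i} together with the standard fact that continuous maps commute with limits of ultrafilters. The only cosmetic difference is that you push forward along $f$ while the paper pulls back along $\wt f^{-1}$; the paper also explicitly notes (via Lemma~\ref{partial-pointwise-convergence}) that the topology induced from $Y^{X_1\times\ldots\times X_n}$ agrees with the restricted pointwise convergence topology, a point you leave implicit.
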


\begin{proof} 
The first equality repeats the first equality in 
Lemma~\ref{e-as-lim} as the topology on~$\RC$ 
induced from $Y^{X_1\times\ldots\times X_n}$ coincides 
with the $(X_1,\ldots,X_n)$-pointwise convergence 
topology by Theorem~\ref{partial-pointwise-convergence}.

For the second equality, recall first a~general fact: 
if a~map $g:A\to B$ is continuous and
$\mathfrak u$~is an ultrafilter over~$A$,  
then $g(\lim\mathfrak u)=\lim\wt{g}(\mathfrak u)$ 
whenever both limits exist. It easily follows that 
if $g$~is a~homeomorphism, then
$\lim\mathfrak u=g^{-1}(\lim\wt{g}(\mathfrak u))$ and 
$\lim\wt{g}(\mathfrak u)=
g(\lim\wt{g}^{-1}(\mathfrak u))$.
In our situation, we have:
$$
\lim I(\mathfrak r)=
f(\lim\wt{f}^{-1}(I(\mathfrak r)))=
f(\lim i(\mathfrak r))=
f(e)(\mathfrak r)=
E(\mathfrak r)
$$
where the first equality holds by this general fact, 
the second follows from Lemma~\ref{E-and-I from e-and-i},
the third holds by Lemma~\ref{e-as-lim}, and the last 
again by Lemma~\ref{E-and-I from e-and-i}.  
\end{proof}


Likewise $i$, we expand $I$ to ultrafilter models 
in the narrow sense pointwise: 

\begin{definition}\label{def: I-on_model}
For all ultrafilter models 
$
\mathfrak A=
(\scc X,\mathfrak f,\ldots,\mathfrak r,\ldots)
$
in the narrow sense, let 
$$
I(\mathfrak A)=
(\scc X,I(\mathfrak f),\ldots,I(\mathfrak r),\ldots).
$$
\end{definition}

\begin{theorem}\label{E-as-lim-of-I}
If $\mathfrak A$~is an ultrafilter model 
in the narrow sense, then 
$\lim I(\mathfrak A)=E(\mathfrak A)$:
$$
\xymatrix{
&I(\mathfrak A)\ar^{\lim}[dd]
\\
\mathfrak A\ar[dr]^{E}\ar[ur]^{I}&
\\
&E(\mathfrak A)
}
$$
\end{theorem}

\begin{proof}
Immediate from Lemma~\ref{E-as-lim}. 
\end{proof}


We summarize the interplay between ultrafilter 
models $\mathfrak A$ in the narrow sense, their 
limits, and the operations $i,I$, $e,E$, and $f$ 
(where $f$ is defined on models of form 
$e(\mathfrak A)$ as expected) 
in the following diagram:
$$
\xymatrix{ 
i(\mathfrak A)
\ar@{-->}^{\wt{f}}[rr]
\ar_{\lim}[d] 
&&I(\mathfrak A) 
\ar^{\lim}[d]
\\
e(\mathfrak A)
\ar@{->}^{f}[rr]
&&E(\mathfrak A)
\\
&\mathfrak A
\ar[ul]^{e}
\ar[ur]_{E}
\ar[uul]_(.7){i}
\ar[uur]^(.7){I} 
}
$$


Despite the fact that Theorem~\ref{E-as-lim-of-I} 
is an $E$-analog of Theorem~\ref{e-as-lim-of-i} used 
to get Theorem~\ref{generalized-models:-old-vs-new}, 
the latter theorem has no such analog. This is due
to an asymmetry between the operations $e$ and~$E$ 
w.r.t.~the satisfiability in ultrafilter models in 
the narrow sense, as it has been defined: there is 
no $E$-analog of Theorem~\ref{e-preserves-formulas}, 
which was also used in proving
Theorem~\ref{generalized-models:-old-vs-new} 
(cf., however, Problem~\ref{prb: 5}). 
Nonetheless, we are still able to get a~counterpart 
of Theorem~\ref{top-char-of-i-of-old}, which does not 
involve satisfiability:

\begin{theorem}\label{top-char-of-I-of-old}
Let $\mathfrak U$ be an ultrafilter model 
in the wide sense. Then:
\begin{enumerate}
\item[(i)] 
$\mathfrak U=I(\mathfrak A)$ 
for some ultrafilter model~$\mathfrak A$ 
in the narrow sense iff 
the universe of~$\mathfrak U$ is $\scc X$ for some~$X$ 
and the interpretation takes 
all functional symbols to ultrafilters
concentrated on $\ext\image X^{X^n}$, and 
all relational symbols to ultrafilters concentrated 
on $\{Q\subseteq(\scc X)^n:Q$~is regular closed\,$\}$;
\item[(ii)]
$\lim\mathfrak U=\mathfrak A^*$ 
for some ordinary model~$\mathfrak A$ iff 
the universe of~$\mathfrak U$ is $\scc X$ for some~$X$ 
and the interpretation takes 
all functional symbols to ultrafilters in 
$\{I(\mathfrak f):\mathfrak f\in\scc(X^{X^n})$~is 
pseudo-principal\,$\}$, and 
all relational symbols to ultrafilters concentrated 
on $\{Q\subseteq(\scc X)^n:Q$~is regular closed\,$\}$.
\end{enumerate}
\end{theorem}

\begin{proof} 
Item~(i) is immediate from Lemma~\ref{I-is-1-1}; 
recall that the images of ultrafilters over
$\mathcal P(X^n)$ under~$I$ are exactly ultrafilters 
over $\mathcal P((\scc X)^n)$ that are concentrated on
$
\cl_{\,\scc X_1\times\ldots\times\scc X_n}\!\!\image
\,\mathcal P(X^n)
$: 
\begin{align*}
i\image\scc\,\mathcal P(X^n)
&=
\bigl\{
\mathfrak s\in\scc\,\mathcal P((\scc X)^n):
\cl_{\,\scc X_1\times\ldots\times\scc X_n}\!\!\image
\,\mathcal P(X^n)\in\mathfrak s
\bigr\}
\\
&=
\bigl\{
\mathfrak s\in\scc\,\mathcal P((\scc X)^n):
\{R^*:R\subseteq X^n\}
\in\mathfrak s
\bigr\}
\\
&=
\bigl\{
\mathfrak s\in\scc\,\mathcal P((\scc X)^n):
\{Q\subseteq(\scc X)^n:Q\text{ is 
regular closed\,}\}\in\mathfrak s
\bigr\}.
\end{align*}
Item~(ii) follows from item~(i) and 
Theorem~\ref{e-and-E-as-ultraextensions}. 
\end{proof}


\vskip+1em
\noindent
\textbf{\textit{Second Extension Theorems.}}
Now we define homomorphisms between ultrafilter 
models in the wide sense as homomorphisms of 
their limits:

\begin{definition}\label{def: homo (wide)}
Let $\mathfrak U$ and $\mathfrak V$ be two ultrafilter 
models in the wide sense, of the same signature,
with the universes $X$ and~$Y$, respectively.  
A~map $h:X\to Y$ is a~{\it homomorphism} 
({\it of ultrafilter  models in the wide sense}) 
iff it is a~homomorphism of  
$\lim\mathfrak U$ into $\lim\mathfrak V$. 
\end{definition}

Theorem~\ref{generalized-models:-old-vs-new} 
guaranties that for ultrafilter models in the narrow 
sense, Definition~\ref{def: homo (wide)} 
gives the same, up to the identification map~$i$, 
that Definition~\ref{def: homo (narrow)}.
Similar concepts ({\it epimorphisms}, {\it quotients}, 
{\it isomorphic embeddings}, {\it submodels},  
{\it elementary embeddings}, {\it elementary submodels}, 
etc.~of ultrafilter models in the wide sense) 
are defined likewise and also coincide with 
the corresponding concepts for ultrafilter models 
in the narrow sense.

The proofs of the Second Extension Theorems 
(Theorems \ref{modelSET} and~\ref{modalSET}) 
are based on Theorems \ref{modeltopology} 
and~\ref{modaltopology}, which describe the 
topological properties of the $\wt{\;\;}$- and 
${}^*$-extensions, respectively, and a~result 
called the ``abstract extension theorem'' 
in~\cite{Saveliev(inftyproc)}. This result is 
rather about restrictions of continuous maps than 
about continuous extensions of maps, but it also
states that such a~map is a~homomorphism of the 
whole models whenever it is a~homomorphism of 
certain submodels in them; we restate it in 
the next theorem:

\begin{theorem}\label{abstract-ET}
Let $\mathfrak A$ and $\mathfrak B$ be 
two (ordinary) models of the same signature 
whose universes $X$ and~$Y$, respectively, 
both carry topologies, the topology on~$Y$ 
is Hausdorff, and let $D\subseteq X$ 
be a~dense subset of~$X$ which forms
a~submodel~$\mathfrak D$ of~$\mathfrak A$. 
Let, moreover, $h:X\to Y$ be a~continuous map, 
and suppose that 
 \begin{enumerate}
\item[(a)] 
all operations in~$\mathfrak A$ are right continuous
w.r.t.~$D$, and in~$\mathfrak B$ right continuous 
w.r.t.~$h\image D$,
\item[(b)] 
one of two following items holds:
\begin{enumerate}
\item[($\alpha$)] 
all relations in~$\mathfrak A$ are right open
w.r.t.~$D$, and in~$\mathfrak B$ right closed
w.r.t.~$h\image D$,
\item[($\beta$)] 
all relations in~$\mathfrak A$ are regular closed
in the product topology on $X^n$, in~$\mathfrak B$ 
closed in the product topology on~$Y^n$ (where $n$~is 
the arity of a~given relation), and $h$~is a~closed map.
\end{enumerate} 
\end{enumerate}
Then the following are equivalent:
\begin{enumerate}
\item[(i)] 
$h\uhr D$ is a~homomorphism of $\mathfrak D$ 
into~$\mathfrak B$, 
\item[(ii)] 
$h$~is a~homomorphism of $\mathfrak A$ 
\end{enumerate} 
$$
\xymatrix{
\mathfrak A
\ar@{-->}[rr]^{h}
&&\mathfrak B&
\\
\mathfrak D
\ar[rr]^{h\upharpoonright D}
\ar[u]
&&\mathfrak E
\ar[u]&
}
$$
where $\mathfrak E$~denotes the submodel 
of~$\mathfrak B$ with the universe $h\image D$.
\end{theorem}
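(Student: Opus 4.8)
The plan is to prove the non-trivial implication (i)$\Rightarrow$(ii); the converse (ii)$\Rightarrow$(i) is immediate, since $\mathfrak D$ is a submodel of $\mathfrak A$, so each symbol is interpreted in $\mathfrak D$ by the restriction to $D$ of its interpretation in $\mathfrak A$, and a homomorphism $h$ of $\mathfrak A$ clearly restricts to a homomorphism $h\uhr D$ of $\mathfrak D$. For (i)$\Rightarrow$(ii) I would treat operations and relations separately, with the operational part common to both cases~($\alpha$) and~($\beta$) and the relational part split according to~(b).

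For the operational part I use only~(a). Fix an $n$-ary functional symbol interpreted by $F$ in $\mathfrak A$ and $G$ in $\mathfrak B$, and consider the two maps $X^n\to Y$ given by $P(\boldsymbol x)=h(F(\boldsymbol x))$ and $Q(\boldsymbol x)=G(h(x_1),\ldots,h(x_n))$. First I would check that $P$ and $Q$ are both right continuous w.r.t.~$D$: for $P$ this follows by composing the coordinate maps of $F$, continuous by right continuity w.r.t.~$D$, with the continuous $h$; for $Q$ it follows because, once the earlier coordinates are pinned in $D$, their $h$-images lie in $h\image D$, so the corresponding coordinate map of $G$ is continuous by right continuity of $G$ w.r.t.~$h\image D$, and precomposing with the continuous $h$ preserves continuity. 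By~(i) the maps $P$ and $Q$ agree on $D^n$ (here one uses that $F$ carries $D^n$ into $D$, so $F\uhr D^n$ is the operation of $\mathfrak D$, which $h\uhr D$ preserves). Since $Y$ is Hausdorff and $D$ is dense in $X$, Lemma~\ref{uniqueness-of-right-cont}(i) forces $P=Q$ on all of $X^n$, which is exactly preservation of the operation by $h$. Constants are the degenerate case $n=0$ and follow directly from $h\uhr D$ preserving the constant of $\mathfrak D$.

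For relations in case~($\alpha$), fix a predicate interpreted by $R$ in $\mathfrak A$ and $S$ in $\mathfrak B$, and set $h^{-1}(S)=\{\boldsymbol x\in X^n:(h(x_1),\ldots,h(x_n))\in S\}$; I want to deduce $R\subseteq h^{-1}(S)$ from $R\cap D^n\subseteq h^{-1}(S)$, which is what~(i) provides. The key preliminary observation is that $h^{-1}(S)$ is right closed w.r.t.~$D$: pinning earlier coordinates in $D$ sends their $h$-images into $h\image D$, so the relevant slice of $S$ is closed by right closedness of $S$ w.r.t.~$h\image D$, and its preimage under the continuous $h$ is closed. I would then run a downward induction on the number $k$ of coordinates pinned to $D$, proving the statement $(*_k)$: whenever $x_1,\dots,x_k\in D$ and $x_{k+1},\dots,x_n\in X$, $R(\boldsymbol x)$ implies $\boldsymbol x\in h^{-1}(S)$. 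The base $k=n$ is the hypothesis. In the step relaxing coordinate $k$, the $k$-th slice $U=\{x:R(d_1,\dots,d_{k-1},x,x_{k+1},\dots,x_n)\}$ is open by right openness of $R$, the slice $V=\{x:(d_1,\dots,d_{k-1},x,x_{k+1},\dots,x_n)\in h^{-1}(S)\}$ is closed by the observation above, and $(*_k)$ gives $D\cap U\subseteq V$; since $D\cap U$ is dense in the open set $U$ and $V$ is closed, $U\subseteq\cl_X(D\cap U)\subseteq V$, which is $(*_{k-1})$. I expect this to be the main obstacle: the careful bookkeeping of which coordinates are pinned to $D$, so that right-openness of $R$ and right-closedness of $h^{-1}(S)$ apply at the same slice simultaneously, is the delicate point.

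For relations in case~($\beta$), $R$ is regular closed in $X^n$ and $S$ is closed in $Y^n$, and I would argue globally rather than coordinatewise. The set $h^{-1}(S)$ is the preimage of the closed set $S$ under the continuous product map $h\times\cdots\times h$, hence closed in $X^n$, and $D^n$ is dense in $X^n$. Therefore $\inter R\subseteq\cl_{X^n}(D^n\cap\inter R)\subseteq\cl_{X^n}(R\cap D^n)\subseteq h^{-1}(S)$, using density on the open set $\inter R$, the hypothesis $R\cap D^n\subseteq h^{-1}(S)$, and the closedness of $h^{-1}(S)$; taking closures and invoking $R=\cl_{X^n}(\inter R)$ yields $R\subseteq h^{-1}(S)$, i.e.\ preservation of the relation. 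The forward preservation thus needs only continuity of $h$; the closedness of $h$ assumed in~($\beta$) is the form in which the statement is quoted from~\cite{Saveliev(inftyproc)} and is what the corresponding reflecting and embedding refinements rely on. Combining the operational and relational parts establishes that $h$ is a homomorphism of $\mathfrak A$ into $\mathfrak B$, completing (i)$\Rightarrow$(ii), with $\mathfrak E$ the submodel of $\mathfrak B$ on $h\image D$ (automatically closed under the operations of $\mathfrak B$ by the homomorphism property of $h\uhr D$).
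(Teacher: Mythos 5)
Your proof is correct. Bear in mind, though, that the paper's own proof of this theorem is mostly a reduction: the implication (i)$\Rightarrow$(ii) in case~($\alpha$), including the treatment of operations, is simply quoted from \cite{Saveliev,Saveliev(inftyproc)} (Theorem~4.1 there), and only the derivation of case~($\beta$) from case~($\alpha$) is written out. So the bulk of your argument --- the comparison of $h\circ F$ with $G\circ(h\times\cdots\times h)$ via Lemma~\ref{uniqueness-of-right-cont}, and the downward induction on the number of coordinates pinned to~$D$ for relations in case~($\alpha$) --- reconstructs the cited result rather than the proof printed here; it is carried out correctly, and you rightly identify the delicate point, namely that right openness of~$R$ and right closedness of $(h\times\cdots\times h)^{-1}(S)$ must be applied at the same coordinate with exactly the earlier coordinates lying in~$D$, which is why the coordinates have to be released from the last one backwards. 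For case~($\beta$) your direct chain $\inter_{X^n}R\subseteq\cl_{X^n}(D^n\cap\inter_{X^n}R)\subseteq(h\times\cdots\times h)^{-1}(S)$ followed by taking closures is essentially the same computation as the paper's, which formally routes it through the unary instance of case~($\alpha$) applied to the open set $\inter_{X^n}R$ on~$X^n$ and then passes to closures; the two differ only in packaging. Your side remark that the closedness of~$h$ is not needed for the direction (i)$\Rightarrow$(ii) in case~($\beta$) is also accurate: the inclusion $h\image\cl_{X^n}\inter_{X^n}R\subseteq\cl_{Y^n}(h\image\inter_{X^n}R)\subseteq S$ that the paper attributes to closedness of~$h$ already follows from continuity alone, so that hypothesis plays no role in this implication.
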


\begin{proof} 
That (ii) implies~(i) is trivial since 
$\mathfrak D$ is a~submodel of~$\mathfrak A$. 
For the converse implication in the case of~($\alpha$), 
see \cite{Saveliev} or~\cite{Saveliev(inftyproc)}, 
Theorem~4.1. The case of~($\beta$) is obtained from 
the case of~($\alpha$) as follows.

If $R$~is an $n$-ary relation on~$X$ belonging to the
model~$\mathfrak A$, consider $R$ as a~unary relation
on~$X^n$ and note that under~(i), the restriction 
$h\uhr D$ is also a~homomorphism between the model 
$(D^n,(\inter_{X^n}R)\cap D^n)$ and the model $(Y^n,S)$,
where $\inter_{X^n}R$ is the interior of~$R$ in 
the product topology on~$X^n$, so the set
$(\inter_{X^n}R)\cap D^n=\inter_{D^n}(R\cap D^n)$ 
is an open unary relation on~$D^n$, and 
$S$~is the relation on~$Y$ interpreting the same 
predicate symbol that $R$~doing and also considered 
as a~unary relation on~$Y^n$. 
By~($\alpha$) we conclude that $h$~is a~homomorphism 
between $(X^n,\inter_{X^n}R)$ and $(Y^n,S)$. But as 
in the case of~($\beta$) the map~$h$ is closed, we have: 
$h\image\cl_{X^n}\inter_{X^n}R\subseteq\cl_{Y^n}S$, 
thus $h$~is a~homomorphism between 
$(X^n,\cl_{X^n}\inter_{X^n}R)$ and $(Y^n,\cl_{Y^n}S)$. 
Finally, as under~($\beta$), $R$~is regular closed 
in~$X^n$ and $S$~is closed in~$Y^n$, we have 
$\cl_{X^n}\inter_{X^n}R=R$ and $\cl_{Y^n}S=S$, 
thus showing that $h$~is a~homomorphism between 
$(X^n,R)$ and $(Y^n,S)$, and hence, between 
$(X,R)$ and $(Y,S)$ where the relations $R$ and~$S$ 
are considered as $n$-ary. This gives~(ii), 
completing the proof of the theorem.
\end{proof}


\begin{remark}\label{rmk: refining abstract-ET}
The argument for proving ($\beta$) from~($\alpha$) 
allows to obtain stronger statements. Instead of 
the assumption of~($\beta$), it suffices to suppose 
that the interior of~$R$ is dense in the closure of~$R$, 
i.e.~the closure of~$R$ is regular closed:
$\cl_{X^n}\inter_{X^n}R=\cl_{X^n}R.$
This includes the cases of open as well as
of regular closed~$R$. 
Also instead of the assumption of~($\alpha$), 
it suffices to suppose that $R$~``has right dense 
interior w.r.t.~$D$'', i.e.~that for each 
$i,1\le i\le n$, and every $a_1,\ldots,a_{i-1}\in D$ 
and $x_{i+1},\ldots,x_n\in X$, the set 
$
\{
x\in X:
(a_1,\ldots,a_{i-1},x,x_{i+1},\ldots,x_n)
\in R
\}
$
has the interior dense in the closure of this set. 
As easy to see, in both cases the same proof works 
as well.

The same is applied to the following theorem. 
\end{remark}


The next result is an immediate analog of 
Theorem~\ref{abstract-ET} for ultrafilter models 
in the wide sense.

\begin{theorem}\label{abstract-ET-generalized}
Let $\mathfrak U$ and $\mathfrak V$ be two 
ultrafilter models in the wide sense, of 
the same signature, whose universes $X$ and~$Y$, 
respectively, both carry topologies, the topology 
on~$Y$ is Hausdorff, and let $D\subseteq X$ be 
a~dense subset of~$X$ which forms an ultrafilter 
submodel~$\mathfrak D$ of~$\mathfrak U$. 
Let, moreover, $h:X\to Y$ be a~continuous map, 
and suppose that for any $n<\omega$,
\begin{enumerate}
\item[(a)]
$n$-ary functional symbols are interpreted: 
in~$\mathfrak U$ by ultrafilters having limits 
in $RC_D(X^n,X)$, and 
in~$\mathfrak V$ by ultrafilters having limits 
in $RC_{h\image D}(Y^n,Y)$,
\item[(b)]
one of two following items holds:
\begin{enumerate}
\item[($\alpha$)]
$n$-ary predicate symbols are interpreted: 
in~$\mathfrak U$ by ultrafilters having limits 
in $\{R\subseteq X^n:R$~is right open w.r.t.~$D\}$, 
and in~$\mathfrak V$ by ultrafilters having limits 
in $\{S\subseteq Y^n:S$~is right closed 
w.r.t.~$h\image D\}$, 
\item[($\beta$)]
$n$-ary predicate symbols are interpreted: 
in~$\mathfrak U$ by ultrafilters having limits 
in $\{R\subseteq X^n:R$~is regular closed\,$\}$, 
and in~$\mathfrak V$ by ultrafilters having limits 
in $\{S\subseteq Y^n:S$~is closed\,$\}$, 
in the product topologies on $X^n$ and~$Y^n$, 
respectively, and $h$~is a~closed map. 
\end{enumerate}
\end{enumerate}
Then the following are equivalent:
\begin{enumerate}
\item[(i)] 
$h\uhr D$ is a~homomorphism of $\mathfrak D$ 
into~$\mathfrak V$, 
\item[(ii)] 
$h$ is a~homomorphism of $\mathfrak U$ 
into~$\mathfrak V$: 
\end{enumerate}
$$
\xymatrix{
\mathfrak U
\ar@{-->}[rr]^h
\ar[ddr]^(.3){\lim}
&&\mathfrak V
\ar[ddr]^(.3){\lim}&
\\
\mathfrak D
\ar[rr]^{h\upharpoonright D}
\ar[u]
\ar[ddr]^(.3){\lim}
&&\mathfrak E
\ar[u]
\ar[ddr]^(.3){\lim}&&
\\
&\lim\mathfrak U
\ar@{-->}[rr]^h
&&\lim\mathfrak V&
\\
&\lim\mathfrak D
\ar[rr]^{h\upharpoonright D}
\ar[u]
&&\lim\mathfrak E\ar[u]& 
}
$$
where $\mathfrak E$~denotes the ultrafilter submodel 
of~$\mathfrak V$ with the universe $h\image D$.
\end{theorem}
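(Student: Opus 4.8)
The plan is to reduce the statement to the ordinary abstract extension theorem (Theorem~\ref{abstract-ET}) applied to the limit models. First I would record that, by the definition of homomorphism of generalized models in the wide sense, assertion~(i) says precisely that $h\uhr D$ is an ordinary homomorphism of $\lim\mathfrak D$ into $\lim\mathfrak V$, and assertion~(ii) that $h$ is an ordinary homomorphism of $\lim\mathfrak U$ into $\lim\mathfrak V$. Hypotheses~(a) and~(b) guarantee that all interpreting ultrafilters of $\mathfrak U$ and $\mathfrak V$ converge, so the ordinary models $\lim\mathfrak U$ and $\lim\mathfrak V$ are well defined; and since $\mathfrak D$ is a generalized submodel of $\mathfrak U$ (and $\mathfrak E$ of $\mathfrak V$), the same limit-based definition makes $\lim\mathfrak D$ an ordinary submodel of $\lim\mathfrak U$ with universe $D$, and $\lim\mathfrak E$ an ordinary submodel of $\lim\mathfrak V$ with universe $h\image D$. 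Thus the top and bottom squares of the displayed diagram are joined by the vertical $\lim$ arrows, and it suffices to prove the equivalence on the bottom, ordinary, square.

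Next I would verify that the present hypotheses, read off on the limit models, are exactly those of Theorem~\ref{abstract-ET} with $\mathfrak A=\lim\mathfrak U$ and $\mathfrak B=\lim\mathfrak V$. Clause~(a) asserts that each $n$-ary operation of $\lim\mathfrak U$ is a limit lying in $RC_D(X^n,X)$, i.e.\ is right continuous w.r.t.~$D$, and that each operation of $\lim\mathfrak V$ is right continuous w.r.t.~$h\image D$; this is clause~(a) of Theorem~\ref{abstract-ET}. Likewise, clause~($\alpha$) (resp.~($\beta$)) here makes the relations of $\lim\mathfrak U$ right open (resp.\ regular closed) and those of $\lim\mathfrak V$ right closed (resp.\ closed), with $h$ closed in the second case, which is clause~($\alpha$) (resp.~($\beta$)) of Theorem~\ref{abstract-ET}. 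The standing hypotheses — $D$ dense in $X$, the topology on $Y$ Hausdorff, and $h$ continuous — are shared verbatim.

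I would then apply Theorem~\ref{abstract-ET} directly: it yields that $h\uhr D$ is a homomorphism of $\lim\mathfrak D$ into $\lim\mathfrak V$ if and only if $h$ is a homomorphism of $\lim\mathfrak U$ into $\lim\mathfrak V$. Translating back through the definition of homomorphism of generalized models in the wide sense, as recorded in the first paragraph, gives precisely the equivalence of~(i) and~(ii), which completes the proof; the displayed diagram commutes because its bottom square is the instance of Theorem~\ref{abstract-ET} just invoked and its four vertical faces are the $\lim$-maps.

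The content here is essentially a faithful translation of definitions, so I do not anticipate a deep obstacle; the one place deserving care is that every model-theoretic notion appearing in the statement — homomorphism, submodel, and the images $\lim\mathfrak D$ and $\lim\mathfrak E$ — is, for generalized models in the wide sense, defined by passing to limits. I must therefore make sure that ``$\mathfrak D$ is a generalized submodel of $\mathfrak U$'' unwinds to ``$\lim\mathfrak D=(\lim\mathfrak U)\uhr D$'' and that $h\uhr D$ viewed as a homomorphism of generalized models is literally $h\uhr D$ viewed as a homomorphism of the limits, so that the top square of the diagram collapses onto the bottom one. Together with the standing assumption, encoded in~(a) and~(b), that all the relevant limits exist (uniquely, so that $\lim\mathfrak U$ and $\lim\mathfrak V$ are genuine ordinary models), this renders the reduction to Theorem~\ref{abstract-ET} exact rather than merely analogous, and no further topological work beyond that already done for the ordinary case is required.
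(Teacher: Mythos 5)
Your proposal is correct and follows exactly the paper's own argument: the paper likewise observes that homomorphisms of the generalized models $\mathfrak D$, $\mathfrak U$, $\mathfrak V$ are by definition homomorphisms of the ordinary limit models, and then applies Theorem~\ref{abstract-ET} to $\lim\mathfrak U$, $\lim\mathfrak V$, and $\lim\mathfrak D$. You simply spell out in more detail the (routine) verification that hypotheses (a) and (b) transfer verbatim to the limits.
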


\begin{proof} 
By definition, homomorphisms of ultrafilter models
$\mathfrak D$, $\mathfrak U$, and $\mathfrak V$ 
are precisely homomorphisms of the ordinary models 
$\lim\mathfrak D$, $\lim\mathfrak U$, and 
$\lim\mathfrak V$. Now apply Theorem~\ref{abstract-ET} 
to the latter three models.
\end{proof}

Note that Theorem~\ref{abstract-ET-generalized} 
includes Theorem~\ref{abstract-ET} as a~particular 
case by identifying operations and relations 
with principal ultrafilters given by them as in 
Theorem~\ref{ordinary-models-as-new-generalized}.


Before formulating an extension theorem 
for ultrafilter models in the wide sense, 
let us state one more auxiliary result:

\begin{lemma}\label{pre-SET-generalized}
Let $\mathfrak U$ and $\mathfrak V$ be two ultrafilter 
models in the wide sense, of the same signature, 
whose universes $\scc X$ and~$Y$, respectively, 
both carry topologies where 
the topology on~$\scc X$ is standard, 
let $\mathfrak U$~coincide, up to the identification 
map~$i$, with an ultrafilter model~$\mathfrak B$ in 
the narrow sense (also having the universe~$\scc X$), 
and let $g:\scc X\to Y$. 
Then the following are equivalent: 
\begin{enumerate}
\item[(i)] 
$g$~is a~homomorphism of $\mathfrak U$ 
into~$\mathfrak V$,
\item[(ii)] 
$g$~is a~homomorphism of $e(\mathfrak B)$
into $\mathfrak V$.  
\end{enumerate}
If moreover, the interpretation in $\mathfrak V$ takes 
$n$-ary predicate symbols to ultrafilters having limits 
in $\{S\subseteq Y^n:S$~is closed\,$\}$ where 
$Y^n$~carries the product topology, 
and the map~$g$ is closed, then the following item:
\begin{enumerate}
\item[(iii)] 
$g$~is a~homomorphism of $E(\mathfrak B)$
into $\mathfrak V$, 
\end{enumerate} 
also is equivalent to each of items (i) and~(ii): 
$$
\xymatrix{
&\mathfrak U
\ar[dd]_(.3){\lim}
\ar@{->}[rrrddd]^(.7)g&&& 
\\
\mathfrak B
\ar[ur]^i
\ar[dr]_e
\ar[rrr]_E
&&&E(\mathfrak B)
\ar[ddr]^{g}&
\\
&e(\mathfrak B)
\ar@{->}[rrrd]^(.6)g&&&
\\
&&&&\mathfrak V
}
$$
\end{lemma}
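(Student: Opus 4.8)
The plan is to route everything through the definition of a homomorphism of generalized models in the wide sense: $g$ is such a homomorphism precisely when it is a homomorphism of the associated limit models. First I would observe that, since $\mathfrak U$ agrees with $i(\mathfrak B)$ up to the identification map, Theorem~\ref{e-as-lim-of-i} gives $\lim\mathfrak U=\lim i(\mathfrak B)=e(\mathfrak B)$. Hence $g$ is a homomorphism of $\mathfrak U$ into $\mathfrak V$ iff it is a homomorphism of $e(\mathfrak B)$ into $\lim\mathfrak V$; viewing the ordinary model $e(\mathfrak B)$ as a generalized model in the wide sense through Theorem~\ref{ordinary-models-as-new-generalized} (so that $\lim e(\mathfrak B)=e(\mathfrak B)$), this is exactly item~(ii). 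Thus the equivalence of (i) and (ii) falls out directly and uses none of the additional hypotheses.

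To bring in (iii) I would separate operations from relations. By construction $e$ and $E$ coincide on ultrafilters over operations, so $e(\mathfrak B)$ and $E(\mathfrak B)$ carry the same operations and the operation clauses of (ii) and (iii) are identical; only the relational clauses differ. By Lemma~\ref{e-E-relations}, for each predicate symbol the relation of $e(\mathfrak B)$ is $\wt R$ and that of $E(\mathfrak B)$ is $R^*$ for one and the same $R\subseteq X^n$, with $\wt R\subseteq R^*$. Denoting by $S$ the relation of $\lim\mathfrak V$ interpreting the same symbol, (ii) asserts that $g$ carries every $\wt R$ into $S$ and (iii) that $g$ carries every $R^*$ into $S$. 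Since $\wt R\subseteq R^*$, the implication (iii)$\Rightarrow$(ii) is immediate.

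The core of the lemma is the converse (ii)$\Rightarrow$(iii), which I expect to be the main obstacle. Restricting (ii) to the set $X$ of principal ultrafilters --- which is dense in $\scc X$ and on which $\wt R$, $R^*$, and the original $R$ all agree --- shows that $g\uhr X$ is a homomorphism of the relational reduct of the principal submodel into $\lim\mathfrak V$. The extra hypotheses are now exactly the case-($\beta$) data of the abstract extension theorem: in $E(\mathfrak B)$ every relation $R^*$ is regular closed (Theorem~\ref{e-and-E-topology}(ii), equivalently Theorem~\ref{modaltopology}), in $\lim\mathfrak V$ every relation $S$ is closed in the product topology (being the limit of the predicate interpretation of $\mathfrak V$), and $g$ is closed. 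I would therefore apply Theorem~\ref{abstract-ET}, case~($\beta$), to the relational reducts of $\mathfrak A:=E(\mathfrak B)$ and of $\lim\mathfrak V$, with dense subset $D:=X$ and map $h:=g$; having discarded the operations, condition~(a) is vacuous and condition~($\beta$) holds verbatim. The theorem then lifts the homomorphism on the principal submodel to a homomorphism of $E(\mathfrak B)$ into $\lim\mathfrak V$, which, combined with the identical operation clauses, is precisely (iii).

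The delicate point I would examine most closely sits inside this application: one must pass from preservation of the dense interior of each $R^*$ to preservation of $R^*=\cl_{(\scc X)^n}\wt R$ itself, i.e.\ establish that $g$ carries $R^*$ into $\cl S=S$. This closure-lifting is the only place where the closedness of each $S$ and of the map $g$ are consumed, and I would verify carefully that $g$ satisfies the standing hypotheses of Theorem~\ref{abstract-ET} and that $\lim\mathfrak V$ indeed carries a Hausdorff topology with closed relations, since these are exactly the conditions that make the lifting legitimate.
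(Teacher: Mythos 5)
Your proof is correct and follows essentially the same route as the paper's: the equivalence of (i) and (ii) via $\lim i(\mathfrak B)=e(\mathfrak B)$ (Theorems \ref{ordinary-models-as-new-generalized} and~\ref{e-as-lim-of-i}) is identical, and for (iii) the paper likewise reduces everything to the regular closedness of the relations $R^*$ of $E(\mathfrak B)$ (Theorem~\ref{e-and-E-topology}) together with the closure-lifting argument from the ($\beta$)-case of Theorem~\ref{abstract-ET}. The only difference is presentational: the paper inlines that ($\beta$)-from-($\alpha$) fragment starting directly from the inclusion $g\image\wt R\subseteq S$ already supplied by (ii) (using that $R^*=\cl\,\wt R$), whereas you descend to the dense principal submodel and re-invoke Theorem~\ref{abstract-ET} wholesale — a slightly longer detour that rederives on $\wt R$ what (ii) hands you for free.
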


\begin{proof} 
Before proving the equivalences, recall that by 
Theorem~\ref{ordinary-models-as-new-generalized},
the ordinary models $e(\mathfrak B)$ and 
$E(\mathfrak B)$ are identified with ultrafilter 
models in the wide sense having the principal 
interpretations, and the limits of the principal 
ultrafilters over the sets of operations and 
relations are the operations and relations that 
generate them. The formulations of (i) and~(ii) 
imply such an identification.

The equivalence of items (i) and~(ii) requires 
no special assumptions about $\mathfrak B$, 
$\mathfrak V$, and~$g$; it is immediate from 
the following: our definition of homomorphisms 
of ultrafilter models via their limits, the 
assumption $\mathfrak U=i(\mathfrak B)$, and 
the equality $e(\mathfrak B)=\lim i(\mathfrak B)$ 
stated in Theorem~\ref{e-as-lim-of-i}.

To prove that item~(iii) under the additional 
assumption is also equivalent to (i) and~(ii), 
we repeat the part of the proof of 
Theorem~\ref{abstract-ET} that deduces 
the case of~($\beta$) from the case of~($\alpha$), 
taking into account Theorem~\ref{e-and-E-topology} 
stating that all relations in $E(\mathfrak B)$ are 
regular closed in the product topology on~$(\scc X)^n$.

The proof is complete. 
\end{proof}


Now we are ready to formulate a~version of the 
Second Extension Theorem for ultrafilter models 
in the wide sense.

\begin{theorem}\label{SET-generalized}
Let 
$
\mathfrak U=
(\scc X,\mathfrak f,\ldots,\mathfrak r,\ldots)
$ 
and 
$
\mathfrak V=
(Y,\mathfrak g,\ldots,\mathfrak s,\ldots)
$ 
be two ultrafilter models in the wide sense, 
of the same signature, let $\scc X$~carry 
its standard topology and $Y$ a~compact Hausdorff 
topology, let $h:X\to Y$, and suppose that 
\begin{enumerate}
\item[(a)] 
$\mathfrak U$~coincides, up to the identification 
map~$i$, with an ultrafilter model~$\mathfrak B$ 
in the narrow sense, and the interpretation 
in~$\mathfrak B$ is pseudo-principal on functional 
symbols with $\mathfrak A$~the principal submodel
(having the universe~$X$),
\item[(b)] 
the interpretation in $\mathfrak V$ takes 
all $n$-ary functional symbols to ultrafilters 
having limits in $RC_{h\image X}(Y^n,Y)$, and 
all $n$-ary predicate symbols to ultrafilters 
having limits in $\{S\subseteq Y^n:S$~is 
right closed w.r.t.~$h\image X\}$, 
for any $n<\omega$. 
\end{enumerate} 
Then the following are equivalent: 
\begin{enumerate}
\item[(i)] 
$h$~is a~homomorphism of $\mathfrak A$ 
into $\mathfrak V$, 
\item[(ii)] 
$\wt{h}$~is a~homomorphism of $\mathfrak U$ 
into~$\mathfrak V$,
\item[(iii)] 
$\wt{h}$~is a~homomorphism of $\wt{\,\mathfrak A\,}$
into $\mathfrak V$.  
\end{enumerate}
If moreover, the interpretation in $\mathfrak V$ 
takes $n$-ary predicate symbols to ultrafilters 
having limits in 
$\{S\subseteq Y^n:S$~is closed\,$\}$
where $Y^n$~carries the product topology, 
then the following item:
\begin{enumerate}
\item[(iv)] 
$\wt{h}$~is a~homomorphism of $\mathfrak A^*$
into $\mathfrak V$, 
\end{enumerate} 
also is equivalent to each of items (i)--(iii):
$$
\xymatrix{
&\mathfrak U
\ar[dd]_(.3){\lim}
\ar@{-->}[rrrddd]^(.7){\wt{h}}&&& 
\\
\mathfrak B
\ar[ur]^i
\ar[dr]_e
\ar[rrr]_E
&&&\mathfrak A^\ast
\ar@{-->}[ddr]^{\wt h}&
\\
&\wt{\,\mathfrak A\,}
\ar@{-->}[rrrd]^(.6){\wt{h}}&&&\\
&&\mathfrak A
\ar[rr]^(.4)h
\ar[ul]
\ar[uur]
&&\mathfrak V
}
$$
\end{theorem}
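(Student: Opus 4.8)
The plan is to reduce all four assertions to statements about the ordinary models $\wt{\,\mathfrak A\,}$, $\mathfrak A^*$, and $\lim\mathfrak V$, and then to assemble the conclusion from already-proved machinery: Lemma~\ref{pre-SET-generalized} for the passage among $\mathfrak U$, $\wt{\,\mathfrak A\,}$, and $\mathfrak A^*$, and the ordinary Second Extension Theorem~\ref{modelSET} to descend from $\scc X$ to~$X$. First I would record the identifications forced by hypothesis~(a). Since $\mathfrak U=i(\mathfrak B)$, Theorem~\ref{e-as-lim-of-i} gives $\lim\mathfrak U=e(\mathfrak B)$, and since the interpretation of~$\mathfrak B$ is pseudo-principal on functional symbols, Theorem~\ref{e-and-E�as-ultraextensions} yields $e(\mathfrak B)=\wt{\,\mathfrak A\,}$ and $E(\mathfrak B)=\mathfrak A^*$, where $\mathfrak A$ is the principal submodel named in~(a). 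On the target side, hypothesis~(b) says exactly that all the defining ultrafilters of~$\mathfrak V$ have limits, so $\lim\mathfrak V$ is a well-defined ordinary model with the compact Hausdorff universe~$Y$ whose operations are right continuous and whose relations are right closed w.r.t.~$h\image X$. By definition a homomorphism of a generalized model in the wide sense is a homomorphism of its limit, so each of (i)--(iv) is an assertion about ordinary homomorphisms into $\lim\mathfrak V$.

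Next I would dispose of the equivalence of (ii), (iii), and~(iv) by a single application of Lemma~\ref{pre-SET-generalized} with $g=\wt h$. Note that $\wt h\colon\scc X\to Y$ is continuous into a Hausdorff space from the compact space~$\scc X$, hence closed, so the closedness proviso of that lemma is automatic. Under the identifications above, item~(i) of Lemma~\ref{pre-SET-generalized} is precisely our~(ii); item~(ii) of the lemma, a homomorphism of $e(\mathfrak B)=\wt{\,\mathfrak A\,}$, is our~(iii); and when the additional hypothesis on predicate limits holds (which matches verbatim the extra hypothesis of the lemma), item~(iii) of the lemma, a homomorphism of $E(\mathfrak B)=\mathfrak A^*$, is our~(iv). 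Thus (ii)$\Leftrightarrow$(iii) unconditionally, and (ii)$\Leftrightarrow$(iv) under the additional assumption.

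It then remains to link these with item~(i), which refers to the restricted map $h\colon X\to Y$ rather than to~$\wt h$. The implication (i)$\Rightarrow$(iii) is Theorem~\ref{modelSET} applied with $\mathfrak C=\lim\mathfrak V$: its hypotheses---$\mathfrak C$ compact Hausdorff, operations right continuous and relations right closed w.r.t.~$h\image X$---are exactly the properties of $\lim\mathfrak V$ recorded in the first paragraph. The converse (iii)$\Rightarrow$(i) is the trivial direction, since $\mathfrak A$ is a submodel of $\wt{\,\mathfrak A\,}$ and $\wt h\uhr X=h$, so restricting the homomorphism $\wt h$ to~$X$ returns~$h$. This closes the cycle (i)$\Leftrightarrow$(iii)$\Leftrightarrow$(ii), and, under the extra hypothesis, (i)$\Leftrightarrow$(iv) as well, completing the proof.

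I do not expect a genuine technical obstacle here, since all the analytic content is already carried by the cited results; the work is entirely one of assembly. The two points demanding real care are the bookkeeping of the chain $\lim\mathfrak U=e(\mathfrak B)=\wt{\,\mathfrak A\,}$ together with $E(\mathfrak B)=\mathfrak A^*$, and the verification that hypothesis~(b) translates \emph{exactly} into the right-continuity and right-closedness (respectively closedness) conditions on $\lim\mathfrak V$ required to invoke Theorems~\ref{modelSET} and~\ref{modalSET}; once these translations are pinned down, the equivalences fall out by transitivity.
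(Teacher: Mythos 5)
Your argument is correct, and its skeleton coincides with the paper's: both first pin down $\lim\mathfrak U=e(\mathfrak B)=\wt{\,\mathfrak A\,}$ and $E(\mathfrak B)=\mathfrak A^*$, and both derive (ii)$\Leftrightarrow$(iii), together with (ii)$\Leftrightarrow$(iv) under the extra hypothesis, from Lemma~\ref{pre-SET-generalized} applied with $g=\wt h$. The one place you diverge is the link to~(i). The paper proves (i)$\Leftrightarrow$(ii) by verifying that $\mathfrak U$ and $\mathfrak V$ satisfy the hypotheses of the abstract extension theorem for generalized models (Theorem~\ref{abstract-ET-generalized}, case~($\alpha$)), which requires Lemma~\ref{i-is-1-1} and Lemma~\ref{partial-pointwise-convergence} to see that the interpretation of $i(\mathfrak B)$ consists of ultrafilters having limits in the appropriate compact Hausdorff spaces, with $\mathfrak A$ playing the role of the dense submodel~$\mathfrak D$. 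You instead prove (i)$\Rightarrow$(iii) by applying the ordinary Second Extension Theorem~\ref{modelSET} to $\mathfrak A$ and $\mathfrak C=\lim\mathfrak V$, and recover (iii)$\Rightarrow$(i) by restriction; since hypothesis~(b) translates verbatim into the right-continuity and right-closedness conditions on $\lim\mathfrak V$ that Theorem~\ref{modelSET} demands, this shortcut is legitimate and spares the re-verification of the conditions of the generalized abstract extension theorem for~$\mathfrak U$. The price is only stylistic: your argument lives entirely at the level of the ordinary limit models, whereas the paper's route exercises the generalized machinery it has just built (and needs anyway for the $I$-version, Theorem~\ref{SET-generalized-I-version}). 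A small point in your favour: you explicitly observe that $\wt h$ is a closed map (continuous from a compact space into a Hausdorff one), which is the hypothesis Lemma~\ref{pre-SET-generalized} requires for its item~(iii); the paper leaves this implicit.
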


\begin{proof} 
The assumptions about $\mathfrak U$, $\mathfrak V$, 
and $\wt{h}$ of this theorem repeat the assumptions 
about $\mathfrak U$, $\mathfrak V$, and $g$ of 
Lemma~\ref{pre-SET-generalized} with an extra 
requirement stating that the interpretation in 
$\mathfrak B$ is pseudo-principal on functional 
symbols. So as the principal submodel 
of~$\mathfrak B$ is~$\mathfrak A$, we have 
$e(\mathfrak B)=\wt{\,\mathfrak A\,}$ 
and $E(\mathfrak B)=\mathfrak A^*$
by Theorem~\ref{e-and-E-as-ultraextensions}. 
Thus by Lemma~\ref{pre-SET-generalized}, 
items (ii) and~(iii) are equivalent, and under 
the additional assumption about~$\mathfrak V$, 
item~(iv) is also equivalent to each of them.

Let us now prove that (i) and~(ii) are equivalent. 
It suffices to show that the models $\mathfrak U$ 
and~$\mathfrak V$ satisfy the conditions of 
Theorem~\ref{abstract-ET-generalized}($\alpha$). 
For~$\mathfrak V$, this is true by the assumption 
of~(b). As for~$\mathfrak U$, by the assumption 
of~(a) we have $\mathfrak U=i(\mathfrak B)$ with 
$\mathfrak B$~an ultrafilter model in the narrow sense. 
Furthermore, $i(\mathfrak B)$ is an ultrafilter model 
in the wide sense, and by Lemma~\ref{i-is-1-1}, 
the interpretation of~$i(\mathfrak B)$
takes functional symbols to ultrafilters
concentrated on $RC_{X}((\scc X)^n,\scc X)$, 
and relational symbols to ultrafilters concentrated on
$\{Q\subseteq(\scc X)^n:Q$~is right clopen w.r.t.~$X\}$. 
Therefore, the ultrafilters have limits in these sets 
endowed with the $X$-pointwise convergence topologies 
as the latter are compact Hausdorff by
Lemma~\ref{partial-pointwise-convergence}. 
Thus $\mathfrak U$ also satisfies the conditions 
of Theorem~\ref{abstract-ET-generalized}($\alpha$), 
with the principal submodel~$\mathfrak A$ here as 
the submodel~$\mathfrak D$ from that theorem (again 
by identifying ordinary models with ultrafilter  
models having the principal interpretations). 
This shows the equivalence of (i) and~(ii), 
thus completing the proof. 
\end{proof}


Finally, by changing $i$ with~$I$, we obtain 
the counterparts of Lemma~\ref{pre-SET-generalized}
and Theorem~\ref{SET-generalized}:

\begin{lemma}\label{pre-SET-generalized-I-version}
Let $\mathfrak U$ and $\mathfrak V$ be two 
ultrafilter models in the wide sense, of the 
same signature, whose universes $\scc X$ and~$Y$, 
respectively, both carry topologies where  
the topology on~$\scc X$ is standard, 
let $g:\scc X\to Y$, and suppose that 
\begin{enumerate}
\item[(a)] 
$\mathfrak U=I(\mathfrak B)$ for some ultrafilter 
model~$\mathfrak B$ in the narrow sense (also having 
the universe~$\scc X$), 
\item[(b)] 
the interpretation in $\mathfrak V$ takes 
all $n$-ary functional symbols to ultrafilters 
having limits in $RC_{g\image X}(Y^n,Y)$, and 
all $n$-ary predicate symbols to ultrafilters 
having limits in $\{S\subseteq S^n:R$~is closed\,$\}$
where $Y^n$~carries the product topology. 
\end{enumerate}
Then the following are equivalent: 
\begin{enumerate}
\item[(i)] 
$g$~is a~homomorphism of $\mathfrak U$ 
into~$\mathfrak V$,
\item[(ii)] 
$g$~is a~homomorphism of $e(\mathfrak B)$
into $\mathfrak V$, 
\item[(iii)] 
$g$~is a~homomorphism of $E(\mathfrak B)$
into $\mathfrak V$: 
$$
\xymatrix{
&\mathfrak U
\ar[drr]^{\lim}
\ar@{->}[rrrddd]^(.7)g&&& 
\\
\mathfrak B
\ar[ur]^I
\ar[dr]_e
\ar[rrr]_E
&&&E(\mathfrak B)
\ar[ddr]^{g}&
\\
&e(\mathfrak B)
\ar@{->}[rrrd]^(.6)g&&&
\\
&&&&\mathfrak V
}
$$
\end{enumerate} 
\end{lemma}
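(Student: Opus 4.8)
The plan is to follow the proof of its $i$-counterpart, Lemma~\ref{pre-SET-generalized}, changing only that the role played there by Theorem~\ref{e-as-lim-of-i} is now played by its $E$-analog, Theorem~\ref{E-as-lim-of-I}. The pivotal ingredients are the definition of a homomorphism of generalized models in the wide sense as a homomorphism of the limits, the identity $\lim I(\mathfrak B)=E(\mathfrak B)$ from Theorem~\ref{E-as-lim-of-I}, and the topological relationship between the relations $\wt R$ and $R^*$ recorded in Lemma~\ref{e-E-relations} and Theorem~\ref{e-and-E-topology}.

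First I would dispatch the equivalence of (i) and~(iii). By assumption~(a) we have $\mathfrak U=I(\mathfrak B)$, so Theorem~\ref{E-as-lim-of-I} gives $\lim\mathfrak U=\lim I(\mathfrak B)=E(\mathfrak B)$. Regarding the ordinary model $E(\mathfrak B)$ as a generalized model in the wide sense with the principal interpretation (whose limit is $E(\mathfrak B)$ itself, as in Theorem~\ref{ordinary-models-as-new-generalized}), the definition of homomorphism via limits collapses both (i) and~(iii) to literally the same assertion, namely that $g$ is a homomorphism of the ordinary model $E(\mathfrak B)$ into $\lim\mathfrak V$. Hence (i)$\,\Leftrightarrow\,$(iii) needs no further work.

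It remains to connect these with~(ii). The direction (iii)$\,\Rightarrow\,$(ii) is immediate: by Theorem~\ref{homo-of-e-to-E} the identity map is a homomorphism of $e(\mathfrak B)$ onto $E(\mathfrak B)$ (the operations of $e(\mathfrak B)$ and $E(\mathfrak B)$ agree, while $\wt R\subseteq R^*$ for relations by Lemma~\ref{e-E-relations}), so composing it with a homomorphism $E(\mathfrak B)\to\lim\mathfrak V$ yields one $e(\mathfrak B)\to\lim\mathfrak V$. The substantive step is the converse (ii)$\,\Rightarrow\,$(iii), where a homomorphism with respect to the smaller relations $\wt R=e(\mathfrak r)$ must be upgraded to one with respect to the larger relations $R^*=E(\mathfrak r)$. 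For this I would reproduce the passage in the proof of Theorem~\ref{abstract-ET} that derives case~($\beta$) from case~($\alpha$): each $R^*$ is regular closed in the product topology on $(\scc X)^n$ by Theorem~\ref{e-and-E-topology}(ii), and in fact $R^*=\cl_{(\scc X)^n}\wt R$ (since $R\subseteq\wt R\subseteq R^*$ with $R$ the underlying relation and $R^*=\cl_{(\scc X)^n}R$ by Theorem~\ref{modaltopology} and Lemma~\ref{e-E-relations}), while the target relation $S=\lim\mathfrak s$ is closed by assumption~(b); applying the product map $G=g\times\ldots\times g$ then gives $G\image R^*=G\image\cl_{(\scc X)^n}\wt R\subseteq\cl_{Y^n}G\image\wt R\subseteq\cl_{Y^n}S=S$, which is exactly the homomorphism condition for $R^*$. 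The operations require nothing new, as $e(\mathfrak B)$ and $E(\mathfrak B)$ share them.

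The main obstacle is precisely this implication (ii)$\,\Rightarrow\,$(iii): it is the only place where topology enters, and it hinges on the product map $G$ being closed. This is automatic once $g$ is continuous, because $(\scc X)^n$ is compact (the standard topology on $\scc X$ is compact Hausdorff) and $Y^n$ is Hausdorff, so a continuous $G$ carries closed sets to closed sets; combined with the closedness of the target relations from~(b), the closure estimate above goes through. This is the same delicate point that separates regular-closed from right-clopen relations in Theorem~\ref{abstract-ET}($\beta$) versus~($\alpha$), and I would cite that argument rather than re-derive it.
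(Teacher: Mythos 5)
Your proof is correct and follows essentially the same route as the paper's: (i) and (iii) are identified outright via $\lim I(\mathfrak B)=E(\mathfrak B)$ (Theorem~\ref{E-as-lim-of-I}) together with the definition of homomorphisms of generalized models in the wide sense through their limits, and (ii) is then tied in by comparing $e(\mathfrak B)$ with $E(\mathfrak B)$. The one point where you are more explicit than the paper is the implication (ii)$\,\Rightarrow\,$(iii): the paper settles the whole equivalence of (ii) with (iii) by citing Theorem~\ref{homo-of-e-to-E}, which on its face only yields (iii)$\,\Rightarrow\,$(ii); your reconstruction of the $(\beta)$-from-$(\alpha)$ passage of Theorem~\ref{abstract-ET} (using $R^*=\cl_{(\scc X)^n}\wt R$, the closedness of the target relations guaranteed by (b), and the closedness of the product map) is precisely the argument the paper does spell out at the corresponding step of the $i$-counterpart, Lemma~\ref{pre-SET-generalized}. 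You are also right to flag that this step needs $g$ to be a closed map (equivalently continuous, since $\scc X$ is compact and $Y$ Hausdorff) -- a hypothesis the statement of the lemma omits but which the analogous clause of Lemma~\ref{pre-SET-generalized} makes explicit and which holds in the intended application with $g=\wt h$.
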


\begin{proof} 
Again, by using 
Theorem~\ref{ordinary-models-as-new-generalized},
we identify the ordinary models $e(\mathfrak B)$ 
and $E(\mathfrak B)$ in (ii) and~(iii) with the 
corresponding ultrafilter models in the wide sense 
having the principal interpretations (and thus having 
$e(\mathfrak B)$ and $E(\mathfrak B)$ as their limits). 
The equivalence of items (i) and~(iii) is immediate 
from the following: our definition of homomorphisms 
of ultrafilter models via their limits, 
the assumption $\mathfrak U=I(\mathfrak B)$, and the 
equality $e(\mathfrak B)=\lim I(\mathfrak B)$ stated 
in Theorem~\ref{E-as-lim-of-I}. But then item~(ii) is 
also equivalent to each of items (i) and~(iii) since 
the identity map on~$\scc X$ is a~homomorphism of 
$e(\mathfrak B)$ onto $E(\mathfrak B)$ by 
Theorem~\ref{homo-of-e-to-E}. The proof is complete. 
\end{proof}


\begin{theorem}\label{SET-generalized-I-version}
Let 
$
\mathfrak U=
(\scc X,\mathfrak f,\ldots,\mathfrak r,\ldots)
$ 
and 
$
\mathfrak V=
(Y,\mathfrak g,\ldots,\mathfrak s,\ldots)
$ 
be two ultrafilter models in the wide sense, 
of the same signature, let $\scc X$~carry its 
standard topology and $Y$ a~compact Hausdorff 
topology, let $h:X\to Y$, and suppose that
\begin{enumerate}
\item[(a)] 
$\mathfrak U=I(\mathfrak B)$ for some ultrafilter 
model~$\mathfrak B$ in the narrow sense, 
and the interpretation in~$\mathfrak B$ is 
pseudo-principal on functional symbols 
with $\mathfrak A$~the principal submodel
(having the universe~$X$),
\item[(b)] 
the interpretation in $\mathfrak V$ takes 
all $n$-ary functional symbols to ultrafilters 
having limits in $RC_{h\image X}(Y^n,Y)$, and 
all $n$-ary predicate symbols to ultrafilters 
having limits in 
$\{S\subseteq Y^n:S$~is closed\,$\}$
where $Y^n$~carries the product topology. 
\end{enumerate} 
Then the following are equivalent: 
\begin{enumerate}
\item[(i)] 
$h$~is a~homomorphism of $\mathfrak A$ 
into $\mathfrak V$, 
\item[(ii)] 
$\wt{h}$~is a~homomorphism of $\mathfrak U$ 
into~$\mathfrak V$,
\item[(iii)] 
$\wt{h}$~is a~homomorphism of $\wt{\,\mathfrak A\,}$
into $\mathfrak V$,
\item[(iv)] 
$\wt{h}$~is a~homomorphism of $\mathfrak A^*$
into $\mathfrak V$:
\end{enumerate} 
$$
\xymatrix{
&\mathfrak U
\ar[drr]^{\lim}
\ar@{-->}[rrrddd]^(.7){\wt{h}}&&& 
\\
\mathfrak B
\ar[ur]^I
\ar[dr]_e
\ar[rrr]_E
&&&\mathfrak A^\ast
\ar@{-->}[ddr]^{\wt h}&
\\
&\wt{\,\mathfrak A\,}
\ar@{-->}[rrrd]^(.6){\wt{h}}&&&
\\
&&\mathfrak A
\ar[rr]^(.4)h
\ar[ul]
\ar[uur]
&&\mathfrak V
}
$$
\end{theorem}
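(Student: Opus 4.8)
The plan is to mimic the proof of Theorem~\ref{SET-generalized}, replacing the map~$i$ throughout by~$I$ and, correspondingly, the case~($\alpha$) of the abstract extension theorem by the case~($\beta$). The essential structural difference is that under~$I$ the relations of the limit model are \emph{regular closed} rather than right clopen, so the target relations must be required closed---which is exactly what hypothesis~(b) now supplies---and the extending map must be closed---which will come for free.

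First I would record the consequences of hypothesis~(a). Since $\mathfrak U=I(\mathfrak B)$ with $\mathfrak B$~pseudo-principal on functional symbols and with principal submodel~$\mathfrak A$, Theorem~\ref{e-and-E�as-ultraextensions} gives $e(\mathfrak B)=\wt{\,\mathfrak A\,}$ and $E(\mathfrak B)=\mathfrak A^\ast$. The hypotheses of the present theorem on $\mathfrak U$, $\mathfrak V$, and~$\wt h$ are then precisely those of Lemma~\ref{pre-SET-generalized-I-version} taken with $g=\wt h$ (hypothesis~(b) matches verbatim); applying that lemma and substituting the identifications $e(\mathfrak B)=\wt{\,\mathfrak A\,}$ and $E(\mathfrak B)=\mathfrak A^\ast$ yields at once the equivalence of items (ii), (iii), and~(iv).

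It then remains to tie item~(i) to these, which I would do by verifying that $\mathfrak U$ and~$\mathfrak V$ satisfy the hypotheses of Theorem~\ref{abstract-ET-generalized}($\beta$), with $D=X$ the dense set of principal ultrafilters and $\mathfrak D=\mathfrak A$ its principal submodel. For~$\mathfrak U=I(\mathfrak B)$, Lemma~\ref{I-is-1-1} shows that the functional symbols are interpreted by ultrafilters concentrated on $\ext\image X^{X^n}\subseteq RC_{X}((\scc X)^n,\scc X)$ and the relational symbols by ultrafilters concentrated on the regular closed subsets of $(\scc X)^n$; as these sets are compact Hausdorff by Lemma~\ref{partial-pointwise-convergence}, the required limits exist, and by Theorem~\ref{E-as-lim-of-I} together with Theorem~\ref{modaltopology} the limit $\lim\mathfrak U=E(\mathfrak B)=\mathfrak A^\ast$ does have regular closed relations. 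For~$\mathfrak V$, hypothesis~(b) furnishes exactly the needed limits in $RC_{h\image X}(Y^n,Y)$ (here $\wt h\image X=h\image X$, since $\wt h$ extends~$h$) and in the closed subsets of~$Y^n$. Finally, $\wt h\colon\scc X\to Y$ is a continuous map of the compact space~$\scc X$ into the Hausdorff space~$Y$, hence closed, supplying the last ingredient of the case~($\beta$).

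With all hypotheses checked, Theorem~\ref{abstract-ET-generalized}($\beta$) gives the equivalence of ``$\wt h\uhr X=h$ is a homomorphism of~$\mathfrak A$ into~$\mathfrak V$'' (item~(i)) and ``$\wt h$ is a homomorphism of~$\mathfrak U$ into~$\mathfrak V$'' (item~(ii)). Combined with the equivalence of (ii)--(iv) already obtained, all four items are equivalent. I expect the only delicate point to be the bookkeeping around the identifications---the principal submodel versus the generalized-model formalism, and the equality $\wt h\image X=h\image X$---while the topological substance is carried entirely by the previously established Theorem~\ref{abstract-ET-generalized} and the automatic closedness of~$\wt h$.
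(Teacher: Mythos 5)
Your proposal is correct and follows essentially the same route as the paper's own proof: equivalence of (ii)--(iv) via Lemma~\ref{pre-SET-generalized-I-version} combined with the identifications $e(\mathfrak B)=\wt{\,\mathfrak A\,}$ and $E(\mathfrak B)=\mathfrak A^\ast$ from Theorem~\ref{e-and-E�as-ultraextensions}, and equivalence of (i) and (ii) by verifying the hypotheses of Theorem~\ref{abstract-ET-generalized}($\beta$) using Lemma~\ref{I-is-1-1}. Your explicit remark that $\wt h$ is automatically closed as a continuous map from a compact space to a Hausdorff space is a detail the paper leaves implicit but is indeed the ingredient needed for case~($\beta$).
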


\begin{proof}
The assumptions about $\mathfrak U$, $\mathfrak V$, 
and $\wt{h}$ of this theorem repeat the assumptions 
about $\mathfrak U$, $\mathfrak V$, and $g$ of 
Lemma~\ref{pre-SET-generalized-I-version} with an 
extra requirement stating that the interpretation 
in $\mathfrak B$ is pseudo-principal on functional 
symbols. So as the principal submodel of~$\mathfrak B$ 
is~$\mathfrak A$, we have 
$e(\mathfrak B)=\wt{\,\mathfrak A\,}$ 
and $E(\mathfrak B)=\mathfrak A^*$
by Theorem~\ref{e-and-E-as-ultraextensions}. 
Thus by Lemma~\ref{pre-SET-generalized-I-version}, 
items (ii), (iii), and~(iv) all are equivalent.

Let us now prove that (i) and~(ii) are equivalent. 
It suffices to show that the models $\mathfrak U$ 
and~$\mathfrak V$ satisfy the conditions of 
Theorem~\ref{abstract-ET-generalized}($\beta$). 
For~$\mathfrak V$, this is true by the assumption 
of~(b). As for~$\mathfrak U$, by the assumption 
of~(a) we have $\mathfrak U=I(\mathfrak B)$ with 
$\mathfrak B$~an ultrafilter model in the narrow sense. 
Furthermore, $I(\mathfrak B)$ is an ultrafilter model 
in the wide sense, and by Lemma~\ref{I-is-1-1}, 
the interpretation of~$I(\mathfrak B)$
takes functional symbols to ultrafilters
concentrated on $RC_{X}((\scc X)^n,\scc X)$, and 
relational symbols to ultrafilters concentrated on
$\{Q\subseteq(\scc X)^n:Q$~is regular closed\,$\}$. 
Therefore, the ultrafilters have limits in these sets 
endowed with the corresponding compact Hausdorff 
topologies described above. 
Thus $\mathfrak U$ also satisfies the conditions 
of Theorem~\ref{abstract-ET-generalized}($\beta$), 
with the principal submodel~$\mathfrak A$ here as 
the submodel~$\mathfrak D$ from that theorem (again 
by identifying ordinary models with ultrafilter 
models having the principal interpretations). 
This shows the equivalence of (i) and~(ii), 
thus completing the proof. 
\end{proof}


\begin{remark}\label{rmk: variants of SET}
Theorems 
\ref{abstract-ET}--\ref{SET-generalized-I-version}
admits some variants and generalizations. 
E.g.~they remain true for epimorphisms (since 
for any compact Hausdorff~$Y$, if $h:X\to Y$ 
is such that $h\image X$ is dense in~$Y$, then 
$\wt{h}:\scc X\to Y$ is surjective), as well 
as for homotopies and isotopies (in sense of
\cite{Saveliev},~\cite{Saveliev(inftyproc)}), 
which can be defined for ultrafilter models 
in the wide sense in the same way as this was 
done for homomorphisms and embeddings. Also 
versions for multi-sorted models (having 
rather many universes $X_1,\ldots,X_n$ than 
one universe~$X$) can be easily stated. 
\end{remark}


\section{Problems}

This section contains a~list of questions and 
tasks, including all ones posed in the text 
above. Some of them are rather technical
(Problems \ref{prb: 1}, \ref{prb: 3}, 
\ref{prb: 4}, \ref{prb: 7})
while others are more program.

\begin{problem}\label{prb: 1}
Does Lemma~\ref{extensions-are-dense} remain true 
for the space 
$RC_{X_1,\ldots,X_{n-1}}(\scc X_1,\ldots,\scc X_n,Y)$ 
(or moreover, the space 
$Y^{\scc X_1\times\ldots\times\scc X_n}$)
endowed with the full pointwise convergence topology? 
i.e. given discrete spaces $X_1,\ldots,X_n$,
a~compact Hausdorff space~$Y$, and 
a~dense subset $S$ of~$Y$, is the set 
$$
\ext\image S^{X_1\times\ldots\times X_n}
=
\bigl\{
\wt{f}:f\in S^{X_1\times\ldots\times X_n}
\bigr\}
$$ 
dense in this space?
It can be seen that the answer is affirmative for 
unary maps, i.e.~the set $\{\wt{f}:f\in S^X\}$ is 
dense in $C(\scc X,Y)$. What happens for binary maps?
\end{problem}

\begin{problem}\label{prb: 2}
Given discrete $X_1,\ldots,X_n$ and 
compact Hausdorff~$Y$, let $\ext$~be a~map of 
$Y^{X_1\times\ldots\times X_n}$ endowed with 
the discrete topology into 
$Y^{\scc X_1\times\ldots\times\scc X_n}$ 
endowed with the usual product topology
(or equivalently, the usual pointwise convergence
topology). As the range is a~compact Hausdorff space,
the map~$\ext$ continuously extends to~$\wt\ext$:
$$
\xymatrix{
\;\quad\scc\bigl(Y^{X_1\times\ldots\times 
X_n}\bigr)\qquad\ar@{-->}^{\wt{\ext}}[dr]&&
\\
Y^{X_1\times\ldots\times X_n}\,
\ar[u]\ar[r]^{\;\;\;\ext}&&%
\!\!\!\!\!\!\!\!\!\!\!\!\!\!\!\!%
Y^{\,\scc X_1\times\ldots\times\scc X_n}
}
$$
Can this alternative version of self-applying 
of the map~$\ext$ lead to some interesting 
possibilities, including variants of the theory 
of ultrafilter models?

Note that now 
$\wt\ext\image\scc(Y^{X_1\times\ldots\times X_n})$ 
does not coincide with
$\ext\image Y^{X_1\times\ldots\times X_n}$ 
(unlike our previous situation); however, 
the latter set is still dense in the former: 
$$
\ext\image\,Y^{X_1\times\ldots\times X_n}
\subset
\wt{\ext}\image\,
\scc\bigl(Y^{X_1\times\ldots\times X_n}\bigr)
=
\cl_{Y^{\,\scc X_1\times\ldots\times\scc X_n}}
\bigl(
\ext\image\,Y^{X_1\times\ldots\times X_n}
\bigr). 
$$
Also, is this version of~$\wt\ext$ surjective?
This would be the case if the previous question 
in its stronger form, i.e.~for the space
$Y^{\,\scc X_1\times\ldots\times\scc X_n}$, 
had the affirmative answer. 
\end{problem}

\begin{problem}\label{prb: 3}
For which compact Hausdorff spaces~$Y$, 
instead of $\scc Y$ with a~discrete~$Y$, 
does Lemma~\ref{appviaext} remain true, 
i.e.~for any discrete $X_1,\ldots,X_n$ 
and the map~$\wt\app$ defined as in 
the remark in the beginning of Section~3:
$$
\xymatrix{
\scc X_1\times\ldots\times\scc X_n\times
\scc(Y^{X_1\times\ldots\times X_n})
\qquad\qquad\ar@{-->}^{\qquad\;\wt\app}[dr]&&
\!\!\!\!\!\!\!\!\!\!\!\!\!\!\! 
\\
\;\;\;{X_1\times\ldots\times X_n\times 
Y^{X_1\times\ldots\times X_n}}\,
\ar[u]\ar[r]^{\qquad\qquad\app}& 
Y
}
$$
the statements
\begin{align*}
e(\mathfrak f)(\mathfrak u_1,\ldots,\mathfrak u_n)
&=
\wt\app(\mathfrak u_1,\ldots,\mathfrak u_n,\mathfrak f),
\\
e(\mathfrak r)(\mathfrak u_1,\ldots,\mathfrak u_n)
&\;\;\text{iff}\;\;\,
\wt\inn
(\mathfrak u_1,\ldots,\mathfrak u_n,\mathfrak r)
\end{align*}
hold for all\,
$
\mathfrak f\in
\scc(Y^{X_1\times\ldots\times X_n}), 
$
$
\mathfrak r\in
\scc\,\mathcal{P}(X_1\times\ldots\times X_n),
$ 
and\, 
$
\mathfrak u_1\in
\scc X_1,\ldots,\mathfrak u_n\in\scc X_n?
$ 
Does this hold at least for all compact Hausdorff 
spaces~$Y$ that are zero-dimensional, 
or extremally disconnected? 
\end{problem}

\begin{problem}\label{prb: 4}
What are topological properties of the subset of 
the space $\scc(Y^{X_1\times\ldots\times X_n})$ 
consisting of pseudo-principal ultrafilters?
Of the preimage of this set under~$e$, i.e.~the 
set $\{\wt{f}:f\in Y^{X_1\times\ldots\times X_n}\}$,
in the space
$
RC_{X_1,\ldots,X_{n-1}}
(\scc X_1,\ldots,\scc X_n,\scc Y)
$ 
with the $(X_1,\ldots,X_n)$-pointwise convergence 
topology (except for the fact that it is dense there, 
as stated in Lemma~\ref{extensions-are-dense}), 
or with the (usual) pointwise convergence topology? 
in the space
$(\scc Y)^{\,\scc X_1\times\ldots\times\scc X_n}$ 
with the pointwise convergence topology?

Often objects naturally defined in terms 
of ultrafilter extensions have rather hardly 
definable topological properties, as shown in 
\cite{Hindman Strauss topol-properties, 
Hindman Strauss non-Borel}.
\end{problem}

In the next two problems, we wonder about 
variants of the definition of the satisfiability 
in ultrafilter models in the narrow sense.

\begin{problem}\label{prb: 5}
Define an alternative satisfaction relation~$\VDash$ 
by using rather $\inn^{\!\!*}$ than~$\wt\inn$; 
i.e.~if $R(t_1,\ldots,t_n)$ is an atomic formula 
in which $R$~is not the equality predicate, let
$$
\mathfrak A\VDash
R(t_1,\ldots,t_n)\;[v]
\;\;\text{iff}\;\;
\inn^{\!\!*}
(v_\imath(t_1),\ldots,v_\imath(t_n),\imath(P)).
$$
Does this give a~$E$-counterpart of the semantics  
of ultrafilter models in the narrow sense? 
More precisely, is the following $E$-counterpart 
of Theorem~\ref{e-preserves-formulas} true: 
If $\mathfrak A$ is an ultrafilter model in 
the narrow sense, then for all formulas~$\varphi$ 
and elements $\mathfrak u_1,\ldots,\mathfrak u_n$ 
of the universe of~$\mathfrak A$, 
$$
\mathfrak{A}\VDash\varphi\,
[\mathfrak u_1,\ldots,\mathfrak u_n]
\;\;\text{iff}\;\;
E(\mathfrak{A})\vDash\varphi\,
[\mathfrak u_1,\ldots,\mathfrak u_n]
$$
(where $\VDash$ has this new meaning)?
\end{problem}

\begin{problem}\label{prb: 6}
Another way to vary the definition~$\VDash$ 
is by letting
$$
\mathfrak A\VDash 
R(t_1,\ldots,t_n)\;[v]
\;\;\text{iff}\;\;
\bigl(\forall^{\,\imath(R)}Q\bigr)\;
\wt Q(v_\imath(t_1),\ldots,v_\imath(t_n)).
$$
This version looks less smooth. Does this, 
nevertheless, give something interesting?
\end{problem}

\begin{problem}\label{prb: 7}
To define the map~$I$, we considered the set 
$\RegCl$ of regular closed subsets of the space 
$\scc X_1\times\ldots\times\scc X_n$ with 
a~topology turning it into a~space homeomorphic 
to the usual product space
$2^{X_1\times\ldots\times X_n}$ with the discrete
space~$2$. Redefine this topology on~$\RegCl$ 
as a~restricted version of Vietoris topology 
(in an analogy with the restricted version of 
pointwise convergence topology turning out 
$\RC$ into a~compact Hausdorff space homeomorphic 
to the product space
$Y^{X_1\times\ldots\times X_n}$). Note that 
in the usual Vietoris topology, the space
$\{Q\subseteq\scc X_1\times\ldots\times\scc X_n:
Q$~is closed$\}$ is compact Hausdorff but 
$\RegCl$ is not a~closed subspace of~it.
\end{problem}

\begin{problem}\label{prb: 8}
Investigate filter extensions of 
first-order models (as was started in 
\cite{Goranko,Saveliev(2 concepts)}) 
and the corresponding concepts of filter 
interpretations and filter models. 
\end{problem}

\begin{problem}\label{prb: 9}
Isolate and investigate other possible types 
of ultrafilter extensions (in the sense of 
Definition~\ref{def: abstract u e}), besides 
the $\wt{\;}$- and ${}^*$-extensions,  
establish special features of the two canonical 
extensions among others (as was proposed 
at the end of~\cite{Saveliev(2 concepts)}).
\end{problem}

\begin{problem}\label{prb: 10}
Investigate ultrafilter extensions of syntax 
(including those of languages, of valuation and 
interpretation maps, of the satisfaction relation).
\end{problem}

\begin{problem}\label{prb: 11}
Investigate iterations of ultrafilter extensions 
(taking unions at limit steps). 
\end{problem}

\begin{problem}\label{prb:12 }
Investigate higher-order and infinitary analogs 
of ultrafilter extensions and ultrafilter 
interpretations, more generally, analogs 
for model-theoretic languages (in the sense 
of~\cite{Barwise Feferman}).
\end{problem}

\begin{problem}\label{prb: 13}
The ${}^*$-extensions play a~special role 
in modal propositional logic; if $\mathfrak A$ is 
a~model of a~relational language, all canonical 
modal formulas are preserved under passing from 
$\mathfrak A$ to~$\mathfrak A^*$, provided both 
first-order models are considered as Kripke frames 
(see \cite{van Benthem,Blackburn et al}). What is 
a~(non-classical) propositional logic with a~similar  
property w.r.t.~the $\,\wt{\;}\,$-extensions? 
(Perhaps, this connects to Shelah's theorem on 
fragments of second-order logic, see 
\cite{Shelah,Barwise Feferman}.)
\end{problem}

\begin{problem}\label{prb: 14}
Do the concepts of ultrafilter interpretations 
and ultrafilter models have any interesting 
applications? e.g.~combinatorial (Ramsey-theoretic) 
applications in model theory? 
\end{problem}


Also a~list of problems related to 
ultrafilter extensions can be found in 
Section~5 of~\cite{Saveliev(inftyproc)}.


\hide{
\begin{acknowledgment}
\hide{
Professor Robert I.~Goldblatt provided us 
some useful historical information about  
the ${}^*$-extension of relations, and 
Professor Neil Hindman, about the emergence of 
studies in ultrafilter extensions of semigroups. 
We would like to express our gratitude to them.
}
We would like to express our gratitude to 
Professors Robert I.~Goldblatt and Neil Hindman 
who provided us some useful historical information.
We are also indebted two anonymous referees 
for some critical remarks and suggestions.
\end{acknowledgment}
}

\vskip+1em
\noindent
\textbf{\textit{Acknowledgement.}}
We would like to express our gratitude to 
Professors Robert I.~Goldblatt and Neil Hindman 
who provided us some useful historical information.
We are also indebted to two anonymous referees 
for some critical remarks and suggestions.

\end{document}